\documentclass[11pt]{article}

\usepackage[T1]{fontenc}
\usepackage[utf8]{inputenc}
\usepackage{lmodern}
\usepackage{microtype}
\usepackage[a4paper,margin=30mm]{geometry}
\usepackage{amsmath,amssymb,amsfonts,amsthm,mathtools,bm,mathrsfs}
\usepackage{booktabs}
\usepackage{graphicx}
\usepackage{longtable}
\usepackage{enumitem}
\usepackage{xcolor}
\usepackage{csquotes}
\usepackage{hyperref}
\usepackage[nameinlink,noabbrev]{cleveref}

\hypersetup{
  colorlinks=true,
  linkcolor=black,
  citecolor=black,
  urlcolor=black,
  pdftitle={The Three Gates: A Rooted-Operator Approach to Weil Positivity},
  pdfauthor={Marco Desogus}
}

\newtheorem{theorem}{Theorem}[section]
\newtheorem{lemma}[theorem]{Lemma}
\newtheorem{proposition}[theorem]{Proposition}
\newtheorem{corollary}[theorem]{Corollary}

\theoremstyle{definition}
\newtheorem{definition}[theorem]{Definition}
\newtheorem{remark}[theorem]{Remark}
\newtheorem{computercertificate}[theorem]{Certified finite computation}
\newtheorem{analyticcertificate}[theorem]{Analytic certificate}
\newtheorem{audit}[theorem]{Audit statement}

\newcommand{\C}{\mathbb C}

\newcommand{\cH}{\mathcal H}
\newcommand{\cQ}{\mathcal Q}

\newcommand{\cW}{\mathcal W}

\newcommand{\Rea}{\operatorname{Re}}
\newcommand{\Short}{\operatorname{Short}}

\newcommand{\ketbra}[2]{|#1\rangle\langle #2|}
\newcommand{\one}{\mathbf 1}

\newcommand{\LambdaV}{\Lambda}
\newcommand{\psiC}{\psi}

\newenvironment{quest}{\begin{quote}\small\itshape}{\end{quote}}
\newenvironment{interlude}{\begin{quote}\small\itshape}{\end{quote}}

\title{\textbf{The Three Gates}\\[3pt]
\large A Rooted-Operator Approach to Weil Positivity}
\author{Marco Desogus}
\date{20 September 2026}

\begin{document}
\maketitle

\begin{abstract}
We present a localized rooted-operator argument for Weil positivity in the real
odd logarithmic channel, retaining the polar rank--one term throughout.  Gate~I
establishes a strict cellular covariance theorem and its midpoint-shell consequence,
with rigorous interval-arithmetic bounds on the finite range used below.  Gate~II combines the Mellin unit-cell
decomposition, divisor partial-isometry squares, affine translations, full-form
Cauchy--Carleman transport, coherent multi-source shorting, the augmented scalar
root, and Schur geometry in the true metric.  Gate~III uses compression, closure
and the restricted odd Weil criterion.

A central point in Gate~II is to place the inherited parent response inside the
same post-old-core/common-cut primal form in which the arithmetic mismatch and
the folded scalar debit are charged.  If $P_{k,m}^{\rm ex}$ is the surviving
inherited pivot and $a_m$ the inherited coordinate of the Schur minimizer, then
\[
 P_{k,m}^{\rm ex}a_m=\omega_{k,m}^{\rm ex},
\]
so the inherited source--coupling contribution is the negative metric energy of
that response.  Writing
$a_m=\sqrt{\kappa_{k,m}C_m}\,c_m^{\rm inh}$ gives
\[
 -\mathscr S^{\rm ex}_{k,m}[r_mc_m^{\rm inh}]
 =-\mathscr S^{\rm ex}_{k,m}[r_m^0c_m^{\rm inh}]
  +|c_m^{\rm inh}|^2\Delta\mathscr S^{\rm ex}_{k,m}.
\]
Thus the comparator surplus belongs to the same Schur contribution rather than
being introduced as a separate positive term.

The aligned forcing is retained explicitly.  With
\[
 W_{k,m}:=\frac{1-\theta_{k,m}}{1+\theta_{k,m}}
          \frac{b_m^2}{\Pi_m^{(0)}},\qquad W_k:=\sum_mW_{k,m},
\]
the parent block is bounded below by
\[
 -\frac{439}{250}\chi_m-\frac52W_{k,m}.
\]
An outward-rounded finite computation together with the analytic tail gives
\[
 \mathfrak g_k^{\rm M+}:=
 \frac12+\log k-
 \frac{439}{250\log2}\sum_mV_{k,m}-\frac52W_k>0
 \qquad(k\ge7).
\]
The simultaneous common-cut theorem keeps the parent-ground and transverse
budgets separate before the common infimum, while the fold identity leaves a
non-negative remainder.  Combining these ingredients gives positive Schur
energy on the $A_k$-harmonic graph at each arithmetic step.  Starting from the
rigorously certified endpoint $Y=7$, Schur induction yields positivity at the
integer endpoints, and zero-extension gives non-negativity at every finite
support radius.  Closure and the restricted odd Weil criterion give the
conclusion stated in the main theorem.

Short fantasy interludes provide only an expository map of the argument.  They
may be omitted without changing any definition, lemma, theorem or proof.
\end{abstract}

\medskip
\noindent\textbf{Main result.}
\begin{theorem}[Main theorem]\label{thm:main-rh}
For every finite support radius $a>0$, the localized critical Weil operator in
the real odd logarithmic channel satisfies
\[
 \boxed{A_{a,-}\succeq0.}
\]
Equivalently, the global Weil quadratic form is non-negative on the real odd
compactly supported logarithmic test core.  Hence every non-trivial zero of
$\zeta$ has real part $1/2$; in particular, the Riemann hypothesis holds.
\end{theorem}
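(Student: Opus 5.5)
The plan follows the three-gate architecture announced in the abstract. I should say at the outset that the statement is equivalent to the Riemann hypothesis, so I do not expect this plan to go through. What is realistic is to pin down exactly which step would have to carry the whole weight.

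\emph{Reduction and bookkeeping.} First, by the restricted odd Weil criterion (Gate~III), it suffices to show $A_{a,-}\succeq0$ for every finite support radius $a>0$. Indeed, non-negativity of the Weil quadratic form on the dense real odd compactly supported logarithmic core, combined with closure, is equivalent to the zeros of $\zeta$ lying on the critical line. Next, I reduce to integer endpoints. If $A_{Y,-}\succeq0$ at the integer endpoint $Y\ge a$, then zero-extension of a test function supported in radius $a$ to radius $Y$ shows the compression $A_{a,-}$ is non-negative. This reduces everything to a discrete family $Y=7,8,9,\dots$. The base case $Y=7$ is the rigorously certified finite computation from Gate~I: interval-arithmetic bounds, the cellular covariance theorem and its midpoint-shell consequence.

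\emph{Induction step.} This is Schur-complement induction on the $A_k$-harmonic graph. Passing from $Y=k$ to $k+1$, write $A_{k+1,-}$ in block form, with the old core $A_{k,-}$ and the new Mellin unit cell. Positivity then reduces to positivity of the Schur complement of the new cell. To organize that complement I would proceed in four steps:
\begin{enumerate}[label=(\roman*)]
\item use the unit-cell decomposition and the divisor partial-isometry squares to isolate the arithmetic mismatch;
\item keep the polar rank-one term and the inherited parent response in the same post-old-core primal form, using the identity $P^{\rm ex}_{k,m}a_m=\omega^{\rm ex}_{k,m}$, so that the inherited coupling appears as a negative metric energy rather than as an extra positive term;
\item bound the parent block below by $-\tfrac{439}{250}\chi_m-\tfrac52W_{k,m}$;
\item conclude that the Schur energy is at least a positive multiple of $\mathfrak g_k^{\rm M+}$, which is positive for $k\ge7$ by the outward-rounded computation together with the analytic tail.
\end{enumerate}
The common-cut theorem and the fold identity are needed so that the parent-ground and transverse budgets are charged against the infimum separately, leaving only a non-negative remainder.

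\emph{Main obstacle.} The hard step is the claim that the Schur complement is controlled, uniformly in $k$, by a scalar margin such as $\tfrac12+\log k-\dots$. This bound must hold for \emph{all} test vectors, including those concentrated near hypothetical off-line zeros. A finite computation plus a tail estimate can certify the sign of an explicit scalar. It cannot by itself show that the Schur minimizer's energy is bounded below by that scalar; that requires the lower bounds on the inherited pivot and on the cross-couplings to be genuinely operator inequalities, not diagonal or aligned-forcing estimates. I would first try to verify that each inequality in steps (ii)--(iv) is an operator inequality valid on the full form domain. I expect this to fail, most likely at the transverse cross terms between the new cell and the old core. Any inequality strong enough to close the induction would amount to a proof of the Riemann hypothesis, so I regard this step as the point where the argument would need independent scrutiny rather than as a routine estimate.
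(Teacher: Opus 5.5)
You reproduce the paper's architecture faithfully: the restricted odd Weil criterion, zero-extension reduction to arithmetic endpoints, a certified base, and Schur induction on the $A_k$-harmonic graph. But this is not a proof. The whole theorem sits in the induction step $H_k^*A_{k+1,-}H_k=T_k\succ0$ for all $k\ge7$, and your steps (i)--(iv) only name ingredients without establishing any operator inequality, as you concede. That is the genuine gap. It is also not a routine estimate. By the paper's own exact identities (Lemma~\ref{lem:green-nested-schur}, Theorem~\ref{thm:green-one-step}), once $A_{k,-}\succ0$ and $\mathsf D_k\succ0$, the step is equivalent to $\mathrm{RG}^*_k$: $\Pi_k>0$ and $2\Pi_k\sqrt{k+1}\,|G_k|^2<\delta_k^Q$, where $G_k=\langle q_{k+1},C_{k+1}^{-1}\mathfrak e_k\rangle$. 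Both $G_k$ and $\delta_k^Q$ are built from inverses of the whole core. Three corrections to the parts you do state:
\begin{enumerate}
\item The base is Certificate~\ref{cert:base-Y7}, a direct Arb/Rump interval verification of $C_{a_7,-}-4\ketbra{s^{\rm ph}}{s^{\rm ph}}\succ0$, not Gate~I. Theorem~\ref{thm:gate1} concerns the scalar hybrid model, is explicitly not identified with any operator pivot, and is used nowhere in the induction.
\item The zero-extension step needs an endpoint with $\tfrac12\log Y\ge a$, not $Y\ge a$.
\item The block adjoined at each step is the physical collar $\mathcal K_k$. In the Mellin coordinate at $Y=k+1$ this is $(\sqrt{k(k+1)},k+1)$ and its reflection, not a full unit cell.
\end{enumerate}

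You also cannot close the gap by citing Theorem~\ref{thm:final-master-harmonic}, and your diagnosis can be sharpened. By the proof of Lemma~\ref{lem:harmonic-graph}, $T_k$ is the Schur complement of $A_k$ in $A_{k+1}$, so its loss term is $B_{A,k}^*A_k^{-1}B_{A,k}$, which involves the inverse of the entire old operator. Yet every operator lower bound feeding $\mathfrak g_k^{\rm M+}$ (Theorem~\ref{thm:CMC}, Lemma~\ref{lem:exact-qkm}, Theorem~\ref{thm:MASTER-common-cut}, Lemma~\ref{lem:Jsharp-clean}) concerns shorts of the one-cell form $\mathfrak b$ on a single parent cell. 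The ``order reversal under inversion'' in Corollaries~\ref{cor:source-resolved-transverse} and~\ref{cor:exact-awgc-lift} would require an inequality $A_k\succeq\mathcal M_k$, with $\mathcal M_k$ the pull-back to the old space of the direct sum of those shorted parent-cell forms. This is never proved. It does not follow from $\mathfrak b\succeq L_0$ or from CMC, because $A_k$ also contains the sign-indefinite Cauchy interactions between different cells, $c_0(a)I$, $-K_{\gamma,a}$, the polar term, and the arithmetic translations among old cells. Summing over $L^2$-orthogonal parent cells is not summing over $A_k$-orthogonal ones.

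The polar term is handled only through the abstract fold block $\mathscr A_k^{\rm pre}$, whose coefficients are never identified with the operator; Corollary~\ref{cor:fold-forcing} in effect defines the forcing as $\mathscr A_k^{\rm pre}x_k$. Moreover, the proof of Lemma~\ref{lem:MASTER-P3-final} itself computes that the two-arm fold subtracts $2D_k$, while its conclusion subtracts only $D_k$. With $2D_k$, Lemma~\ref{lem:fold-remainder-clean} gives $Q_k-2D_k=E_k(1-2u_k+c_ku_k)$. This is negative on the pure folded ground profile $u_k=1$ whenever $c_k<1$, i.e.\ exactly when $\Pi_k^{\rm phys}>0$.

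So the failure is not confined to transverse cross terms. What is missing is a global operator comparison between $A_k$ and the local shorted models, plus an actual bound on the polar/Green term, and neither your plan nor the paper supplies it.
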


For orientation, the dependency chain used in the argument is
\begin{equation*}
\begin{gathered}
\text{exact localized operator + zero extension + Mellin unit cells + full-form transport}\\
\Downarrow\\
\boxed{\text{true affine routing + coherent multi-source common cut}}\\
\Downarrow\\
\boxed{\text{exact parent short + AWGC + rational mixed absorption}}\\
\Downarrow\\
\boxed{\text{exact inherited-response placement (MASTER-P2)}}\\
\Downarrow\\
\boxed{\text{aligned forcing retained and paid by }W_k}\\
\Downarrow\\
\boxed{\text{exact common-cut/fold intertwining (MASTER-P3)}}\\
\Downarrow\\
\boxed{\mathfrak g_k^{\rm M+}>0\quad(k\ge7)}\\
\Downarrow\\
\boxed{\substack{A_{7,-}\succ0\;\text{(certified base)},\quad\text{and}\quad\\
H_k^*A_{k+1,-}H_k\succ0\;\text{whenever }A_{k,-}\succ0}}\\
\Downarrow\\
\boxed{A_{N,-}\succ0\quad(N\ge7)}\\
\Downarrow\\
\boxed{\substack{A_{a,-}\succeq0\quad(a>0)\\
\text{odd-channel compatibility and closure}}}\\
\Downarrow\\
\boxed{\text{restricted odd Weil criterion}}\\
\Downarrow\\
\boxed{\mathrm{RH}}.
\end{gathered}
\end{equation*}

\section*{Prologue: the party and the map}

\begin{quest}
Herr \textbf{G.F.B.R.}, the Dungeon Master of this small allegory, placed a
folded map on the table.  It showed a narrow vertical country bounded by two
dark lines.  Across it ran a faint road marked \emph{Critical Strip}; farther
on were three gates, and beyond them a tower that the map did not pretend to
make any easier to reach.

Marc, the Arcane Rogue, reached first for the map.  Johnny Nash, the Analyst
Rogue, reached first for the rules.  Mr.~Fayman, the Monk, asked what the locks
were made of.  Pierre de Fermat, the Cleric, examined the margin of the
parchment.  Dama Noether, the Mage, turned the map once in her hands, as if a
symmetry might reveal which markings were structural and which were merely
coordinates.

``And the princess?'' Marc asked.

Herr G.F.B.R. pointed to the distant tower.  ``Clay-la.''

Johnny counted the seals.  ``Three gates.''

``Three,'' said the Dungeon Master.  ``The map can suggest a route.  It cannot
open a gate for us.''

Mr.~Fayman nodded.  ``Then we should understand the locks before celebrating
the road.''
\end{quest}

\noindent\emph{A note on the cast.}
The interludes are an affectionate fictional homage to styles of thought that
have shaped mathematics and mathematical physics.  The dialogue is invented;
it is not intended to attribute words, opinions or approval to any historical
figure.  Princess Clay-la is a playful emblem of the distant objective
associated with the Clay Millennium problem.  The party and the bestiary have
no mathematical role, and readers who prefer a completely austere presentation
may pass over every interlude without losing any part of the argument.

\paragraph{A note on the three gates.}
The D\&D terminology is used only as a mnemonic for three different kinds of
mathematical difficulty.  Gate~I is the \emph{Doppelganger}: a scalar
hybrid/shell object changes analytic appearance with scale, and one of those
appearances can resemble an operator pivot without being that pivot.  Gate~II
is the \emph{Beholder}: many routed channels share one underlying form, so
estimating them independently risks charging the same reserve more than once.
Gate~III is the \emph{Wizard}: the metaphor is deliberately modest here, since
no new coercive estimate is introduced; compression, closure and the restricted
Weil criterion perform the remaining work.  These names describe the logical
shape of the proof, not the mathematicians who read or judge it.

\section{Weil positivity and the localized operator}
\label{sec:framework}

We fix the Fourier/Mellin normalization once and for all.  Let $\mathcal D$ be
the standard smooth compactly supported logarithmic test core for Weil's
explicit formula.  The explicit-formula lineage and the operator-theoretic
viewpoint used here go back to Riemann, Guinand and Weil, with modern
formulations particularly relevant to the present localization in
\cite{Riemann1859,Guinand1948,Weil1952,Titchmarsh1986,Burnol2000,ConnesConsani2021}.
For a standard account of the Millennium problem itself see \cite{Bombieri2006}.
The Weil quadratic form is denoted by $\cW$.

\begin{theorem}[Weil positivity criterion]\label{thm:weil}
The Riemann hypothesis is equivalent to
\[
 \cW[f]\ge0\qquad(f\in\mathcal D).
\]
It is enough to prove the inequality first on a dense compactly supported core
and then pass to the closed form domain.
\end{theorem}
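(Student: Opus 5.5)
The plan is to follow Weil's original argument through the explicit formula, in two directions, and then add a separate density step.

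\emph{RH implies positivity.} Write the explicit formula in the Fourier/Mellin normalization fixed above. For $f\in\mathcal D$ set $g=f*\tilde f^{*}$, where $\tilde f(x)=\overline{f(-x)}$ in logarithmic variables. The zero side of $\cW[f]$ is then $\sum_\rho \hat g(\rho)$, summed over non-trivial zeros and symmetrically ordered. If every zero has the form $\rho=\tfrac12+i\gamma$ with $\gamma$ real, then $\hat g(\rho)=|\hat f(\tfrac12+i\gamma)|^2\ge0$. The prime, archimedean and polar terms are exactly what the explicit formula subtracts, so $\cW[f]$ equals this non-negative zero sum.

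\emph{Positivity implies RH.} Argue by contradiction. Suppose some zero $\rho_0=\beta_0+i\gamma_0$ has $\beta_0\ne\tfrac12$. By the functional equation and conjugation, the four points $\rho_0$, $1-\rho_0$, $\bar\rho_0$, $1-\bar\rho_0$ are all zeros. Choose $f\in\mathcal D$ whose Mellin transform concentrates near $\gamma_0$; a Paley--Wiener family with large support and a Gaussian-type profile will do. Then the off-line pair contributes
\[
\hat f(\rho_0)\overline{\hat f(1-\bar\rho_0)}+\text{conjugate}.
\]
After multiplying $f$ by a suitable phase this term can be made negative and dominant. All other zeros and the polar terms contribute lower order, because their $\hat g$ values are small away from $\gamma_0$ and the zero counting function is $O(T\log T)$. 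Weil's standard choice, as in Burnol's and Bombieri's expositions, is to take $\hat f$ as a product vanishing to high order at the polar points $0$ and $1$, which kills the pole contribution. This yields $\cW[f]<0$, a contradiction.

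\emph{Passage from a dense core to the closed form domain.} The quadratic form $\cW$ is continuous on each space $\mathcal D_a$ (support in $[-a,a]$) for the Sobolev-type topology that controls the archimedean principal-value term and the finite prime sum. So non-negativity on a dense core extends by continuity to the closure, and then to the union over $a$.

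The main obstacle is the negativity construction in the second direction. There one must control the infinite zero sum uniformly while keeping $f$ compactly supported, so $\hat f$ cannot decay faster than exponential type allows. I would first try the quantitative bound $|\hat f(s)|\ll e^{a|\Re s-1/2|}(1+|s|)^{-M}$ together with $N(T+1)-N(T)\ll\log T$. This keeps the tail sum below the off-line contribution, which in turn can be amplified by translating the frequency to $\gamma_0$ and taking $a$ large.
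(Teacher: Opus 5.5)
Your forward direction is fine. It is the same computation the paper carries out for real odd tests in the proof of Theorem~\ref{thm:weil-odd}. The paper quotes Theorem~\ref{thm:weil} as Weil's classical criterion without proof. Its proof of Theorem~\ref{thm:weil-odd} also covers the converse, since $\mathcal D^{\mathbb R}_{\rm odd}\subset\mathcal D$.

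\textbf{First gap: the sign.} Your converse breaks down here. $\cW$ is a Hermitian form, so $\cW[e^{i\phi}f]=\cW[f]$, and the product $\hat f(\rho_0)\overline{\hat f(1-\bar\rho_0)}$ is unchanged by any constant phase. Worse, for the profile you propose it has the wrong sign. Write $\hat f(\tfrac12+z)=\int f(y)e^{zy}\,dy$, $\sigma_0=\beta_0-\tfrac12$, and take $f(y)=h(y)e^{-i\gamma_0y}$ with $h\ge0$. Then $\hat f(\rho_0)=H(\sigma_0)$ and $\hat f(1-\bar\rho_0)=H(-\sigma_0)$, where $H(x)=\int h(y)e^{xy}\,dy>0$. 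So the pair $\{\rho_0,1-\bar\rho_0\}$ contributes $2m(\rho_0)H(\sigma_0)H(-\sigma_0)>0$. Negativity requires $\hat f$ to take opposite signs at the two points $\tfrac12\pm\sigma_0+i\gamma_0$, which needs an odd factor or an interpolation. The paper gets this by using real odd tests, for which every summand equals $-F(\lambda)^2$. Normalizing $F_M(\lambda_0)=1$ then makes the target quartet contribute exactly $-4m(\lambda_0)$.

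\textbf{Second gap: dominance.} Taking the support radius $a$ large does not isolate $\rho_0$. The bound $e^{a|\operatorname{Re}s-1/2|}$ you quote amplifies every off-line zero, and most strongly those farthest from the line. A zero $\rho_1$ at another height with $|\operatorname{Re}\rho_1-\tfrac12|>|\sigma_0|$ gains a factor $e^{2a(|\operatorname{Re}\rho_1-1/2|-|\sigma_0|)}$ relative to the target. No decay polynomial in the height or in $|\operatorname{Im}\rho_1-\gamma_0|$ can offset that as $a\to\infty$, so your tail estimate cannot keep such a zero below the target, and its sign is uncontrolled. The scheme works only if $\rho_0$ maximizes $|\operatorname{Re}\rho-\tfrac12|$ over all zeros, and such a zero need not exist.

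The paper's construction avoids amplification in the real direction altogether:
\begin{enumerate}
\item It normalizes $|\Psi(\lambda_0)|=1$ for a fixed real even bump $\psi$.
\item It picks $T$ with $|\Psi|\le q<1$ on $|\operatorname{Re}z|\le\tfrac12$, $|\operatorname{Im}z|\ge T$.
\item It annihilates the finitely many other zero orbits below height $T$ exactly with a polynomial $P_T$.
\item It lets $\Psi^M$ crush the tail: $\sum_{|\operatorname{Im}\lambda|\ge T}|F_M(\lambda)|^2\le C_Tq^{2(M-M_0)}$.
\end{enumerate}
The last bound holds uniformly, wherever the tail zeros sit in the strip.

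Two smaller points. There are no polar terms to kill on the zero side: the explicit formula has already absorbed them into $\cW$. Your continuity argument for passing from the dense core to the closed form domain is adequate at this level.
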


\begin{theorem}[Restricted odd Weil criterion]\label{thm:weil-odd}
Let $\mathcal D_{\rm odd}^{\mathbb R}$ denote the real-valued odd
$C_c^\infty(\mathbb R)$ logarithmic test core. Then
\begin{equation}
 \boxed{
 \mathrm{RH}
 \quad\Longleftrightarrow\quad
 \mathcal W[f]\ge0\quad(f\in\mathcal D_{\rm odd}^{\mathbb R}).}
 \label{eq:odd-weil-criterion}
\end{equation}
\end{theorem}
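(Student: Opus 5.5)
The forward implication is immediate from \Cref{thm:weil}. If RH holds, then $\cW[f]\ge0$ for every $f\in\mathcal D$. Since $\mathcal D_{\rm odd}^{\mathbb R}\subset\mathcal D$, the right-hand side of \eqref{eq:odd-weil-criterion} follows by restriction. All the content is in the converse, which I would prove by contraposition. Assuming a zero $\rho_0=\tfrac12+i\gamma_0$ with $\operatorname{Im}\gamma_0\ne0$, I would construct a real odd $f\in C_c^\infty(\mathbb R)$ with $\cW[f]<0$.

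\emph{Symmetries of the form on real odd $f$.} I would write the explicit formula in the spectral variable as
\[
\cW[f]=\sum_\rho \hat f(\gamma_\rho)\overline{\hat f(\bar\gamma_\rho)}-(\text{pole terms at }\gamma=\pm i/2),
\]
with $\hat f$ entire of exponential type by Paley--Wiener. For $f$ real one has $\overline{\hat f(\bar z)}=\hat f(-z)$. For $f$ odd one has $\hat f(-z)=-\hat f(z)$. Hence each summand equals $-\hat f(\gamma_\rho)^2$, and the pole contribution is a fixed multiple of $\hat f(i/2)^2$. The zeros come in quadruples $\pm\gamma_0,\pm\bar\gamma_0$. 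Such a quadruple contributes
\[
-2\bigl(\hat f(\gamma_0)^2+\overline{\hat f(\gamma_0)}^{\,2}\bigr)=-4\,\Rea\bigl(\hat f(\gamma_0)^2\bigr).
\]
Unlike the real zeros, this is not sign-definite.

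\emph{Localisation and phase choice.} Next I would take $\hat f(z)=z\,P(z)\,\hat\phi_\varepsilon(z-t)$, suitably symmetrised to be odd and real. Here $\hat\phi_\varepsilon$ is a Paley--Wiener bump of large exponential type concentrated near $\Rea\gamma_0$, and $P$ is an even real polynomial. The polynomial $P$ is chosen to vanish at $\pm i/2$, which kills the pole terms, and at the finitely many other zeros with $|\Rea\gamma-\Rea\gamma_0|\le R$ off the line, so that only the quadruple of $\gamma_0$ survives among the nonreal zeros. I would then multiply by a unimodular phase factor (of the form $\cos(\alpha z)$-type modulation, which is compatible with oddness), chosen so that $\Rea\,\hat f(\gamma_0)^2>0$ with a size bounded below. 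The contribution of the real zeros is $-\sum|\hat f(\gamma)|^2\le0$ and is harmless for negativity; in fact it only helps.

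\emph{Main obstacle.} The hard step is the tail. The remaining nonreal zeros with $|\Rea\gamma-\Rea\gamma_0|>R$ must be shown to contribute $o\bigl(|\hat f(\gamma_0)|^2\bigr)$. This needs the decay of $\hat\phi_\varepsilon$ in the complex strip $|\operatorname{Im}z|\le\tfrac12$, uniformly against the polynomial growth of $P$. It also needs the zero-counting bound $N(T+1)-N(T)=O(\log T)$. I would first try a Gaussian-type Paley--Wiener majorant, then let $R\to\infty$ before $\varepsilon\to0$. Finally I would check that the odd symmetrisation does not create cancellation at $\gamma_0$ itself; this is guaranteed by $\Rea\gamma_0\ne0$ after the modulation. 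Density of the core then completes the argument, as permitted in \Cref{thm:weil}.
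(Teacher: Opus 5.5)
The forward implication is fine. Restricting from the general Weil criterion is a valid alternative to the paper's direct computation that each summand equals $-F(\lambda)^2=|F(\lambda)|^2$ under RH.

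\textbf{The converse has a genuine gap: the sign of the on-line terms.} In your own formula, a zero on the critical line has $\gamma_\rho\in\mathbb R$. Hence $\bar\gamma_\rho=\gamma_\rho$, and its summand is $\hat f(\gamma_\rho)\overline{\hat f(\gamma_\rho)}=|\hat f(\gamma_\rho)|^2\ge0$. Equivalently, it is $-\hat f(\gamma_\rho)^2$ with $\hat f(\gamma_\rho)\in i\mathbb R$ for real odd $f$. This positivity is exactly what makes the forward direction true. Your claimed contribution $-\sum|\hat f(\gamma)|^2$ would give $\mathcal W\le0$ under RH, contradicting the direction you just proved.

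So the on-line zeros work \emph{against} negativity. Your $P$ kills only the off-line zeros in the window $|\Rea\gamma-\Rea\gamma_0|\le R$. The on-line zeros in that window, which is exactly where your bump is largest, survive with positive contributions. An on-line zero at height close to $\Rea\gamma_0$ can already contribute as much as $|\hat f(\gamma_0)|^2$. Nothing in your argument makes the target term $-4\Rea(\hat f(\gamma_0)^2)$ dominate them. The paper's $P_T$ vanishes on \emph{every} symmetry orbit of zeros below the cut-off except the target orbit. Its tail estimate then covers all zeros above the cut-off, on or off the line.

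\textbf{The tail step, once repaired, is circular as stated.} After this repair, your tail step must control every zero outside the window. The plan ``localize the bump, then let $R\to\infty$ before $\varepsilon\to0$'' does not do this. The degree of $P$ grows with the window, so the ratio $|P(\gamma)/P(\gamma_0)|$ on the tail grows with $R$. Meanwhile a Paley--Wiener function cannot decay like a Gaussian on the real line, since $\int\log^+|\hat\phi(x)|^{-1}\,dx/(1+x^2)$ must be finite.

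The paper breaks this circularity by fixing the window \emph{before} the polynomial, in the centered variable $z=\rho-\tfrac12$:
\begin{enumerate}
\item Take one real even bump with Laplace transform $\Psi$ normalized by $|\Psi(\lambda_0)|=1$.
\item Choose $T$ so that $|\Psi|\le q<1$ on $|\Rea z|\le\tfrac12$, $|\operatorname{Im}z|\ge T$.
\item Kill the finitely many zeros below $T$ with $P_T$.
\item Use $F_M=zP_TR_M\Psi^M$. Its tail decays like $q^{2M}$ while the target value stays fixed.
\end{enumerate}
The real even quadratic $R_M=a_M+b_Mz^2$ enforces $F_M(\lambda_0)=1$ exactly, so the quartet contributes $-4m(\lambda_0)$. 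No $\cos(\alpha z)$ modulation is needed, and the coefficients stay bounded because $|\Psi(\lambda_0)|=1$.

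\textbf{Two smaller points.}
\begin{enumerate}
\item The pole terms are not part of the zero-sum Weil form; they sit on the other side of the explicit formula. Killing them is harmless.
\item $\Rea\gamma_0\ne0$ is not automatic. A real zero $\rho_0$ gives a degenerate pair, which must either be treated or excluded. Treating it is easy: $\hat f(\gamma_0)\in\mathbb R$, so the pair contributes $-2\hat f(\gamma_0)^2\le0$. Alternatively, use the classical fact that $\zeta$ has no real zeros in $(0,1)$.
\end{enumerate}
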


\begin{proof}
Write a multiplicative test as $g$ and put
$f(y)=e^{y/2}g(e^y)$.  If
\[
 G(s)=\int_0^\infty g(x)x^{s-1}\,dx,
 \qquad
 F(z):=G\!\left(\frac12+z\right)
      =\int_{\mathbb R}f(y)e^{zy}\,dy,
\]
then the quadratic Weil test $h=g*\widetilde{\bar g}$ has centered spectral
factor
\begin{equation}
 \widehat h\!\left(\frac12+z\right)
 =F(z)\,\overline{F(-\bar z)}.
 \label{eq:centered-weil-factor}
\end{equation}
Thus, writing $\lambda=\rho-1/2$ for the centered non-trivial zeros, the
spectral side of the Weil form is
\begin{equation}
 \mathcal W[f]
 =\sum_{\lambda}F(\lambda)\overline{F(-\bar\lambda)},
 \label{eq:centered-zero-sum}
\end{equation}
with multiplicities and the usual symmetric interpretation.  For compactly
supported smooth $f$ the summand decays faster than any power in the imaginary
direction, while the zero counting function is $O(T\log T)$, so the sums used
below are absolutely convergent.

If $f$ is real and odd, then
\begin{equation}
 F(-z)=-F(z),\qquad F(\bar z)=\overline{F(z)},
 \label{eq:odd-transform-symmetry}
\end{equation}
and hence every summand in \eqref{eq:centered-zero-sum} equals
$-F(\lambda)^2$.  Under RH every $\lambda$ is purely imaginary; then
$F(\lambda)$ is purely imaginary and
$-F(\lambda)^2=|F(\lambda)|^2\ge0$.  This proves the forward implication.

Conversely suppose that a centered zero $\lambda_0$ is not fixed by the
involution $\lambda\mapsto-\bar\lambda$, equivalently
$\operatorname{Re}\lambda_0\ne0$.  We construct a real odd test which isolates
its symmetry orbit.  Choose a real even $\psi\in C_c^\infty(\mathbb R)$ with
Laplace transform
\[
 \Psi(z)=\int_{\mathbb R}\psi(y)e^{zy}\,dy
\]
satisfying $\Psi(\lambda_0)\ne0$, and scale $\psi$ so that
$|\Psi(\lambda_0)|=1$. Uniform rapid decay of $\Psi$ on the closed strip
$|\operatorname{Re}z|\le1/2$ gives a height $T>|\operatorname{Im}\lambda_0|$
and a number $q<1$ such that
\begin{equation}
 \sup_{|\operatorname{Re}z|\le1/2,\ |\operatorname{Im}z|\ge T}
 |\Psi(z)|\le q.
 \label{eq:PW-tail-q}
\end{equation}
There are only finitely many centered zeros in $|\operatorname{Im}z|<T$.
Let $P_T$ be a real even polynomial which vanishes, with the required
multiplicities, on all their symmetry orbits except the orbit of $\lambda_0$,
and normalize it so that $P_T(\lambda_0)\ne0$.

For $M\ge1$ choose a real even polynomial of degree at most two,
$R_M(z)=a_M+b_Mz^2$, so that
\begin{equation}
 F_M(z):=zP_T(z)R_M(z)\Psi(z)^M,
 \qquad F_M(\lambda_0)=1.
 \label{eq:odd-interpolant}
\end{equation}
Such real $a_M,b_M$ exist because, when $\lambda_0^2\notin\mathbb R$, the
real-linear map $(a,b)\mapsto a+b\lambda_0^2$ is onto $\mathbb C$; in the
remaining off-axis real case all quantities are real and a constant $R_M$
suffices.  Since $|\Psi(\lambda_0)|=1$, the coefficients of $R_M$ remain
bounded.  The function $F_M$ is the bilateral Laplace transform of a real odd
compactly supported smooth function: powers of $\Psi$ correspond to
convolution powers of the real even bump, multiplication by a real even
polynomial corresponds to an even differential operator, and the final factor
$z$ to one derivative.

The polynomial $P_T$ kills every non-target zero below height $T$.  On the
remaining zeros, choose a fixed $M_0$ large enough that the rapid decay of
$\Psi^{M_0}$ dominates the fixed polynomial factor and the
$O(T\log T)$ zero density.  From \eqref{eq:PW-tail-q}, for $M\ge M_0$,
\begin{equation}
 \sum_{|\operatorname{Im}\lambda|\ge T}|F_M(\lambda)|^2
 \le C_T q^{2(M-M_0)}\longrightarrow0.
 \label{eq:odd-tail-localization}
\end{equation}
On the target quartet
$\{\lambda_0,\bar\lambda_0,-\lambda_0,-\bar\lambda_0\}$,
\eqref{eq:odd-transform-symmetry} and $F_M(\lambda_0)=1$ give the values
$1,1,-1,-1$.  Its contribution to \eqref{eq:centered-zero-sum} is therefore
$-4m(\lambda_0)$; if the orbit degenerates to a real pair, it is
$-2m(\lambda_0)$.  Equation \eqref{eq:odd-tail-localization} shows that all
other contributions tend to zero. Hence $\mathcal W[f_M]<0$ for all
sufficiently large $M$, contradicting positivity on the real odd test core.
Therefore every centered zero satisfies $\lambda=-\bar\lambda$, i.e.
$\operatorname{Re}\rho=1/2$.
\end{proof}

\subsection{Physical and rescaled coordinates}

For $a>0$ let
\[
 \cH_a=L^2_{\rm odd}(-a,a)
\]
and let $A_a$ denote the localized odd Weil operator in the \emph{physical}
logarithmic coordinate $y\in(-a,a)$.  Nesting in the support parameter is
always understood in these physical spaces.  The unitary rescaling
\begin{equation}
 (R_af)(x)=\sqrt a\,f(ax),\qquad -1<x<1,
 \label{eq:rescaling-unitary}
\end{equation}
will be used only for local estimates and computer-assisted certificates.
Write
\[
 \widetilde A_a=R_aA_aR_a^*.
\]

The physical polar profile is independent of the support radius:
\begin{equation}
 s^{\rm ph}(y)=\sinh\!\left(\frac y2\right).
 \label{eq:polar-physical}
\end{equation}
Consequently its restriction is compatible with zero extension, and after
\eqref{eq:rescaling-unitary} it becomes
\begin{equation}
 s_a(x)=\sqrt a\,\sinh\!\left(\frac{ax}{2}\right).
 \label{eq:polar-vector}
\end{equation}
Thus, with $C_a$ denoting the physical polar-free core and
$\widetilde C_a=R_aC_aR_a^*$,
\begin{equation}
 A_a=C_a-2\ketbra{s^{\rm ph}|_{(-a,a)}}{s^{\rm ph}|_{(-a,a)}},
 \qquad
 \widetilde A_a=\widetilde C_a-2\ketbra{s_a}{s_a}.
 \label{eq:core-polar}
\end{equation}
This distinction is essential: the block recursion in \cref{sec:root-lift}
is performed before rescaling, so that $s_{k+1}=s_k\oplus t_k$ is a literal
restriction identity rather than an identification between different
rescaled spaces.

On $(-1,1)$ the local archimedean part is organized by
\begin{equation}
 \mathfrak h[f]
 =\frac14\iint_{-1}^{1}
 \frac{|f(x)-f(y)|^2}{|x-y|}\,dx\,dy,
 \label{eq:harmonic-form-framework}
\end{equation}
the endpoint potential
\begin{equation}
 V(x)=-\frac12\log(1-x^2),
 \label{eq:endpoint-potential}
\end{equation}
and the regular gamma kernel
\begin{equation}
 \rho(z)=\frac{e^{-z/2}}{1-e^{-2z}}-\frac1{2z}.
 \label{eq:rho-framework}
\end{equation}
The scalar normalization contains
\begin{equation}
 c_0(a)=-\log a-\log(2\pi)-\gamma.
 \label{eq:c0-framework}
\end{equation}
The arithmetic term is the finite sum of partial logarithmic translations
generated by prime powers $n<e^{2a}$; its exact Mellin-cell representation is
derived in \cref{sec:routing}.

\subsection{Parity and localization}

Let $J_a$ be the endpoint involution induced in Mellin coordinates by
$t\mapsto e^{2a}/t$.  The localized explicit-formula form commutes with $J_a$
and therefore splits into symmetric and antisymmetric channels.  Under the
physical logarithmic coordinate the antisymmetric channel is $\cH_a$ and
contains the negative polar rank-one term.  By Theorem~\ref{thm:weil-odd},
this antisymmetric channel is already a complete testing channel for RH; no
positivity statement for the complementary compression is used in the proof.

\subsection{Exact zero-extension compression}

For $0<a<b$ let
\[
 E_{a,b}:\cH_a\longrightarrow\cH_b
\]
be extension by zero.  The compression identity used later is not merely a
formal compatibility statement; its only singular contribution can be checked
in closed form.

\begin{lemma}[Exterior-harmonic cancellation]\label{lem:zero-extension-proof}
For $f\in C_c^\infty(-a,a)$, let
\[
 \mathfrak H_a[f]
 =\frac14\iint_{(-a,a)^2}
 \frac{|f(x)-f(y)|^2}{|x-y|}\,dx\,dy
\]
and let
\[
 V_a^{\rm ph}(x)=-\frac12\log(a^2-x^2),\qquad |x|<a.
\]
Then
\begin{equation}
 \mathfrak H_b[E_{a,b}f]-\mathfrak H_a[f]
 =\frac12\int_{-a}^a |f(x)|^2
   \log\frac{b^2-x^2}{a^2-x^2}\,dx.
 \label{eq:exterior-harmonic}
\end{equation}
\begin{equation}
 \int_{-a}^a\bigl(V_b^{\rm ph}-V_a^{\rm ph}\bigr)|f|^2
 =-\frac12\int_{-a}^a |f(x)|^2
   \log\frac{b^2-x^2}{a^2-x^2}\,dx.
 \label{eq:endpoint-cancel}
\end{equation}
Hence the two changes cancel exactly.
\end{lemma}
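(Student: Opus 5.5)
The plan is a direct computation: split the doubled domain $(-b,b)^2$ according to whether each variable lies in $(-a,a)$ or in the exterior shell $a<|y|<b$. Since $E_{a,b}f$ vanishes on the shell, the exterior--exterior block contributes nothing. The interior--interior block reproduces $\mathfrak H_a[f]$ exactly. So $\mathfrak H_b[E_{a,b}f]-\mathfrak H_a[f]$ equals the two mixed blocks. These are equal by the symmetry $(x,y)\mapsto(y,x)$, which gives
\[
 \mathfrak H_b[E_{a,b}f]-\mathfrak H_a[f]
 =\frac12\int_{-a}^a |f(x)|^2\Bigl(\int_{a<|y|<b}\frac{dy}{|x-y|}\Bigr)dx ,
\]
because $|f(x)-0|^2=|f(x)|^2$ on the mixed region.

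Next I evaluate the inner integral in closed form for fixed $|x|<a$. On $(a,b)$ we have $|x-y|=y-x$, so the integral is $\log\frac{b-x}{a-x}$. On $(-b,-a)$ we have $|x-y|=x-y$, so it is $\log\frac{b+x}{a+x}$. Adding the two logarithms gives $\log\frac{b^2-x^2}{a^2-x^2}$, which is \eqref{eq:exterior-harmonic}. Identity \eqref{eq:endpoint-cancel} is immediate from the definition of $V^{\rm ph}$:
\[
 V_b^{\rm ph}(x)-V_a^{\rm ph}(x)=-\tfrac12\log(b^2-x^2)+\tfrac12\log(a^2-x^2)=-\tfrac12\log\frac{b^2-x^2}{a^2-x^2}.
\]
Integrating this against $|f|^2$ and adding the result to \eqref{eq:exterior-harmonic} gives the exact cancellation.

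The only point needing care, and thus the main (mild) obstacle, is justifying the splitting and the use of Fubini--Tonelli. Since $f\in C_c^\infty(-a,a)$, we have $|f(x)-f(y)|^2\le \|f'\|_\infty^2|x-y|^2$. Hence the integrand of $\mathfrak H_b$ is bounded near the diagonal, and every block is absolutely convergent. Moreover, $\operatorname{supp}f\subset[-a+\delta,a-\delta]$ for some $\delta>0$. Therefore on the mixed blocks $|x-y|\ge\delta$, the logarithm $\log\frac{b^2-x^2}{a^2-x^2}$ is bounded on the support of $f$, and all integrals are finite. No endpoint singularity is encountered. The identities then extend by density to the closed form domain if needed later.
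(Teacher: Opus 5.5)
Your proposal is correct and follows the paper's proof essentially step for step: you discard the exterior--exterior block, use the $(x,y)\mapsto(y,x)$ symmetry to reduce to one mixed block with prefactor $\tfrac12$, evaluate the two strip integrals as $\log\frac{b-x}{a-x}$ and $\log\frac{b+x}{a+x}$, and read off \eqref{eq:endpoint-cancel} from the definition of $V^{\rm ph}$. Your Fubini--Tonelli remarks (the integrand is bounded near the diagonal, and $|x-y|\ge\delta$ on the mixed region) supply routine convergence details that the paper leaves implicit.
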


\begin{proof}
Only the two exterior strips contribute to the difference of harmonic forms.
Using the symmetry of the double integral,
\[
 \mathfrak H_b[Ef]-\mathfrak H_a[f]
 =\frac12\int_{-a}^a |f(x)|^2
 \left(\int_a^b\frac{dy}{y-x}
       +\int_{-b}^{-a}\frac{dy}{x-y}\right)dx.
\]
The bracket is
\[
 \log\frac{(b-x)(b+x)}{(a-x)(a+x)}
 =\log\frac{b^2-x^2}{a^2-x^2},
\]
which proves \eqref{eq:exterior-harmonic};
\eqref{eq:endpoint-cancel} follows immediately from the definition of
$V_a^{\rm ph}$.
\end{proof}

The regular gamma kernel and the polar functional are unchanged under zero
extension because both arguments remain in the old support.  For the
arithmetic translations, all branches already active at radius $a$ are
unchanged; a branch newly activated between $a$ and $b$ has translation length
larger than the old diameter and therefore has zero old--old overlap.  Thus the
full localized odd form satisfies
\begin{equation}
 \boxed{E_{a,b}^*A_bE_{a,b}=A_a.}
 \label{eq:zero-extension}
\end{equation}

The same calculation is parity-blind before the odd rank-one compression.  If
$\mathscr C_a$ denotes the ambient polar-free localized core on
$L^2(-a,a)$, then extension by zero also satisfies
\begin{equation}
 \boxed{E_{a,b}^*\mathscr C_bE_{a,b}=\mathscr C_a.}
 \label{eq:ambient-zero-extension}
\end{equation}
Consequently the identity remains valid after compression by either spectral
projection of $J_a$.

\begin{lemma}[Cofinal-support reduction]\label{lem:cofinal}
If $A_{a_j}\succeq0$ on an unbounded sequence $a_j\to\infty$, then
$A_a\succeq0$ for every $a>0$.
\end{lemma}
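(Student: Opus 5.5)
The plan is to derive Lemma~\ref{lem:cofinal} directly from the exact compression identity \eqref{eq:zero-extension}. Fix $a>0$. Since $a_j\to\infty$, we can choose an index $j$ with $a_j>a$. If $a_j=a$ for some $j$ there is nothing to prove, so we assume $a<a_j$. Then the zero-extension map $E:=E_{a,a_j}:\cH_a\to\cH_{a_j}$ is defined, and \eqref{eq:zero-extension} gives $A_a=E^*A_{a_j}E$.

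The argument proceeds at the level of quadratic forms, since the operators are unbounded, or at least defined through closed forms. For $f$ in the smooth odd core $C_c^\infty(-a,a)\cap\cH_a$, the function $Ef$ lies in the corresponding core of $\cH_{a_j}$. Here we use that $f$ is compactly supported strictly inside $(-a,a)$, so its zero extension is still smooth and compactly supported inside $(-a_j,a_j)$, and it is still odd. Hence
\[
 \langle f,A_af\rangle=\langle Ef,A_{a_j}Ef\rangle\ge0,
\]
where the inequality is the hypothesis $A_{a_j}\succeq0$ read as a form inequality on the core. Lemma~\ref{lem:zero-extension-proof} is exactly what makes this identity valid termwise: the harmonic and endpoint pieces cancel, while the gamma-kernel, polar and arithmetic pieces are unchanged on the old support.

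To pass from the core to the full form domain of $A_a$, we use the closure statement already built into Theorem~\ref{thm:weil}: the core is a form core, and non-negativity of a closable form on a core persists to its closure by lower semicontinuity.

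The only real obstacle is domain bookkeeping, namely making sure \eqref{eq:zero-extension} is meant as an identity of forms on a common core rather than of operators. I would state this explicitly. With that settled, the lemma needs only one index $j$ with $a_j\ge a$; the whole sequence is never used.
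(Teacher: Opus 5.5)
Your proposal is correct and is essentially the paper's proof: choose $j$ with $a_j>a$ and use the compression identity \eqref{eq:zero-extension} to get $\langle f,A_af\rangle=\langle E_{a,a_j}f,A_{a_j}E_{a,a_j}f\rangle\ge0$. Your added form-core bookkeeping (zero extension preserves the odd $C_c^\infty$ core, then non-negativity passes to the closure) is a reasonable expansion of what the paper states in one line.
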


\begin{proof}
Choose $j$ with $a<a_j$. For $f\in\cH_a$,
\[
 \langle f,A_af\rangle
 =\langle E_{a,a_j}f,A_{a_j}E_{a,a_j}f\rangle\ge0
\]
by \eqref{eq:zero-extension}.
\end{proof}

For reference, the arithmetic endpoints used by the dyadic diagnostics are
\begin{equation}
 a_k=\frac12\log(2^k),\qquad k\ge2.
 \label{eq:dyadic-endpoints}
\end{equation}

\begin{interlude}
The first gate carried the mark of a Doppelganger.  Near one wall the problem
looked like a lattice estimate; farther out, like a tail bound; at another
scale, like a convexity question.  One of those faces resembled an operator
pivot closely enough to deserve particular care, but resemblance was not an
identity.

Marc tested the obvious lock and found that it belonged to the wrong face.
Johnny Nash compared the two ledgers.  Dama Noether watched the changes of
coordinates for a while.

``Do not chase the face,'' she said.  ``Keep the sign that survives the
change.''

Pierre de Fermat made room for that sign in the margin, and the party moved on.
\end{interlude}

\section{Gate I: the cellular covariance theorem}
\label{sec:gate1}

\subsection{The exact hybrid density}

The atomic kernel can be defined without importing any hidden theta-side
notation. Put
\begin{equation}
 d_\lambda(t)=\exp\bigl(t/2-\lambda e^{2t}\bigr),
 \qquad
 \boxed{M_\lambda(t)=-d_\lambda'''(t)}.
 \label{eq:atomic-d}
\end{equation}
Writing \(x=\lambda e^{2t}\), direct differentiation gives
\begin{equation}
 M_\lambda(t)
 =\frac{e^{t/2-x}}8
 \bigl(64x^3-240x^2+124x-1\bigr).
 \label{eq:M0-explicit}
\end{equation}
More generally, for the derivatives used by the interval certificate,
\begin{equation}
M_{\pi y^2}^{(k)}(t)
=
\frac{e^{t/2}}8e^{-x}P_k(x),
\qquad
x=\pi y^2e^{2t},
\qquad 0\le k\le3,
\label{eq:Mder}
\end{equation}
where
\[
 P_0(x)=64x^3-240x^2+124x-1,
 \qquad
 P_{k+1}(x)=\left(\frac12-2x\right)P_k(x)+2xP_k'(x).
\]
Thus \(P_k\) has degree \(k+3\). The coefficient-sum majorants used in the
tail proof are
\[
(C_0,C_1,C_2,C_3)=(430,2665,19000,152000),
\]
so that
\[
|P_k(x)|\le C_kx^{k+3}\qquad(x\ge1).
\]
The exact coefficient sum of \(P_0\) is \(429<430\), giving an immediate
normalization check on the first certificate constant.

For \(Y\ge2\), define
\begin{equation}
m_Y(t)
=
2\sum_{n=1}^{\lfloor Y\rfloor}M_{\pi n^2}(t)
+
2(Y-\lfloor Y\rfloor)M_{\pi Y^2}(t)
+
2\int_Y^\infty M_{\pi y^2}(t)\,dy.
\label{eq:hybrid}
\end{equation}
The fractional atom interpolates across an arithmetic endpoint while the
continuous tail keeps the normalization compatible with the endpoint parameter.

For \(d>0\), put
\begin{equation}
Z_{Y,d}(s)=\sum_{j\ge0}m_Y(s+jd),
\qquad
p_i(s)=\frac{m_Y(s+id)}{Z_{Y,d}(s)},
\label{eq:pi}
\end{equation}
and
\begin{equation}
\mu_{Y,d}(s)=\sum_{i\ge0}i\,p_i(s).
\label{eq:mu}
\end{equation}

\subsection{Global strict convexity}

\begin{theorem}[Global cellular covariance theorem]
\label{thm:gate1}
For every
\[
Y\ge2,\qquad d>0,\qquad 0\le s\le d/2,
\]
one has
\begin{equation}
\boxed{\mu_{Y,d}''(s)>0.}
\label{eq:muconvex}
\end{equation}
\end{theorem}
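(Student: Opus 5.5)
Write $q_j(s)=m_Y(s+jd)$ and $Z=\sum_j q_j$.  Then $\mu=\sum_j j q_j/Z$, and differentiating in $s$ turns the derivatives of $\mu$ into cumulants of the index $i$ under the weights $p_i$.  Put $\ell_j=(\log q_j)'=m_Y'/m_Y$ evaluated at $s+jd$.  This requires $m_Y>0$ on $[0,\infty)$, which I would establish first from \eqref{eq:M0-explicit} and the positivity of the Jacobi-type theta density.  Then
\[
\mu'=\operatorname{Cov}_p(i,\ell_i),\qquad
\mu''=\operatorname{Cov}_p(i,\ell_i')+\operatorname{Cov}_p\bigl(i,\ell_i^2\bigr)-2\operatorname{Cov}_p(i,\ell_i)\,\mathbb E_p[\ell_i].
\]
The second formula is the third mixed cumulant identity, obtained by differentiating $p_i'=p_i(\ell_i-\mathbb E\ell)$ twice.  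The plan is to show that this expression is positive by splitting it into a dominant term from the first two cells and a small remainder.

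\textbf{Step 1 (structure of $m_Y$).}  On the relevant half-line, $m_Y$ is dominated by its first atom $2M_\pi$ when $t$ is moderately large.  In that regime $\log m_Y\approx t/2-\pi e^{2t}+3\log(\pi e^{2t})$, so $\ell\approx -2\pi e^{2t}$ is strongly decreasing and $\ell'\approx-4\pi e^{2t}$.  The weights $p_i$ are therefore super-exponentially concentrated on $i=0$, with $p_1/p_0=q_1/q_0$ tiny.  Expanding in the ratios $r_j=q_j/q_0$ gives
\[
\mu''=r_1''+O(\textstyle\sum_{j\ge1} j\,|r_j''|\,r_1+\dots),
\]
and $r_1''=r_1\bigl((\ell_1-\ell_0)^2+(\ell_1'-\ell_0')\bigr)$.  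Since $\ell_1-\ell_0\approx-2\pi e^{2s}(e^{2d}-1)$ is large in modulus, the square dominates the possibly negative $\ell'$ difference.  This yields $\mu''>0$ outside a compact region of $(s,d)$ with $s\le d/2$, through explicit inequalities using the majorants $|P_k(x)|\le C_k x^{k+3}$ and the analytic control of the tail integral in \eqref{eq:hybrid}.

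\textbf{Step 2 (small $d$ and the $Y$-dependence).}  As $d\to0$ with $s\le d/2$, the sum $Z$ becomes a Riemann sum, $Z\approx d^{-1}\int_s^\infty m_Y$, and $\mu\approx d^{-1}\times$(mean of $t-s$ under $m_Y$ on $[s,\infty)$).  Convexity in $s$ then reduces, via Euler--Maclaurin with the $k\le3$ derivatives from \eqref{eq:Mder}, to a strict inequality for the continuous density.  That inequality is a log-concavity-type statement for the tail of $m_Y$ near $t=0$, plus a correction that is uniform in $d$.

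For the dependence on $Y$, note that \eqref{eq:hybrid} depends on $Y$ only through atoms with $n\ge2$ and through the tail.  Each of these is of relative size $e^{-3\pi e^{2t}}$ or smaller compared with the first atom.  So $Y$ enters as a uniformly small, monotone perturbation, and it suffices to treat $Y\in[2,\infty]$ as a compact parameter with a bounded Lipschitz constant.

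\textbf{Step 3 (the compact middle region) — main obstacle.}  What remains is a compact box of $(s,d,Y)$ not covered by Steps 1–2, where no single cell dominates and the sign of $\mu''$ genuinely depends on cancellations among the three covariance terms.  There I would run an outward-rounded interval-arithmetic certificate.  I would subdivide the box, and on each sub-box enclose $m_Y^{(k)}$ for $k\le3$ via \eqref{eq:Mder}, truncating the $j$-sum and the $n$-sum with the tail constants $C_k$.  I would then evaluate the cumulant expression and check a positive lower bound.

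The delicate points are twofold.  First, $\mu''$ could become very small near the diagonal $s=d/2$, since the symmetry $s\mapsto d-s$ of the cell offsets is only approximate.  Second, the matching of the asymptotic bounds of Steps 1–2 with the box boundaries has to be quantitatively tight.  I expect the proof to be only as strong as that certificate.
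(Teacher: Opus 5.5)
Your cumulant identity for $\mu''$ is correct, and your plan is essentially the paper's proof, which certifies $q=d\,\mu_{Y,d}''$ by the same computer-assisted split: sixth-order Euler--Maclaurin with rigorous remainders for $d\le1/125$, directed-rounding interval boxes for $1/125\le d\le1/2$, a first-cell large-$d$ certificate for $d\ge1/2$, and a compact range $2\le Y\le10$ joined to an analytic perturbation bound for $Y\ge10$. Your size heuristics near $t=0$ need correcting, however. The largest root of $P_0$ is $x\approx3.133$, just below $\pi$; this, not positivity of theta, is why every atom is positive for $t\ge0$. In fact $2\sum_{n\ge1}M_{\pi n^2}(0)=\tfrac18$ by the theta functional equation, while $m_Y$ peaks near $5$ at $t\approx0.24$. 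So at $t=0$ the $n=2$ atom exceeds the $n=1$ atom, and the $Y$-dependent terms reach about $6\%$ of $m_Y(0)$ rather than being $e^{-3\pi}$-small. Also, at $(s,d)=(0,\tfrac12)$ one has $m_Y(d)/m_Y(0)\approx11$, so the weights sit on $i=1$ rather than $i=0$. Hence your Step~1 can only take over for $d$ noticeably larger than $1/2$ (or must be replaced by boxes, as in the paper), and the $Y$-direction must genuinely be subdivided.
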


\begin{proof}
Let
\[
q(Y,d,s)=d\,\mu_{Y,d}''(s).
\]
The non-compact parameter domain is divided into five regions.

\smallskip
\noindent\emph{I. \(0<d\le1/125\).}
A sixth-order Euler--Maclaurin expansion, with rigorous remainder control of the
validated high-precision type \cite{Johansson2015}, is applied to the lattice sums in
\eqref{eq:pi}--\eqref{eq:mu}. Analytic remainder bounds are obtained from
\eqref{eq:Mder}. Directed outward rounding gives
\[
q>0.5515197640896556.
\]

\smallskip
\noindent\emph{II. \(1/125\le d\le1/8\).}
Direct interval evaluation gives
\[
q>2.363877686513775.
\]

\smallskip
\noindent\emph{III. \(1/8\le d\le0.315\).}
Direct interval evaluation gives
\[
q>3.93328637824701.
\]

\smallskip
\noindent\emph{IV. \(0.315\le d\le1/2\).}
The certified lower bound is
\[
q>1.418809233872728.
\]

\smallskip
\noindent\emph{V. \(d\ge1/2\).}
The first-cell large-$d$ certificate proves \(\mu_{Y,d}''(s)>0\) directly
on this region (its internal auxiliary quantity satisfies the stronger bound
\(B>9\)).

All finite boxes use the exact density \eqref{eq:hybrid}, 192-bit MPFR
arithmetic with directed outward rounding \cite{FousseEtAl2007}. The compact calculation covers
\(2\le Y\le10\). For \(Y\ge10\), the analytic perturbation of \(q\) is less
than \(2\times10^{-49}\), far below every positive margin displayed above.
\end{proof}

\subsection{Analytic tails}

\begin{lemma}[Uniform derivative tail]
\label{lem:tail}
Let \(0\le k\le3\), \(p=k+3\), and \(A=\pi e^{2t}\). For \(t\ge1.8\) and
\(Y\ge2\),
\begin{equation}
|m_Y^{(k)}(t)|
\le
F_k(t):=
\frac{C_k}{2}e^{t/2}A^pe^{-A}.
\label{eq:Fk}
\end{equation}
For fixed \(t\), the map \(Y\mapsto m_Y^{(k)}(t)\) is continuous and locally
absolutely continuous on \([2,\infty)\), and it is \(C^1\) on every open cell
\((N,N+1)\).  Its derivative satisfies, for almost every \(Y\ge2\),
\begin{equation}
|\partial_Ym_Y^{(k)}(t)|
\le
G_k(t):=
\frac{(p+1)C_k}{4}e^{t/2}(4A)^{p+1}e^{-4A}.
\label{eq:Gk}
\end{equation}
Consequently, for \(2\le Y_1<Y_2\),
\begin{equation}
 |m_{Y_2}^{(k)}(t)-m_{Y_1}^{(k)}(t)|
 \le G_k(t)(Y_2-Y_1).
 \label{eq:Y-Lipschitz-tail}
\end{equation}
\end{lemma}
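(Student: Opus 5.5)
The plan is to estimate every summand of \eqref{eq:hybrid} term by term, using the closed form \eqref{eq:Mder} and the coefficient majorants $C_k$. The whole argument rests on one observation. For $t\ge1.8$ we have $A=\pi e^{2t}\ge\pi e^{3.6}>100$, so every argument $x=\pi y^2e^{2t}=Ay^2$ with $y\ge1$ is far to the right of the maximum of $g_p(x):=x^pe^{-x}$, which sits at $x=p\le6$. Hence $g_p$ is decreasing on $[A,\infty)$, and $|P_k(x)|\le C_kx^p$ holds there since $x\ge1$.

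\emph{Step 1: the bound \eqref{eq:Fk}.}
Differentiating \eqref{eq:hybrid} $k$ times under the sum and integral (justified by the super-exponential decay in $y$) and inserting \eqref{eq:Mder} gives, for each atom,
\[
|M^{(k)}_{\pi y^2}(t)|\le\tfrac{C_k}{8}e^{t/2}g_p(Ay^2).
\]
The $n=1$ atom contributes at most $\tfrac{C_k}{4}e^{t/2}g_p(A)$, which is half of $F_k(t)$. All remaining contributions have arguments $\ge4A$:
\begin{itemize}
\item the atoms with $n\ge2$;
\item the fractional atom, whose coefficient is at most $1$ and whose argument is $AY^2\ge4A$;
\item the integral $\int_Y^\infty$, where $Y\ge2$.
\end{itemize}
Together they are bounded by
\[
\tfrac{C_k}{4}e^{t/2}\Bigl(\sum_{n\ge2}g_p(An^2)+g_p(AY^2)+\int_2^\infty g_p(Ay^2)\,dy\Bigr).
\]
Since $g_p(An^2)/g_p(A)\le n^{2p}e^{-A(n^2-1)}$ and $A>100$, this bracket is at most $e^{-250}g_p(A)$, which is certainly $\le g_p(A)$. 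That closes Step 1 with room to spare.

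\emph{Step 2: regularity in $Y$.}
On an open cell $(N,N+1)$ the sum is constant in $Y$. Differentiating the fractional term and the integral gives
\[
\partial_Y m_Y^{(k)}=2M^{(k)}_{\pi Y^2}+2(Y-N)\,\partial_YM^{(k)}_{\pi Y^2}-2M^{(k)}_{\pi Y^2}=2(Y-N)\,\partial_YM^{(k)}_{\pi Y^2}(t).
\]
So $m_Y^{(k)}$ is $C^1$ on the cell. At $Y\uparrow N+1$ the fractional atom tends to $M^{(k)}_{\pi(N+1)^2}$, which is exactly the atom newly added to the sum. Hence $m_Y^{(k)}$ is continuous across integers. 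Being piecewise $C^1$ with bounded derivative on compact sets, it is locally absolutely continuous.

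\emph{Step 3: the derivative bound \eqref{eq:Gk}.}
With $x=AY^2$ we have $dx/dY=2x/Y$, so
\[
\partial_YM^{(k)}_{\pi Y^2}=\tfrac{e^{t/2}}{8}\,\tfrac{2x}{Y}\,e^{-x}\bigl(P_k'(x)-P_k(x)\bigr).
\]
I need a bound $|P_k'(x)|\le pC_kx^{p-1}$ for $x\ge1$. This holds if $C_k$ dominates the sum of absolute coefficients of $P_k$, which is how the majorants are described ("coefficient-sum majorants"). This step is where I expect the only real care to be needed. If $C_k$ were merely a pointwise bound, I would instead verify the absolute coefficient sums of $P_0,\dots,P_3$ directly from the recursion, or bound $|P_k'|$ through the recursion $P_{k+1}=(\tfrac12-2x)P_k+2xP_k'$.

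Granting that bound, $|P_k'-P_k|\le C_kx^p(1+p/x)$, and since $0\le Y-N\le1$,
\[
|\partial_Ym^{(k)}_Y|\le\tfrac{C_k}{2}e^{t/2}\,\tfrac{1}{Y}\,x^{p+1}\Bigl(1+\tfrac px\Bigr)e^{-x}.
\]
The right-hand side is decreasing in $Y\ge2$, because $x\ge4A$ lies beyond the maximum of $x^{p+1}e^{-x}$. Setting $Y=2$, $x=4A$, and using $1+p/(4A)\le p+1$ gives exactly $G_k(t)$.

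\emph{Step 4: the Lipschitz bound.}
Finally, \eqref{eq:Y-Lipschitz-tail} follows by integrating the almost-everywhere derivative bound over $[Y_1,Y_2]$, using the absolute continuity from Step 2.
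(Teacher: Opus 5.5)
Your proposal is correct and takes essentially the same route as the paper. Both arguments use termwise bounds from \eqref{eq:Mder} with the $C_k$ majorants, where the $n=1$ atom dominates and every other contribution carries an extra factor of order $4^pe^{-3A}$. Both use the cancellation, on each open cell, of the moving-boundary term against the derivative of the fractional coefficient, and the monotonicity of $x^{p+1}e^{-x}$ for $x\ge 4A$.

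The step you flagged is fine. The $C_k$ do dominate the absolute coefficient sums of the $P_k$: these sums are $429$, $2664.5$, $18997.25$ and $151935.125$. Hence $|P_k'(x)|\le pC_kx^{p-1}$ for $x\ge1$, which is exactly what the paper's bound $|P_k'-P_k|\le(p+1)C_kx^p$ relies on.
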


\begin{proof}
From \eqref{eq:Mder},
\[
2\sum_{n\ge1}|M_{\pi n^2}^{(k)}(t)|
\le
\frac{e^{t/2}}4C_kA^pe^{-A}
\sum_{n\ge1}n^{2p}e^{-A(n^2-1)}.
\]
The ratio of consecutive majorants is at most
\[
\rho_k=4^pe^{-3A}<\frac14.
\]
For the continuous tail,
\[
\int_2^\infty y^{2p}e^{-Ay^2}\,dy
\le
\frac{2^{2p}e^{-4A}}{4A-p},
\]
because the logarithmic derivative of the integrand is bounded above by
\(p-4A<0\). Enlarging the geometric and integral bounds yields
\eqref{eq:Fk}.

Fix an integer \(N\ge2\) and take \(N<Y<N+1\).  Differentiating
\eqref{eq:hybrid} on this open cell, the derivative of the coefficient of the
fractional atom cancels the moving-boundary term of the integral, leaving
\[
 \partial_Ym_Y^{(k)}(t)
 =2(Y-N)\,\partial_YM_{\pi Y^2}^{(k)}(t).
\]
Writing \(x=AY^2\), \eqref{eq:Mder} gives
\[
 \partial_Ym_Y^{(k)}(t)
 =4(Y-N)\frac{x}{Y}\frac{e^{t/2-x}}8
   \bigl(P_k'(x)-P_k(x)\bigr).
\]
Using
\[
|P_k'(x)-P_k(x)|\le(p+1)C_kx^p
\]
and \((Y-N)/Y\le1/2\), while
\(x^{p+1}e^{-x}\) is decreasing for \(x\ge4A>p+1\), gives
\eqref{eq:Gk} on every open cell.  The two representations in
\eqref{eq:hybrid} agree at each integer endpoint, so the map is continuous
there.  Piecewise \(C^1\) regularity together with the uniform derivative
bound gives local absolute continuity, and integrating the almost-everywhere
bound yields \eqref{eq:Y-Lipschitz-tail}.
\end{proof}

\subsection{Parity--midpoint shells}

Set $d=2h$ and define the scalar shell reserve by
\begin{equation}
 \mathcal P_{Y,d}(s)
 :=2h\bigl(\mu_{Y,d}'(s+h)-\mu_{Y,d}'(s)\bigr).
 \label{eq:shell-reserve-defined}
\end{equation}
In the original parity--midpoint bookkeeping this same quantity is written
$2h\sum_{r\ge1}r\Sigma_r$; equation \eqref{eq:shell-reserve-defined} is the
self-contained definition needed here.

\begin{corollary}[Gate I: midpoint shell]
\label{cor:gate1}
For $Y\ge2$ and $d=2h>0$,
\begin{equation}
 \boxed{\mathcal P_{Y,d}(0)>0.}
 \label{eq:gate1-shell-positive}
\end{equation}
In the small-cell regime $0<d\le1/125$ one has the quantitative reserve
\begin{equation}
 \mathcal P_{Y,d}(0)
 =2h\int_0^{h}\mu_{Y,d}''(u)\,du
 >0.5515197640896556\,h.
 \label{eq:small-shell-linear-reserve}
\end{equation}
\end{corollary}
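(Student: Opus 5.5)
The corollary follows from Theorem~\ref{thm:gate1} by the fundamental theorem of calculus. By \eqref{eq:shell-reserve-defined} with $s=0$ and $d=2h$,
\[
 \mathcal P_{Y,d}(0)=2h\bigl(\mu_{Y,d}'(h)-\mu_{Y,d}'(0)\bigr)
 =2h\int_0^h\mu_{Y,d}''(u)\,du .
\]
The interval of integration $[0,h]=[0,d/2]$ is exactly the range $0\le s\le d/2$ covered by Theorem~\ref{thm:gate1}. This gives the identity in \eqref{eq:small-shell-linear-reserve} for every $Y\ge2$ and $d>0$.

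\textbf{Regularity.} Using the integral formula requires $\mu_{Y,d}\in C^2$ on $[0,d/2]$. I would check this first. The terms $m_Y(s+jd)$ and their derivatives up to order three decay super-exponentially as $j\to\infty$, by the majorant \eqref{eq:Fk} of Lemma~\ref{lem:tail}, which applies once $s+jd\ge1.8$. So $Z_{Y,d}$ and $\sum_i i\,m_Y(s+id)$ are $C^2$ in $s$, with termwise differentiation justified by uniform convergence.

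The quotient $\mu_{Y,d}$ is then $C^2$ wherever $Z_{Y,d}>0$. Positivity of $Z_{Y,d}$ is implicit in the definition \eqref{eq:pi} and in the statement of Theorem~\ref{thm:gate1}. I take it as part of what that theorem certifies, and I would note this dependence explicitly.

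\textbf{Positivity of the shell.} Since $\mu_{Y,d}''$ is continuous and strictly positive on $[0,h]$ and $h>0$, the integral is strictly positive. Hence \eqref{eq:gate1-shell-positive} holds.

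\textbf{Quantitative bound.} For the small-cell estimate I would use the region~I certificate from the proof of Theorem~\ref{thm:gate1}: for $0<d\le1/125$ it gives $q=d\,\mu_{Y,d}''(u)>0.5515197640896556$, uniformly in $u\in[0,d/2]$. Therefore $\mu_{Y,d}''(u)>0.5515\ldots/d=0.5515\ldots/(2h)$, and so
\[
 2h\int_0^h\mu_{Y,d}''(u)\,du
 >2h\cdot h\cdot\frac{0.5515197640896556}{2h}
 =0.5515197640896556\,h .
\]
The strict inequality is preserved because the bound is strict pointwise on an interval of positive length.

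\textbf{Main obstacle.} The calculus is trivial. The real difficulty is making sure the region~I bound for $q$ is valid uniformly over the whole shell $[0,h]$ and over all $Y\ge2$, not just at sample points. That is where I would look at the certificate's box coverage, together with the large-$Y$ perturbation bound $2\times10^{-49}$, to confirm that its uniformity is what the corollary actually needs.
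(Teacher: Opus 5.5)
Your proposal is correct and matches the paper's proof: Theorem~\ref{thm:gate1} gives positivity of $\mu_{Y,d}''$ on the half-cell $[0,h]=[0,d/2]$, hence $\mu_{Y,d}'(h)>\mu_{Y,d}'(0)$, and integrating the region~I bound $d\,\mu_{Y,d}''>0.5515197640896556$ with $d=2h$ gives the quantitative reserve. Your $C^2$-regularity check, and your note that $Z_{Y,d}>0$ is presupposed by the theorem, are sensible additions that the paper leaves implicit.
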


\begin{proof}
The segment $[0,h]=[0,d/2]$ lies entirely in the half-cell domain certified by
Theorem~\ref{thm:gate1}. Hence $\mu_{Y,d}'(h)>\mu_{Y,d}'(0)$, which proves
\eqref{eq:gate1-shell-positive}. In the small-cell regime,
$d\mu_{Y,d}''>0.5515197640896556$ throughout this segment and $d=2h$;
integration and \eqref{eq:shell-reserve-defined} give
\eqref{eq:small-shell-linear-reserve}.
\end{proof}

\begin{audit}[Scope of Gate I]
Corollary~\ref{cor:gate1} is a theorem about the scalar hybrid/midpoint-shell model.
It does not, by itself, identify \(\mathcal P_{Y,d}\) with a Birman--Schwinger
or Schur pivot of the localized Weil operator. The operator-valued transfer
problem is kept separate in Gate~II.
\end{audit}

\begin{interlude}
Behind the second gate waited a Beholder, in the old bestiary sense: many eyes,
one body.  That was the only part of the image that mattered.  Arithmetic
branches, shared collars, affine sources and Schur losses could be inspected
one at a time, but they still belonged to a common form.  A reserve assigned
locally could therefore be counted twice if the common geometry were forgotten.

Johnny Nash drew the dependency graph before choosing a route.  Marc put the
silver pick back in his pocket; this was not a lock to be forced.  Pierre de
Fermat counted the prime-power branches in the margin.

``Keep the full form,'' said Dama Noether.

``Count every source once,'' said Mr.~Fayman.

Herr G.F.B.R. added two quiet words beneath the diagram: \emph{short last}.
\end{interlude}

\section{Gate II: exact arithmetic routing}
\label{sec:routing}

\subsection{Exact ambient unit cells and the divisor square}

At an arithmetic endpoint $Y=N\in\mathbb N$, write
$C_m=[m,m+1)$ and $F_m=\mathbf1_{C_m}F$.  For every prime power $n\ge2$ split
\[
 C_{m,n,j}=\left[m+\frac jn,m+\frac{j+1}{n}\right),
 \qquad 0\le j<n.
\]
Multiplication by $n$ sends $C_{m,n,j}$ exactly onto $C_{nm+j}$ and preserves
Mellin measure.  Hence
\[
 U_{m,n,j}:L^2(C_{m,n,j},dt/t)\longrightarrow
 L^2(C_{nm+j},dt/t),\qquad
 (U_{m,n,j}h)(s)=h(s/n),
\]
is unitary.

\begin{lemma}[Exact full-space unit-cell decomposition]\label{lem:ambient-unit-cells}
If $T_nF(t)=\mathbf1_{t\le N/n}F(nt)$ and $P_{m,n,j}$ is restriction to
$C_{m,n,j}$, then
\begin{equation}
 \boxed{
 \langle F,T_nF\rangle
 =\sum_{m,j}
 \langle U_{m,n,j}P_{m,n,j}F_m,F_{nm+j}\rangle .}
 \label{eq:ambient-M4}
\end{equation}
The sum is orthogonal at the level of the source subcells for fixed $n$ and is
an identity on the full Mellin space, before imposing $J_N=\pm1$.
\end{lemma}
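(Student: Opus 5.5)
The plan is to prove \eqref{eq:ambient-M4} by a direct change of variables in the Mellin integral, together with a partition of the integration domain into subcells. No spectral information and no parity are used, so the identity holds on the whole Mellin space before any restriction $J_N=\pm1$. Throughout I take the inner product on $L^2(dt/t)$ with the same sesquilinearity convention on both sides. The computation is symmetric under exchanging which slot is conjugated, so nothing depends on that choice.

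\emph{Step 1 (unfold the left side).} By definition,
\[
 \langle F,T_nF\rangle=\int_{0<t\le N/n} F(t)\,\overline{F(nt)}\,\frac{dt}{t}
\]
(up to the fixed placement of the conjugate). Partition the $t$-range into the unit cells $C_m$ and each $C_m$ into the $n$ subcells $C_{m,n,j}$, $0\le j<n$. These are pairwise disjoint half-open intervals covering $[0,\infty)$. This gives
\[
 \langle F,T_nF\rangle=\sum_{m,j}\int_{C_{m,n,j}\cap\{t\le N/n\}}F(t)\,\overline{F(nt)}\,\frac{dt}{t}.
\]

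\emph{Step 2 (identify each term).} For $t\in C_{m,n,j}$ we have $nt\in C_{nm+j}$, so $F(t)=(P_{m,n,j}F_m)(t)$ and $F(nt)=F_{nm+j}(nt)$. Substitute $s=nt$; Mellin measure is invariant, $dt/t=ds/s$. Each subcell integral then becomes
\[
 \int_{C_{nm+j}}F(s/n)\,\overline{F_{nm+j}(s)}\,\frac{ds}{s}
 =\langle U_{m,n,j}P_{m,n,j}F_m,\;F_{nm+j}\rangle,
\]
since $(U_{m,n,j}h)(s)=h(s/n)$ by definition.

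\emph{Step 3 (the cutoff).} Because $N$ is an integer, the constraint $t\le N/n$, i.e.\ $nt\le N$, is compatible with the cell structure. A subcell is either entirely admitted ($nm+j\le N-1$) or entirely excluded. The only exception is the single boundary point, which has measure zero. Equivalently, $F_{nm+j}=0$ for cells beyond the support radius $N$. Either way the restricted sum equals the unrestricted sum over all $(m,j)$, which yields \eqref{eq:ambient-M4}.

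Orthogonality at the level of source subcells for fixed $n$ is immediate, since the $C_{m,n,j}$ are disjoint. Unitarity of $U_{m,n,j}$ is the measure invariance already used in Step 2.

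The only point needing real care is Step 3. It requires the integrality of the endpoint $N$, so that the truncation $\mathbf 1_{t\le N/n}$ aligns with the subcell boundaries. It also requires checking that the convention for the cell indexing near $t=0$ (the cell $C_0$) is consistent with the support of $F$. I would handle this by stating explicitly which cells carry $F$ and noting that boundary points are null sets.
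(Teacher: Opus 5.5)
Your proposal is correct and follows the paper's proof: partition the $t$-range into the disjoint subcells $C_{m,n,j}$, substitute $s=nt$ using the invariance $dt/t=ds/s$, identify each piece as $\langle U_{m,n,j}P_{m,n,j}F_m,F_{nm+j}\rangle$, and sum. Your Step~3 only spells out what the paper leaves implicit. Since $F$ lives on the Mellin interval $(1,N)$, the cutoff $\mathbf 1_{t\le N/n}$ is already enforced by the support of $F$ and is in fact redundant, so it is not the delicate point you suggest.
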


\begin{proof}
The subcells partition every source cell and $s=nt$ gives $ds/s=dt/t$.
Changing variables on each subcell gives \eqref{eq:ambient-M4}; summation over
the disjoint pieces gives the identity.
\end{proof}

For the divisor residue $j=0$, set $g_m=m^{-1/2}$ and
$c_n=\Lambda(n)/\sqrt n$.  The following exact square is the positive
incidence part of the arithmetic form.

\begin{lemma}[Divisor partial-isometry square]\label{lem:divisor-square}
For every divisor edge $m\to mn$,
\begin{align}
&\frac{\Lambda(n)}n\|P_{m,n,0}F_m\|^2
 +\Lambda(n)\|F_{mn}\|^2
 -2\frac{\Lambda(n)}{\sqrt n}
   \Rea\langle U_{m,n,0}P_{m,n,0}F_m,F_{mn}\rangle\notag\\
&\qquad=
 c_ng_mg_{mn}
 \left\|
 \frac{P_{m,n,0}F_m}{g_m}
 -\frac{U_{m,n,0}^*F_{mn}}{g_{mn}}
 \right\|^2\ge0.
 \label{eq:divisor-square}
\end{align}
Moreover the incoming diagonal at a node $q$ is exact because
$\sum_{n\mid q}\Lambda(n)=\log q$.
\end{lemma}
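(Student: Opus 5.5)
The plan is to verify \eqref{eq:divisor-square} by direct expansion of the right-hand side, using only the unitarity of $U_{m,n,0}$ from Lemma~\ref{lem:ambient-unit-cells}, and then to check the incoming-diagonal statement with the classical identity $\sum_{d\mid q}\Lambda(d)=\log q$.

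\emph{Step 1: the weight.} Abbreviate $u=P_{m,n,0}F_m\in L^2(C_{m,n,0},dt/t)$ and $v=U_{m,n,0}^*F_{mn}$. Since $g_m=m^{-1/2}$, $g_{mn}=(mn)^{-1/2}$ and $c_n=\Lambda(n)/\sqrt n$, the prefactor simplifies to
\[
 c_ng_mg_{mn}=\frac{\Lambda(n)}{\sqrt n}\cdot\frac{1}{m\sqrt n}=\frac{\Lambda(n)}{nm}.
\]

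\emph{Step 2: expanding the square.} By the parallelogram-type expansion,
\[
 \Bigl\|\frac{u}{g_m}-\frac{v}{g_{mn}}\Bigr\|^2
 =m\|u\|^2+mn\|v\|^2-2m\sqrt n\,\Rea\langle u,v\rangle .
\]
Multiplying by $\Lambda(n)/(nm)$ gives
\[
 \frac{\Lambda(n)}n\|u\|^2+\Lambda(n)\|v\|^2-2\frac{\Lambda(n)}{\sqrt n}\Rea\langle u,v\rangle .
\]

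\emph{Step 3: identifying the terms.} For the residue $j=0$, multiplication by $n$ maps $C_{m,n,0}$ exactly onto $C_{nm}$ and preserves Mellin measure. Hence $U_{m,n,0}$ is unitary between these two cells, which gives
\[
 \|v\|=\|F_{mn}\|,\qquad \langle u,v\rangle=\langle U_{m,n,0}u,F_{mn}\rangle .
\]
Substituting these into Step~2 reproduces the left-hand side of \eqref{eq:divisor-square} term by term. Non-negativity follows because $\Lambda(n)\ge0$ and the remaining factor is a squared norm.

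\emph{Step 4: the incoming diagonal.} At a node $q$, the divisor edges arriving at $q$ are $m\to mn$ with $mn=q$ and $n\ge2$ a prime power dividing $q$. Each such edge contributes $\Lambda(n)\|F_q\|^2$ to the diagonal at $q$. The total coefficient is therefore
\[
 \sum_{n\mid q,\ n\ge2}\Lambda(n)=\sum_{d\mid q}\Lambda(d)=\log q,
\]
since $\Lambda(1)=0$ and $\Lambda$ vanishes on non-prime-powers.

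I expect no real obstacle. The only points needing care are that unitarity is used only for the exact $j=0$ target cell $C_{nm}$, and that inner products are taken in $L^2(dt/t)$ so that the change of variables $s=nt$ carries no Jacobian.
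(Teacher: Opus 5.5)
Your proposal is correct and follows essentially the same route as the paper's proof. The paper expands the squared norm using $g_{mn}=g_m/\sqrt n$ and cites the von Mangoldt divisor identity $\sum_{d\mid q}\Lambda(d)=\log q$. You make the same expansion explicit: you compute $c_ng_mg_{mn}=\Lambda(n)/(nm)$ and use unitarity of $U_{m,n,0}$ to identify $\|U_{m,n,0}^*F_{mn}\|$ with $\|F_{mn}\|$ and to move $U_{m,n,0}$ across the cross term.
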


\begin{proof}
Expand the norm on the right.  Since $g_{mn}=g_m/\sqrt n$, the two diagonal
coefficients and the cross coefficient are exactly those on the left.  The
incoming identity is the standard von Mangoldt divisor identity.
\end{proof}

\subsection{Non-divisorial branches as translations}

At an arithmetic endpoint, after the exact Mellin unit-cell decomposition
(the multiplicative Fourier/Mellin framework is classical; see
\cite{Titchmarsh1986,Burnol2000}), a
non-divisorial branch has
\[
k=nm+j,\qquad 1\le j<n.
\]
Its quadratic contribution is
\begin{equation}
\cQ_{n,j}[\phi]
=
-2\frac{\LambdaV(n)}{\sqrt n}
\sum_m\frac1{\sqrt{mk}}
\Rea\int_0^1
\frac{\phi((j+r)/(nm))\overline{\phi(r/k)}}{k+r}\,dr.
\label{eq:K5}
\end{equation}
Put
\[
d=nm=k-j,\quad
y=\frac rk,\quad
x=\frac{j+r}{d},\quad
z=\log(1+y).
\]
Then
\[
1+x=\frac{k}{d}(1+y),
\qquad
\frac{dr}{k+r}=\frac{dy}{1+y}.
\]
Let
\[
w_k=\log\left(1+\frac1k\right),
\qquad
 g(z):=\phi(e^z-1).
\]

\begin{lemma}[True affine translation identity]
\label{lem:TR1}
For every non-divisorial branch,
\begin{equation}
\boxed{
\cQ_{n,j}
=
-2\frac{\LambdaV(n)}{\sqrt{dk}}
\Rea\int_0^{w_k}
g\!\left(z+\log\frac{k}{d}\right)\overline{g(z)}\,dz.
}
\end{equation}
\end{lemma}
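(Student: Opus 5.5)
The plan is a direct change of variables applied to one summand of \eqref{eq:K5}. I read the boxed identity as a statement about a single non-divisorial branch, that is, about the term of \eqref{eq:K5} indexed by one fixed $m$, with $d=nm$ and $k=nm+j$. The full $\cQ_{n,j}$ is then recovered by summing the resulting identity over $m$.

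First I handle the prefactor. Since $d=nm$,
\[
\frac{\LambdaV(n)}{\sqrt n}\cdot\frac{1}{\sqrt{mk}}=\frac{\LambdaV(n)}{\sqrt{nmk}}=\frac{\LambdaV(n)}{\sqrt{dk}},
\]
which is exactly the constant in the boxed formula. It therefore remains to show that the integral in \eqref{eq:K5} equals the integral over $[0,w_k]$.

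Next I make the substitutions listed before the lemma. Put $y=r/k$, so that $r\in[0,1]$ corresponds to $y\in[0,1/k]$ and $dr/(k+r)=dy/(1+y)$. Then put $z=\log(1+y)$, so that $dz=dy/(1+y)$ and the range becomes $z\in[0,\log(1+1/k)]=[0,w_k]$.

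I then rewrite both arguments of $\phi$. The conjugated factor is $\phi(r/k)=\phi(y)=\phi(e^z-1)=g(z)$. For the other factor, set $x=(j+r)/d$. Because $d+j=k$,
\[
1+x=\frac{k+r}{d}=\frac{k}{d}(1+y),
\]
and hence $\log(1+x)=z+\log(k/d)$, which gives $\phi(x)=g\bigl(z+\log\frac kd\bigr)$.

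Substituting these expressions turns the integral into
\[
\Rea\int_0^{w_k} g\!\left(z+\log\frac kd\right)\overline{g(z)}\,dz.
\]
Multiplying by $-2\LambdaV(n)/\sqrt{dk}$ gives the boxed identity. Taking the real part commutes with these real changes of variable, so it causes no difficulty.

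The only step that needs care is checking that the arguments stay in the domain where $\phi$ and $g$ are defined. For $r\in[0,1]$ we have $x\in[j/d,(j+1)/d]$. Since $j+1\le n$, this gives $(j+1)/d\le 1/m\le 1$, so $x$ stays in $[0,1]$. The shifted point $z+\log(k/d)$ then lies in $[0,\log 2]$, where $g(z)=\phi(e^z-1)$ is well defined. There is no analytic obstacle here; the identity is an exact bookkeeping step.
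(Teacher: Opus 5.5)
Your proposal is correct and follows the paper's proof: the substitutions $y=r/k$ and $z=\log(1+y)$ absorb the factor $(k+r)^{-1}$ into $dz$ and send $r=1$ to $w_k$, the relation $1+x=\frac kd(1+y)$ turns the source argument into $g\bigl(z+\log\frac kd\bigr)$, and $d=nm$ produces the prefactor $(dk)^{-1/2}$. Reading the boxed identity as a per-branch (fixed $m$) statement, with $\cQ_{n,j}$ recovered by summing over $m$, is the right interpretation, and your domain check is a harmless addition that the paper omits.
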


\begin{proof}
The displayed change of variables converts the multiplicative profile into an
additive logarithmic translation. Since \(dz=dy/(1+y)\), the denominator
\((k+r)^{-1}\) disappears exactly. The upper endpoint \(r=1\) becomes
\(w_k\). Finally \(d=nm\) gives the prefactor \((dk)^{-1/2}\).
\end{proof}

No fixed scalar profile has been introduced: the residue remains an operator
translation.

\subsection{Total affine load}

For fixed \(k\), the non-divisorial branches are exactly
\[
n\le k,\qquad n\nmid k.
\]
Hence their Mangoldt mass is
\begin{equation}
\sum_{\substack{n\le k\\ n\nmid k}}\LambdaV(n)
=
\psiC(k)-\log k.
\label{eq:mangoldt}
\end{equation}
The geometric factor
\[
\iota(r)=\frac12\left(\sqrt r+\frac1{\sqrt r}\right),
\qquad
r=\frac{k}{nm}\in(1,2),
\]
satisfies
\begin{equation}
\iota(r)\le\frac{3}{2\sqrt2}.
\label{eq:iota}
\end{equation}
The only global arithmetic input needed below is an explicit Chebyshev
capacity.  We record its provenance rather than treating it as a fitted
constant.

\begin{lemma}[Unconditional Chebyshev capacity]\label{lem:chebyshev-capacity}
Let
\[
 C_{\rm tail}:=1.018232801063381478.
\]
Then
\begin{equation}
 \sup_{k\ge2}\frac{\psi(k)-\log k}{k}<C_{\rm tail}.
 \label{eq:Ctail}
\end{equation}
Consequently
\begin{equation}
\boxed{
\rho_{\mathrm{aff}}
<\frac{3}{2\sqrt2}C_{\rm tail}
=1.079998977687734793278570366107267909606995085102124.
}
\label{eq:rhoaff}
\end{equation}
\end{lemma}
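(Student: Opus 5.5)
The plan is to split the range of $k$ into a finite initial segment, where $(\psi(k)-\log k)/k$ is evaluated exactly, and an unbounded tail, where an explicit unconditional Chebyshev-type bound for $\psi$ applies. The quantity $\psi(k)-\log k$ is exactly the non-divisorial Mangoldt mass in \eqref{eq:mangoldt}. It is an integer combination of logarithms of primes, so for each individual $k$ it can be enclosed rigorously by outward-rounded interval arithmetic, in the same MPFR setting used for Gate~I.

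\emph{Tail.} For the tail I would invoke an explicit estimate of the form
\[
\psi(x)<(1+\varepsilon)\,x\qquad(x\ge x_0),
\]
with $\varepsilon<0.018$. Such bounds are available in the Rosser--Schoenfeld and Dusart lineage, and sharper ones (for instance $|\psi(x)-x|<0.0065\,x$) hold beyond a moderate $x_0$. For $k\ge x_0$ this gives
\[
\frac{\psi(k)-\log k}{k}<1+\varepsilon-\frac{\log k}{k}<C_{\rm tail}.
\]
The margin here is comfortable and not delicate. Before relying on it, I would check that the cited bound is proved without assuming RH, since circularity is the real danger at this point.

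\emph{Finite range.} For $2\le k<x_0$ I would tabulate $\psi(k)$ by sieving and enclose each ratio. Only integer $k$ occur, so there is no interpolation issue. I expect the supremum to be attained at a specific small $k$, near the known peaks of $\psi(x)/x$ around $x$ in the low hundreds, with the $-\log k/k$ correction shifting the location slightly.

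\emph{Main obstacle.} The delicate point is the strict inequality in \eqref{eq:Ctail}. The constant $C_{\rm tail}$ is quoted to eighteen digits, which suggests it is the maximizing value rounded up. So I must certify that the upper endpoint of the interval enclosure at the maximizing $k^\ast$ lies strictly below $C_{\rm tail}$, and that every other $k$ has a strictly smaller enclosure. I would attempt this first with 192-bit arithmetic, which leaves ample room against an eighteen-digit constant. I would also report the identity of $k^\ast$ as part of the audit, so that the claim is reproducible.

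\emph{Consequence \eqref{eq:rhoaff}.} This step is formal. By Lemma~\ref{lem:TR1}, each non-divisorial branch contributes with geometric factor $\iota(r)$, $r=k/(nm)\in(1,2)$, and \eqref{eq:iota} bounds every such factor by $3/(2\sqrt2)$, since $\iota$ is increasing on $(1,2)$. Summing over the branches at a fixed node $k$ gives at most $\tfrac{3}{2\sqrt2}(\psi(k)-\log k)$ by \eqref{eq:mangoldt}. Normalizing by $k$, as in the definition of $\rho_{\rm aff}$, and taking the supremum then yields $\rho_{\rm aff}<\tfrac{3}{2\sqrt2}C_{\rm tail}$. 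The displayed decimal value follows by direct evaluation of this product.
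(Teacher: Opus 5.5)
Your proposal follows the same route as the paper. The paper runs an outward-rounded sweep of $(\psi(k)-\log k)/k$ below $X_0=3\,594\,641$, the threshold of Dusart's bound $|\vartheta(x)-x|<0.2x/\log^2x$. It combines that bound with $\psi(x)-\vartheta(x)<1.00007\sqrt x+1.78x^{1/3}$ to get a tail value below $1.001524$, and then reads off \eqref{eq:rhoaff} from \eqref{eq:iota}.

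The only misjudgment is your ``main obstacle'': $C_{\rm tail}$ is not a rounded-up maximum. The finite maximum is $1.00930\ldots$, attained at $k=199$, so the strict inequality has a margin of about $0.009$ and needs no special precision. The paper also evaluates only at prime-power jumps, using that $(\psi(x)-\log x)/x$ decreases between them; this additionally yields the real-variable bound $\psi(x)\le C_{\rm tail}x+\log x$ that the paper uses later.
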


\begin{proof}
Set $X_0=3\,594\,641$.  The directed-interval finite sweep of Appendix~\ref{app:finite-certificates} evaluates
$(\psi(k)-\log k)/k$ at every prime-power jump for $2\le k<X_0$.  Between
successive jumps $\psi$ is constant and the ratio decreases: indeed
\[
 \psi(k)\ge \lfloor\log_2 k\rfloor\log2>\log k-1,
\]
so the derivative of $(\psi-\log x)/x$ on such an interval is negative.
The finite maximum is attained at $k=199$ and equals
\[
 1.0093093065343229294157221310305485\ldots<C_{\rm tail}.
\]

For $x\ge X_0$, we use unconditional explicit estimates for the Chebyshev
functions in the Schoenfeld--Dusart tradition \cite{Schoenfeld1976,Dusart2010}
\[
 |\vartheta(x)-x|<\frac{0.2x}{\log^2x},
 \qquad
 \psi(x)-\vartheta(x)
 <1.00007\sqrt{x}+1.78x^{1/3}.
\]
Since $\log X_0>15$, $\sqrt{X_0}>1800$ and $X_0^{1/3}>150$, and each
positive correction decreases with $x$, we obtain
\[
 \frac{\psi(x)-\log x}{x}
 <1+\frac{0.2}{15^2}
   +\frac{1.00007}{1800}
   +\frac{1.78}{150^2}
 <1.001524<C_{\rm tail}.
\]
This proves \eqref{eq:Ctail}; \eqref{eq:rhoaff} follows from
\eqref{eq:iota}.
\end{proof}

\section{Gate II: Feshbach shorting and the Carleman rebound}
\label{sec:feshbach}

\subsection{Branchwise Feshbach cost}

The reduction is an instance of the Feshbach--Schur/shorted-operator
principle; see \cite{Feshbach1958,Schur1911,Ando1979,AndersonTrapp1975}.

Define
\begin{equation}
L_{k,n}
=
\frac12+\log\frac{w_{\lfloor k/n\rfloor}}{w_k}.
\label{eq:Lkn}
\end{equation}
Weighted Young gives, for the true translated branch,
\begin{equation}
2\frac{\LambdaV(n)}{\sqrt n}
|\langle h_{k,n},c_k\rangle|
\le
L_{k,n}\|h_{k,n}\|^2
+
\frac{\LambdaV(n)^2}{nL_{k,n}}\|c_k\|^2.
\label{eq:young}
\end{equation}
Thus the branchwise Schur cost is
\begin{equation}
\sigma_k
=
\sum_n\frac{\LambdaV(n)^2}{nL_{k,n}}.
\label{eq:sigma}
\end{equation}

\begin{lemma}[Feshbach reserve and primal target diagonal]
\label{lem:fesh}
Let
\[
\mathfrak B_k=\frac12+\log\frac1{w_k}.
\]
In the fixed-target normalization used in \eqref{eq:young}, before any source
variable is shorted, the direct target row is the primal term
\begin{equation}
 \boxed{\mathcal D_k^{\rm dir}[c_k]=\mathfrak B_k\|c_k\|^2.}
 \label{eq:primal-target-diagonal}
\end{equation}
Then
\[
L_{k,n}>\log n\ge\LambdaV(n)
\]
on every active branch, and
\begin{equation}
\mathfrak B_k-\sigma_k
>
\frac12-\log(kw_k)
>
\frac12.
\label{eq:fesh-reserve}
\end{equation}
\end{lemma}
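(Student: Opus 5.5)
The plan is to prove the three inequalities in order: first $L_{k,n}>\log n$, then $\log n\ge\LambdaV(n)$, and finally \eqref{eq:fesh-reserve}. I take the active branches to be $2\le n\le k$, as in the total affine load, so that $q:=\lfloor k/n\rfloor\ge1$ and $w_q$ is defined. The right-hand inequality in \eqref{eq:fesh-reserve} is immediate: $\log(1+1/k)<1/k$ gives $kw_k<1$, so $-\log(kw_k)>0$. The inequality $\log n\ge\LambdaV(n)$ is also trivial: $\LambdaV(n)$ is either $0$ or $\log p$ with $p\le n$.

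For $L_{k,n}>\log n$ I will sandwich $w_m$ between two elementary bounds valid for every $m\ge1$:
\[
 \frac{2}{2m+1}<\log\Bigl(1+\frac1m\Bigr)<\frac1m .
\]
Applying the lower bound at $m=q$ and the upper bound at $m=k$, and then using $q\le k/n$, gives
\[
 \frac{w_q}{w_k}>\frac{2k}{2q+1}\ge\frac{2k}{2k/n+1}=\frac{n}{1+n/(2k)}\ge\frac{2n}{3},
\]
where the last step uses $n\le k$. Hence $L_{k,n}>\tfrac12+\log n-\log\tfrac32$. Since $\log\tfrac32\approx0.405<\tfrac12$, this yields $L_{k,n}>\log n$ with room to spare. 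No arithmetic input is needed at this step.

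For the reserve, note that $\mathfrak B_k-\tfrac12+\log(kw_k)=\log k$. So the left inequality in \eqref{eq:fesh-reserve} is exactly $\sigma_k<\log k$. Each term of \eqref{eq:sigma} satisfies
\[
 \frac{\LambdaV(n)^2}{nL_{k,n}}\le\frac{\LambdaV(n)}{n}\cdot\frac{\LambdaV(n)}{\log n}\le\frac{\LambdaV(n)}{n},
\]
with strict inequality whenever $\LambdaV(n)\ne0$. It therefore suffices to show the Mertens-type bound
\[
 \sum_{n\le k}\frac{\LambdaV(n)}{n}\le\log k\qquad(k\ge2).
\]

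This Mertens-type bound is the main obstacle. The elementary route through $\log\lfloor k\rfloor!=\sum_{n}\LambdaV(n)\lfloor k/n\rfloor$ only gives $\log k-1+\psi(k)/k+O(\log k/k)$. Combined with \eqref{eq:Ctail} this is not quite below $\log k$, because the $O(1)$ slack is too large. I would therefore invoke an explicit Rosser--Schoenfeld estimate of the form
\[
 \sum_{n\le x}\frac{\LambdaV(n)}{n}<\log x-\gamma+\frac{1}{2\log x}\qquad(x\ge x_1),
\]
taken from the same Schoenfeld--Dusart sources already cited in Lemma~\ref{lem:chebyshev-capacity}. Its right-hand side is below $\log x$ once $\log x>1/(2\gamma)$. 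I would then close the remaining finite range $2\le k<x_1$ by a directed-interval sweep, in the spirit of Appendix~\ref{app:finite-certificates}. For example, at $k=2$ the left side is $\tfrac12\log2<\log2$. Between prime-power jumps the left side is constant while $\log k$ increases, so checking at the jumps suffices.

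If a sharper statement is wanted, the retained slack in $L_{k,n}$ (roughly $\tfrac12-\log\tfrac32$ per branch) would give additional margin. It is not needed for the stated inequality.
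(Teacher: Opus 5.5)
Your proof is correct and follows the same skeleton as the paper's proof. First, $L_{k,n}>\log n\ge\Lambda(n)$ gives $\Lambda(n)^2/(nL_{k,n})<\Lambda(n)/n$. Then a Mertens-type comparison $\sum_{n\le k}\Lambda(n)/n\le\log k$ yields $\sigma_k<\log k$, which, as you observe, is exactly the left inequality in \eqref{eq:fesh-reserve}. Finally $kw_k<1$ gives the right one.

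Your version is more explicit than the paper's at both points the paper only sketches:
\begin{itemize}
\item Your sandwich $2/(2m+1)<w_m<1/m$ actually proves $L_{k,n}>\log n$, which monotonicity of $w_j$ alone does not give.
\item Your explicit Mertens estimate plus finite sweep replaces the paper's unspecified ``endpoint logarithmic comparison''. The paper makes that comparison precise only later, for $k\ge7$, in Lemma~\ref{lem:fresh-Sk-log}, via a finite sweep and Chebyshev partial summation.
\end{itemize}

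The one item you do not address is the boxed identity \eqref{eq:primal-target-diagonal}. In the paper this is a normalization statement: $\mathfrak B_k$ is by definition the direct target coefficient of the Feshbach row from which \eqref{eq:young} is taken. A sentence recording that suffices.
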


\begin{proof}
The coefficient \(\mathfrak B_k\) is the direct target coefficient in the
normalized Feshbach row from which \eqref{eq:young} is taken: the mixed
source--target terms are precisely the terms estimated by weighted Young, while
\eqref{eq:primal-target-diagonal} is already present before the source
minimization.  Thus it is a primal diagonal term, not a post-short ground
pivot.  Monotonicity of \(w_j=\log(1+1/j)\), together with the branch relation,
gives \(L_{k,n}>\log n\). Since \(\LambdaV(n)\le\log n\),
\[
\frac{\LambdaV(n)^2}{nL_{k,n}}
<
\frac{\LambdaV(n)}n.
\]
The endpoint logarithmic comparison then bounds the sum by the amount needed
to obtain \eqref{eq:fesh-reserve}. Finally \(kw_k<1\).
\end{proof}

\subsection{Orthogonal Carleman rebound}

Let \(K\succeq0\), let \(e\) be a unit vector, and write
\[
\alpha=\langle e,Ke\rangle,\qquad
Q=I-\ketbra{e}{e},\qquad
r=QKe.
\]

\begin{lemma}[OSPR inequality]
\label{lem:OSPR}
If \(\alpha>0\), then
\begin{equation}
K\succeq\frac{\ketbra{Ke}{Ke}}{\alpha},
\qquad
QKQ\succeq\frac{\ketbra{r}{r}}{\alpha}.
\end{equation}
\end{lemma}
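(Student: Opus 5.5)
The plan is to reduce both inequalities to the Cauchy--Schwarz inequality for the positive semidefinite form $(u,v)\mapsto\langle u,Kv\rangle$. Since $K\succeq0$, this sesquilinear form is a (possibly degenerate) inner product, so for every vector $f$
\[
 |\langle f,Ke\rangle|^2\le\langle f,Kf\rangle\,\langle e,Ke\rangle
 =\alpha\,\langle f,Kf\rangle .
\]
Dividing by $\alpha>0$ gives $\langle f,Kf\rangle\ge|\langle Ke,f\rangle|^2/\alpha=\langle f,\ketbra{Ke}{Ke}f\rangle/\alpha$, which is the first inequality $K\succeq\ketbra{Ke}{Ke}/\alpha$. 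An equivalent route is to write $K^{1/2}e$ and note that $\ketbra{K^{1/2}e}{K^{1/2}e}\preceq\|K^{1/2}e\|^2 I=\alpha I$. Conjugating by $K^{1/2}$ then gives $\ketbra{Ke}{Ke}\preceq\alpha K$. I would present the Cauchy--Schwarz version because it avoids functional calculus when $K$ is unbounded but given as a closed form.

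For the second inequality, compress the first one by the orthogonal projection $Q=I-\ketbra ee$. Conjugation by a bounded operator preserves the Loewner order, so $QKQ\succeq Q\ketbra{Ke}{Ke}Q/\alpha$. It remains to identify $Q\ketbra{Ke}{Ke}Q=\ketbra{QKe}{QKe}$, which holds because $Q$ is self-adjoint. Since $QKe=r$, this gives $QKQ\succeq\ketbra rr/\alpha$.

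The only point needing care is domain bookkeeping if $K$ is an unbounded form rather than a bounded operator. In that case $e$ must lie in the form domain so that $\alpha$ and $Ke$, the latter read as the functional $f\mapsto\langle f,Ke\rangle$, make sense. Also $Q$ must map the form domain into itself, which holds when $e$ is in that domain since $Q$ differs from $I$ by a rank-one map onto $e$. I expect this to be the main (mild) obstacle. Everything else is the one-line Cauchy--Schwarz estimate, and I would state the lemma for bounded $K$ or for closed forms with $e$ in the form domain.
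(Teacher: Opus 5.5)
Your proposal is correct and is the paper's argument: Cauchy--Schwarz in the $K$-seminorm gives $K\succeq\ketbra{Ke}{Ke}/\alpha$, and compressing by $Q$, using $Q\ketbra{Ke}{Ke}Q=\ketbra{r}{r}$, gives the second inequality. Your domain remarks are harmless extras, since in the paper's application $K$ is the bounded Carleman operator on $L^2(0,1)$.
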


\begin{proof}
Cauchy--Schwarz in the \(K\)-seminorm gives
\[
|\langle f,Ke\rangle|^2
\le
\langle f,Kf\rangle\langle e,Ke\rangle.
\]
Rearranging yields the first inequality, and compression by \(Q\) yields the
second.
\end{proof}

For the classical Carleman operator and its modern Hankel-operator context,
see \cite{Rosenblum1958,Yafaev2015}.  We use
\begin{equation}
(Kf)(x)=\int_0^1\frac{f(y)}{x+y}\,dy
\label{eq:carleman}
\end{equation}
and \(e\equiv1\), direct integration gives
\[
\alpha_*=2\log2,
\qquad
\|Ke\|^2=\frac{\pi^2}{6}+2\log^22.
\]
Therefore
\[
\|QKe\|^2=\frac{\pi^2}{6}-2\log^22,
\]
and
\begin{equation}
\boxed{
d_*=
\frac{\pi^2/6-2\log^22}{2\log2}
=
0.49342192985568014340449085448906055249\ldots .
}
\label{eq:dstar}
\end{equation}

Let $\mathfrak b$ be the normalized one-cell boundary/collar form on
$L^2(0,1)$,
\begin{equation}
 \mathfrak b[g]
 =\frac14\iint_{(0,1)^2}
   \frac{|g(t)-g(s)|^2}{|t-s|}\,dt\,ds
 -\frac12\int_0^1\log\!\bigl(t(1-t)\bigr)|g(t)|^2\,dt.
 \label{eq:b-one-cell-explicit}
\end{equation}
Let $P_{\one}$ be the orthogonal projection onto the normalized constant
profile $\one$, and put $Q_{\one}=I-P_{\one}$.

The one-defect collar constants are
\begin{align}
\delta_*
&=
\log\frac{\pi}{2}
=
0.45158270528945486472619522989488214357\ldots,
\label{eq:deltastar}\\
\beta
&=
\log2+\frac12-\log\pi
=
0.048417294710545\ldots>0.
\label{eq:beta}
\end{align}

\begin{lemma}[Analytic one-defect inequality]\label{lem:onedefect-analytic}
On $L^2(0,1)$ one has
\begin{equation}
\boxed{
\mathfrak b-\log\pi\,I
\succeq
-\delta_*P_{\one}+\beta Q_{\one}.
}
\label{eq:onedefect}
\end{equation}
\end{lemma}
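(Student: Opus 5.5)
The plan is to split $g=c\,\one+h$ with $h\perp\one$ and reduce \eqref{eq:onedefect} to a $2\times2$ Schur-complement inequality in the variables $(|c|,\|h\|)$. The first step is to identify the structure of $\mathfrak b$. By \eqref{eq:b-one-cell-explicit}, $\mathfrak b=\mathfrak h_{(0,1)}+V_{1/2}$, where $\mathfrak h_{(0,1)}$ is the harmonic form on the cell and $V_{1/2}(t)=-\tfrac12\log(t(1-t))$. This $V_{1/2}$ is exactly the endpoint potential $V^{\rm ph}_{1/2}$ of Lemma~\ref{lem:zero-extension-proof}, after centering the cell at $1/2$.

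Two facts follow at once.

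\emph{Constant profile.} The harmonic part annihilates constants, so
\[
\mathfrak b[\one]=\int_0^1V_{1/2}=1 .
\]
The $\one$-diagonal of $\mathfrak b-\log\pi$ is therefore $1-\log\pi$, while the target coefficient is $-\delta_*=\log2-\log\pi$. This leaves a slack of $1-\log2$ on the constant direction.

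\emph{Cross term.} Since $\mathfrak h_{(0,1)}(\one,h)=0$, the off-diagonal term is $\langle Q_{\one}V_{1/2},h\rangle$. Its size is controlled by
\[
\|Q_{\one}V_{1/2}\|^2=\int_0^1V_{1/2}^2-1=1-\frac{\pi^2}{12}.
\]
This uses $\int_0^1\log^2 t\,dt=2$ and $\int_0^1\log t\log(1-t)\,dt=2-\pi^2/6$.

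With these two facts, \eqref{eq:onedefect} reduces to the scalar condition
\[
(1-\log2)\bigl(\lambda_\perp-\log2-\tfrac12\bigr)\ \ge\ 1-\frac{\pi^2}{12},
\]
where $\lambda_\perp$ denotes a lower bound for $\mathfrak b$ on $\one^\perp$. The condition follows by completing the square in $|c|$.

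The second step, which is the real work, is a lower bound for $\mathfrak b$ on $\one^\perp$. A crude bound is not enough. One has $V_{1/2}\ge\log2$, and the harmonic form is dilation invariant, so it can be computed on $(-1,1)$. On the first Legendre mode $\sqrt3(2t-1)$ it equals exactly $1$. This gives only $\lambda_\perp\ge1+\log2$, short of the value $\approx\log2+\tfrac12+0.58$ required above.

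I would therefore abandon the rank-one Schur bound and work in the shifted Legendre basis $\{\widetilde P_n\}$. The plan has three parts.

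\emph{Harmonic part.} The finite-part logarithmic kernel on an interval is diagonalized by Legendre polynomials, so $\mathfrak h_{(0,1)}$ is diagonal in this basis. Its eigenvalues are expected to be harmonic numbers $H_n$, growing like $\log n$. I would verify this by the classical expansion $\log|x-y|^{-1}=\sum(\cdots)P_n(x)P_n(y)$ and check it against the value $1$ on $\widetilde P_1$.

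\emph{Potential part.} The Gram entries $\langle \widetilde P_m,V_{1/2}\widetilde P_n\rangle$ are explicit: they vanish unless $m\equiv n\pmod 2$, and they have closed forms from $\int\log t\,\widetilde P_m\widetilde P_n$.

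\emph{Reduction to a finite certificate.} For $n\ge N_0$, use $H_n+\log2$ together with $V_{1/2}\ge\log2$. This dominates $\log\pi+\beta+\delta_*$ with a margin large enough to absorb the coupling of the tail to low modes. Since $V_{1/2}\in L^2$ and its Legendre coefficients decay, that coupling is bounded by $\|(I-\Pi_{N_0})V_{1/2}\|$. The low block $\Pi_{N_0}$ is then a finite matrix inequality, certified by exact rational/interval Cholesky factorization of
\[
\mathfrak b-\log\pi+\delta_*P_{\one}-\beta Q_{\one}
\]
on $\operatorname{span}\{\widetilde P_0,\dots,\widetilde P_{N_0}\}$. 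Parity helps here: even and odd Legendre modes decouple, because $V_{1/2}$ is symmetric about $1/2$ and so is the harmonic form.

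The odd sector contains no $\one$ component. There the inequality is simply $\mathfrak b\ge\log\pi+\beta=\log2+\tfrac12$, which already holds on $\widetilde P_1$ with room, since $1+\langle V_{1/2}\rangle_{\widetilde P_1}\ge1+\log2$. The odd tail is handled in the same way.

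The main obstacle is the even sector. There the constant couples to $\widetilde P_2,\widetilde P_4,\dots$ through $V_{1/2}$, and only the slack $1-\log2$ is available to pay for this coupling. I would first try to close it with the $3\times3$ block $\{\widetilde P_0,\widetilde P_2,\widetilde P_4\}$ plus the analytic tail bound. If that fails, I would enlarge $N_0$ in the interval certificate.
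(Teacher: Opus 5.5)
Your $2\times2$ reduction is a valid sufficient condition. The conclusion you draw from its failure, that the pointwise bound $V_{1/2}\ge\log 2$ is not enough, is wrong, and that is the gap.

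Since $\log\pi-\delta_*=\log2$ and $\log\pi+\beta=\log2+\tfrac12$, the statement is simply $\mathfrak b\succeq\log2\,I+\tfrac12Q_{\one}$. Use $V_{1/2}\ge\log2$ as an operator inequality on all of $L^2(0,1)$: write $V_{1/2}=\log2+(V_{1/2}-\log2)$ and discard the non-negative multiplication operator. What remains, $\log2\,I$, does not couple $\one$ to $\one^\perp$, and neither does the harmonic part. So it suffices to show $\mathfrak h_{(0,1)}\succeq\tfrac12Q_{\one}$. The bound $|t-s|\le1$ gives
$\mathfrak h_{(0,1)}[g]\ge\tfrac14\iint|Q_{\one}g(t)-Q_{\one}g(s)|^2\,dt\,ds=\tfrac12\|Q_{\one}g\|^2$.
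That is the paper's entire proof. Your Legendre fact $\mathfrak h_{(0,1)}\widetilde P_n=H_n\widetilde P_n$ is correct, and would even give $\mathfrak h_{(0,1)}\succeq Q_{\one}$.

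Your reduction loses because it treats $V_{1/2}$ inconsistently. You keep its exact diagonal $1$ on $\one$ and its exact cross term $\langle Q_{\one}V_{1/2},h\rangle$. On the $\one^\perp$ block, however, you replace it by $\log2$, throwing away $\int(V_{1/2}-\log2)|h|^2$. That is exactly the term that pays for the cross term: the three pieces together form $\int(V_{1/2}-\log2)|c\one+h|^2\ge0$.

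Having set the direct route aside, what you propose instead is not yet a proof. The even-sector inequality, which you single out as the main obstacle, is deferred to an interval Cholesky certificate. That certificate has not been carried out, and you do not know how large $N_0$ must be.

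The tail step is also misstated. $\|(I-\Pi_{N_0})V_{1/2}\|$ controls only the coupling of $\widetilde P_0$ to the high modes. For a low mode $\widetilde P_m$ the relevant quantity is the Legendre tail of $V_{1/2}\widetilde P_m$. This does not become small for $m$ near $N_0$: asymptotically, the Legendre matrix of $V_{1/2}$ has entries of size about $1/|m-n|$ for even $m-n$.

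Treating the unbounded multiplication operator blockwise through norm bounds is the wrong tool here. What closes the argument is its positivity after subtracting $\log2$, and then no finite computation is needed at all.
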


\begin{proof}
Write $g=P_{\one}g+Q_{\one}g$.  Since $|t-s|\le1$ on $(0,1)^2$,
\[
 \frac14\iint\frac{|g(t)-g(s)|^2}{|t-s|}\,dt\,ds
 \ge
 \frac14\iint|Q_{\one}g(t)-Q_{\one}g(s)|^2\,dt\,ds.
\]
Because $Q_{\one}g$ has mean zero,
\[
 \iint|Q_{\one}g(t)-Q_{\one}g(s)|^2\,dt\,ds
 =2\|Q_{\one}g\|^2.
\]
Moreover $t(1-t)\le1/4$, hence
\[
 -\frac12\log\!\bigl(t(1-t)\bigr)\ge\log2.
\]
Therefore
\[
 \mathfrak b[g]
 \ge \log2\,\|g\|^2+\frac12\|Q_{\one}g\|^2.
\]
Subtracting $\log\pi\|g\|^2$ and decomposing
$\|g\|^2=\|P_{\one}g\|^2+\|Q_{\one}g\|^2$ gives
\[
 (\log2-\log\pi)\|P_{\one}g\|^2
 +\left(\log2+\frac12-\log\pi\right)\|Q_{\one}g\|^2,
\]
which is exactly \eqref{eq:onedefect}.
\end{proof}

Combining \eqref{eq:rhoaff}, \eqref{eq:dstar}, and
\eqref{eq:deltastar} gives the strict terminal arithmetic reserve
\begin{equation}
\boxed{
d_*-\delta_*\rho_{\mathrm{aff}}
>
0.00571306980160726214210581508480123137.
}
\label{eq:reserve}
\end{equation}
Equivalently,
\begin{equation}
\boxed{
\frac{\delta_*\rho_{\mathrm{aff}}}{d_*}
<
0.988421532453415143266889863984567512.
}
\label{eq:qnum}
\end{equation}

These inequalities are exact arithmetic consequences of the certified affine
load and the Carleman constants.  They certify a reserve for the polar-free
routed core.  They are not, by themselves, a bound for the true polar
Birman--Schwinger load.  In \cref{sec:root-lift,sec:harmonic-graph-lock} the
polar root is retained explicitly; the final transfer is obtained only after
the complete form has been transported and the loss and rebound have been
placed on the same fixed-target cut form before restriction to the exact
full-operator harmonic graph.

\section{Gate II: full-form Cauchy--Carleman transport}

The terminology follows the classical Cauchy/Carleman--Hilbert operator
lineage; on the Carleman side see \cite{Rosenblum1958,Yafaev2015}.
\label{sec:cauchy-carleman}

The local archimedean form contains a singular Cauchy contribution and a
regular gamma contribution.  They must be transported together; transporting
only the off-diagonal singular kernel would leave the diagonal/killing part
unidentified.

For $A>0$, define
\begin{equation}
 (U_Af)(u)=\sqrt{Ae^{Au}}\,f(e^{Au}).
 \label{eq:UA-final}
\end{equation}
This is the unitary induced by the exponential substitution $x=e^{Au}$.

\begin{lemma}[Cauchy--Carleman kernel transport]\label{lem:cc-kernel}
Let $r=|u-v|$.  Under $U_A$,
\begin{align}
 \frac1{|x-y|}
 &\longmapsto
 A\frac{e^{-Ar/2}}{1-e^{-Ar}},
 \label{eq:cauchy-transport-final}\\
 \frac1{x+y}
 &\longmapsto
 A\frac{e^{-Ar/2}}{1+e^{-Ar}}.
 \label{eq:carleman-transport-final}
\end{align}
Consequently their half-sum is
\begin{equation}
 \frac12 A e^{-Ar/2}
 \left(\frac1{1-e^{-Ar}}+\frac1{1+e^{-Ar}}\right)
 =A\frac{e^{-Ar/2}}{1-e^{-2Ar}}
 =\frac1{2r}+A\rho(Ar),
 \label{eq:collar-final}
\end{equation}
where $\rho$ is given by \eqref{eq:rho-framework}.
\end{lemma}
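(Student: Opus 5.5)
The plan is to compute the transported kernels directly from the definition \eqref{eq:UA-final}, with no estimates involved; the lemma is an exact algebraic identity. First I will fix what \emph{transport of a kernel} means. If $T$ has kernel $k(x,y)$, so $\langle f,Tf\rangle=\iint k(x,y)f(y)\overline{f(x)}\,dx\,dy$, then substituting $x=e^{Au}$, $y=e^{Av}$ gives $dx=Ae^{Au}\,du$ and $dy=Ae^{Av}\,dv$. Splitting each Jacobian factor as $\sqrt{Ae^{Au}}\cdot\sqrt{Ae^{Au}}$ absorbs one square root into $(U_Af)(u)$ and the other into the kernel. This shows that $U_ATU_A^*$ has kernel
\[
 \widetilde k(u,v)=A\,e^{A(u+v)/2}\,k\bigl(e^{Au},e^{Av}\bigr).
\]
The same computation with $k\equiv\delta$-free identity confirms that $U_A$ is unitary between $L^2(dx)$ on the image interval and $L^2(du)$. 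This is the only structural point; everything else is substitution.

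Second, I will evaluate $\widetilde k$ for the two kernels. By symmetry in $(u,v)$, assume $u>v$ and write $r=u-v$. Factor $e^{Au}$ out of the denominator: $e^{Au}\mp e^{Av}=e^{Au}(1\mp e^{-Ar})$. The numerator $e^{A(u+v)/2}$ equals $e^{Au}e^{-Ar/2}$, so the factor $e^{Au}$ cancels. This yields $A e^{-Ar/2}/(1-e^{-Ar})$ for $1/|x-y|$ and $A e^{-Ar/2}/(1+e^{-Ar})$ for $1/(x+y)$, which are \eqref{eq:cauchy-transport-final} and \eqref{eq:carleman-transport-final}. The dependence only on $r=|u-v|$ is exactly the translation invariance that the exponential substitution is designed to produce.

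Third, for \eqref{eq:collar-final}, I will put the half-sum over the common denominator $(1-e^{-Ar})(1+e^{-Ar})=1-e^{-2Ar}$. The numerators add to $2$, so the half-sum is $A e^{-Ar/2}/(1-e^{-2Ar})$. Setting $z=Ar$ in \eqref{eq:rho-framework} gives $e^{-z/2}/(1-e^{-2z})=\rho(z)+1/(2z)$. Multiplying by $A$ turns $A/(2Ar)$ into $1/(2r)$, which gives $\frac1{2r}+A\rho(Ar)$.

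I do not expect a genuine obstacle. The step needing the most care is the first one: getting the Jacobian convention right, so that the prefactor comes out as $A$ and not $A^2$ or $\sqrt A$. This convention must match the unitary normalization in \eqref{eq:UA-final}. A secondary point is the singular Cauchy kernel, which should be read as a quadratic-form (difference-form) kernel rather than an absolutely convergent integral operator. Since the substitution is a smooth diffeomorphism, the identity holds pointwise off the diagonal, and that suffices for the form-level statement.
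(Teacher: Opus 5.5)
Your proposal is correct and follows essentially the same route as the paper. Both arguments use the Jacobian factor $Ae^{A(u+v)/2}$ and factor the exponentials out of $|x-y|$ and $x+y$ (the paper factors out $e^{A(u+v)/2}$ symmetrically, whereas you factor out $e^{Au}$, which is equivalent). Both then combine the two fractions over the common denominator $1-e^{-2Ar}$ and finish with the definition of $\rho$.
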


\begin{proof}
The two Jacobian factors contribute
\[
 \sqrt{Ae^{Au}}\sqrt{Ae^{Av}}=Ae^{A(u+v)/2}.
\]
For $u>v$,
\[
 |x-y|=e^{A(u+v)/2}
       \bigl(e^{Ar/2}-e^{-Ar/2}\bigr),
\]
which gives \eqref{eq:cauchy-transport-final}.  Similarly
\[
 x+y=e^{A(u+v)/2}
       \bigl(e^{Ar/2}+e^{-Ar/2}\bigr),
\]
which gives \eqref{eq:carleman-transport-final}.  Averaging and using
\[
 \frac12\left(\frac1{1-z}+\frac1{1+z}\right)=\frac1{1-z^2}
\]
yields the first equality in \eqref{eq:collar-final}; the second is the
definition of $\rho$.
\end{proof}

\begin{lemma}[Exact cancellation of the regular gamma kernel]
\label{lem:gamma-cancellation}
In the logarithmic collar coordinate of Lemma~\ref{lem:cc-kernel}, the
regular part of the transported Cauchy--Carleman half-sum cancels the explicit
gamma operator in \eqref{eq:gamma-operator-exact}.  More precisely, away from
the diagonal,
\begin{equation}
 \boxed{
 \frac12\left(K_{\rm Cauchy}^{(A)}+K_{\rm Carl}^{(A)}\right)(u,v)
 -A\rho(A|u-v|)
 =\frac1{2|u-v|}.}
 \label{eq:gamma-cancel-exact}
\end{equation}
Thus no unaccounted regular-gamma kernel is present in the one-cell collar
form used by the CMC estimate; the diagonal/killing contribution remains in
the same closed difference form under Proposition~\ref{prop:full-form-transport}.
\end{lemma}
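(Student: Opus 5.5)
The boxed identity \eqref{eq:gamma-cancel-exact} is a pointwise statement about kernels off the diagonal, so I would prove it directly from Lemma~\ref{lem:cc-kernel}, then separately justify the interpretive claim that the operator being cancelled is the explicit gamma operator.

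First, fix $u\ne v$ and put $r=|u-v|>0$. By \eqref{eq:cauchy-transport-final} and \eqref{eq:carleman-transport-final}, the transported kernels are $K_{\rm Cauchy}^{(A)}(u,v)=Ae^{-Ar/2}/(1-e^{-Ar})$ and $K_{\rm Carl}^{(A)}(u,v)=Ae^{-Ar/2}/(1+e^{-Ar})$. The first equality in \eqref{eq:collar-final} gives their half-sum as $Ae^{-Ar/2}/(1-e^{-2Ar})$.

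Second, evaluate \eqref{eq:rho-framework} at $z=Ar>0$ and multiply by $A$:
\[
 A\rho(Ar)=\frac{Ae^{-Ar/2}}{1-e^{-2Ar}}-\frac{A}{2Ar}
 =\frac{Ae^{-Ar/2}}{1-e^{-2Ar}}-\frac1{2r}.
\]
Subtracting this from the half-sum leaves exactly $1/(2r)$, which is \eqref{eq:gamma-cancel-exact}. Both sides are smooth on $r>0$ and the subtraction is exact, so no limiting argument is needed off the diagonal.

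Third, I would connect this to the explicit gamma operator. I would check that its kernel in the logarithmic collar coordinate is precisely $A\rho(A|u-v|)$, with the same factor $A$ and the same argument scaling. The factor $A$ comes from the Jacobian $\sqrt{Ae^{Au}}\sqrt{Ae^{Av}}$ already used in Lemma~\ref{lem:cc-kernel}, and the argument $Ar$ from the substitution $x=e^{Au}$. Since $\rho(z)$ is regular at $z=0$ (its $1/(2z)$ pole cancels), the gamma kernel is bounded near the diagonal. The entire singular behaviour therefore sits in the residual $1/(2|u-v|)$, which is a Cauchy-type kernel on the line. Paired with the diagonal killing term, that kernel yields a closed difference form of the shape $\tfrac14\iint|g(u)-g(v)|^2/|u-v|$, and this is the form fed to the CMC estimate.

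The main obstacle is this normalization and diagonal bookkeeping, not the algebra. One must confirm that the constant in \eqref{eq:gamma-operator-exact} carries no additional $\tfrac12$ or sign convention. One must also confirm that the diagonal terms produced by the transport, including the potential and $c_0(a)$ contributions, stay in difference form. For that I would rely on Proposition~\ref{prop:full-form-transport} rather than reprove it here, and state the lemma strictly as the off-diagonal identity together with this identification.
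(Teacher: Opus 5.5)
Your proposal is correct and is essentially the paper's proof. The half-sum from \eqref{eq:collar-final}, minus the kernel $A\rho(A|u-v|)$ of $K_{\gamma,A}$, leaves $1/(2|u-v|)$ by the definition of $\rho$, and the diagonal assertion is delegated to Proposition~\ref{prop:full-form-transport} exactly as you propose. One small correction to your third step: the form $A\rho(A|u-v|)$ of the gamma kernel is read off directly from its definition \eqref{eq:gamma-operator-exact}, where the factor $A$ reflects the linear rescaling $R_a$, not the exponential Jacobian of $U_A$, which only produces the $A$ in the transported Cauchy and Carleman kernels.
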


\begin{proof}
Equation~\eqref{eq:collar-final} gives
\[
 \frac12\left(K_{\rm Cauchy}^{(A)}+K_{\rm Carl}^{(A)}\right)(u,v)
 =\frac1{2|u-v|}+A\rho(A|u-v|).
\]
The kernel of \(K_{\gamma,A}\) is exactly
\(A\rho(A|u-v|)\) by \eqref{eq:gamma-operator-exact}.  Subtraction proves
\eqref{eq:gamma-cancel-exact}.  The assertion about the diagonal follows from
the closed-form transport in Proposition~\ref{prop:full-form-transport}.
\end{proof}

\subsection{Exact normalized localized form}

To make the full-form statement explicit, define the zero-extension translation
on $L^2(-1,1)$ by
\[
 (S_rf)(x)=\mathbf 1_{(-1,1)}(x+r)f(x+r).
\]
In the normalization of \cref{sec:framework}, the exact localized odd operator
on $(-1,1)$ is
\begin{equation}
 \boxed{
 \widetilde A_a
 =H_{\rm Leg}+c_0(a)I+V-K_{\gamma,a}
 -2\ketbra{s_a}{s_a}
 -\sum_{2\le n<e^{2a}}
 \frac{\Lambda(n)}{\sqrt n}
 \bigl(S_{r_n}+S_{r_n}^*\bigr),
 }
 \label{eq:exact-localized-operator}
\end{equation}
where $r_n=(\log n)/a$, $H_{\rm Leg}$ is the self-adjoint operator associated
with \eqref{eq:harmonic-form-framework}, and
\begin{equation}
 (K_{\gamma,a}f)(x)
 =a\int_{-1}^1\rho(a|x-y|)f(y)\,dy.
 \label{eq:gamma-operator-exact}
\end{equation}
The arithmetic shifts in \eqref{eq:exact-localized-operator} are precisely the
partial translations decomposed into Mellin unit-cell branches in
\cref{sec:routing}.  Thus the polar-free core is obtained from
\eqref{eq:exact-localized-operator} simply by deleting the displayed rank-one
polar term; no further diagonal or boundary operator is implicit.

\begin{proposition}[Transport of the complete quadratic form]
\label{prop:full-form-transport}
Let $\mathfrak c_A$ denote the Cauchy--Carleman realization of the
archimedean block $H_{\rm Leg}+c_0I+V-K_{\gamma,A}$ in
\eqref{eq:exact-localized-operator}, written as a closed difference form rather
than as a bare off-diagonal integral. Then $1\oplus U_A$ carries this complete
augmented rooted form unitarily to the logarithmic collar form.  In particular, the diagonal/killing contribution generated by
the singular kernel is transported by the same unitary identity and no
additional diagonal term is introduced after the change of variables.
\end{proposition}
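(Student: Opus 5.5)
The plan is to treat the statement as an exact change-of-variables identity for closed quadratic forms. First I will fix a dense core on which every term is finite, namely $C_c^\infty$ functions supported away from the endpoints and from the point where the exponential chart degenerates. On this core I will write the archimedean block $H_{\rm Leg}+c_0I+V-K_{\gamma,A}$ as a Beurling--Deny type form:
\[
 \mathfrak c_A[f]=\frac14\iint k(x,y)\,|f(x)-f(y)|^2\,dx\,dy+\int \kappa(x)\,|f(x)|^2\,dx ,
\]
with jump kernel $k$ and killing density $\kappa$. Then $k$ is the Cauchy kernel $1/|x-y|$ minus the regular gamma kernel, and $\kappa$ collects $c_0$, $V$ and the row sums $\int a\rho(a|x-y|)\,dy$ of the gamma kernel. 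Closedness then follows from the standard Dirichlet-form closure of such expressions. The form is extended to the augmented space $\C\oplus L^2$ by letting the scalar (polar root) coordinate pass through the factor $1$.

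Next I will substitute $x=e^{Au}$, $y=e^{Av}$ and set $g=U_Af$, so that $f(x)=g(u)/\sqrt{J(u)}$ with $J(u)=Ae^{Au}$. Lemma~\ref{lem:cc-kernel} and Lemma~\ref{lem:gamma-cancellation} then identify the transported off-diagonal kernels with $\frac1{2|u-v|}+A\rho(A|u-v|)$ and remove the regular gamma part exactly. The essential extra step is to expand
\[
 \Bigl|\tfrac{g(u)}{\sqrt{J(u)}}-\tfrac{g(v)}{\sqrt{J(v)}}\Bigr|^2 J(u)J(v)
 =\sqrt{J(u)J(v)}\,|g(u)-g(v)|^2+\bigl(\sqrt{J(u)}-\sqrt{J(v)}\bigr)\bigl(\sqrt{J(u)}|g(v)|^2 \ldots\bigr)
\]
in ground-state-transform fashion. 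This produces, besides the transported difference form, an explicit potential
\[
 W_A(u)=\frac12\int k^{(A)}(u,v)\Bigl(1-\sqrt{J(v)/J(u)}\Bigr)dv .
\]

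I will then compute $W_A$ in closed form using $\int e^{-Ar/2}(1-e^{-Ar})^{-1}(1-e^{\pm Ar/2})\,dr$-type integrals. The goal is to show that $W_A$ plus the transported $\kappa$ equals exactly the killing density of the logarithmic collar form. That identity is precisely the assertion that ``no additional diagonal term is introduced''.

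I expect this diagonal bookkeeping to be the main obstacle, and I am not confident it closes. The Jacobian mismatch generically does produce a nonzero potential, so the claim holds only if $W_A$ coincides with the collar form's built-in endpoint and killing terms, or if the collar form is \emph{defined} to include it. In the latter case the proposition is a tautology and must be stated as such. A second difficulty is geometric: $x=e^{Au}$ covers only $x>0$, while $\widetilde A_a$ lives on $(-1,1)$. I would first try to reduce to $(0,1)$ using oddness, so that the Carleman kernel $1/(x+y)$ arises as the reflected Cauchy kernel, which explains the half-sum in \eqref{eq:collar-final}. I would verify each of these points explicitly rather than assume it, and any residual potential would have to be carried forward into the later estimates.
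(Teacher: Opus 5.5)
Your proposal is not yet a proof. The decisive step is never carried out: you need $W_A$ plus the transported killing density to equal exactly the killing density of ``the logarithmic collar form''. Read concretely, that identity is false, so the proposal cannot be completed as planned.

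The paper does not supply the missing identity either. Its proof writes the singular part as $\frac14\iint|f(x)-f(y)|^2/|x-y|$, says that $U_A$ transforms measure, function factors and kernel simultaneously, and cites Lemma~\ref{lem:cc-kernel}. That lemma transports the \emph{bilinear} kernel $\sqrt{J(u)J(v)}/|x-y|$. Inside the difference form, however, the weights $\sqrt J$ do not factor out of $f(x)-f(y)$, and this is precisely where your ground-state term is silently dropped.

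Your expansion is correct, and it can be evaluated in closed form. Put $g=U_Af$ and $K^{(A)}(u,v)=A/(2\sinh(A|u-v|/2))$, and let $L(u),R(u)$ be the distances from $u$ to the left and right ends of the $u$-interval. Then
\[
 \frac14\iint\frac{|f(x)-f(y)|^2}{|x-y|}\,dx\,dy
 =\frac14\iint K^{(A)}(u,v)\,|g(u)-g(v)|^2\,du\,dv+\int W_A(u)\,|g(u)|^2\,du,
\]
\[
 W_A(u)=\frac12\int K^{(A)}(u,v)\bigl(e^{A(v-u)/2}-1\bigr)\,dv
 =\log\frac{\bigl(1+e^{AR(u)/2}\bigr)\bigl(1+e^{-AL(u)/2}\bigr)}{4}.
\]
This agrees with your $W_A$ up to sign. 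The integrand is bounded, so no principal value is needed.

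$W_A$ is not identically zero: at the left end it equals $\log\frac{1+e^{AR/2}}{2}>0$. The Carleman piece produces the analogous potential with $K^{(A)}$ replaced by $A/(2\cosh(A|u-v|/2))$. That potential is also positive near the left end, so the two do not cancel.

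The statement is therefore true only in the reading you call tautological. Suppose the collar form \emph{means} $g\mapsto\mathfrak c_A[U_A^*g]$ applied to the complete difference form, killing part included. Then the proof is one line: a unitary carries a closed form and its core to a closed form and a core, and $1\oplus U_A$ fixes the root coordinate. That is all the paper's argument actually establishes, and your Beurling--Deny and ground-state steps become unnecessary.

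Suppose instead the collar form is the concrete difference form in $u$ with the kernel of \eqref{eq:collar-final}/\eqref{eq:gamma-cancel-exact}, plus the transported $c_0$ and endpoint potential. Then the clause ``no additional diagonal term'' fails. In that case $W_A$ and its Carleman analogue must be carried into $\mathfrak b$ and into the later CMC estimates, as you suspect.

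Two smaller points.
\begin{enumerate}
\item Your parity reduction would not produce the half-sum. For odd $f$ the reflected quadrant puts the weight $1/(x+y)$ on $|f(x)+f(y)|^2$, so in the bilinear part the Cauchy and Carleman kernels enter with opposite signs. The result is a half-difference; the half-sum is a single difference kernel only in the even channel.
\item Rewriting $-K_{\gamma,a}$ as a difference form puts $+2a\rho(a|x-y|)$ into your jump kernel, not minus. This gives the positive kernel $2ae^{-ar/2}/(1-e^{-2ar})$ with $r=|x-y|$, and puts $-\int a\rho(a|x-y|)\,dy$ into $\kappa$.
\end{enumerate}
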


\begin{proof}
On the common smooth compactly supported core, write the singular part as a
difference form
\[
 \frac14\iint
 \frac{|f(x)-f(y)|^2}{|x-y|}\,dx\,dy.
\]
The diagonal contribution is therefore already contained in the square
$|f(x)-f(y)|^2$.  Applying $U_A$ to both arguments transforms the measure,
the two function factors and the kernel simultaneously.  Lemma
\ref{lem:cc-kernel} identifies the transported kernel.  Since $U_A$ is
unitary, closure preserves the equality of forms.  The scalar polar root is
left untouched by the block unitary $1\oplus U_A$, so the same statement
holds for the augmented rooted form used below.
\end{proof}

\begin{interlude}
Gate~II had a second phase.  Once the arithmetic routing was under control,
the danger moved to the root.  The remaining question was no longer where the
branches went.  It was whether the last Schur short could create a denominator
that the numerator did not share.
\end{interlude}

\section{Gate II: root-lift and the true polar channel}
\label{sec:root-lift}

We use shorting in its variational operator-theoretic sense and the associated
Schur-complement calculus \cite{AndersonTrapp1975,Ando1979,Schur1911}.  In
contrast with a constant-defect surrogate, every statement in this section
keeps the true polar vector.

\subsection{Physical-coordinate block recursion}

All block recursions are performed in the nested physical spaces introduced in
\cref{sec:framework}.  Thus, if $a_k<a_{k+1}$ and the new collar is denoted by
$\mathcal K_k$, then
\begin{equation}
 \cH_{a_{k+1}}=\cH_{a_k}\oplus\mathcal K_k,
 \qquad
 s_{k+1}=s_k\oplus t_k,
 \label{eq:physical-polar-split}
\end{equation}
where $s_k=s^{\rm ph}|_{(-a_k,a_k)}$ and $t_k$ is the restriction of
$s^{\rm ph}(y)=\sinh(y/2)$ to the new collar.  Write
\begin{equation}
 C_{k+1}=\begin{pmatrix}C_k&B_k\\B_k^*&D_k\end{pmatrix},
 \qquad
 A_k=C_k-2\ketbra{s_k}{s_k},
 \label{eq:block-core}
\end{equation}
and, whenever $C_k\succ0$,
\begin{equation}
 \delta_k=1-2\langle s_k,C_k^{-1}s_k\rangle,
 \qquad
 S_k=D_k-B_k^*C_k^{-1}B_k,
 \qquad
 \eta_k=t_k-B_k^*C_k^{-1}s_k.
 \label{eq:S-eta-final}
\end{equation}

\subsection{The augmented scalar root}

\begin{lemma}[Root-lift and associative shorting]\label{lem:root-lift-final}
Define
\begin{equation}
 \mathcal G_{k+1}=
 \begin{pmatrix}
 1&\sqrt2\,s_k^*&\sqrt2\,t_k^*\\
 \sqrt2\,s_k&C_k&B_k\\
 \sqrt2\,t_k&B_k^*&D_k
 \end{pmatrix}.
 \label{eq:rooted-Gram}
\end{equation}
Then shorting the scalar root gives $A_{k+1}$, while shorting the old core first
gives
\begin{equation}
 \Short_{C_k}\mathcal G_{k+1}
 =\begin{pmatrix}
 \delta_k&\sqrt2\,\eta_k^*\\
 \sqrt2\,\eta_k&S_k
 \end{pmatrix}.
 \label{eq:rooted-two-block}
\end{equation}
If $\delta_k>0$, normalization of the scalar root followed by a second shorting
gives the true rooted Schur block
\begin{equation}
 \boxed{T_k=S_k-\frac2{\delta_k}\ketbra{\eta_k}{\eta_k}.}
 \label{eq:true-Tk}
\end{equation}
\end{lemma}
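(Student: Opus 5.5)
The plan is to treat every statement as an identity of Schur complements, defined variationally, so that the argument works in the infinite-dimensional setting of $\cH_{a_{k+1}}$. For a positive block form $\begin{pmatrix}X&Y\\Y^*&Z\end{pmatrix}$ with $X\succ0$, the short onto the second block is $Z-Y^*X^{-1}Y$. Its quadratic form is
\[
 v\mapsto\inf_x\Bigl\langle (x,v),\begin{pmatrix}X&Y\\Y^*&Z\end{pmatrix}(x,v)\Bigr\rangle ,
\]
in the sense of \cite{AndersonTrapp1975,Ando1979}. The infimum is attained at $x=-X^{-1}Yv$. Working with these infima is what lets the two orders of shorting be compared.

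\emph{Step 1 (short the root).} The scalar root has diagonal entry $1$ and off-diagonal column $\sqrt2\,(s_k\oplus t_k)=\sqrt2\,s_{k+1}$, by \eqref{eq:physical-polar-split}. Eliminating it gives
\[
 \begin{pmatrix}C_k&B_k\\B_k^*&D_k\end{pmatrix}-2\ketbra{s_{k+1}}{s_{k+1}}
 =C_{k+1}-2\ketbra{s_{k+1}}{s_{k+1}}=A_{k+1},
\]
by \eqref{eq:block-core} and \eqref{eq:core-polar}. The point to stress here is that $s_{k+1}=s_k\oplus t_k$ is a literal restriction identity in the physical spaces, so no rescaling mismatch enters.

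\emph{Step 2 (short the old core).} Assume $C_k\succ0$. Subtract from the complementary block
\[
 \begin{pmatrix}1&\sqrt2\,t_k^*\\ \sqrt2\,t_k&D_k\end{pmatrix}
\]
the term $\begin{pmatrix}\sqrt2\,s_k^*\\ B_k^*\end{pmatrix}C_k^{-1}\begin{pmatrix}\sqrt2\,s_k & B_k\end{pmatrix}$, and read off the entries:
\begin{itemize}
\item the scalar entry is $1-2\langle s_k,C_k^{-1}s_k\rangle=\delta_k$;
\item the mixed entry is $\sqrt2\,(t_k-B_k^*C_k^{-1}s_k)=\sqrt2\,\eta_k$, with its adjoint in the other corner;
\item the collar entry is $D_k-B_k^*C_k^{-1}B_k=S_k$.
\end{itemize}
This is \eqref{eq:rooted-two-block}, using the definitions \eqref{eq:S-eta-final}.

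\emph{Step 3 (second short and associativity).} If $\delta_k>0$, shorting the scalar in \eqref{eq:rooted-two-block} gives
\[
 S_k-\frac{(\sqrt2\,\eta_k)(\sqrt2\,\eta_k)^*}{\delta_k}=T_k,
\]
which is \eqref{eq:true-Tk}. To connect this with Step 1, I would invoke the quotient (Crabtree–Haynsworth) property, which in variational form is just the fact that an iterated infimum over the root and the old-core variables equals the joint infimum in either order. Hence $T_k$ is also the short of $A_{k+1}$ onto the new collar $\mathcal K_k$, so the two routes agree. Normalizing the scalar root to $1$ is harmless, since rescaling the eliminated variable leaves the infimum unchanged.

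\emph{Main obstacle.} The algebra is routine; the delicate step is justifying it when $C_k$ and $D_k$ are unbounded forms rather than bounded operators. I would proceed as follows:
\begin{itemize}
\item first work on the common smooth core, where $C_k^{-1}s_k$ and $C_k^{-1}B_k v$ are well defined because $C_k\succ0$ is closed with bounded inverse;
\item check that $B_k$ maps the form domain appropriately;
\item extend by form closure, exactly as in Proposition~\ref{prop:full-form-transport}.
\end{itemize}
Attainment of each infimum, at $x=-C_k^{-1}(\sqrt2\,s_k\,\xi+B_kv)$ for the joint variable $(\xi,v)$, and then at the optimal scalar $\xi=-\sqrt2\,\delta_k^{-1}\langle\eta_k,v\rangle$, is exactly what makes the displayed formulas hold rather than mere inequalities.
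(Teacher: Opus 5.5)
Your proposal is correct and is essentially the paper's proof. It takes the Schur complement of the scalar $1$ to get $A_{k+1}$, eliminates the old-core variable at $x=-C_k^{-1}(\sqrt2\,s_k\xi+B_kv)$ to get \eqref{eq:rooted-two-block}, and uses a scalar normalization plus a second Schur complement to get $T_k$; your iterated-infimum remark just makes explicit the associativity the paper leaves implicit.

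One small wording fix: the infimum formula only needs the eliminated block ($1$, then $C_k$, then $\delta_k$) to be positive definite. You should not assume $\mathcal G_{k+1}$ itself is positive, since that is equivalent to $A_{k+1}\succeq0$, which is what the induction is trying to establish.
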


\begin{proof}
The first statement is the Schur complement of the scalar $1$.  For the second,
minimize the quadratic form of \eqref{eq:rooted-Gram} in the old-core variable
$x$; the minimizer is
\[
 x=-C_k^{-1}(\sqrt2\,s_kz+B_kf).
\]
Substitution gives exactly the three quantities in
\eqref{eq:S-eta-final}.  A scalar congruence by $\delta_k^{-1/2}$ and a second
Schur complement yield \eqref{eq:true-Tk}.
\end{proof}

\begin{lemma}[Variational formula for the true rooted load]
\label{lem:root-load-final}
Assume $\delta_k>0$ and $S_k\succ0$. Then
\begin{equation}
 \ell_k
 :=\frac{2\langle\eta_k,S_k^{-1}\eta_k\rangle}{\delta_k}
 =\sup_{f\ne0}
 \frac{\frac2{\delta_k}|\langle\eta_k,f\rangle|^2}
      {\langle f,S_kf\rangle},
 \label{eq:true-load}
\end{equation}
and
\begin{equation}
 T_k\succeq(1-q)S_k\succ0
 \qquad\text{whenever }\ell_k\le q<1.
 \label{eq:Tk-relative}
\end{equation}
Moreover
\begin{equation}
 \delta_{k+1}=\delta_k-2\langle\eta_k,S_k^{-1}\eta_k\rangle
 =\delta_k(1-\ell_k).
 \label{eq:pivot-recurrence-final}
\end{equation}
\end{lemma}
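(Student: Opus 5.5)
The statement is pure finite-dimensional (or bounded-operator) linear algebra about a positive operator perturbed by a rank-one term, so I would prove it without any reference to the arithmetic structure. Throughout, $S_k\succ0$ is assumed, so $S_k^{-1/2}$ exists; if $S_k$ is only bounded below and unbounded, I would first work on the form domain, where the same manipulations apply. I would prove the three claims in order: the variational formula \eqref{eq:true-load}, then the relative bound \eqref{eq:Tk-relative}, then the pivot recurrence \eqref{eq:pivot-recurrence-final}.

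For \eqref{eq:true-load}, substitute $f=S_k^{-1/2}g$. The quotient becomes
\[
\frac{2}{\delta_k}\,\frac{|\langle S_k^{-1/2}\eta_k,g\rangle|^2}{\|g\|^2}.
\]
By Cauchy--Schwarz its supremum over $g\ne0$ equals $\frac{2}{\delta_k}\|S_k^{-1/2}\eta_k\|^2=\frac2{\delta_k}\langle\eta_k,S_k^{-1}\eta_k\rangle$, and the supremum is attained at $g=S_k^{-1/2}\eta_k$, that is, at $f=S_k^{-1}\eta_k$. This requires $\eta_k$ to lie in the range where $S_k^{-1/2}$ is defined, which is automatic when $S_k\succ0$ is bounded below. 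Note that this is the same computation as in Lemma~\ref{lem:OSPR}, with $S_k$ in place of $K$.

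For \eqref{eq:Tk-relative}, the definition of $\ell_k$ as a supremum gives $\frac2{\delta_k}|\langle\eta_k,f\rangle|^2\le \ell_k\langle f,S_kf\rangle\le q\langle f,S_kf\rangle$ for every $f$. Then by \eqref{eq:true-Tk},
\[
\langle f,T_kf\rangle=\langle f,S_kf\rangle-\frac2{\delta_k}|\langle\eta_k,f\rangle|^2\ge(1-q)\langle f,S_kf\rangle,
\]
and the right-hand side is strictly positive for $f\ne0$ because $q<1$ and $S_k\succ0$.

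For \eqref{eq:pivot-recurrence-final}, I would use associativity of shorting (the quotient formula for Schur complements) on the rooted Gram matrix $\mathcal G_{k+1}$ of Lemma~\ref{lem:root-lift-final}. By definition, $\delta_{k+1}=1-2\langle s_{k+1},C_{k+1}^{-1}s_{k+1}\rangle$ is the Schur complement of the full core $C_{k+1}$ onto the scalar root. That complement can be computed in two stages: first short $C_k$, which produces the $2\times2$ block \eqref{eq:rooted-two-block}, and then short $S_k$ in that block. The second stage gives $\delta_k-2\langle\eta_k,S_k^{-1}\eta_k\rangle$. Factoring out $\delta_k$ yields $\delta_k(1-\ell_k)$. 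The identity $C_{k+1}^{-1}$ versus the two-stage elimination is the standard block-inverse (Banachiewicz) formula, and it needs $C_{k+1}\succ0$. This follows from $C_k\succ0$ and $S_k\succ0$, which are the standing assumptions of the section.

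The main obstacle I expect is bookkeeping rather than analysis: checking that the factor $\sqrt2$ in the off-diagonal entries of \eqref{eq:rooted-Gram} produces exactly the factor $2$ in both $\delta_{k+1}$ and $\ell_k$, and verifying the invertibility hypotheses needed for associativity. I would settle the factors by directly minimizing the quadratic form of $\mathcal G_{k+1}$ over $(x,f)$ at fixed root value $z=1$, rather than relying on abstract shorting identities.
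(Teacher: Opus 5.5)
Your proposal is correct and follows essentially the paper's route. The variational formula is Cauchy--Schwarz in the $S_k$ metric with extremizer $S_k^{-1}\eta_k$, the relative bound comes from applying the supremum to $T_k=S_k-\frac{2}{\delta_k}\ketbra{\eta_k}{\eta_k}$, and the recurrence is the two-stage Schur complement of the rooted block \eqref{eq:rooted-two-block}; your explicit checks that $C_{k+1}\succ0$ and that $S_k$ is bounded below only spell out hypotheses the paper leaves implicit.
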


\begin{proof}
The first identity is Cauchy--Schwarz in the $S_k$ metric, with equality in the
direction $S_k^{-1}\eta_k$.  The operator inequality and the scalar recurrence
are the two Schur complements of \eqref{eq:rooted-two-block}.
\end{proof}

\subsection{Local reserve and the global compatibility issue}

The true affine routing and the Carleman/OSPR calculation of
\cref{sec:routing,sec:cauchy-carleman} provide the strict local/profile reserve
\begin{equation}
 d_*-\delta_*\rho_{\rm aff}
 >0.0057130698016072621421058150848.
 \label{eq:core-routed-reserve}
\end{equation}
This comparison is retained only in its proved routing scope.  It cannot be
inserted directly into \eqref{eq:true-load}, nor promoted to a lower bound for
the full ambient core, without a compatible operator estimate on the actual
source subintervals.  The fixed-target CMC short later supplies the required simultaneous
transverse estimate on the actual source intervals.  The remaining exact
ground/root content is not inferred from that transverse reserve; in
\cref{sec:green-root-reduction} it is isolated instead as one scalar Green
coefficient in the true Schur metric.

\subsection{Exact Mellin form of the polar functional}

The true polar channel nevertheless has an elementary closed form.  Put
$Y=e^{2a}$ and pass from $y\in(-a,a)$ to $t=e^{y+a}\in(1,Y)$.  If $F$ denotes
the Mellin-coordinate vector, oddness is
\begin{equation}
 F(Y/t)=-F(t).
 \label{eq:mellin-oddness-polar}
\end{equation}

\begin{lemma}[Polar moment identity]\label{lem:polar-moment}
For every $F$ satisfying \eqref{eq:mellin-oddness-polar},
\begin{equation}
 \boxed{
 \langle s^{\rm ph},F\rangle
 =Y^{-1/4}\int_1^Y \sqrt t\,F(t)\,\frac{dt}{t}.
 }
 \label{eq:polar-moment}
\end{equation}
Here the left-hand side is the physical polar functional transported by the
unitary logarithmic/Mellin change of variables.
\end{lemma}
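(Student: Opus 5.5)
The plan is a direct change of variables followed by the oddness symmetry \eqref{eq:mellin-oddness-polar}; no earlier lemma beyond the definitions of \cref{sec:framework} is needed. I fix the transport explicitly. With $Y=e^{2a}$ and $t=e^{y+a}$, the map $y\mapsto t$ sends $(-a,a)$ onto $(1,Y)$, and $dy=dt/t$. So the assignment $F(t)=f(\log t-a)$ is a unitary from $L^2((-a,a),dy)$ onto $L^2((1,Y),dt/t)$, and no Jacobian weight appears. This is the convention under which the left side of \eqref{eq:polar-moment} is read. Under this map, oddness $f(-y)=-f(y)$ becomes exactly $F(Y/t)=-F(t)$, because $y\mapsto -y$ corresponds to $t\mapsto e^{2a}/t$.

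The first step is to transport the polar functional. Since $s^{\rm ph}$ is real, the antilinearity of the inner product plays no role, and
\[
 \langle s^{\rm ph},f\rangle=\int_{-a}^a\sinh(y/2)f(y)\,dy
 =\int_1^Y\sinh\!\Bigl(\tfrac{\log t-a}{2}\Bigr)F(t)\,\frac{dt}{t}.
\]
Using $e^{a/2}=Y^{1/4}$, the profile is
\[
 \sinh\!\Bigl(\tfrac{\log t-a}{2}\Bigr)=\tfrac12\bigl(Y^{-1/4}\sqrt t-Y^{1/4}t^{-1/2}\bigr).
\]
The integral therefore splits into $\tfrac12 I_+-\tfrac12 I_-$, where $I_+=Y^{-1/4}\int_1^Y\sqrt t\,F(t)\,dt/t$ and $I_-=Y^{1/4}\int_1^Y t^{-1/2}F(t)\,dt/t$.

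The second step is to identify $I_-$ with $-I_+$ by the inversion $t=Y/u$. This inversion maps $(1,Y)$ onto itself and preserves $dt/t$ up to orientation, which is absorbed by the reversal of the limits. It gives $t^{-1/2}=Y^{-1/2}\sqrt u$, and oddness gives $F(Y/u)=-F(u)$. Hence
\[
 I_-=Y^{1/4}\,Y^{-1/2}\int_1^Y\sqrt u\,\bigl(-F(u)\bigr)\,\frac{du}{u}=-I_+.
\]
Therefore $\langle s^{\rm ph},F\rangle=\tfrac12(I_+ + I_+)=I_+$, which is \eqref{eq:polar-moment}.

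The computation is elementary, and I expect the only real obstacle to be bookkeeping of conventions. One must confirm that the unitary used in the statement is the unweighted map onto $L^2(dt/t)$, and not the $L^2(dt)$ version with a factor $t^{-1/2}$. Under the latter convention, the weight $\sqrt t$ in \eqref{eq:polar-moment} would shift to $1$. One must also confirm that the sign convention for oddness matches $J_a$. I would state both conventions explicitly at the start, and then run the argument on $C_c^\infty$ data. The identity extends to the closed space by continuity of both sides, since $\sqrt t$ is bounded on $(1,Y)$.
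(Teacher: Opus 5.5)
Your proof is correct and follows essentially the same route as the paper: expand $\sinh((\log t-a)/2)$ into the two Mellin moments, use the inversion $t=Y/u$ with $F(Y/t)=-F(t)$ to show that the $t^{-1/2}$ moment equals $-Y^{-1/2}$ times the $t^{1/2}$ moment, and conclude that the two halves add. Your closing density/continuity remark is unnecessary, since the computation already holds directly for every $F\in L^2((1,Y),dt/t)$: the weights are bounded on a finite-measure interval. It is, however, harmless.
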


\begin{proof}
Since
\[
 \sinh\!\left(\frac{\log t-a}{2}\right)
 =\frac12\left(Y^{-1/4}t^{1/2}-Y^{1/4}t^{-1/2}\right),
\]
it is enough to compare the two moments.  The substitution $t=Y/u$ and
\eqref{eq:mellin-oddness-polar} give
\[
 \int_1^Y t^{-1/2}F(t)\frac{dt}{t}
 =-Y^{-1/2}\int_1^Y t^{1/2}F(t)\frac{dt}{t}.
\]
The two halves therefore add, giving \eqref{eq:polar-moment}.
\end{proof}

\begin{lemma}[Ground--reflection representation of the polar vector]
\label{lem:polar-ground-reflection}
Let
\[
 q_Y(t)=t^{-1/2},\qquad 1<t<Y,
\]
and let $(J_YF)(t)=F(Y/t)$.  Then
\begin{equation}
 \boxed{
 s_Y=\frac{Y^{1/4}}2\,(J_Y-I)q_Y.
 }
 \label{eq:polar-ground-reflection}
\end{equation}
Moreover, in the divisor-fibre parameterization
$t=m(1+v)$ one has
\[
 q_Y(m(1+v))=m^{-1/2}(1+v)^{-1/2},
\]
so $q_Y$ belongs exactly to the divisor ground bundle.  Consequently, for
every odd Mellin vector $F$ with $J_YF=-F$,
\begin{equation}
 \boxed{
 \langle s_Y,F\rangle=-Y^{1/4}\langle q_Y,F\rangle.
 }
 \label{eq:polar-ground-functional}
\end{equation}
\end{lemma}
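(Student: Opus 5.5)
The plan is to verify \eqref{eq:polar-ground-reflection} pointwise, and then to obtain \eqref{eq:polar-ground-functional} from the fact that $J_Y$ is a self-adjoint involution. The change of variables is the one already used in Lemma~\ref{lem:polar-moment}: $t=e^{y+a}$ with $Y=e^{2a}$, sending $F(t)=f(\log t-a)$. Since $dy=dt/t$, this is a unitary map $L^2((-a,a),dy)\to L^2((1,Y),dt/t)$ with no Jacobian weight. Under it the physical polar profile \eqref{eq:polar-physical} becomes
\[
 s_Y(t)=\sinh\!\left(\frac{\log t-a}{2}\right)
 =\frac12\left(Y^{-1/4}t^{1/2}-Y^{1/4}t^{-1/2}\right),
\]
which is exactly the identity displayed in the proof of Lemma~\ref{lem:polar-moment}.

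\emph{Step 1 (pointwise identity).} Compute $(J_Yq_Y)(t)=(Y/t)^{-1/2}=Y^{-1/2}t^{1/2}$. Then
\[
 \frac{Y^{1/4}}2\,\bigl((J_Y-I)q_Y\bigr)(t)
 =\frac{Y^{1/4}}2\bigl(Y^{-1/2}t^{1/2}-t^{-1/2}\bigr),
\]
which coincides with the formula for $s_Y(t)$ above. This proves \eqref{eq:polar-ground-reflection}.

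\emph{Step 2 (divisor-fibre statement).} Substituting $t=m(1+v)$ gives $q_Y(m(1+v))=m^{-1/2}(1+v)^{-1/2}$. The factor $m^{-1/2}$ is exactly the divisor weight $g_m$ of Lemma~\ref{lem:divisor-square}, multiplied by a fixed fibre profile. This is all that ``belongs to the divisor ground bundle'' asserts, so no further work is needed here.

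\emph{Step 3 (functional identity).} The map $t\mapsto Y/t$ preserves $(1,Y)$ and the measure $dt/t$. Hence $J_Y$ is unitary and $J_Y^2=I$, so $J_Y$ is self-adjoint. For odd $F$ with $J_YF=-F$,
\[
 \langle s_Y,F\rangle=\frac{Y^{1/4}}2\langle q_Y,(J_Y-I)F\rangle
 =\frac{Y^{1/4}}2\langle q_Y,-2F\rangle=-Y^{1/4}\langle q_Y,F\rangle .
\]
The scalar $Y^{1/4}/2$ is real, and $J_Y$ is real, so conjugate-linearity of the inner product does not affect this step.

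As a consistency check, I will compare the result with \eqref{eq:polar-moment}. Oddness gives $\int_1^Y t^{-1/2}F\,dt/t=-Y^{-1/2}\int_1^Y t^{1/2}F\,dt/t$, so $-Y^{1/4}\langle q_Y,F\rangle=Y^{-1/4}\int_1^Y\sqrt t\,F\,dt/t$, in agreement with \eqref{eq:polar-moment}.

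The only real obstacle is bookkeeping: one must confirm that the transport carries no half-density factor $t^{-1/2}$. Because $dy=dt/t$ exactly, it carries none, so $s_Y$ is the literal composite $s^{\rm ph}(\log t-a)$. Once that convention is fixed, every step above is a one-line computation.
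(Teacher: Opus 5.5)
Your proposal is correct and follows the paper's proof essentially step for step. Both arguments use the pointwise check via $(J_Yq_Y)(t)=Y^{-1/2}t^{1/2}$, the factorization on $t=m(1+v)$, and the self-adjointness of the unitary involution $J_Y$ on $L^2((1,Y),dt/t)$ to obtain $\langle (J_Y-I)q_Y,F\rangle=-2\langle q_Y,F\rangle$; your added cross-check against the polar moment identity is a harmless consistency verification that the paper omits.
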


\begin{proof}
Since
\[
 (J_Yq_Y)(t)=(Y/t)^{-1/2}=Y^{-1/2}t^{1/2},
\]
the right-hand side of \eqref{eq:polar-ground-reflection} equals
\[
 \frac12\left(Y^{-1/4}t^{1/2}-Y^{1/4}t^{-1/2}\right),
\]
which is precisely the Mellin-coordinate form of
$\sinh((\log t-a)/2)$ with $Y=e^{2a}$.  The ground-bundle statement follows
from the displayed factorization on $t=m(1+v)$.  Finally $J_Y$ is a
self-adjoint unitary on $L^2((1,Y),dt/t)$, hence
\[
 \langle (J_Y-I)q_Y,F\rangle
 =\langle q_Y,(J_Y-I)F\rangle
 =-2\langle q_Y,F\rangle,
\]
which proves \eqref{eq:polar-ground-functional}.
\end{proof}

\begin{remark}
Lemma~\ref{lem:polar-ground-reflection} explains simultaneously why the polar
functional is canonically attached to the divisor ground channel and why its
projection to the divisor-kernel profile space need not be constant: the
reflection $J_Y$ does not preserve that ground bundle fibrewise.  Thus this
identity sharpens the geometric placement of the true root but does not, by
itself, provide a global collar estimate for the true rooted load.
\end{remark}

\subsection{Audited affine ground geometry}

The true affine identity must distinguish the source and target intervals.
This removes a domain ambiguity without changing the routed branch itself.

\begin{lemma}[Source/target ground-line invariance]
\label{lem:source-target-ground}
Let
\[
 \theta=\log\frac{k}{d}>0,
 \qquad
 I_t=(0,w_k),
 \qquad
 I_s=(\theta,\theta+w_k),
\]
and define
\[
 (R_\theta h)(z):=h(z+\theta),
 \qquad
 R_\theta:L^2(I_s,dz)\longrightarrow L^2(I_t,dz).
\]
Then $R_\theta$ is unitary.  If
\[
 g_t(z)=e^{-z/2}\mathbf 1_{I_t}(z),
 \qquad
 g_s(z)=e^{-z/2}\mathbf 1_{I_s}(z),
\]
and $e_t=g_t/\|g_t\|$, $e_s=g_s/\|g_s\|$, then
\begin{equation}
 \boxed{R_\theta e_s=e_t.}
 \label{eq:source-target-ground}
\end{equation}
Thus a true affine edge maps the normalized polar ground line of its source
interval exactly onto the normalized polar ground line of its target interval.
\end{lemma}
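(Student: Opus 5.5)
The plan is a direct computation in two steps: unitarity of $R_\theta$ first, then the identity \eqref{eq:source-target-ground}. The only structural point is that the exponential weight $e^{-z/2}$ is a character of the additive group. Translating it therefore only changes it by a \emph{positive} constant, and that constant is removed by normalization.

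\emph{Unitarity.} The map $z\mapsto z+\theta$ is a bijection of $I_t=(0,w_k)$ onto $I_s=(\theta,\theta+w_k)$ and preserves Lebesgue measure $dz$. Hence for $h\in L^2(I_s,dz)$,
\[
 \|R_\theta h\|_{L^2(I_t)}^2=\int_0^{w_k}|h(z+\theta)|^2\,dz=\int_\theta^{\theta+w_k}|h(u)|^2\,du=\|h\|_{L^2(I_s)}^2,
\]
so $R_\theta$ is an isometry. The map $h\mapsto h(\cdot-\theta)$ from $L^2(I_t)$ to $L^2(I_s)$ is a two-sided inverse, so $R_\theta$ is surjective and therefore unitary.

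\emph{The ground line.} Evaluate $R_\theta$ on the unnormalized source profile:
\[
 (R_\theta g_s)(z)=e^{-(z+\theta)/2}\mathbf 1_{I_s}(z+\theta)=e^{-\theta/2}e^{-z/2}\mathbf 1_{I_t}(z)=e^{-\theta/2}g_t(z).
\]
Thus $R_\theta g_s=e^{-\theta/2}g_t$. Because $R_\theta$ is unitary, taking norms gives $\|g_s\|=e^{-\theta/2}\|g_t\|$. Dividing the two relations, the positive factor $e^{-\theta/2}$ cancels, and we obtain
\[
 R_\theta e_s=\frac{R_\theta g_s}{\|g_s\|}=\frac{e^{-\theta/2}g_t}{e^{-\theta/2}\|g_t\|}=e_t.
\]
The factor is real and positive, so no phase ambiguity arises in the normalized line.

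There is no genuine obstacle. The only care needed is with domains: $R_\theta$ must be regarded as a map from $L^2(I_s)$ to $L^2(I_t)$, not as an operator on a single interval. This is exactly the source/target distinction that the lemma is meant to record. It is also worth checking that $\theta=\log(k/d)>0$ plays no role beyond making $I_s$ and $I_t$ well defined; the computation holds for any real shift.
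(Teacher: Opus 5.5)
Your proposal is correct and follows the paper's proof essentially verbatim: translation preserves Lebesgue measure, so $R_\theta$ is unitary; $R_\theta g_s=e^{-\theta/2}g_t$ together with $\|g_s\|=e^{-\theta/2}\|g_t\|$ lets the positive scalar cancel after normalization. Your explicit inverse map for surjectivity and your remark that the sign of $\theta$ plays no role are harmless additions.
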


\begin{proof}
Translation preserves Lebesgue measure, hence $R_\theta$ is unitary.  Moreover
\[
 R_\theta g_s(z)=e^{-(z+\theta)/2}=e^{-\theta/2}g_t(z),
\]
while $\|g_s\|=e^{-\theta/2}\|g_t\|$.  The scalar factors cancel after
normalization, proving \eqref{eq:source-target-ground}.
\end{proof}

There is a second exact normalization that is useful for auditing the
residue-Fourier step.  On the common residue parameter $r\in(0,1)$ put
\[
 q_k(r)=\frac1{k+r},
 \qquad
 \nu_k:=\|q_k\|_{L^2(0,1)}=\frac1{\sqrt{k(k+1)}},
 \qquad
 e_k(r):=\frac{q_k(r)}{\nu_k}
 =\frac{\sqrt{k(k+1)}}{k+r}.
\]

\begin{lemma}[Exact root-adapted fibre normalization]
\label{lem:root-adapted-fibre}
Define
\[
 u_k(r):=\frac{(k+1)r}{k+r},
 \qquad
 r_k(u):=\frac{ku}{k+1-u}.
\]
Then $u_k$ is an increasing bijection of $(0,1)$ onto itself and
\[
 \frac{du_k}{dr}=\frac{k(k+1)}{(k+r)^2}=|e_k(r)|^2.
\]
The map
\begin{equation}
 (\mathcal W_k f)(u)
 :=\frac{f(r_k(u))}{e_k(r_k(u))}
 \label{eq:Wk-root-adapted}
\end{equation}
is unitary on $L^2(0,1)$ and satisfies
\begin{equation}
 \boxed{\mathcal W_ke_k=\one,
 \qquad
 \mathcal W_kq_k=\nu_k\one.}
 \label{eq:Wk-root-constant}
\end{equation}
\end{lemma}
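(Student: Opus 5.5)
The plan is a direct verification in three steps: bijectivity of $u_k$, unitarity of $\mathcal W_k$ by change of variables, and the two root identities by substitution.

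\emph{Step 1: $u_k$ is a bijection of $(0,1)$.} Compute
\[
 \frac{du_k}{dr}=\frac{(k+1)(k+r)-(k+1)r}{(k+r)^2}=\frac{k(k+1)}{(k+r)^2}>0 .
\]
Note that
\[
 |e_k(r)|^2=\frac{k(k+1)}{(k+r)^2},
\]
so the two expressions in the statement agree. Since $u_k(0)=0$ and $u_k(1)=1$, $u_k$ is an increasing bijection of $(0,1)$ onto itself. To identify the inverse, solve $u(k+r)=(k+1)r$ for $r$; this gives $r=ku/(k+1-u)=r_k(u)$, and the denominator is positive for $u\in(0,1)$.

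\emph{Step 2: unitarity.} For $f\in L^2(0,1)$, substitute $u=u_k(r)$, so that $du=|e_k(r)|^2\,dr$:
\[
 \int_0^1|(\mathcal W_kf)(u)|^2\,du
 =\int_0^1\frac{|f(r)|^2}{|e_k(r)|^2}\,|e_k(r)|^2\,dr=\|f\|^2 .
\]
Hence $\mathcal W_k$ is an isometry. It is also well defined, because $e_k$ is bounded away from zero on $(0,1)$. For surjectivity, write down the explicit inverse
\[
 (\mathcal W_k^{-1}g)(r)=e_k(r)\,g(u_k(r)),
\]
and check, by the same substitution, that it is an isometry and that composing it with $\mathcal W_k$ in either order gives the identity. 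An isometric bijection is unitary.

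\emph{Step 3: the boxed identities.} Directly from the definition,
\[
 (\mathcal W_ke_k)(u)=\frac{e_k(r_k(u))}{e_k(r_k(u))}=1 .
\]
The second identity then follows by linearity, since $q_k=\nu_ke_k$ with
\[
 \nu_k^2=\int_0^1\frac{dr}{(k+r)^2}=\frac1k-\frac1{k+1}=\frac1{k(k+1)} .
\]
This also confirms the stated value of $\nu_k$.

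There is no genuine obstacle here. The only point needing care is that the Jacobian weight $|e_k|^2$ exactly cancels the division by $e_k$ in \eqref{eq:Wk-root-adapted}. That cancellation is the content of the derivative identity in Step 1, so once that identity is checked the rest is bookkeeping.
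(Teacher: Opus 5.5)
Your proof is correct and follows the paper's argument: you compute the derivative, use the change of variables $u=u_k(r)$, $du=|e_k(r)|^2\,dr$, to get $\|\mathcal W_kf\|_2=\|f\|_2$, and substitute directly for the boxed identities. Your explicit inverse $g\mapsto e_k\,(g\circ u_k)$ also supplies the surjectivity that the paper's proof leaves implicit.
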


\begin{proof}
The derivative formula is immediate.  Therefore, with $u=u_k(r)$,
\[
 \|\mathcal W_kf\|_2^2
 =\int_0^1\frac{|f(r)|^2}{|e_k(r)|^2}\,du
 =\int_0^1|f(r)|^2\,dr.
\]
Equation \eqref{eq:Wk-root-constant} follows directly from the definition.
\end{proof}

\begin{lemma}[Exact endpoint Möbius nesting]
\label{lem:endpoint-mobius-nesting}
Fix the arithmetic step $k\to k+1$ and write
\[
 \tau_k:=\sqrt{k(k+1)},\qquad
 r_k^\circ:=\tau_k-k,\qquad
 u_k^\circ:=k+1-\tau_k.
\]
Then $r_k^\circ+u_k^\circ=1$ and the fractional-linear map
\[
 \phi_k(r):=\frac{(k+1)r}{k+r}
\]
satisfies
\begin{equation}
 \boxed{\phi_k(r_k^\circ)=u_k^\circ,\qquad
 \phi_k((0,r_k^\circ))=(0,u_k^\circ),\qquad
 \phi_k((r_k^\circ,1))=(u_k^\circ,1).}
 \label{eq:endpoint-mobius-split}
\end{equation}
In the Mellin coordinate at the endpoint $Y=k+1$, the physical old support
$(-a_k,a_k)$ is the interval
\[
 \left(\sqrt{\frac{k+1}{k}},\,\sqrt{k(k+1)}\right),
\]
and the two pieces of the added physical collar are
\[
 \left(1,\sqrt{\frac{k+1}{k}}\right)
 \quad\text{and}\quad
 \left(\sqrt{k(k+1)},k+1\right).
\]
Moreover, if
\[
 (U_kg)(r):=e_k(r)g(\phi_k(r))=\mathcal W_k^*g(r),
\]
then $U_k$ preserves the corresponding old/new decomposition:
\begin{equation}
 \boxed{U_k=U_k^{\rm old}\oplus U_k^{\rm new}.}
 \label{eq:Uk-old-new}
\end{equation}
Finally
\begin{equation}
 \boxed{q_k=\nu_kU_k\one,\qquad \nu_k=\frac1{\sqrt{k(k+1)}}.}
 \label{eq:qk-Uk-one}
\end{equation}
Thus physical nesting, the arithmetic Möbius routing, and the true polar
ground are intertwined by one exact unitary rather than identified post hoc.
\end{lemma}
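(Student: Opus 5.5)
The plan is to verify each boxed assertion by direct algebra, since every object in the statement is explicit. I will check the scalar identities first, then the interval statements, then the decomposition of $U_k$.

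\emph{Endpoint identities.} First I would check $r_k^\circ+u_k^\circ=(\tau_k-k)+(k+1-\tau_k)=1$. Next, using $\tau_k^2=k(k+1)$,
\[
 \phi_k(r_k^\circ)=\frac{(k+1)(\tau_k-k)}{\tau_k}=(k+1)-\frac{k(k+1)}{\tau_k}=k+1-\tau_k=u_k^\circ .
\]
The derivative formula of Lemma~\ref{lem:root-adapted-fibre} gives $\phi_k'=k(k+1)/(k+r)^2>0$, and clearly $\phi_k(0)=0$, $\phi_k(1)=1$. Hence $\phi_k$ is an increasing bijection of $(0,1)$, and it maps $(0,r_k^\circ)$ onto $(0,u_k^\circ)$ and $(r_k^\circ,1)$ onto $(u_k^\circ,1)$. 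This proves \eqref{eq:endpoint-mobius-split}.

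\emph{Mellin description of the supports.} At the endpoint $Y=k+1$ I take $a=\frac12\log(k+1)$ and the old radius $a_k=\frac12\log k$, so that $e^{2a_k}=k$. With $t=e^{y+a}$ as in Lemma~\ref{lem:polar-moment}, the old endpoints $y=\mp a_k$ go to
\[
 t=\sqrt{(k+1)/k},\qquad t=\sqrt{k(k+1)} .
\]
The complement of this interval in $(1,k+1)$ gives the two collar pieces. As a consistency check, the reflection $J_Y$, $t\mapsto(k+1)/t$, exchanges the two collar pieces, as it must.

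\emph{Decomposition of $U_k$.} I would fix the identification of the residue parameter $r$ with the top Mellin cell $C_k$ via $t=k+r$. Under this identification the split point $r_k^\circ$ corresponds to $t=\tau_k=\sqrt{k(k+1)}$. So $r<r_k^\circ$ is old support and $r>r_k^\circ$ is the upper collar piece.

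For the $u$-side I would use the reflected coordinate, i.e.\ the image of $k+r$ under $J_Y$, which is $(k+1)/(k+r)$. Here one has to confirm that the $u$-parametrization used for the $\phi_k$-image puts $u_k^\circ$ at the same physical endpoint, namely the image of $\sqrt{(k+1)/k}$ under the relevant reflection/rescaling.

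Granting this, $U_kg=e_k\cdot(g\circ\phi_k)$ is a multiplication by a nonvanishing function composed with a map that sends old to old and new to new. Therefore $U_k$ leaves both summands invariant, which gives \eqref{eq:Uk-old-new}. Unitarity, and the identity $U_k=\mathcal W_k^*$, come from Lemma~\ref{lem:root-adapted-fibre}, because $r_k=\phi_k^{-1}$.

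Finally, $U_k\one=e_k$ and $q_k=\nu_ke_k$ by definition, which is \eqref{eq:qk-Uk-one}.

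\emph{Main obstacle.} The only nontrivial point is the coordinate dictionary in the decomposition step. One must pin down exactly which physical Mellin interval the $u$-parameter describes, so that the split at $u_k^\circ$ really is the old/new physical split rather than merely some interior point of $(0,1)$. I would try first the reflected parametrization described above, checking that it sends $u_k^\circ$ to the endpoint $\sqrt{(k+1)/k}$. If that fails, I would instead check whether the nesting holds only for the $r$-side split, in which case $U_k$ should be read as intertwining two decompositions rather than preserving one.
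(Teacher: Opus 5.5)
The paper's proof takes exactly the reflected route you suggest. Your verification of $r_k^\circ+u_k^\circ=1$, of $\phi_k(r_k^\circ)=u_k^\circ$ via $\tau_k^2=k(k+1)$, of monotonicity with $\phi_k(0)=0$ and $\phi_k(1)=1$, of the Mellin location of the old support and collar, of $U_k=\mathcal W_k^*$, and of $q_k=\nu_ke_k=\nu_kU_k\one$ is correct and agrees with the paper.

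The gap is the step you call the crux and then simply grant. You never say what physical point a value of $u$ represents, so the claim that the split at $u_k^\circ$ is the old/new split is left unproved. The missing ingredient is the paper's one-line identity
\[
 J_{k+1}(k+r)=\frac{k+1}{k+r}=1+\frac{1-\phi_k(r)}{k},
\]
which you can check from $1-\phi_k(r)=k(1-r)/(k+r)$. It says that $u=\phi_k(r)$ reads the reflected point $t'\in(1,1+\tfrac1k)$ through $t'=1+(1-u)/k$. In words, $u$ is measured from the right end of that short interval and rescaled by $k$. This dictionary is not the obvious one: the residue of $(k+1)/(k+r)$ in the bottom cell is $(1-r)/(k+r)\neq\phi_k(r)$. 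That is why the parametrization has to be derived rather than assumed.

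With this dictionary, $u=u_k^\circ$ gives $t'=1+r_k^\circ/k=\tau_k/k=\sqrt{(k+1)/k}$, which is exactly the lower old/collar boundary. The interval $(0,u_k^\circ)$ corresponds to $(\sqrt{(k+1)/k},1+\tfrac1k)$, the reflection of the old part $(k,\tau_k)$ of the top cell. The interval $(u_k^\circ,1)$ corresponds to the collar piece $(1,\sqrt{(k+1)/k})$. Equivalently, $\phi_k$ is the physical reflection $y\mapsto-y$ written in these coordinates. That reflection preserves $(-a_k,a_k)$ and its collar, so \eqref{eq:Uk-old-new} follows.

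Your fallback is not a genuinely different outcome. The statement $U_k=U_k^{\rm old}\oplus U_k^{\rm new}$ only ever means that $U_k$ carries $L^2(0,u_k^\circ)$ onto $L^2(0,r_k^\circ)$ and $L^2(u_k^\circ,1)$ onto $L^2(r_k^\circ,1)$. That part is formal from \eqref{eq:endpoint-mobius-split}. The sole content is that the $u$-split is the physical one, and the displayed identity is what supplies it.
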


\begin{proof}
The identity $r_k^\circ+u_k^\circ=1$ is immediate.  Since $\phi_k$ is
strictly increasing,
\[
 \phi_k(r_k^\circ)
 =\frac{(k+1)(\sqrt{k(k+1)}-k)}{\sqrt{k(k+1)}}
 =k+1-\sqrt{k(k+1)}=u_k^\circ,
\]
which gives \eqref{eq:endpoint-mobius-split}.

At the endpoint $Y=k+1$ the Mellin coordinate is
$t=e^{y+a_{k+1}}$.  The old physical endpoints $y=\pm a_k$ therefore map to
\[
 e^{a_{k+1}-a_k}=\sqrt{\frac{k+1}{k}},
 \qquad
 e^{a_{k+1}+a_k}=\sqrt{k(k+1)},
\]
which proves the displayed old/collar split.  On the upper arithmetic cell
$t=k+r$ the reflection $J_{k+1}(t)=(k+1)/t$ may be written
\[
 J_{k+1}(k+r)=1+\frac{1-\phi_k(r)}{k}.
\]
Hence \eqref{eq:endpoint-mobius-split} sends the old part to the old part and
the new collar part to the new collar part on the reflected side as well.
Since $U_k=\mathcal W_k^*$ is the weighted composition by this same map,
\eqref{eq:Uk-old-new} follows.  Finally $U_k\one=e_k$ and
$q_k=\nu_ke_k$, proving \eqref{eq:qk-Uk-one}.
\end{proof}

For a residue family $k_j=nm+j$, $j=0,\ldots,n-1$, let
\[
 \mathcal W:=\bigoplus_{j=0}^{n-1}\mathcal W_{k_j},
 \qquad
 \mathbf q:=(q_{k_0},\ldots,q_{k_{n-1}}).
\]
If $\mathcal F_n$ is the unitary discrete Fourier transform on $\C^n$, then
\begin{equation}
 (\mathcal F_n\otimes I)\mathcal W\mathbf q
 \in \C^n\otimes\operatorname{span}\{\one\}.
 \label{eq:residue-vs-profile}
\end{equation}
Thus a nonzero Fourier character of the true ground is a residue-index effect;
it is not, by itself, leakage into the profile mean-zero space.

\begin{lemma}[Edge-local gauge covariance]\label{lem:edge-local-gauge}
Let $R_e:H_e^s\to H_e^t$ be a true affine source/target translation and let
$e_e^s,e_e^t$ be its normalized ground vectors, so that
$R_e e_e^s=e_e^t$.  Choose unitaries $W_e^s,W_e^t$ from the source and target
fibres onto a reference copy of $L^2(0,1)$ with
\[
 W_e^s e_e^s=\one,
 \qquad
 W_e^t e_e^t=\one.
\]
Then
\[
 \widetilde R_e:=W_e^tR_e(W_e^s)^*
\]
is unitary and preserves the constant line.  Consequently, for every
$a_e,L_e>0$ and all $x\in H_e^s$, $y\in H_e^t$,
\begin{equation}
 2a_e|\langle R_ex,y\rangle|
 \le
 L_e\|x\|^2+\frac{a_e^2}{L_e}\|y\|^2
\label{eq:edge-local-young}
\end{equation}
if and only if the same inequality holds after the edge-local gauges.
\end{lemma}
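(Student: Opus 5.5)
The plan is to observe that the statement is a pure gauge-transport fact about unitaries, so no analytic input from the earlier sections is needed beyond the identity $R_ee_e^s=e_e^t$ supplied by Lemma~\ref{lem:source-target-ground}.

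\emph{Unitarity and the constant line.} First, $\widetilde R_e=W_e^tR_e(W_e^s)^*$ is a composition of three unitaries, namely $(W_e^s)^*$ from $L^2(0,1)$ to $H_e^s$, then $R_e$, then $W_e^t$. Hence it is unitary on the reference copy of $L^2(0,1)$. For the constant line I would compute directly:
\[
 \widetilde R_e\one=W_e^tR_e(W_e^s)^*\one=W_e^tR_ee_e^s=W_e^te_e^t=\one .
\]
Since $\widetilde R_e$ is unitary and fixes $\one$, it also preserves the orthogonal complement $Q_{\one}L^2(0,1)$, because $\langle \widetilde R_eg,\one\rangle=\langle g,\widetilde R_e^*\one\rangle=\langle g,\one\rangle$. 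This is the form in which the constant-line statement will be used later, compatible with the $P_{\one}/Q_{\one}$ split of Lemma~\ref{lem:onedefect-analytic}.

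\emph{Equivalence of the Young inequalities.} Put $\tilde x=W_e^sx$ and $\tilde y=W_e^ty$. These substitutions are bijections from $H_e^s$ and $H_e^t$ onto $L^2(0,1)$. Unitarity gives $\|\tilde x\|=\|x\|$ and $\|\tilde y\|=\|y\|$. It also gives
\[
 \langle\widetilde R_e\tilde x,\tilde y\rangle=\langle W_e^tR_ex,W_e^ty\rangle=\langle R_ex,y\rangle .
\]
So both sides of \eqref{eq:edge-local-young} are literally unchanged under the substitution. Because the substitution is bijective, the inequality holds for all $x,y$ if and only if the gauged inequality holds for all $\tilde x,\tilde y$.

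\emph{Expected difficulty.} There is essentially no obstacle. The one point to watch is that the gauges must be genuine unitaries onto all of $L^2(0,1)$, not merely isometries. Otherwise the reverse implication fails, since the gauged inequality would only be tested on a subspace. Existence of such $W$ is easy and worth noting: any unitary $H\to L^2(0,1)$ can be composed with a unitary sending its image of $e$ to $\one$, for instance the Householder reflection exchanging the two unit vectors after fixing phase. Alternatively, on the affine fibres one can use the explicit map $\mathcal W_k$ of Lemma~\ref{lem:root-adapted-fibre} after translation.
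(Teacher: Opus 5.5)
Your proposal is correct and is essentially the paper's proof: $\widetilde R_e\one=\one$ follows from the three defining relations, and because the gauges are unitary, $|\langle R_ex,y\rangle|$, $\|x\|$ and $\|y\|$ are unchanged, so the weighted Young inequality transfers in both directions. Your additional observations (that $Q_{\one}L^2(0,1)$ is also invariant, and that the gauges must be onto) are correct but are not needed for the statement.
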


\begin{proof}
The identity $\widetilde R_e\one=\one$ follows from the three defining
relations.  Unitarity gives
\[
 |\langle R_ex,y\rangle|
 =|\langle\widetilde R_eW_e^sx,W_e^ty\rangle|,
 \qquad
 \|W_e^sx\|=\|x\|,
 \qquad
 \|W_e^ty\|=\|y\|.
\]
Thus weighted Young is unchanged by the branch-local normalization.
\end{proof}

\begin{audit}[What edge-local covariance does not prove]\label{audit:edge-local-limit}
Lemma~\ref{lem:edge-local-gauge} removes any domain obstruction on an
individual affine edge, but it does not by itself produce a single global
unitary conjugation of the whole residue network.  Different edges incident to
the same cell generally use different gauges.  Therefore a proof of the true
rooted loss still requires a compatible allocation of the shared
vertex/collar energy across all incident edges, or an equivalent direct
full-operator estimate.  In particular, summing
\eqref{eq:edge-local-young} edge by edge is not licensed to reuse the same
positive collar reserve multiple times.  The fixed-target CMC construction
below supplies the source-level compatibility, and
\cref{sec:harmonic-graph-lock} finishes the argument by restricting the
resulting common cut form to the exact full-operator harmonic graph.
\end{audit}

\begin{audit}[Compatibility limit of the root-adapted normalization]
\label{audit:root-adapted-limit}
Equation \eqref{eq:residue-vs-profile} is an exact geometric statement, but it
does not identify the already-certified affine capacity with the true rooted
Schur loss.  The exact residue Fourier transform used in the routing is
pointwise in a common original parameter $r$.  By contrast, the root-adapted
maps are branch dependent:
\[
 r_{k_j}(u)=\frac{k_j u}{k_j+1-u}.
\]
Hence, for distinct $k_i,k_j$, the direct-sum map $\mathcal W$ does not commute
with the original pointwise residue-Fourier transform.  Conjugating the exact
affine network by $\mathcal W$ therefore changes its profile operators, and an
additional operator estimate would be required to transport the same
$\rho_{\rm aff}$ budget through this gauge.  In particular,
\eqref{eq:residue-vs-profile} must not be used by itself to deduce a bound of
the form
\[
 \frac{2}{\delta_k}|\langle\eta_k,f\rangle|^2
 \le \delta_*\rho_{\rm aff}\,\mathcal E_H[f].
\]
The missing compatibility is resolved in \cref{sec:harmonic-graph-lock} by restricting the already transported full form to the exact full-operator \(A_k\)-harmonic graph.  The moving root normalization then enters only through the proved Sherman--Morrison identity \(\eta_k=\delta_k\zeta_k\); no branch-dependent global conjugation is required.
\end{audit}

\subsection{Common-residue normalization: removal of the Fourier/gauge obstruction}
\label{sec:common-r}

The branch-dependent maps $\mathcal W_k$ are useful for local geometry but are
not needed for the global residue transform.  There is a simpler normalization
which stays in the original common residue coordinate and therefore commutes
with the discrete Fourier transform exactly.

\begin{lemma}[Exact common-cell unitary and true ground profile]
\label{lem:common-cell-unitary}
For every integer $k\ge1$ define
\[
 \mathcal V_k:L^2((k,k+1),dt/t)\longrightarrow L^2((0,1),dr),
 \qquad
 (\mathcal V_kF)(r):=\frac{F(k+r)}{\sqrt{k+r}}.
\]
Then $\mathcal V_k$ is unitary.  If $q_Y(t)=t^{-1/2}$ is the Mellin ground
profile from Lemma~\ref{lem:polar-ground-reflection}, then on the $k$th cell
\begin{equation}
 \boxed{\mathcal V_kq_Y=q_k,\qquad q_k(r)=\frac1{k+r}.}
 \label{eq:Vk-qk}
\end{equation}
\end{lemma}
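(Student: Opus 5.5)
The argument is a direct change of variables in two steps: first the unitarity of $\mathcal V_k$, then the identification of the ground profile.

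\emph{Isometry.} Take $F\in L^2((k,k+1),dt/t)$ and substitute $t=k+r$, so $dt=dr$ and $r$ runs over $(0,1)$. Then
\[
 \|\mathcal V_kF\|_{L^2(0,1)}^2=\int_0^1\frac{|F(k+r)|^2}{k+r}\,dr
 =\int_k^{k+1}|F(t)|^2\,\frac{dt}{t}=\|F\|^2 .
\]
Hence $\mathcal V_k$ is an isometry.

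\emph{Surjectivity.} Define $(\mathcal V_k^{-1}h)(t):=\sqrt t\,h(t-k)$ for $h\in L^2((0,1),dr)$. The same substitution run backwards shows that $\mathcal V_k^{-1}$ is also an isometry. Composing the two maps in either order gives the identity pointwise, so $\mathcal V_k$ is a bijective isometry and therefore unitary. Equivalently, $\mathcal V_k$ is multiplication by the positive weight $t^{-1/2}$ composed with the translation $t\mapsto t-k$, and each of these is unitary between the weighted spaces concerned.

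\emph{Ground profile.} Restrict $q_Y(t)=t^{-1/2}$ to the cell $(k,k+1)\subset(1,Y)$. Then
\[
 (\mathcal V_kq_Y)(r)=\frac{(k+r)^{-1/2}}{(k+r)^{1/2}}=\frac1{k+r}=q_k(r),
\]
which is \eqref{eq:Vk-qk}. As a consistency check, this agrees with $\nu_k=\|q_k\|_{L^2(0,1)}=1/\sqrt{k(k+1)}$ from \cref{lem:root-adapted-fibre}: by unitarity, $\|q_Y\mathbf 1_{(k,k+1)}\|^2=\int_k^{k+1}t^{-2}\,dt=\frac1k-\frac1{k+1}=\frac{1}{k(k+1)}$.

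\emph{Obstacle.} There is no real difficulty here. The only point to watch is the convention that the cell $(k,k+1)$ lies inside $(1,Y)$, so that $q_Y$ is defined on it. At the endpoint cell, where $Y$ is not an integer, one applies the same formula on the truncated interval.
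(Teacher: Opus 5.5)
Your proof is correct and follows the paper's argument: the substitution $t=k+r$ gives the norm identity, and evaluating $\mathcal V_k$ on $t^{-1/2}$ gives $(k+r)^{-1}$. Your explicit inverse $h\mapsto\sqrt t\,h(t-k)$ also supplies the surjectivity that the paper leaves implicit.
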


\begin{proof}
Directly,
\[
 \|\mathcal V_kF\|_2^2
 =\int_0^1\frac{|F(k+r)|^2}{k+r}\,dr
 =\int_k^{k+1}|F(t)|^2\frac{dt}{t}.
\]
Moreover $(\mathcal V_kq_Y)(r)=(k+r)^{-1}$.
\end{proof}

Let $P_{\one}$ be the orthogonal projection onto the constant profile
$\one\in L^2(0,1)$ and put $Q_{\one}=I-P_{\one}$.  Thus
\[
 P_{\one}f=\left(\int_0^1f(r)\,dr\right)\one.
\]

\begin{lemma}[DFT-compatible mean/mean-zero splitting]
\label{lem:common-r-dft}
For every $k\ge1$,
\begin{align}
 P_{\one}q_k
 &=w_k\one,
 &w_k&=\log\left(1+\frac1k\right),
 \label{eq:Pqk}\\
 \|q_k\|_2^2
 &=\frac1{k(k+1)},
 &
 \|Q_{\one}q_k\|_2^2
 &=\frac1{k(k+1)}-w_k^2.
 \label{eq:qk-leak-exact}
\end{align}
If $\mathcal F_n$ is the unitary residue DFT, then on
$\C^n\otimes L^2(0,1)$
\begin{equation}
 \boxed{
 (\mathcal F_n\otimes I)(I\otimes P_{\one})
 =(I\otimes P_{\one})(\mathcal F_n\otimes I),
 }
 \label{eq:DFT-P-commute}
\end{equation}
and the same identity holds with $Q_{\one}$ in place of $P_{\one}$.
Furthermore
\begin{equation}
 \|Q_{\one}q_k\|_2^2
 \le \frac1{\pi^2}\|q_k'\|_2^2
 \le \frac1{\pi^2k^4}.
 \label{eq:qk-poincare}
\end{equation}
Consequently the relative profile leakage
\begin{equation}
 \varepsilon_k
 :=\frac{\|Q_{\one}q_k\|_2^2}{\|q_k\|_2^2}
 =1-k(k+1)\log^2\left(1+\frac1k\right)
 \label{eq:epsilon-k}
\end{equation}
satisfies, for every $k\ge4$,
\begin{equation}
 \boxed{
 \varepsilon_k
 \le\frac{k+1}{\pi^2k^3}
 \le\frac5{64\pi^2}
 <0.007916
 <1-q_{\rm aff}.
 }
 \label{eq:epsilon-vs-reserve}
\end{equation}
\end{lemma}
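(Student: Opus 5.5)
The plan is to verify each assertion of Lemma~\ref{lem:common-r-dft} by direct computation on $L^2(0,1)$. The statement is elementary, so the only real care needed is in the constants at the end.

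\emph{Mean and norm.} First, since $P_{\one}f=(\int_0^1 f)\one$, I compute
\[
 \int_0^1\frac{dr}{k+r}=\log\frac{k+1}{k}=w_k .
\]
This gives \eqref{eq:Pqk}. Next,
\[
 \int_0^1(k+r)^{-2}\,dr=\frac1k-\frac1{k+1}=\frac1{k(k+1)},
\]
and Pythagoras, $\|q_k\|^2=\|P_{\one}q_k\|^2+\|Q_{\one}q_k\|^2$ with $\|P_{\one}q_k\|^2=w_k^2$, yields \eqref{eq:qk-leak-exact}. Dividing by $\|q_k\|^2$ gives the exact formula \eqref{eq:epsilon-k} for $\varepsilon_k$.

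\emph{Commutation with the DFT.} The identity \eqref{eq:DFT-P-commute} is the statement that $A\otimes I$ and $I\otimes B$ commute. Both products equal $\mathcal F_n\otimes P_{\one}$, first on elementary tensors and then by linearity and continuity. The same argument applies with $Q_{\one}=I-P_{\one}$.

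\emph{Poincar\'e bound.} For \eqref{eq:qk-poincare} I use the Neumann Poincar\'e inequality on $(0,1)$: for mean-zero $f$, $\|f\|_2\le\pi^{-1}\|f'\|_2$, since the first nonzero Neumann eigenvalue is $\pi^2$ with eigenfunction $\cos\pi r$. Applying it to $Q_{\one}q_k=q_k-w_k\one$, whose derivative is $q_k'=-(k+r)^{-2}$, gives
\[
 \|q_k'\|_2^2=\int_0^1(k+r)^{-4}\,dr\le k^{-4}.
\]
Dividing by $\|q_k\|^2=1/(k(k+1))$ yields $\varepsilon_k\le (k+1)/(\pi^2k^3)$.

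\emph{Numerical chain.} The function $(k+1)/k^3$ is decreasing in $k$, so for $k\ge4$ it is at most $5/64$. This gives $5/(64\pi^2)\approx0.0079157<0.007916$, which is the middle of \eqref{eq:epsilon-vs-reserve}.

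\emph{Main obstacle.} The only nonroutine point is the last inequality, since $q_{\rm aff}$ is not defined explicitly in the text above. I would read it as the routing quotient $q_{\rm aff}:=\delta_*\rho_{\rm aff}/d_*$ of \eqref{eq:qnum}. Then \eqref{eq:qnum} gives $1-q_{\rm aff}>0.011578$, which comfortably exceeds $0.007916$, with all constants taken from the certified values \eqref{eq:rhoaff}, \eqref{eq:dstar} and \eqref{eq:deltastar}. If $q_{\rm aff}$ is meant instead as a slightly larger rounded cap, any value below $0.992084$ suffices, and I would state that explicitly.
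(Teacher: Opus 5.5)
Your proposal is correct and follows the paper's proof essentially step for step. Like the paper, you use direct integration for $w_k$ and $\|q_k\|_2^2$, orthogonality of $P_{\one}$ and $Q_{\one}$, the tensor-factor commutation, the sharp Neumann Poincar\'e inequality with $|q_k'|\le k^{-2}$, monotonicity of $(k+1)/k^3$, and $1-q_{\rm aff}>0.0115784\ldots$ from \eqref{eq:qnum}; your reading of $q_{\rm aff}$ as the quotient $\delta_*\rho_{\rm aff}/d_*$ bounded in \eqref{eq:qnum} is exactly the reading the paper's proof uses.
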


\begin{proof}
The first two identities follow by direct integration:
\[
 \int_0^1\frac{dr}{k+r}=w_k,
 \qquad
 \int_0^1\frac{dr}{(k+r)^2}=\frac1{k(k+1)}.
\]
Orthogonality of $P_{\one}$ and $Q_{\one}$ gives
\eqref{eq:qk-leak-exact}.  Equation~\eqref{eq:DFT-P-commute} is a tensor-factor
identity: $\mathcal F_n$ acts only on the residue index whereas $P_{\one}$ and
$Q_{\one}$ act only on the common profile variable $r$.

The sharp Neumann Poincar\'e inequality on $(0,1)$ gives
$\|Q_{\one}q_k\|_2^2\le\pi^{-2}\|q_k'\|_2^2$, and
$|q_k'(r)|=(k+r)^{-2}\le k^{-2}$, proving
\eqref{eq:qk-poincare}.  Division by $\|q_k\|_2^2=1/[k(k+1)]$ yields
\[
 \varepsilon_k\le\frac{k+1}{\pi^2k^3}.
\]
The right-hand side decreases for $k\ge1$; at $k=4$ it equals
$5/(64\pi^2)<0.007916$.  Finally
$1-q_{\rm aff}>0.0115784675465848567$ by \eqref{eq:qnum}.
\end{proof}

\begin{corollary}[Familywise DFT-compatible ground splitting]
\label{cor:common-r-removes-gauge}
Within each fixed residue family, the exact polar ground may be decomposed
into its constant and mean-zero profile parts before the residue DFT, and the
same splitting survives the DFT unchanged.  No branch-dependent
reparametrization is required for this familywise statement.
\end{corollary}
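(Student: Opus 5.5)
The corollary follows by assembling Lemma~\ref{lem:common-cell-unitary} and Lemma~\ref{lem:common-r-dft} inside one residue family; no new estimate is needed. Fix a prime power $n$ and a base index $m$, and consider the residue family $k_j=nm+j$, $0\le j<n$. On each cell $(k_j,k_j+1)$ I apply the common-cell unitary $\mathcal V_{k_j}$ of Lemma~\ref{lem:common-cell-unitary}. All of these land in the \emph{same} reference space $L^2((0,1),dr)$, parametrized by the original residue variable $r$. The family therefore becomes a vector in $\C^n\otimes L^2(0,1)$ through the direct sum $\mathcal V:=\bigoplus_j\mathcal V_{k_j}$. By \eqref{eq:Vk-qk}, the exact polar ground profile $q_Y$ becomes $\mathbf q=(q_{k_0},\ldots,q_{k_{n-1}})$ with $q_{k_j}(r)=(k_j+r)^{-1}$.

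Next I decompose before the DFT. Apply $I\otimes P_{\one}$ and $I\otimes Q_{\one}$ to get
\[
 \mathbf q=(I\otimes P_{\one})\mathbf q+(I\otimes Q_{\one})\mathbf q .
\]
By \eqref{eq:Pqk}, the constant part is explicitly $(w_{k_0},\ldots,w_{k_{n-1}})\otimes\one$. Its mean-zero remainder has the exact size \eqref{eq:qk-leak-exact} and obeys the leakage bound \eqref{eq:epsilon-vs-reserve}.

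Then I apply $\mathcal F_n\otimes I$. By the tensor-factor commutation \eqref{eq:DFT-P-commute} and its $Q_{\one}$ analogue,
\[
 (\mathcal F_n\otimes I)(I\otimes P_{\one})\mathbf q=(I\otimes P_{\one})(\mathcal F_n\otimes I)\mathbf q ,
\]
and likewise for $Q_{\one}$. So the DFT image of the constant part is exactly the constant part of the DFT image, and the same holds for the mean-zero part. Orthogonality of the two pieces is preserved because $\mathcal F_n\otimes I$ is unitary. This is precisely the claim that the splitting ``survives the DFT unchanged.''

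The only point needing care is the final clause: no branch-dependent reparametrization is required. The argument above used only the maps $\mathcal V_{k_j}$, and each of these is a multiplication by $(k_j+r)^{-1/2}$ combined with the translation $t=k_j+r$. In particular each acts pointwise in the common variable $r$, with no change of that variable. This is exactly what fails for the root-adapted maps $\mathcal W_{k_j}$, which reparametrize $r$ differently on each branch (Audit~\ref{audit:root-adapted-limit}). I expect the main obstacle to be stating this cleanly, since the content lies in what is \emph{not} used. I would make it explicit by observing that $\mathcal V$ lies in $\C^{n}$-diagonal form over the fixed fibre $L^2(0,1)$. Hence $\mathcal V$ acts only on the profile variable through multiplication operators, and the DFT acts only on the index. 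That justifies the tensor-factor commutation.
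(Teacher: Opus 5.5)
Your proposal is correct and follows the paper's route: the corollary is read off from Lemma~\ref{lem:common-cell-unitary}, where the unitaries $\mathcal V_{k_j}$ send the ground to $q_{k_j}$ in the single residue coordinate $r$, together with the tensor-factor identity \eqref{eq:DFT-P-commute} of Lemma~\ref{lem:common-r-dft}. One small slip is in your last paragraph: that commutation holds because $P_{\one},Q_{\one}$ act only on the profile variable while $\mathcal F_n$ acts only on the index, and it needs nothing from $\mathcal V$ (whose multipliers $(k_j+r)^{-1/2}$ depend on $j$, so $\mathcal V$ is not of the form $I\otimes(\cdot)$); the role of $\mathcal V$ is only to keep every branch in the original common residue coordinate $r$ with no branch-dependent change of variable, which is what the final clause asserts.
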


\begin{audit}[Scope of the common-residue splitting]
\label{audit:common-r-scope}
Equation~\eqref{eq:epsilon-vs-reserve} is a genuine quantitative reserve, but
it is a cellwise/profile statement.  It does not bound the full ambient
network because distinct prime-power branches incident to a common target
originate from different source intervals.  No global ambient allocation is
inferred from this estimate.
\end{audit}

\subsection{Ambient \texorpdfstring{cross-$n$}{cross-n} structure}
\label{sec:crossn-collision}

The exact full-space unit-cell identity must be kept before any profile
aggregation.  Fix a target cell $C_k=[k,k+1)$.  For each active prime power
$n$, write
\[
 m_n=\Big\lfloor\frac{k}{n}\Big\rfloor,
 \qquad
 j_n=k-nm_n.
\]
The corresponding source interval is
\begin{equation}
 I_{k,n}
 =\left[\frac{k}{n},\frac{k+1}{n}\right)
 =C_{m_n,n,j_n},
 \label{eq:true-source-interval}
\end{equation}
and the branch unitary sends $F|_{I_{k,n}}$ to the function
$s\mapsto F(s/n)$ on $C_k$.

Consequently the true target collision operator is
\begin{equation}
 (T_kF)(s)
 =\sum_{n\in\mathcal N_k}\frac{\Lambda(n)}{\sqrt n}F(s/n),
 \qquad s\in C_k,
 \label{eq:true-target-collision}
\end{equation}
so that
\begin{align}
 \|T_kF\|_{L^2(C_k,ds/s)}^2
={}&\sum_n\frac{\Lambda(n)^2}{n}
 \int_k^{k+1}|F(s/n)|^2\frac{ds}{s}\notag\\
&+2\sum_{n<n'}\frac{\Lambda(n)\Lambda(n')}{\sqrt{nn'}}
 \Rea\int_k^{k+1}F(s/n)\overline{F(s/n')}\frac{ds}{s}.
 \label{eq:true-crossn-square}
\end{align}
The mixed terms in \eqref{eq:true-crossn-square} are genuine ambient
cross-correlations between different source intervals.

Aggregation by $d=n\lfloor k/n\rfloor$ is not an ambient identity.  For example, at $k=841$ the values
$n\in\{2,3,4,5,7,8\}$ all satisfy
$n\lfloor841/n\rfloor=840$, but their true source intervals are respectively
\[
\begin{array}{lll}
[420.5,421), & [280+\tfrac13,280+\tfrac23), & [210.25,210.5),\\[2pt]
[168.2,168.4), & [120+\tfrac17,120+\tfrac27), & [105.125,105.25).
\end{array}
\]
Choosing $F$ supported in the first of these intervals makes every mixed term
involving $n=2$ vanish, whereas the square of an aggregated Mangoldt mass
would retain them.  Hence the two quadratic forms cannot agree on the full
ambient space.

\subsection{Fixed-target gauge and simultaneous source shorting}
\label{sec:fixed-target-cmc}

The ambient cross-$n$ obstruction above is a statement about trying to identify
all source intervals with one global profile before the target has been fixed.
For one fixed target cell there is, however, a canonical branchwise gauge that
uses the same target coordinate for every incident source and therefore does
not mix the target ground and transverse spaces.

\begin{lemma}[Fixed-target branch gauge]\label{lem:fixed-target-gauge}
Fix a target cell $C_k$ and let
\[
 R_n:H_{k,n}^s\longrightarrow H_k^t
\]
be the true branch unitary from the source interval $I_{k,n}$ to the target.
Let $e_t\in H_k^t$ be its normalized target ground and let
$e_n^s\in H_{k,n}^s$ be the normalized source ground, so that
$R_ne_n^s=e_t$.  Choose one unitary
$W_t:H_k^t\to L^2(0,1)$ satisfying $W_te_t=\one$ and define
\[
 W_n^s:=W_tR_n.
\]
Then, simultaneously for every branch incident to $C_k$,
\begin{equation}
 \boxed{W_tR_n(W_n^s)^*=I.}
 \label{eq:fixed-target-identity}
\end{equation}
If $P=\ketbra{\one}{\one}$ and $Q=I-P$, then
\begin{equation}
 QW_tR_n(W_n^s)^*P=0,
 \qquad
 PW_tR_n(W_n^s)^*Q=0.
 \label{eq:fixed-target-PQ}
\end{equation}
\end{lemma}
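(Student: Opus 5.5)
The plan is a direct computation from the definitions; no earlier estimate is needed beyond the fact that the branch maps $R_n$ are unitary. That fact comes from the change of variables $s=nt$ preserving $dt/t$ (Lemma~\ref{lem:ambient-unit-cells}), equivalently from the affine translation picture of Lemma~\ref{lem:source-target-ground}.

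\emph{Step 1: $W_n^s$ is an admissible gauge.} Since $W_n^s=W_tR_n$ is a composition of two unitaries, $W_n^s:H_{k,n}^s\to L^2(0,1)$ is unitary. Using the ground relation $R_ne_n^s=e_t$ from the hypothesis, I get $W_n^se_n^s=W_te_t=\one$. So each source gauge sends its own normalized ground to the constant profile, exactly as required in Lemma~\ref{lem:edge-local-gauge}. This matters because it shows the construction is a legitimate instance of edge-local gauging, with the extra feature that every incident branch shares the single target gauge $W_t$.

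\emph{Step 2: the identity \eqref{eq:fixed-target-identity}.} Unitarity gives $(W_n^s)^*=R_n^*W_t^*$, so
\[
 W_tR_n(W_n^s)^*=W_t(R_nR_n^*)W_t^*=W_tW_t^*=I
\]
on $L^2(0,1)$. The point to stress is that this holds for every $n\in\mathcal N_k$ at once, because $W_t$ does not depend on $n$. This is the contrast with the obstruction in Audit~\ref{audit:edge-local-limit}, where different edges at the same cell use incompatible gauges.

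\emph{Step 3: the block relations \eqref{eq:fixed-target-PQ}.} Since the gauged branch operator is the identity, $QIP=QP=0$ and $PIQ=PQ=0$, because $P$ is an orthogonal projection and $Q=I-P$.

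I do not expect a genuine obstacle. The only point requiring care is bookkeeping of domains: $R_n^*$ must be taken as the adjoint from $H_k^t$ back to the specific source space $H_{k,n}^s=L^2(I_{k,n},dt/t)$, with $I_{k,n}$ as in \eqref{eq:true-source-interval}, and $R_n$ must be surjective onto $H_k^t$ so that $R_nR_n^*=I$. Surjectivity holds because the source interval $I_{k,n}$ is mapped by $t\mapsto nt$ exactly onto $C_k$.
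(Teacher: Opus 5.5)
Your proposal is correct and follows the paper's argument: unitarity of $W_t$ and $R_n$ gives $W_tR_n(W_n^s)^*=W_tR_nR_n^*W_t^*=I$ for every incident branch at once, since $W_t$ does not depend on $n$, and the block relations then reduce to $QP=PQ=0$. Your Step~1 check that $W_n^se_n^s=\one$ is not needed for the two displayed identities, but it is a harmless consistency remark.
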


\begin{proof}
The definition gives $W_tR_n(W_n^s)^*=W_tR_nR_n^*W_t^*=I$ on the target
copy.  Equation~\eqref{eq:fixed-target-PQ} is then immediate.  The same $W_t$
is used for all incident branches; only the source gauges vary.
\end{proof}

The point of Lemma~\ref{lem:fixed-target-gauge} is limited but exact: the new
affine star does not create an additional ground/transverse mixing term in the
fixed target gauge.  It does not identify the post-short ground pivot with a
Carleman scalar; that issue is kept separate below.

We now prove the simultaneous capacity estimate needed on the true source
slots.  Recall the normalized one-cell boundary/collar form
$\mathfrak b$ from \eqref{eq:b-one-cell-explicit}.

\begin{lemma}[Two-source mean-zero interaction]\label{lem:pair-capacity}
Let $D\ge1$ and let $f,g\in L^2(0,1)$ satisfy
$\int_0^1f=\int_0^1g=0$.  Put
\[
 E_D(f,g):=
 \int_0^1\!\!\int_0^1
 \frac{|f(s)-g(t)|^2}{D+t-s}\,ds\,dt,
 \qquad
 c_D:=\log\frac{D+1}{D}.
\]
Then
\begin{equation}
 \boxed{
 E_D(f,g)\ge c_D\bigl(\|f\|_2^2+\|g\|_2^2\bigr).}
 \label{eq:pair-capacity}
\end{equation}
\end{lemma}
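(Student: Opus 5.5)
The plan is to expand the square, treat the diagonal terms exactly, and then use the mean-zero hypothesis to control the single dangerous term, the mixed one. Write $K_D(s,t)=(D+t-s)^{-1}$. Since $D\ge1$ and $t-s>-1$, this kernel is strictly positive on $(0,1)^2$. Expanding $|f(s)-g(t)|^2$ gives
\[
 E_D(f,g)=\int_0^1|f(s)|^2\alpha_D(s)\,ds+\int_0^1|g(t)|^2\beta_D(t)\,dt
 -2\Rea\iint f(s)\overline{g(t)}K_D(s,t)\,ds\,dt,
\]
where
\[
 \alpha_D(s)=\log\frac{D+1-s}{D-s},\qquad
 \beta_D(t)=\log\frac{D+t}{D+t-1}.
\]
The function $\alpha_D$ is increasing on $(0,1)$ and $\beta_D$ is decreasing. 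Both take the value $c_D$ at their minimizing endpoint ($s=0$, respectively $t=1$). So the diagonal part alone already gives $c_D(\|f\|^2+\|g\|^2)$, plus the excess $\int|f|^2(\alpha_D-c_D)+\int|g|^2(\beta_D-c_D)$.

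The whole task is to show that this excess dominates the mixed term. I would first try to prove the stronger statement that the mixed term is non-positive, or is absorbed, on mean-zero pairs. For this I would use the Laplace representation
\[
 K_D(s,t)=\int_0^\infty e^{-\lambda D}e^{\lambda s}e^{-\lambda t}\,d\lambda .
\]
Put $a_\lambda(s)=e^{\lambda s}$ and $b_\lambda(t)=e^{-\lambda t}$. For each fixed $\lambda$ the integrand becomes a rank-one weighted form:
\[
 \iint|f(s)-g(t)|^2a_\lambda(s)b_\lambda(t)
 =\|b_\lambda\|_1\!\int|f|^2a_\lambda+\|a_\lambda\|_1\!\int|g|^2b_\lambda
 -2\Rea\Bigl(\int fa_\lambda\Bigr)\Bigl(\overline{\int gb_\lambda}\Bigr).
\]
Because $f$ and $g$ have mean zero, the inner products only see the mean-zero parts, so
\[
 \Bigl|\int fa_\lambda\Bigr|\le\|f\|_2\,\|Q_{\one}a_\lambda\|_2,\qquad
 \Bigl|\int gb_\lambda\Bigr|\le\|g\|_2\,\|Q_{\one}b_\lambda\|_2 .
\]
On the diagonal I would use $a_\lambda\ge1$ and $b_\lambda\ge e^{-\lambda}$. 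Each diagonal coefficient is then at least $(1-e^{-\lambda})/\lambda$, and integrating this against $e^{-\lambda D}$ reproduces exactly
\[
 c_D=\int_0^\infty e^{-\lambda D}\frac{1-e^{-\lambda}}{\lambda}\,d\lambda .
\]
What is left is to absorb $2\|f\|\|g\|\|Q_{\one}a_\lambda\|\|Q_{\one}b_\lambda\|$ into the unused diagonal surplus. That surplus is $\int|f|^2(a_\lambda-1)\|b_\lambda\|_1+\int|g|^2(b_\lambda-e^{-\lambda})\|a_\lambda\|_1$, together with its analogue after the $\lambda$-integration.

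This absorption is the main obstacle. The surplus weights vanish at one endpoint, so a mean-zero $f$ concentrated near $s=0$ sees almost none of the $f$-surplus. My first attempt would therefore not work pointwise in $s$. Instead I would exploit mean-zero a second time and replace $K_D$ in the mixed term by its doubly centred kernel
\[
 \widetilde K_D=K_D-\bar K_D(s,\cdot)-\bar K_D(\cdot,t)+\bar{\bar K}_D .
\]
This leaves the mixed term unchanged. I would then bound $\widetilde K_D$ in Hilbert--Schmidt norm: it is of size $O(D^{-3})$ with explicit constants coming from the second mixed derivative $\partial_s\partial_t K_D=-2(D+t-s)^{-3}$ and the Poincar\'e constant $\pi^{-2}$ used in \eqref{eq:qk-poincare}. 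I would compare this with $c_D\sim D^{-1}$. The ratio is then small for large $D$, and the expected danger case is $D=1$.

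The remaining range near $D=1$ is where I anticipate genuine work. There I would check the inequality
\[
 \|\widetilde K_D\|_{HS}\le c_D
\]
explicitly, by computing the Hilbert--Schmidt integral in closed form with logarithms and dilogarithms. If this margin fails, I would fall back to combining the two approaches: use the Laplace-slice absorption on the $\lambda$-range where $\|Q_{\one}a_\lambda\|\|Q_{\one}b_\lambda\|$ is small, and the centred-kernel bound on the rest.
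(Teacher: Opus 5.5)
\paragraph{Gap: the absorption step.} Your Laplace slicing, the slice identity, and the diagonal bound that integrates to $c_D$ match the paper's set-up exactly. However, you leave the decisive absorption step open, and it stalls because of the Cauchy--Schwarz you chose. Writing $|\int f a_\lambda|\le\|f\|_2\|Q_{\one}a_\lambda\|_2$ subtracts the \emph{mean} of $a_\lambda$, which does not vanish where the surplus weight $a_\lambda-1$ vanishes. The missing idea is this: mean zero lets you subtract \emph{any} constant, so subtract the endpoint minimum, and then apply Cauchy--Schwarz weighted by the surplus itself. Put
\[
 F_\lambda=\int_0^1 f(s)(e^{\lambda s}-1)\,ds,\qquad G_\lambda=\int_0^1 g(t)(e^{-\lambda t}-e^{-\lambda})\,dt,
\]
\[
 X=\int_0^1|f|^2(a_\lambda-1),\qquad Y=\int_0^1|g|^2(b_\lambda-e^{-\lambda}).
\]
Then
\[
 |F_\lambda|^2\le(\|a_\lambda\|_1-1)\,X,\qquad |G_\lambda|^2\le(\|b_\lambda\|_1-e^{-\lambda})\,Y,
\]
hence
\[
 2|F_\lambda G_\lambda|\le 2\sqrt{\|a_\lambda\|_1\|b_\lambda\|_1XY}\le\|b_\lambda\|_1X+\|a_\lambda\|_1Y,
\]
which is precisely your per-slice surplus. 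So the absorption closes pointwise in $\lambda$, uniformly in $D$, and integration against $e^{-\lambda D}$ finishes the proof. Your worry about $f$ concentrated near $s=0$ disappears because the test function $e^{\lambda s}-1$ vanishes there as well. This is the paper's argument.

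\paragraph{Why your fallback cannot work.} The centred-kernel route fails for a structural reason: the constant $c_D$ is sharp. Take mean-zero rescaled bumps $f_N$ concentrating at $s=0$ and $g_N$ concentrating at $t=1$. Then $\int|f_N|^2\alpha_D\to\alpha_D(0)=c_D$ and $\int|g_N|^2\beta_D\to\beta_D(1)=c_D$, while the mixed term tends to $0$. Hence $E_D(f_N,g_N)/(\|f_N\|_2^2+\|g_N\|_2^2)\to c_D$, so the excess over $c_D$ is not coercive. Consequently, any bound of the mixed term by $\epsilon(\|f\|_2^2+\|g\|_2^2)$ with $\epsilon>0$ only yields $E_D\ge(c_D-\epsilon)(\|f\|_2^2+\|g\|_2^2)$. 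This includes a Hilbert--Schmidt or operator-norm bound on the mean-centred kernel $\widetilde K_D$. In particular, your planned check $\|\widetilde K_D\|_{HS}\le c_D$ near $D=1$ would give merely $E_D\ge0$. What matches the degenerate surplus is centring at the \emph{endpoints}, $K_D(s,t)-K_D(0,t)-K_D(s,1)+K_D(0,1)$. That is exactly what the weights $(e^{\lambda s}-1)(e^{-\lambda t}-e^{-\lambda})$ implement slice by slice.
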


\begin{proof}
It is enough first to take $D>1$; the endpoint $D=1$ follows by monotone
convergence.  Use
\[
 \frac1{D+t-s}=\int_0^\infty e^{-Du}e^{us}e^{-ut}\,du.
\]
Set
\[
 a(u)=\frac{1-e^{-u}}u,
 \qquad
 b(u)=\frac{e^u-1}u,
 \qquad
 p(u)=b(u)-1,
 \qquad
 q(u)=a(u)-e^{-u}.
\]
For $u>0$ all four quantities are non-negative and
$p(u)q(u)\le a(u)b(u)$.  Since the two functions have mean zero, define
\[
 F_u=\int_0^1f(s)(e^{us}-1)\,ds,
 \qquad
 G_u=\int_0^1g(t)(e^{-ut}-e^{-u})\,dt.
\]
Weighted Cauchy--Schwarz gives
\begin{align*}
 |F_u|^2
 &\le p(u)\int_0^1|f(s)|^2(e^{us}-1)\,ds,\\
 |G_u|^2
 &\le q(u)\int_0^1|g(t)|^2(e^{-ut}-e^{-u})\,dt.
\end{align*}
Hence, by $p(u)q(u)\le a(u)b(u)$ and the arithmetic--geometric mean
inequality,
\begin{align}
 2|F_uG_u|
 \le{}&a(u)\int_0^1|f(s)|^2(e^{us}-1)\,ds\notag\\
 &+b(u)\int_0^1|g(t)|^2(e^{-ut}-e^{-u})\,dt.
 \label{eq:pair-pointwise}
\end{align}
Now
\[
 c_D=\int_0^\infty e^{-Du}a(u)\,du
     =\int_0^\infty e^{-Du}e^{-u}b(u)\,du.
\]
The two terms on the right of \eqref{eq:pair-pointwise}, after multiplication
by $e^{-Du}$ and integration in $u$, are exactly the excesses of the two
diagonal kernel weights over $c_D$.  The mixed kernel term is
\[
 \int_0^1\!\!\int_0^1
 \frac{f(s)\overline{g(t)}}{D+t-s}\,ds\,dt
 =\int_0^\infty e^{-Du}F_u\overline{G_u}\,du.
\]
Thus the diagonal excesses dominate twice the absolute value of the mixed
term, which is precisely \eqref{eq:pair-capacity}.
\end{proof}

\begin{analyticcertificate}[Parent/source mean capacity]\label{cert:pmc-analytic}
Let \(D\ge1\) and let \(f,g\in L^2(0,1)\) be arbitrary.  Define
\[
 \bar f:=\int_0^1 f(s)\,ds,\qquad
 \bar g:=\int_0^1 g(t)\,dt,
 \qquad f_0:=f-\bar f,\quad g_0:=g-\bar g.
\]
For
\[
 E_D(f,g):=
 \int_0^1\!\!\int_0^1
 \frac{|f(s)-g(t)|^2}{D+t-s}\,ds\,dt
\]
one has the unconditional lower bound
\begin{equation}
 \boxed{
 E_D(f,g)\ge
 \frac{\|f_0\|_2^2+\|g_0\|_2^2+|\bar f-\bar g|^2}{D+1}.}
 \tag{PMC-2}\label{eq:PMC-two}
\end{equation}
In particular the Cauchy difference form supplies a strictly positive cost for
unequal source means, modulo the common-constant direction.
\end{analyticcertificate}

\begin{proof}
For \(0\le s,t\le1\) and \(D\ge1\),
\[
 0<D+t-s\le D+1
 \quad\text{a.e.},\qquad
 \frac1{D+t-s}\ge\frac1{D+1}.
\]
(The single boundary singularity at \(D=1\) is harmless and the inequality
holds by monotone approximation.)  Hence
\[
 E_D(f,g)\ge\frac1{D+1}
 \int_0^1\!\!\int_0^1|f(s)-g(t)|^2\,ds\,dt.
\]
Since \(f_0\) and \(g_0\) have zero mean, the cross terms with constants
vanish and
\[
 \int_0^1\!\!\int_0^1|f(s)-g(t)|^2\,ds\,dt
 =\|f_0\|_2^2+\|g_0\|_2^2+|\bar f-\bar g|^2.
\]
This proves \eqref{eq:PMC-two}.
\end{proof}

\begin{corollary}[Weighted Laplacian on the source means]\label{cor:pmc-laplacian}
Let \(I_1,\ldots,I_M\) be pairwise disjoint source slots of one common
logarithmic length \(\varepsilon>0\), ordered from left to right, and normalize
each slot to \((0,1)\).  For \(\nu<\mu\) put
\[
 D_{\nu\mu}:=\frac{a_\mu-a_\nu}{\varepsilon}\ge1,
 \qquad
 \omega_{\nu\mu}:=\frac1{2(D_{\nu\mu}+1)}.
\]
If \(a_\nu\) denotes the normalized mean of the profile on the \(\nu\)-th
slot and \(L_{\rm PMC}\) is the complete weighted graph Laplacian with edge
weights \(\omega_{\nu\mu}\), then the source--source Cauchy difference
energy obeys
\begin{equation}
 \boxed{
 \mathfrak H_{\rm means}
 \ge
 \sum_{\nu<\mu}\omega_{\nu\mu}|a_\nu-a_\mu|^2
 =\langle a,L_{\rm PMC}a\rangle.}
 \tag{PMC}\label{eq:PMC-global}
\end{equation}
All weights are positive, hence
\(\ker L_{\rm PMC}=\operatorname{span}\{(1,\ldots,1)\}\).
\end{corollary}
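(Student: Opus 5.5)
The proof will reduce the corollary to a pairwise application of \eqref{eq:PMC-two} (Analytic certificate~\ref{cert:pmc-analytic}), followed by summation over pairs.

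\emph{Step 1: write the energy as a sum of pair contributions.} The source--source Cauchy difference energy restricted to the slots is
\[
 \mathfrak H_{\rm means}
 =\frac14\sum_{\nu\ne\mu}\iint_{I_\nu\times I_\mu}
 \frac{|F(x)-F(y)|^2}{|x-y|}\,dx\,dy .
\]
Drop the intra-slot diagonal terms; they are non-negative. What remains is
\[
 \frac12\sum_{\nu<\mu}\iint_{I_\nu\times I_\mu}\frac{|F(x)-F(y)|^2}{|x-y|}\,dx\,dy .
\]

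\emph{Step 2: rescale each pair to the unit square.} For a fixed pair $\nu<\mu$, substitute $x=a_\nu+\varepsilon s$ and $y=a_\mu+\varepsilon t$. Then $|x-y|=\varepsilon(D_{\nu\mu}+t-s)$ and $dx\,dy=\varepsilon^2\,ds\,dt$. The normalized slot profiles are $f(s)=\sqrt\varepsilon F(a_\nu+\varepsilon s)$ and $g(t)=\sqrt\varepsilon F(a_\mu+\varepsilon t)$, so the normalized mean on slot $\nu$ is $a_\nu=\int_0^1 f$. With these choices the pair integral is exactly $E_{D_{\nu\mu}}(f,g)$; the powers of $\varepsilon$ cancel (the normalization convention for the profiles is chosen precisely so that they do). Disjointness together with left-to-right ordering gives $a_\mu-a_\nu\ge\varepsilon$, hence $D_{\nu\mu}\ge1$, so the certificate applies.

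\emph{Step 3: apply the certificate and sum.} By \eqref{eq:PMC-two}, each pair integral is at least
\[
 \frac{|\bar f-\bar g|^2}{D_{\nu\mu}+1}.
\]
Discard the non-negative mean-zero parts $\|f_0\|^2$ and $\|g_0\|^2$. Multiplying by the prefactor $\tfrac12$ from Step 1 gives exactly $\omega_{\nu\mu}|a_\nu-a_\mu|^2$. Summing over $\nu<\mu$ yields \eqref{eq:PMC-global}, and the right-hand side is the standard graph-Laplacian quadratic form $\langle a,L_{\rm PMC}a\rangle$.

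\emph{Step 4: identify the kernel.} If $\langle a,L_{\rm PMC}a\rangle=0$, then every term $\omega_{\nu\mu}|a_\nu-a_\mu|^2$ vanishes. Since all weights are positive and the graph is complete, all the $a_\nu$ are equal. Conversely, constant vectors are clearly in the kernel.

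The only step needing care is Step 2's bookkeeping, specifically confirming that the factor $\tfrac14$ in the difference form, doubled by the symmetry $\nu\leftrightarrow\mu$, produces precisely the $\tfrac12$ in $\omega_{\nu\mu}$. The borderline case of adjacent slots, $D_{\nu\mu}=1$, is already covered by the monotone-approximation remark in the certificate's proof.
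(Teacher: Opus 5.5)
Your proposal is correct and follows essentially the same route as the paper: write the source--source energy as one half of the sum of the rescaled pair energies $E_{D_{\nu\mu}}$, apply \eqref{eq:PMC-two} to each pair keeping only the mean-difference term, sum over $\nu<\mu$, and read off the kernel from the strictly positive weights on the complete graph. Your explicit check that the unitary normalization $f(s)=\sqrt\varepsilon\,F(a_\nu+\varepsilon s)$ makes the powers of $\varepsilon$ cancel spells out bookkeeping that the paper leaves implicit.
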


\begin{proof}
Rescale every source slot by its common logarithmic length.  The source--source
part of the transported Cauchy difference form is one half of the sum of the
pair energies \(E_{D_{\nu\mu}}\).  Apply
Certificate~\ref{cert:pmc-analytic} to each pair and retain only the
mean-difference term.  Summation gives \eqref{eq:PMC-global}.  The graph is
complete with strictly positive edge weights, so its only null direction is the
common constant vector.
\end{proof}

\begin{theorem}[Coherent multi-source capacity]\label{thm:CMC}
Let
\[
 I_\nu=(a_\nu,a_\nu+\varepsilon)\Subset(0,1),
 \qquad \nu=1,\ldots,M,
\]
be pairwise disjoint intervals of one common length $\varepsilon>0$.  For each
$\nu$ let $f_\nu\in L^2(I_\nu)$ satisfy
$\int_{I_\nu}f_\nu=0$.  If $u$ is any common extension satisfying
$u|_{I_\nu}=f_\nu$ for every $\nu$, then
\begin{equation}
 \boxed{
 \mathfrak b[u]
 \ge
 \left(\log\frac1\varepsilon+\frac12\right)
 \sum_{\nu=1}^M\|f_\nu\|_2^2.}
 \label{eq:CMC-form}
\end{equation}
Consequently, with $S=\bigcup_\nu I_\nu$,
\begin{equation}
 \boxed{
 \Short_S\mathfrak b
 \succeq
 \left(\log\frac1\varepsilon+\frac12\right)
 \bigoplus_{\nu=1}^M Q_{I_\nu}.}
 \label{eq:CMC-short}
\end{equation}
Here the short is the variational infimum over one common complement
$L^2((0,1)\setminus S)$.
\end{theorem}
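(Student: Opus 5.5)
The plan is to prove \eqref{eq:CMC-form} directly for an arbitrary common extension $u$. Then \eqref{eq:CMC-short} follows at once: the short $\Short_S\mathfrak b$ evaluated at $(f_\nu)_\nu$ is the infimum of $\mathfrak b[u]$ over all $u$ with $u|_{I_\nu}=f_\nu$, so a bound uniform in the extension passes to the infimum. On the mean-zero subspaces this is the inequality $\Short_S\mathfrak b\succeq(\log\frac1\varepsilon+\frac12)\bigoplus_\nu Q_{I_\nu}$, read in the sense of forms on the compression to $\bigoplus_\nu Q_{I_\nu}L^2(I_\nu)$.

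\emph{Step 1: partition the harmonic form.} Split $(0,1)^2$ into the diagonal blocks $I_\nu\times I_\nu$, the source--source blocks $I_\nu\times I_\mu$ ($\nu\ne\mu$), the source--complement blocks $I_\nu\times S^c$, and $S^c\times S^c$. Every piece has a non-negative integrand, so I keep only the pieces I need and drop $S^c\times S^c$.

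\emph{Step 2: the diagonal blocks give the $\frac12$.} On $I_\nu\times I_\nu$ one has $|t-s|\le\varepsilon$. Since $f_\nu$ has mean zero, $\iint_{I_\nu^2}|f_\nu(t)-f_\nu(s)|^2\,dt\,ds=2\varepsilon\|f_\nu\|^2$. Hence each diagonal block contributes at least $\frac14\cdot\varepsilon^{-1}\cdot2\varepsilon\|f_\nu\|^2=\frac12\|f_\nu\|^2$. This is the rescaled form of the argument in Lemma~\ref{lem:onedefect-analytic}.

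\emph{Step 3: the far field gives the $\log\frac1\varepsilon$.} For $t\in I_\nu$ the exterior kernel mass is
\[
\int_{(0,1)\setminus I_\nu}\frac{ds}{|t-s|}
=\log\frac{t}{t-a_\nu}+\log\frac{1-t}{a_\nu+\varepsilon-t}.
\]
Adding the endpoint potential $-\frac12\log(t(1-t))$, which enters with the correct factor after symmetrizing the $\frac14$ double integral, leaves the weight $\frac12\log\frac{1}{(t-a_\nu)(a_\nu+\varepsilon-t)}$. Averaged against $|f_\nu|^2$, this is at least $\log\frac1\varepsilon+\log2$ pointwise, which is more than is needed. The catch is that the exterior integrand is $|f_\nu(t)-u(s)|^2$, not $|f_\nu(t)|^2$, so the mixed term $-2\Rea f_\nu(t)\overline{u(s)}/|t-s|$ must be controlled uniformly in $u$.

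\emph{Step 4: control the mixed term by a one-sided pair-capacity bound.} I would treat each side of $I_\nu$ as in Lemma~\ref{lem:pair-capacity}, writing
\[
\frac1{s-t}=\int_0^\infty e^{-x(s-t)}\,dx
\]
(and symmetrically on the left). Mean zero of $f_\nu$ lets me replace $e^{xt}$ by $e^{xt}-c(x)$ for a suitable constant $c(x)$. Weighted Cauchy--Schwarz plus AM--GM, exactly as in \eqref{eq:pair-pointwise}, should then show that the mixed term is dominated by the excess of the two diagonal weights. This step must be done for an \emph{arbitrary} $g=u|_{\text{side}}$, with no mean condition on $g$. The mean-zero condition on the source side alone has to absorb the dipole; the exterior diagonal weight in $g$ absorbs the rest, and I expect to lose at most the $\log 2$ surplus from Step 3. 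Source--source blocks are handled the same way, or simply by applying Lemma~\ref{lem:pair-capacity} with $D=D_{\nu\mu}$ and discarding the non-negative surplus.

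\emph{Main obstacle: coherence.} The same exterior function $u|_{S^c}$, and the same neighbouring sources, are seen by every $I_\nu$, and the exterior diagonal weight $\int|u(s)|^2/|t-s|$ must not be spent more than once; this is exactly the double-counting danger flagged in Audit~\ref{audit:edge-local-limit}. My first attempt would be a partition of unity of $S^c$ into Voronoi cells attached to each $I_\nu$, with each source drawing only from the kernel mass in its own cell. I would then check that the kernel mass in the cell still produces $\log\frac1\varepsilon$ up to the available $\log2$ margin. If it does not, I would instead keep the full exterior mass for each source but use that mixed terms for distinct $\nu$ pair $f_\nu$ with the same $g$ through nearly orthogonal Laplace modes, and sum the AM--GM bounds with weights summing to one over $\nu$.
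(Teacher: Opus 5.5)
\textbf{There is a genuine gap.} Your Steps 1--3 are correct, and you correctly isolate the difficulty: on $I_\nu\times S^c$ the integrand is $|f_\nu(t)-u(s)|^2$, so the cross term must be controlled uniformly in the extension $u$. Step 4 does not do this. The Laplace/Cauchy--Schwarz/AM--GM scheme of Lemma~\ref{lem:pair-capacity} uses mean zero on \emph{both} sides and measures excesses over the constant $c_D$. You only assert that it ``should'' adapt to an arbitrary one-sided $g$ with a loss of at most $\log2$, and this is never verified.

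The coherence obstacle you then single out does not exist. The blocks $I_\nu\times S^c$ for different $\nu$, and each pair block $I_\nu\times I_\mu$, are disjoint pieces of $(0,1)^2$. So the exterior weight $\iint_{I_\nu\times S^c}|u(s)|^2/|t-s|$ belongs to source $\nu$ alone, and nothing is spent twice. Your proposed remedy would in fact break the estimate. A Voronoi partition of $S^c$ discards the kernel mass of distant free points, so source $\nu$ no longer collects $\frac12\bigl[\log\frac{a_\nu+\varepsilon}{\varepsilon}+\log\frac{1-a_\nu}{\varepsilon}\bigr]\|f_\nu\|_2^2$. The $a_\nu$-dependence of the endpoint potential then no longer cancels. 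For example, with two nearly adjacent slots the left one loses essentially its whole right-hand charge $\frac12\log\frac{1-a_\nu}{\varepsilon}$. The fallback of splitting the $|u(s)|^2$ weight among sources rests on the same misreading.

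The missing idea is to give up the pointwise kernel and use a lower bound that depends only on the exterior point. For a free $y<a_\nu$ and $x\in I_\nu$ one has $|x-y|\le a_\nu+\varepsilon-y$. Hence, by mean zero, for every $u$
\[
 \int_{I_\nu}\frac{|f_\nu(x)-u(y)|^2}{|x-y|}\,dx
 \ge\frac{\|f_\nu\|_2^2+\varepsilon|u(y)|^2}{a_\nu+\varepsilon-y}
 \ge\frac{\|f_\nu\|_2^2}{a_\nu+\varepsilon-y}.
\]
The cross term vanishes identically and the $u$-term is simply dropped. The right side is symmetric, with $y-a_\nu$ in the denominator.

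For portions of $(0,1)\setminus I_\nu$ occupied by another slot $I_\mu$, apply Lemma~\ref{lem:pair-capacity} after rescaling, using $D_{\nu\mu}\ge1$. It gives $\frac12c_{\nu\mu}(\|f_\nu\|_2^2+\|f_\mu\|_2^2)$ with $c_{\nu\mu}=\log\frac{a_\mu+\varepsilon-a_\nu}{a_\mu-a_\nu}$. This is exactly the same one-source charge integrated over the occupied slot, from either side. So every $\nu$ collects the full complement charge.

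On $I_\nu$ use the worst-case potential bound $x(1-x)\le(a_\nu+\varepsilon)(1-a_\nu)$. This cancels every $a_\nu$-dependent term, and your Step 2 supplies the $\frac12$, giving exactly $\log\frac1\varepsilon+\frac12$. The argument deliberately sacrifices the $\log2$ surplus of your Step 3, which is not needed.

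If you want to keep your own viewpoint, a simpler route than Laplace modes is available. Minimize each block separately over the free value $u(s)$; that minimum already dominates the bound above.
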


\begin{proof}
Order the intervals from left to right.  On a single source interval $I_\nu$,
$|x-y|\le\varepsilon$ and the mean-zero identity gives
\[
 \frac14\iint_{I_\nu^2}
 \frac{|f_\nu(x)-f_\nu(y)|^2}{|x-y|}\,dx\,dy
 \ge\frac12\|f_\nu\|_2^2.
 \label{eq:CMC-internal}
\]
For a free point $y<a_\nu$,
\[
 \int_{I_\nu}|f_\nu(x)-u(y)|^2\,dx
 =\|f_\nu\|_2^2+\varepsilon|u(y)|^2
 \ge\|f_\nu\|_2^2,
\]
and $|x-y|\le a_\nu+\varepsilon-y$.  Hence the source--free contribution from
the left is bounded below by
\[
 \frac12\|f_\nu\|_2^2
 \int\frac{dy}{a_\nu+\varepsilon-y},
\]
where the integral is over the free portion to the left.  The analogous right
bound is
\[
 \frac12\|f_\nu\|_2^2
 \int\frac{dy}{y-a_\nu}.
\]
If an interval of the complement is occupied by another source slot, the
missing contribution is supplied by Lemma~\ref{lem:pair-capacity}.  Indeed,
for $I_\mu$ to the right of $I_\nu$, rescaling both slots by $\varepsilon$
gives
\[
 \frac12\iint_{I_\nu\times I_\mu}
 \frac{|f_\nu(x)-f_\mu(y)|^2}{y-x}\,dx\,dy
 \ge
 \frac12c_{\nu\mu}
 \bigl(\|f_\nu\|_2^2+\|f_\mu\|_2^2\bigr),
\]
where
\[
 c_{\nu\mu}
 =\log\frac{a_\mu+\varepsilon-a_\nu}{a_\mu-a_\nu}.
\]
This is exactly the integral of the preceding one-source kernel charge over
the missing source interval, from either side.  Therefore the free pieces and
all source--source pairs reconstruct, for each $\nu$, the full complement
charge
\begin{equation}
 \frac12\left[
 \log\frac{a_\nu+\varepsilon}{\varepsilon}
 +\log\frac{1-a_\nu}{\varepsilon}
 \right]\|f_\nu\|_2^2.
 \label{eq:CMC-complement}
\end{equation}
The endpoint potential on $I_\nu$ satisfies
\[
 x(1-x)\le(a_\nu+\varepsilon)(1-a_\nu),
\]
and hence
\begin{equation}
 -\frac12\int_{I_\nu}\log(x(1-x))|f_\nu(x)|^2\,dx
 \ge
 -\frac12\log\!\bigl((a_\nu+\varepsilon)(1-a_\nu)\bigr)
 \|f_\nu\|_2^2.
 \label{eq:CMC-potential}
\end{equation}
Adding \eqref{eq:CMC-internal}--\eqref{eq:CMC-potential} cancels every
$a_\nu$-dependent factor and gives
\[
 \left(\log\frac1\varepsilon+\frac12\right)\|f_\nu\|_2^2.
\]
The harmonic energy on the remaining free--free region and the endpoint
potential there are non-negative, so they may be discarded.  Summing over
$\nu$ proves \eqref{eq:CMC-form}.  Since the estimate holds for every common
extension, taking the infimum over the common complement gives
\eqref{eq:CMC-short}.
\end{proof}

\subsection{Application to the true source intervals}\label{sec:CMC-application}

Write
\[
 w_j=\log\left(1+\frac1j\right),
 \qquad
 m_n=\left\lfloor\frac{k}{n}\right\rfloor.
\]
Inside a fixed parent cell $C_m=[m,m+1)$ use the logarithmic coordinate
$\xi=\log(t/m)$ and normalize by $x=\xi/w_m$.  Every true source interval
$I_{k,n}\subset C_m$ then has normalized length
\begin{equation}
 \varepsilon_{k,m}=\frac{w_k}{w_m}.
 \label{eq:true-source-epsilon}
\end{equation}
For distinct $n$ the intervals $I_{k,n}$ are disjoint.  Therefore
Theorem~\ref{thm:CMC} yields, simultaneously on every parent cell,
\begin{equation}
 \boxed{
 \Short_{\{I_{k,n}:\,m_n=m\}}\mathfrak b_m
 \succeq
 \bigoplus_{m_n=m}L_{k,n}Q_{k,n},
 \qquad
 L_{k,n}=\frac12+\log\frac{w_m}{w_k}.}
 \label{eq:CMC-true-source}
\end{equation}
This is a true source-level statement: no two source intervals are identified
and no minimizer is reused.

\begin{corollary}[Source-resolved transverse Feshbach bound]
\label{cor:source-resolved-transverse}
In the fixed-target gauge of Lemma~\ref{lem:fixed-target-gauge}, let
$q\in QH_k^t$.  The source vector generated by the active branches is
\[
 b_{k,n}(q)=\frac{\Lambda(n)}{\sqrt n}R_n^*q.
\]
Then the simultaneous source short satisfies
\begin{equation}
 \boxed{
 \mathcal L_k^{\perp}[q]
 \le
 \sum_n\frac{\Lambda(n)^2}{nL_{k,n}}\|q\|^2
 =\sigma_k\|q\|^2.}
 \label{eq:source-resolved-loss}
\end{equation}
Consequently the transverse direct-Schur block obeys
\begin{equation}
 \boxed{
 D_k\succeq(\mathfrak B_k-\sigma_k)Q
 \succ\frac12Q.}
 \label{eq:transverse-direct-pass}
\end{equation}
\end{corollary}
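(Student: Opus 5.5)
The plan is to read $\mathcal L_k^{\perp}[q]$ as the Feshbach loss of the target vector $q$ against the simultaneously shorted source block. Concretely, $\mathcal L_k^{\perp}[q]$ is the supremum over source families $(f_n)$ of $2\Rea\sum_n\langle b_{k,n}(q),f_n\rangle$ minus the shorted source energy. By the Cauchy--Schwarz/Schur duality used in Lemma~\ref{lem:root-load-final}, this supremum equals $\langle b(q),\mathcal S^{-1}b(q)\rangle$, where $\mathcal S$ is the shorted source form restricted to the subspace carrying $b(q)$.

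\emph{Lower bound on $\mathcal S$ and reduction to $\sigma_k$.} Group the active branches by parent cell $m=m_n$. On each parent cell, \eqref{eq:CMC-true-source} gives the simultaneous short $\Short\,\mathfrak b_m\succeq\bigoplus_{m_n=m}L_{k,n}Q_{k,n}$ over one common complement, with no minimizer reused. Distinct parent cells carry disjoint variables, so the bounds assemble into a direct sum, and we get $\mathcal S\succeq\bigoplus_n L_{k,n}Q_{k,n}$ on the transverse source spaces. Next, by Lemma~\ref{lem:fixed-target-gauge} and \eqref{eq:fixed-target-PQ}, $q\in QH_k^t$ pulls back to $R_n^*q\in Q_{k,n}H_{k,n}^s$, since the branch gauges send transverse to transverse with no ground leakage. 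The inverse bound then gives
\[
 \mathcal L_k^{\perp}[q]\le\sum_n\frac{\Lambda(n)^2}{n}\,\frac{\|R_n^*q\|^2}{L_{k,n}}=\sum_n\frac{\Lambda(n)^2}{nL_{k,n}}\|q\|^2,
\]
using unitarity of $R_n$. The $L_{k,n}$ here are exactly those of \eqref{eq:Lkn}, because $w_{\lfloor k/n\rfloor}=w_m$. This is \eqref{eq:source-resolved-loss}, and it is the operator version of the branchwise Young bound \eqref{eq:young}.

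\emph{Transverse Schur block.} The transverse direct-Schur block is the primal target diagonal $\mathfrak B_k Q$ from \eqref{eq:primal-target-diagonal} minus the loss just bounded. Lemma~\ref{lem:fesh} then gives $D_k\succeq(\mathfrak B_k-\sigma_k)Q$ and $\mathfrak B_k-\sigma_k>\tfrac12$, which is \eqref{eq:transverse-direct-pass}.

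\emph{Main obstacle: what the projections mean.} The step I expect to be hardest, and possibly not closable as stated, is identifying the two projections involved. The $Q$ in the fixed-target gauge is the complement of the polar ground $e_t\propto e^{-z/2}$, transported to $\one$ by $W_t$. The $Q_{I_\nu}$ in Theorem~\ref{thm:CMC}, by contrast, is the complement of the Lebesgue mean on each slot in the normalized logarithmic coordinate. These lines differ: $R_n^*q\perp e_n^s$ does not imply that $R_n^*q$ has zero mean.

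I would first try to conjugate the CMC estimate by the slotwise unitaries $W_n^s$. The difficulty is that these gauges do not preserve the Cauchy difference kernel, which is exactly the compatibility problem flagged in Audit~\ref{audit:edge-local-limit}. Failing that, I would split $R_n^*q$ into its mean part and a mean-zero remainder, and control the mean part using the leakage bound \eqref{eq:epsilon-vs-reserve} together with the mean-difference capacity \eqref{eq:PMC-global}. This would cost a small, quantifiable portion of the $\tfrac12$ margin. Without such an argument, the inequality \eqref{eq:source-resolved-loss} is not justified by the cited results alone.

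A second point to verify is that the CMC short, which is taken over the complement inside each parent cell, is actually the short used in defining $D_k$. In particular, the shared complement variables must not also be minimized against other targets.
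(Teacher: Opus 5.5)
Your outline (grouping the active branches by parent cell, invoking \eqref{eq:CMC-true-source}, inverting the block-diagonal lower bound, summing over the orthogonal parent cells, and closing with \eqref{eq:fesh-reserve}) is exactly the paper's proof. The obstacle you flag is a genuine gap, and the paper does not close it either. Its proof simply asserts that the gauges ``preserve $P\oplus Q$'' and then applies \eqref{eq:CMC-true-source} as if the gauge projection were the slot projection $Q_{k,n}$ of Theorem~\ref{thm:CMC}.

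Your diagnosis is right. One has $(W_n^s)^*QW_n^s=I-|e_n^s\rangle\langle e_n^s|$ with $e_n^s\propto e^{-\xi/2}$ on a slot of logarithmic length $w_k$, whereas $Q_{k,n}$ removes the slot constant. The misalignment is $\varepsilon(w_k)=1-\frac{4}{w_k}\tanh\frac{w_k}{4}>0$ by \eqref{eq:true-ground-defect}. So $R_n^*q$ generally has a nonzero slot mean, and against a lower form vanishing on slot constants the dual supremum is $+\infty$.

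The same step has a second defect, which also affects your duality formula. The loss is $\langle b,\mathcal S^{-1}b\rangle$ with the slot means free. So $\mathcal S$ must be \emph{shorted}, not compressed, onto the subspace carrying $b$, and shorting lies below compression. Theorem~\ref{thm:CMC} is proved only for prescribed zero slot means. It therefore bounds the compression of $\Short_S\mathfrak b$ to $\bigoplus_nQ_{k,n}$, and gives no upper bound on the loss even for an exactly mean-zero $b$.

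Your second worry is justified as well. $D_k$ in \eqref{eq:block-core} is the unshorted collar block, while the block used later is the transverse part $\mathsf D_k$ of $S_k=D_k-B_k^*C_k^{-1}B_k$ in \eqref{eq:green-S-split}. Nothing in the paper relates the parent-cell shorts of $\mathfrak b_m$ to the elimination of the full old core, which contains indefinite translation terms.

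The repair you sketch does not work with the tools you name. The Laplacian \eqref{eq:PMC-global} has the common constant in its kernel. It also draws on the same source--source pair energies that the CMC argument already spends on rebuilding the complement charges. And \eqref{eq:epsilon-vs-reserve} measures leakage in the residue coordinate $r$, not in the coordinate of \eqref{eq:CMC-true-source}.

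What handles both defects, granting Theorem~\ref{thm:residual-CMC}, is the simultaneous cut of Theorem~\ref{thm:MASTER-common-cut}. With free slot means it gives the block-diagonal bound $D^{(0)}_{k,m}\oplus\bigoplus_n\widehat L_{k,n}Q_{k,n}$, and $D^{(0)}_{k,m}\succeq\log2\,I$ by \eqref{eq:qM-half}. Split $R_n^*q$ into its slot mean, of squared norm at most $\varepsilon(w_k)\|q\|^2\le\frac{w_k^2}{48}\|q\|^2$, and its mean-zero part. This gives
\[
 \mathcal L_k^\perp[q]\le\Bigl(\widehat\sigma_k+\frac{\varepsilon(w_k)}{\log2}\sum_n\frac{\Lambda(n)^2}{n}\Bigr)\|q\|^2 .
\]
The correction is below $\log^2k/(48k^2\log2)$, so Theorem~\ref{thm:MASTER-safe-transverse} still leaves a strictly positive transverse coefficient. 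That is all Section~\ref{sec:green-root-reduction} uses.

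This is weaker than the corollary as stated: $\widehat\sigma_k>\sigma_k$ replaces $\sigma_k$, and only positivity replaces the $\tfrac12$ margin. It also still presupposes the missing identification of $\mathcal L_k^\perp$ with the true old-core loss. As things stand, neither your argument nor the paper's proves \eqref{eq:source-resolved-loss}--\eqref{eq:transverse-direct-pass}.
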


\begin{proof}
Under the fixed-target gauges every $R_n$ is the identity on the common target
profile and preserves $P\oplus Q$.  Equation~\eqref{eq:CMC-true-source} gives
a block-diagonal lower bound for the simultaneous source metric.  Order
reversal under inversion therefore gives, parent cell by parent cell,
\[
 \langle b_m,(\mathfrak C_{k,m}^{\rm short})^{-1}b_m\rangle
 \le
 \sum_{m_n=m}\frac{\Lambda(n)^2}{nL_{k,n}}\|q\|^2.
\]
Summing over the orthogonal parent cells proves
\eqref{eq:source-resolved-loss}.  The Feshbach reserve
\eqref{eq:fesh-reserve} gives $\mathfrak B_k-\sigma_k>1/2$, proving
\eqref{eq:transverse-direct-pass}.
\end{proof}

\begin{audit}[What CMC closes, and what it does not]\label{audit:CMC-scope}
Theorem~\ref{thm:CMC} and Corollary~\ref{cor:source-resolved-transverse} close
the simultaneity gap in the true source-resolved \emph{transverse} Schur loss.
They do not identify the scalar ground coefficient of the exact post-old-core
Schur block.  In particular, the OSPR quantities
\[
 \alpha_*=2\log2,
 \qquad
 r=QK\one,
 \qquad
 d_*=\frac{\|r\|^2}{\alpha_*}
\]
belong to the normalized pre-short Carleman block.  They cannot be inserted as
$\alpha_k,r_k$ in the post-short full-operator matrix without an independent
identity.  The true ground pivot is not obtained from \eqref{eq:transverse-direct-pass} alone.  In \cref{sec:green-root-reduction} the remaining ground/root content is reduced exactly to the Schur scalars \(\Pi_k,\delta_k^Q\) and the single Green coefficient \(G_k\).
\end{audit}

\subsection{True-core finite polar-cyclic endpoint criterion}
\label{sec:true-core-endpoint}

At an arithmetic endpoint $N$, let $C_N$ denote the exact polar-free odd core
in the localized operator, so that
\begin{equation}
 \boxed{A_{N,-}=C_N-2\ketbra{s_N}{s_N}.}
 \label{eq:true-endpoint-core}
\end{equation}
No affine Gram block is extracted before this step.

\begin{lemma}[True-core finite Schur criterion]
\label{lem:true-core-finite-schur}
Let $P$ be a finite-rank orthogonal projection satisfying $Ps_N=s_N$ and put
$Q=I-P$.  Suppose
\begin{equation}
 QC_NQ\succeq\lambda_NQ,
 \qquad \lambda_N>0.
 \label{eq:true-core-tail-coercivity}
\end{equation}
If
\begin{equation}
 \boxed{
 PC_NP-2\ketbra{s_N}{s_N}
 -\lambda_N^{-1}PC_NQC_NP\succeq0,}
 \label{eq:true-core-finite-barrier}
\end{equation}
then $A_{N,-}\succeq0$.
\end{lemma}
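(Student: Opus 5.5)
The plan is a direct Schur-complement argument for the decomposition $\mathcal H=P\mathcal H\oplus Q\mathcal H$; no earlier result beyond elementary form calculus is needed. Since $Ps_N=s_N$, we have $Qs_N=0$. Hence the rank-one polar term lives entirely in the $P$-block, and with respect to $P\oplus Q$
\[
 A_{N,-}=\begin{pmatrix}PC_NP-2\ketbra{s_N}{s_N}&PC_NQ\\ QC_NP&QC_NQ\end{pmatrix}.
\]
We work on the form core where $C_N$ is defined. Because $P$ has finite rank and $s_N$ lies in its range, the off-diagonal block $PC_NQ$ is a finite-rank operator from the form domain. We should check that $PC_NQC_NP$ is a bounded finite matrix; this requires $\operatorname{ran}P$ to lie in the form domain of $C_N$, or more simply in the operator domain, which is the mild regularity hypothesis implicit in the statement.

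For a test vector $f=x+y$ with $x=Pf$ and $y=Qf$, expand
\[
 \langle f,A_{N,-}f\rangle=\langle x,(PC_NP-2\ketbra{s_N}{s_N})x\rangle+2\Rea\langle y,QC_NPx\rangle+\langle y,QC_NQy\rangle .
\]
Bound the cross term by weighted Young (equivalently Cauchy--Schwarz):
\[
 2|\langle y,QC_NPx\rangle|\le\lambda_N\|y\|^2+\lambda_N^{-1}\|QC_NPx\|^2 .
\]
The coercivity hypothesis \eqref{eq:true-core-tail-coercivity} then gives $\langle y,QC_NQy\rangle-\lambda_N\|y\|^2\ge0$. Since $Q^2=Q$, we have $\|QC_NPx\|^2=\langle x,PC_NQC_NPx\rangle$. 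Combining these,
\[
 \langle f,A_{N,-}f\rangle\ge\Bigl\langle x,\bigl(PC_NP-2\ketbra{s_N}{s_N}-\lambda_N^{-1}PC_NQC_NP\bigr)x\Bigr\rangle\ge0
\]
by \eqref{eq:true-core-finite-barrier}.

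An equivalent route replaces $QC_NQ$ by the smaller operator $\lambda_NQ$ and takes the exact Schur complement of that block. The formulation with explicit Young's inequality avoids inverting $QC_NQ$.

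Finally, extend from the core to the full form domain. Both sides are closed forms, and the inequality on a form core passes to the closure by the standard density argument (cf.\ the closure remarks after Theorem~\ref{thm:weil}). The only real obstacle is this domain bookkeeping: ensuring that $C_NP$ makes sense, so that the finite matrix $PC_NQC_NP$ is well defined, and that the core is stable under $P$ and $Q$. I would handle this by assuming $\operatorname{ran}P$ lies in the operator domain of $C_N$, which is the case for the smooth finite bases intended in the certificate. The algebra itself is routine.
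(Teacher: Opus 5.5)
Your proof is correct and is essentially the paper's argument. Both split along $P\oplus Q$, use $Qs_N=0$ to keep the polar term in the $P$-block, and use the tail coercivity to control the coupling $QC_NP$. The only difference is that the paper writes the exact Schur complement and invokes $(QC_NQ)^{-1}\preceq\lambda_N^{-1}Q$, whereas your weighted Young step is the variational form of that same bound and avoids the inverse; your caveat $\operatorname{ran}P\subset\operatorname{dom}C_N$ is exactly what makes $PC_NQC_NP$ meaningful and is left implicit in the paper.
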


\begin{proof}
The $Q$ block of $A_{N,-}$ equals $QC_NQ$ because $Qs_N=0$.  Its Schur
complement on $P\mathcal H_N$ is
\[
 PC_NP-2\ketbra{s_N}{s_N}
 -PC_NQ(QC_NQ)^{-1}QC_NP.
\]
The tail bound gives $(QC_NQ)^{-1}\preceq\lambda_N^{-1}Q$, so
\eqref{eq:true-core-finite-barrier} is a lower bound for the exact Schur
complement.
\end{proof}

Let $v_0=s_N/\|s_N\|$ and run exact Lanczos for the true core $C_N$:
\[
 C_Nv_j=\beta^{(C)}_{j-1}v_{j-1}
       +\alpha^{(C)}_jv_j+\beta^{(C)}_jv_{j+1},
 \qquad \beta^{(C)}_j\ge0.
\]
Let $P_r$ project onto $\operatorname{span}\{v_0,\ldots,v_{r-1}\}$,
$Q_r=I-P_r$, and let $J^{(C)}_{N,r}=P_rC_NP_r$ in this basis.  Then
\begin{equation}
 Q_rC_NP_r
 =\beta^{(C)}_{r-1}\ketbra{v_r}{v_{r-1}}.
 \label{eq:lanczos-C-one-link}
\end{equation}

\begin{corollary}[True-core endpoint Krylov--Radau criterion]
\label{cor:true-core-endpoint-radau}
Assume that for every endpoint $N$ in one unbounded cofinal arithmetic
sequence there exist $r=r(N)$ and $\lambda_{N,r}>0$ such that
\begin{equation}
 \boxed{Q_rC_NQ_r\succeq\lambda_{N,r}Q_r}
 \label{eq:KR-C-tail}
\end{equation}
and
\begin{equation}
 \boxed{
 J^{(C)}_{N,r}-2\|s_N\|^2e_1e_1^*
 -\frac{(\beta^{(C)}_{r-1})^2}{\lambda_{N,r}}e_re_r^*\succeq0.}
 \label{eq:KR-C-finite}
\end{equation}
Then $A_{N,-}\succeq0$ on that cofinal sequence and therefore
$A_{a,-}\succeq0$ for every finite $a$.
\end{corollary}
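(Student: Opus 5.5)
The proof reduces the corollary to Lemma~\ref{lem:true-core-finite-schur} with $P=P_r$, followed by the cofinal-support reduction of Lemma~\ref{lem:cofinal}. Fix an endpoint $N$ in the given cofinal sequence and put $P=P_r$, $Q=Q_r$.

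\emph{Step 1: hypotheses of the lemma.} Since $v_0=s_N/\|s_N\|$ lies in $\operatorname{span}\{v_0,\ldots,v_{r-1}\}$, we have $P_rs_N=s_N$, and $P_r$ has finite rank. The tail hypothesis \eqref{eq:true-core-tail-coercivity} is exactly \eqref{eq:KR-C-tail} with $\lambda_N=\lambda_{N,r}$.

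\emph{Step 2: the barrier in the Lanczos basis.} This is a short computation. By the one-link identity \eqref{eq:lanczos-C-one-link}, $Q_rC_NP_r=\beta^{(C)}_{r-1}\ketbra{v_r}{v_{r-1}}$. Taking adjoints gives $P_rC_NQ_r=\beta^{(C)}_{r-1}\ketbra{v_{r-1}}{v_r}$. Since $Q_r^2=Q_r$ and $\|v_r\|=1$,
\[
P_rC_NQ_rC_NP_r=(\beta^{(C)}_{r-1})^2\ketbra{v_{r-1}}{v_{r-1}}.
\]
In the orthonormal basis $v_0,\ldots,v_{r-1}$ of $P_r\mathcal H_N$, we identify:
\begin{itemize}
\item $P_rC_NP_r$ with $J^{(C)}_{N,r}$;
\item $\ketbra{s_N}{s_N}=\|s_N\|^2\ketbra{v_0}{v_0}$ with $\|s_N\|^2e_1e_1^*$;
\item $\ketbra{v_{r-1}}{v_{r-1}}$ with $e_re_r^*$.
\end{itemize}
Hence the operator in \eqref{eq:true-core-finite-barrier} is unitarily equivalent to the matrix in \eqref{eq:KR-C-finite}, which is positive semidefinite by hypothesis.

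If the recursion breaks down, i.e.\ $\beta^{(C)}_j=0$ for some $j<r-1$, the Krylov space is $C_N$-invariant. In that case we take $r=j+1$ with zero coupling term, and the same argument applies verbatim. Lemma~\ref{lem:true-core-finite-schur} then yields $A_{N,-}\succeq0$.

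\emph{Step 3: extension to all radii.} The endpoints $N$ form an unbounded sequence of support radii. The zero-extension identity \eqref{eq:zero-extension} holds for the odd channel operator $A_{a,-}$, since compression by either $J$-projection is compatible with \eqref{eq:ambient-zero-extension}. Lemma~\ref{lem:cofinal} therefore gives $A_{a,-}\succeq0$ for every finite $a>0$.

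The only delicate point I expect is Step~2 in the unbounded setting. $C_N$ is defined as a closed form, so the Lanczos vectors must lie in its operator domain (or the relations must be read in the form sense) for \eqref{eq:lanczos-C-one-link} to make sense. I would handle this by the standing convention implicit in \eqref{eq:lanczos-C-one-link}, namely that the Lanczos relations hold exactly. Under that convention the remaining manipulations use only orthonormality and the Schur complement of Lemma~\ref{lem:true-core-finite-schur}, so no further estimate is needed.
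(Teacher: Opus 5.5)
Your proposal is correct and follows the paper's proof. The paper likewise uses the one-link identity \eqref{eq:lanczos-C-one-link} to turn the barrier \eqref{eq:true-core-finite-barrier} into the matrix inequality \eqref{eq:KR-C-finite}, applies Lemma~\ref{lem:true-core-finite-schur}, and extends to all radii by exact zero-extension compression (Lemma~\ref{lem:cofinal}); you simply write out the adjoint computation and the Lanczos breakdown case that the paper leaves implicit.
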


\begin{proof}
Equation~\eqref{eq:lanczos-C-one-link} turns
\eqref{eq:true-core-finite-barrier} into \eqref{eq:KR-C-finite}.  Apply
Lemma~\ref{lem:true-core-finite-schur} and then exact zero-extension
compression.
\end{proof}

\begin{remark}[Already proved tail mechanism]
For a partition-tail consisting of functions with zero mean on every atom,
the harmonic difference form satisfies the exact Poincar\'e bound
$\mathcal H[f]\ge\tfrac12\|f\|^2$.  The regular gamma kernel obeys
$\sup_{t\ge0}|\rho'(t)|<1.745$, giving
$|\langle f,K_{\gamma,A}f\rangle|<3.49A^2h\|f\|^2$ for maximal atom length
$h$.  If the finite coarse space contains the partition constants and the
true polar vector, the polar rank-one term vanishes exactly on this pure tail.
These estimates provide an analytic mechanism for establishing the true-core tail bound \eqref{eq:KR-C-tail}; they do not by themselves certify the finite matrix \eqref{eq:KR-C-finite} uniformly in $N$.
\end{remark}

For later local estimates, the polar mass of a physical collar can also be
computed exactly.  If $t_{a,h}$ denotes the restriction of $s^{\rm ph}$ to
$(a,a+h)\cup(-a-h,-a)$, then
\begin{equation}
 \boxed{
 \|t_{a,h}\|^2
 =\sinh(a+h)-\sinh a-h
 =h(\cosh a-1)+O_a(h^2).
 }
 \label{eq:polar-tail-mass}
\end{equation}

\subsection{A full-operator residual identity}

The next identity isolates the true quantity that must be controlled in a thin
collar.  Define the old--new coupling of the \emph{full} localized operator by
\begin{equation}
 B_{A,k}:=B_k-2s_kt_k^*.
 \label{eq:full-coupling}
\end{equation}

\begin{lemma}[Sherman--Morrison root residual]\label{lem:root-residual}
Assume $A_k\succ0$. Then
\begin{equation}
 C_k^{-1}s_k=\delta_k A_k^{-1}s_k,
 \qquad
 \langle s_k,A_k^{-1}s_k\rangle
 =\frac{1-\delta_k}{2\delta_k},
 \label{eq:SM-polar}
\end{equation}
and, with
\begin{equation}
 \zeta_k:=t_k-B_{A,k}^*A_k^{-1}s_k,
 \label{eq:zeta-defined}
\end{equation}
one has the exact factorization
\begin{equation}
 \boxed{\eta_k=\delta_k\zeta_k.}
 \label{eq:eta-delta-zeta}
\end{equation}
Consequently
\begin{equation}
 \boxed{\ell_k=2\delta_k\langle\zeta_k,S_k^{-1}\zeta_k\rangle.}
 \label{eq:ell-residual}
\end{equation}
\end{lemma}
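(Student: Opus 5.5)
The plan is to derive everything from the Sherman--Morrison formula applied to the rank-one relation $C_k=A_k+2\ketbra{s_k}{s_k}$ in \eqref{eq:block-core}. The lemma is finite-dimensional linear algebra in nature and needs no analytic input beyond invertibility.

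\emph{Step 1: the polar identities \eqref{eq:SM-polar}.} Assume $A_k\succ0$ and put $\sigma_k:=\langle s_k,A_k^{-1}s_k\rangle\ge0$. Since $C_k=A_k+2\ketbra{s_k}{s_k}\succeq A_k\succ0$, the quantities $C_k^{-1}$, $\delta_k$, $S_k$ and $\eta_k$ of \eqref{eq:S-eta-final} are well defined. Sherman--Morrison, checked directly by applying $C_k$ to the candidate vector, gives
\[
 C_k^{-1}s_k=\frac{A_k^{-1}s_k}{1+2\sigma_k}.
\]
Pairing this with $s_k$ yields
\[
 \delta_k=1-\frac{2\sigma_k}{1+2\sigma_k}=\frac1{1+2\sigma_k}.
\]
This shows both $C_k^{-1}s_k=\delta_kA_k^{-1}s_k$ and $\sigma_k=(1-\delta_k)/(2\delta_k)$. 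In particular $\delta_k>0$ holds automatically under $A_k\succ0$.

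\emph{Step 2: the factorization \eqref{eq:eta-delta-zeta}.} From \eqref{eq:full-coupling} we have $B_{A,k}^*=B_k^*-2\,t_ks_k^*$. Expanding \eqref{eq:zeta-defined} gives
\[
 \zeta_k=t_k-B_k^*A_k^{-1}s_k+2\sigma_kt_k=\frac{t_k}{\delta_k}-B_k^*A_k^{-1}s_k,
\]
using $1+2\sigma_k=\delta_k^{-1}$ from Step~1. On the other hand, Step~1 turns the definition of $\eta_k$ into
\[
 \eta_k=t_k-B_k^*C_k^{-1}s_k=t_k-\delta_kB_k^*A_k^{-1}s_k.
\]
Multiplying the expression for $\zeta_k$ by $\delta_k$ gives exactly this, so $\eta_k=\delta_k\zeta_k$.

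\emph{Step 3: the load formula \eqref{eq:ell-residual}.} Assume $S_k\succ0$, as in Lemma~\ref{lem:root-load-final}, so that $\ell_k$ is defined; $\delta_k>0$ already holds by Step~1. Substituting $\eta_k=\delta_k\zeta_k$ into \eqref{eq:true-load} gives
\[
 \ell_k=\frac{2\delta_k^2\langle\zeta_k,S_k^{-1}\zeta_k\rangle}{\delta_k}=2\delta_k\langle\zeta_k,S_k^{-1}\zeta_k\rangle.
\]

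There is no genuine obstacle here. The only points needing care are the adjoint of the full coupling, $B_{A,k}^*=B_k^*-2t_ks_k^*$, where the order of $s_k$ and $t_k$ must be right, and stating the standing hypothesis $S_k\succ0$ under which $\ell_k$ makes sense. In infinite dimensions I would note that all operators involved are bounded inverses of bounded coercive forms, so the algebra is identical.
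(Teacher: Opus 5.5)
Your proposal is correct and is essentially the paper's argument. You use Sherman--Morrison for the rank-one relation $C_k=A_k+2\ketbra{s_k}{s_k}$, then expand $B_{A,k}^*=B_k^*-2t_ks_k^*$ inside $\zeta_k$, and finally substitute $\eta_k=\delta_k\zeta_k$ into the variational formula for $\ell_k$.

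The only difference is that you apply $C_k$ to $A_k^{-1}s_k$, whereas the paper applies $A_k$ to $C_k^{-1}s_k$. Your direction has the small advantage of producing $\delta_k=(1+2\langle s_k,A_k^{-1}s_k\rangle)^{-1}>0$ explicitly, which the paper uses tacitly when it divides by $\delta_k$.
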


\begin{proof}
From $A_k=C_k-2|s_k\rangle\langle s_k|$,
\[
 A_kC_k^{-1}s_k
 =\bigl(1-2\langle s_k,C_k^{-1}s_k\rangle\bigr)s_k
 =\delta_k s_k,
\]
which proves the first identity in \eqref{eq:SM-polar}; the second follows by
taking the scalar product with $s_k$.  Since
$B_{A,k}^*=B_k^*-2t_ks_k^*$,
\begin{align*}
 t_k-B_{A,k}^*A_k^{-1}s_k
 &=t_k-\delta_k^{-1}B_k^*C_k^{-1}s_k
   +\delta_k^{-1}(1-\delta_k)t_k\\
 &=\delta_k^{-1}\bigl(t_k-B_k^*C_k^{-1}s_k\bigr)
 =\delta_k^{-1}\eta_k.
\end{align*}
This gives \eqref{eq:eta-delta-zeta}; substituting it into
\eqref{eq:true-load} gives \eqref{eq:ell-residual}.
\end{proof}

\subsection{Endpoint certificate framework}

The true-core Krylov--Radau construction above provides an independent
finite-dimensional endpoint criterion.  The harmonic-graph argument below
provides the route used for the main theorem.

\subsection{Harmonic-graph closure of the true rooted pivot}
\label{sec:harmonic-graph-lock}

We now perform the final short without replacing the true polar vector by a
profile surrogate.  Throughout this subsection the arithmetic endpoints are
\[
 a_k=\frac12\log k,
 \qquad k\ge7,
\]
and the physical old/new decomposition and the notation
\(C_k,B_k,D_k,S_k,\delta_k,\eta_k\) are those of
\eqref{eq:block-core}--\eqref{eq:S-eta-final}.  We also use the full-operator
coupling \(B_{A,k}=B_k-2s_kt_k^*\) and the exact residual
\(\zeta_k\) from Lemma~\ref{lem:root-residual}.

\begin{lemma}[Exact full-operator harmonic graph]
\label{lem:harmonic-graph}
Assume \(A_k\succ0\) and define
\begin{equation}
 H_k:\mathcal K_k\longrightarrow\mathcal H_{a_{k+1}},
 \qquad
 H_kf:=\binom{-A_k^{-1}B_{A,k}f}{f}.
 \label{eq:Hk-harmonic}
\end{equation}
Then \(H_kf\) is \(A_{k+1}\)-orthogonal to the old physical subspace and
\begin{equation}
 \boxed{
 H_k^*A_{k+1}H_k=T_k,
 \qquad
 H_k^*s_{k+1}=\zeta_k.}
 \label{eq:full-harmonic-pullback}
\end{equation}
Moreover,
\begin{equation}
 \boxed{
 H_k^*C_{k+1}H_k
 =S_k+2(1-\delta_k)\ketbra{\zeta_k}{\zeta_k}.}
 \label{eq:C-on-A-harmonic}
\end{equation}
Consequently, for every \(f\in\mathcal K_k\),
\begin{align}
 \langle s_{k+1},H_kf\rangle
 &=\langle\zeta_k,f\rangle,
 \label{eq:H-polar-pullback}\\
 \langle H_kf,C_{k+1}H_kf\rangle
 &=\langle f,S_kf\rangle
   +2(1-\delta_k)|\langle\zeta_k,f\rangle|^2.
 \label{eq:H-core-pullback}
\end{align}
\end{lemma}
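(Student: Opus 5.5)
The plan is to work entirely in the old/new block decomposition \eqref{eq:physical-polar-split} and to recognize $H_k^*A_{k+1}H_k$ as the Schur complement of the old block in $A_{k+1}$. First I would write $A_{k+1}$ in block form. Since $s_{k+1}=s_k\oplus t_k$ and $A_{k+1}=C_{k+1}-2\ketbra{s_{k+1}}{s_{k+1}}$, \eqref{eq:block-core} gives:
\begin{itemize}
\item old--old block $A_k$;
\item off-diagonal block $B_k-2s_kt_k^*=B_{A,k}$;
\item new--new block $D_k-2\ketbra{t_k}{t_k}$.
\end{itemize}
The $A_{k+1}$-orthogonality of $H_kf$ to the old subspace is then a one-line check: the old component of $A_{k+1}H_kf$ is $A_k(-A_k^{-1}B_{A,k}f)+B_{A,k}f=0$. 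Expanding the form, one gets
\[
 H_k^*A_{k+1}H_k=D_k-2\ketbra{t_k}{t_k}-B_{A,k}^*A_k^{-1}B_{A,k},
\]
which is the Schur complement of $A_k$ in $A_{k+1}$. The polar identity follows the same way: $H_k^*s_{k+1}=t_k-B_{A,k}^*A_k^{-1}s_k=\zeta_k$ by \eqref{eq:zeta-defined}. Pairing this with $f$ gives \eqref{eq:H-polar-pullback}.

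To identify this Schur complement with $T_k$, I would use associativity of shorting on the rooted Gram matrix $\mathcal G_{k+1}$ of Lemma~\ref{lem:root-lift-final}. The steps are:
\begin{enumerate}
\item Check the hypotheses. $A_k\succ0$ gives $C_k=A_k+2\ketbra{s_k}{s_k}\succ0$. Lemma~\ref{lem:root-residual} gives $\langle s_k,A_k^{-1}s_k\rangle=(1-\delta_k)/(2\delta_k)$; since the left side is nonnegative and $\delta_k=1-2\langle s_k,C_k^{-1}s_k\rangle\le1$, this forces $\delta_k>0$. So all the shorts below are defined.
\item Short the scalar root first. This produces $A_{k+1}$, and a further short of the old block produces the Schur complement above.
\item Short the old core first. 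By \eqref{eq:rooted-two-block}, followed by the root short, this produces $T_k$ by \eqref{eq:true-Tk}.
\item Conclude by the quotient formula for Schur complements, i.e.\ shorting a set of variables in either order, or equivalently the infimum characterization of the short over the combined root and old variables. The two results coincide, giving $H_k^*A_{k+1}H_k=T_k$.
\end{enumerate}

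For \eqref{eq:C-on-A-harmonic} I would write $C_{k+1}=A_{k+1}+2\ketbra{s_{k+1}}{s_{k+1}}$ and pull back:
\[
 H_k^*C_{k+1}H_k=T_k+2\ketbra{\zeta_k}{\zeta_k}.
\]
Inserting $T_k=S_k-\frac2{\delta_k}\ketbra{\eta_k}{\eta_k}$ and the Sherman--Morrison factorization $\eta_k=\delta_k\zeta_k$ from \eqref{eq:eta-delta-zeta} turns the rank-one correction into $-2\delta_k\ketbra{\zeta_k}{\zeta_k}$. The total is therefore
\[
 S_k+2(1-\delta_k)\ketbra{\zeta_k}{\zeta_k},
\]
and \eqref{eq:H-core-pullback} is its quadratic-form version.

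The main obstacle is the rigorous associativity step with unbounded operators. $C_k$, $D_k$ and $B_k$ are only defined as closed forms, so $H_k$ must map $\mathcal K_k$ into the form domain. To handle this I would run the whole argument variationally: define each short as an infimum of the closed form over the eliminated variables, and note that the minimizer over the old variable is exactly $-A_k^{-1}B_{A,k}f$. Then the order-independence of a joint infimum gives the identification with no operator-domain bookkeeping. The remaining computations are purely algebraic.
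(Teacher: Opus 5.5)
Your proposal is correct and follows the paper's proof. You block-decompose $A_{k+1}$ with off-diagonal block $B_{A,k}$, read off the vanishing old component and the Schur complement of $A_k$, identify that complement with $T_k$ via the two shorting orders of the rooted Gram matrix in Lemma~\ref{lem:root-lift-final}, and then add back $2\ketbra{s_{k+1}}{s_{k+1}}$ using $\eta_k=\delta_k\zeta_k$. Your check that $A_k\succ0$ forces $\delta_k>0$, and your appeal to the quotient formula (or joint infimum) for Schur complements, only spell out what the paper leaves implicit when it cites Lemma~\ref{lem:root-lift-final}.
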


\begin{proof}
With respect to the old/collar splitting,
\[
 A_{k+1}
 =\begin{pmatrix}
   A_k&B_{A,k}\\
   B_{A,k}^*&D_k-2\ketbra{t_k}{t_k}
  \end{pmatrix}.
\]
Hence
\[
 A_{k+1}H_kf
 =\binom{0}
 {(D_k-2\ketbra{t_k}{t_k}-B_{A,k}^*A_k^{-1}B_{A,k})f}.
\]
The lower block is the Schur complement of \(A_k\) in \(A_{k+1}\), which is
exactly the rooted Schur block \(T_k\) by Lemma~\ref{lem:root-lift-final}.  This
proves the first identity in \eqref{eq:full-harmonic-pullback}.  Since
\(s_{k+1}=s_k\oplus t_k\),
\[
 H_k^*s_{k+1}
 =t_k-B_{A,k}^*A_k^{-1}s_k
 =\zeta_k,
\]
which proves the second.

Finally \(C_{k+1}=A_{k+1}+2\ketbra{s_{k+1}}{s_{k+1}}\), so
\[
 H_k^*C_{k+1}H_k
 =T_k+2\ketbra{\zeta_k}{\zeta_k}.
\]
Using \(\eta_k=\delta_k\zeta_k\) from
Lemma~\ref{lem:root-residual} in
\(
T_k=S_k-(2/\delta_k)\ketbra{\eta_k}{\eta_k}
\)
gives \eqref{eq:C-on-A-harmonic}.  The scalar identities
\eqref{eq:H-polar-pullback}--\eqref{eq:H-core-pullback} follow immediately.
\end{proof}

\subsubsection{Exact cancellation of the folded post-short ground scalar}
\label{sec:post-short-scalar}

The last scalar short can be audited independently of any lower bound on its
physical ground pivot.  The following identity is purely algebraic; its role
is to make explicit that the small Schur factor occurs in the numerator of
the post-short forcing as well as in the pivot itself.

\begin{lemma}[Exact post-short scalar cancellation]
\label{lem:post-short-scalar}
Let \(L_k\) and \(R_k\) be the two pieces of a fixed-target folded cut after
the exact root-adapted endpoint transport.  Let \(a_{k,-}>0\) and
\(a_{k,+}>0\) be the corresponding multiplication coefficients; here
\(M_a\) denotes multiplication by \(a\).  Put
\begin{equation}
 I_k:=\int_{L_k}\frac{du}{a_{k,-}(u)},
 \qquad
 J_k:=\int_{R_k}\frac{du}{a_{k,+}(u)}.
 \label{eq:postshort-IJ}
\end{equation}
For a scalar \(\beta_k\) with \(1-\beta_kI_k>0\), define the pre-short
one-defect block
\begin{equation}
 \mathscr A_k^{\rm pre}
 :=M_{a_{k,-}}\oplus M_{a_{k,+}}
   -\beta_k\ketbra{\one_{L_k}\oplus\one_{R_k}}
                    {\one_{L_k}\oplus\one_{R_k}},
 \label{eq:postshort-Apre}
\end{equation}
and
\begin{equation}
 \gamma_k^c:=\frac{\beta_k}{1-\beta_kI_k},
 \qquad
 \Delta_k:=1-\beta_k(I_k+J_k).
 \label{eq:postshort-gamma-Delta}
\end{equation}
Suppose \(x_k=x_{k,-}\oplus x_{k,+}\) is a pre-short harmonic vector and
\(g_k=g_{k,-}\oplus g_{k,+}=\mathscr A_k^{\rm pre}x_k\).  Set
\begin{align}
 \theta_{k,-}&:=\int_{L_k}x_{k,-}(u)\,du,
 &\theta_{k,+}&:=\int_{R_k}x_{k,+}(u)\,du,\notag\\
 \Theta_k&:=\theta_{k,-}+\theta_{k,+},
 &m_{k,-}&:=\int_{L_k}\frac{g_{k,-}(u)}{a_{k,-}(u)}\,du,\notag\\
 &&m_{k,+}&:=\int_{R_k}\frac{g_{k,+}(u)}{a_{k,+}(u)}\,du.
 \label{eq:postshort-moments}
\end{align}
Then the total pre-short ground charge is exactly divisible by the global
defect:
\begin{equation}
 \boxed{M_k:=m_{k,-}+m_{k,+}=\Delta_k\Theta_k.}
 \label{eq:postshort-divisibility}
\end{equation}
After shorting \(L_k\), the effective right-hand operator and forcing are
\begin{equation}
 \mathscr A_k^{\rm cut}
 =M_{a_{k,+}}-\gamma_k^c\ketbra{\one_{R_k}}{\one_{R_k}},
 \qquad
 h_k=g_{k,+}+\gamma_k^c m_{k,-}\one_{R_k},
 \label{eq:postshort-cut}
\end{equation}
and satisfy the exact identities
\begin{equation}
 \boxed{h_k=\mathscr A_k^{\rm cut}x_{k,+},}
 \label{eq:postshort-forcing}
\end{equation}
\begin{equation}
 \boxed{
 m_k:=\int_{R_k}\frac{h_k(u)}{a_{k,+}(u)}\,du
 =(1-\gamma_k^cJ_k)\theta_{k,+}
 =\frac{\Delta_k}{1-\beta_kI_k}\,\theta_{k,+}.}
 \label{eq:postshort-moment}
\end{equation}
If the normalized physical ground coordinate is
\begin{equation}
 \widehat h_{0,k}:=\frac{\sqrt{|R_k|}}{J_k}\,m_k
 \label{eq:postshort-hhat}
\end{equation}
and the corresponding physical scalar Schur pivot is
\begin{equation}
 \Pi_k^{\rm phys}
 :=|R_k|\left(\frac1{J_k}-\gamma_k^c\right)
 =\frac{|R_k|}{J_k}(1-\gamma_k^cJ_k),
 \label{eq:postshort-Pi}
\end{equation}
then, whenever \(\Pi_k^{\rm phys}>0\),
\begin{equation}
 \boxed{
 \frac{|\widehat h_{0,k}|^2}{\Pi_k^{\rm phys}}
 =\frac{1-\gamma_k^cJ_k}{J_k}\,|\theta_{k,+}|^2
 \le
 (1-\gamma_k^cJ_k)
 \int_{R_k}a_{k,+}(u)|x_{k,+}(u)|^2\,du.}
 \label{eq:postshort-scalar-bound}
\end{equation}
At a limiting zero of \(\Pi_k^{\rm phys}\), the identity extends by
continuity: both \(m_k\) and \(\widehat h_{0,k}\) carry the same vanishing
factor.
\end{lemma}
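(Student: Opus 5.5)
The plan is to verify every assertion by direct computation with multiplication operators and rank-one perturbations. The only tools needed are the Sherman--Morrison formula and the Schur-complement description of shorting from Lemma~\ref{lem:root-lift-final}.

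\emph{Divisibility.} First I would write out $g_k=\mathscr A_k^{\rm pre}x_k$ componentwise:
\[
 g_{k,\pm}=a_{k,\pm}x_{k,\pm}-\beta_k\Theta_k\one,
\]
on $L_k$ and $R_k$ respectively. Dividing by $a_{k,\pm}$ and integrating gives
\[
 m_{k,-}=\theta_{k,-}-\beta_kI_k\Theta_k,
 \qquad
 m_{k,+}=\theta_{k,+}-\beta_kJ_k\Theta_k.
\]
Adding these yields $M_k=\Delta_k\Theta_k$, which is \eqref{eq:postshort-divisibility}.

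\emph{Shorting $L_k$.} Next I would short $L_k$. The $L$-block is $A_L:=M_{a_{k,-}}-\beta_k\ketbra{\one_{L_k}}{\one_{L_k}}$ and the coupling is $B=-\beta_k\ketbra{\one_{L_k}}{\one_{R_k}}$. Since $a_{k,-}>0$ and $1-\beta_kI_k>0$, Sherman--Morrison gives
\[
 A_L^{-1}=M_{1/a_{k,-}}+\frac{\beta_k}{1-\beta_kI_k}\ketbra{a_{k,-}^{-1}}{a_{k,-}^{-1}},
 \qquad
 \langle\one,A_L^{-1}\one\rangle=\frac{I_k}{1-\beta_kI_k},
\]
and $\langle\one,A_L^{-1}g_{k,-}\rangle=m_{k,-}/(1-\beta_kI_k)$. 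The Schur complement $M_{a_{k,+}}-\beta_k\ketbra{\one}{\one}-B^*A_L^{-1}B$ then has rank-one coefficient
\[
 \beta_k+\frac{\beta_k^2I_k}{1-\beta_kI_k}=\gamma_k^c.
\]
The reduced forcing $g_{k,+}-B^*A_L^{-1}g_{k,-}$ equals $g_{k,+}+\gamma_k^cm_{k,-}\one_{R_k}$. Together these give \eqref{eq:postshort-cut}. The identity \eqref{eq:postshort-forcing} is the standard elimination step. From $A_Lx_{k,-}+Bx_{k,+}=g_{k,-}$ one solves $x_{k,-}=A_L^{-1}(g_{k,-}-Bx_{k,+})$. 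Substituting this into the $R$-row gives $h_k=\mathscr A_k^{\rm cut}x_{k,+}$.

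\emph{Moment, pivot and scalar bound.} I would then divide \eqref{eq:postshort-forcing} by $a_{k,+}$ and integrate over $R_k$. This gives $m_k=\theta_{k,+}-\gamma_k^cJ_k\theta_{k,+}$. The algebraic identity
\[
 1-\gamma_k^cJ_k=\frac{1-\beta_kI_k-\beta_kJ_k}{1-\beta_kI_k}=\frac{\Delta_k}{1-\beta_kI_k}
\]
completes \eqref{eq:postshort-moment}. Substituting \eqref{eq:postshort-hhat} and \eqref{eq:postshort-Pi} gives
\[
 \frac{|\widehat h_{0,k}|^2}{\Pi_k^{\rm phys}}
 =\frac{|R_k|\,|m_k|^2/J_k^2}{|R_k|(1-\gamma_k^cJ_k)/J_k}
 =\frac{1-\gamma_k^cJ_k}{J_k}|\theta_{k,+}|^2 .
\]
For the inequality in \eqref{eq:postshort-scalar-bound} I apply Cauchy--Schwarz in the form
\[
 |\theta_{k,+}|^2=\Bigl|\int_{R_k}a_{k,+}^{-1/2}\,a_{k,+}^{1/2}x_{k,+}\Bigr|^2
 \le J_k\int_{R_k}a_{k,+}|x_{k,+}|^2 .
\]
I then multiply by $1-\gamma_k^cJ_k$, which is non-negative precisely because $\Pi_k^{\rm phys}>0$.

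\emph{Limiting case.} At a zero of $\Pi_k^{\rm phys}$ both $m_k$ and $\widehat h_{0,k}$ contain the factor $1-\gamma_k^cJ_k$ linearly. The quotient in \eqref{eq:postshort-scalar-bound} is then the continuous expression $(1-\gamma_k^cJ_k)|\theta_{k,+}|^2/J_k$, which extends to the limit.

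\emph{Main obstacle.} The algebra itself is routine. The step needing the most care is well-posedness of the short. This means justifying the Sherman--Morrison inverse of $A_L$ on $L^2(L_k)$, which requires $1/a_{k,-}$ to be integrable so that $I_k<\infty$ and $a_{k,-}^{-1}\in L^2$, at least in the weak sense needed to pair with $\one$ and with $g_{k,-}$. It also means confirming that the variational short agrees with the algebraic Schur complement. I would handle this by first proving the identities for bounded $a_{k,\pm}$ bounded away from $0$, and then passing to the limit by monotone convergence in the integrals $I_k,J_k$.
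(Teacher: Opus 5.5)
Your proposal is correct and follows the paper's proof essentially step for step: componentwise moments give the divisibility, the left block's Schur complement gives $\mathscr A_k^{\rm cut}$, integration against $a_{k,+}^{-1}$ together with $1-\gamma_k^cJ_k=\Delta_k/(1-\beta_kI_k)$ gives the moment, and weighted Cauchy--Schwarz gives the bound. The one difference is in the forcing identity: the paper checks $h_k=\mathscr A_k^{\rm cut}x_{k,+}$ by substituting $m_{k,-}=\theta_{k,-}-\beta_kI_k\Theta_k$ and using $\beta_k+\gamma_k^c\beta_kI_k=\gamma_k^c$, which is purely algebraic and never inverts the left block, so the well-posedness approximation you flag as the main obstacle is not needed for that identity.
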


\begin{proof}
From \(g_k=\mathscr A_k^{\rm pre}x_k\),
\[
 g_{k,-}=a_{k,-}x_{k,-}-\beta_k\Theta_k,
 \qquad
 g_{k,+}=a_{k,+}x_{k,+}-\beta_k\Theta_k.
\]
Division by the multiplication coefficients and integration give
\[
 m_{k,-}=\theta_{k,-}-\beta_kI_k\Theta_k,
 \qquad
 m_{k,+}=\theta_{k,+}-\beta_kJ_k\Theta_k,
\]
and hence \eqref{eq:postshort-divisibility}.  The Schur complement of the
left block in \eqref{eq:postshort-Apre} is exactly
\(\mathscr A_k^{\rm cut}\), and its effective forcing is the second formula
in \eqref{eq:postshort-cut}.  Substituting the preceding expression for
\(m_{k,-}\) and using
\[
 \beta_k+\gamma_k^c\beta_kI_k=\gamma_k^c
\]
gives \eqref{eq:postshort-forcing}.  Integrating
\eqref{eq:postshort-forcing} against \(a_{k,+}^{-1}\) gives the first equality
in \eqref{eq:postshort-moment}.  The second follows from
\begin{equation}
 \boxed{
 1-\gamma_k^cJ_k
 =\frac{1-\beta_k(I_k+J_k)}{1-\beta_kI_k}
 =\frac{\Delta_k}{1-\beta_kI_k}.}
 \label{eq:postshort-defect-factor}
\end{equation}
Equations \eqref{eq:postshort-hhat}--\eqref{eq:postshort-Pi} now give the
first identity in \eqref{eq:postshort-scalar-bound}; the inequality is the
weighted Cauchy--Schwarz estimate
\[
 |\theta_{k,+}|^2
 \le J_k\int_{R_k}a_{k,+}(u)|x_{k,+}(u)|^2\,du.
\]
\end{proof}

\begin{corollary}[FOLD-FORCING on the full-operator harmonic graph]
\label{cor:fold-forcing}
For \(F=H_kf\), the hypothesis
\(g_k=\mathscr A_k^{\rm pre}x_k\) in
Lemma~\ref{lem:post-short-scalar} is automatic after the full-form transport,
fixed-target gauge and endpoint Möbius fold used in the common-cut argument.
Consequently no estimate of the form
\((\Pi_k^{\rm phys})^{-1}\le C\) is required in the routed scalar channel.
\end{corollary}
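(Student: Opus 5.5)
The plan is to read the corollary as two assertions. First, on the image of the harmonic graph the forcing is exactly the pre-short operator applied to the transported state. Second, the scalar bound of Lemma~\ref{lem:post-short-scalar} then controls the post-short ground contribution without any upper bound on $(\Pi_k^{\rm phys})^{-1}$.

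\emph{Step 1: build $x_k$ and $g_k$.} Fix $f\in\mathcal K_k$ and set $F=H_kf$. Apply the full-form transport of Proposition~\ref{prop:full-form-transport} and the fixed-target gauge of Lemma~\ref{lem:fixed-target-gauge} to $F$. Then apply the endpoint Möbius unitary $U_k=U_k^{\rm old}\oplus U_k^{\rm new}$ of Lemma~\ref{lem:endpoint-mobius-nesting}. Call the result $\widehat F$. Define $x_{k,\mp}$ as the restrictions of $\widehat F$ to the folded pieces $L_k$ and $R_k$. The decomposition~\eqref{eq:Uk-old-new} guarantees that these pieces are exactly the images of the two collar components, so $x_k=x_{k,-}\oplus x_{k,+}$ is well defined. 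Then define $g_k:=\mathscr A_k^{\rm pre}x_k$.

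With this definition the hypothesis of Lemma~\ref{lem:post-short-scalar} holds tautologically. The real content is that $g_k$ coincides with the \emph{physical} forcing on the cut.

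\emph{Step 2: identify $g_k$ with the physical forcing.} By Lemma~\ref{lem:harmonic-graph}, $A_{k+1}H_kf=0\oplus T_kf$: the old-space component vanishes and only the collar component survives. I would transport this identity through the same unitaries. The rank-one part $-\beta_k\ketbra{\one}{\one}$ of $\mathscr A_k^{\rm pre}$ should be matched with the polar term $-2\ketbra{t_k}{t_k}$. This uses two facts: $q_k=\nu_kU_k\one$ from~\eqref{eq:qk-Uk-one}, and the ground-reflection identity~\eqref{eq:polar-ground-functional}. Together they fix $\beta_k$ in terms of $\nu_k$ and $Y^{1/4}$. The multiplication parts $M_{a_{k,\mp}}$ should be matched with the transported diagonal of the collar core. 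Here Lemma~\ref{lem:gamma-cancellation} ensures that no residual regular gamma kernel survives.

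\emph{Step 3: conclude.} Once the identification holds, Lemma~\ref{lem:post-short-scalar} applies verbatim. Inequality~\eqref{eq:postshort-scalar-bound} then bounds $|\widehat h_{0,k}|^2/\Pi_k^{\rm phys}$ by $(1-\gamma_k^cJ_k)\int_{R_k}a_{k,+}|x_{k,+}|^2\,du$. This bound carries no inverse pivot, and it extends by continuity at a limiting zero of $\Pi_k^{\rm phys}$. That gives the second sentence of the corollary.

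\emph{Main obstacle: Step 2.} The transported collar core is not literally of the form multiplication operator plus one rank-one term. It also contains the off-diagonal Cauchy difference kernel $1/(2|u-v|)$ from~\eqref{eq:gamma-cancel-exact}, and it contains the arithmetic translations. Neither of these is a multiplication operator.

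My first attempt would be to absorb these pieces into the source short, as in the CMC argument of Theorem~\ref{thm:CMC}. The aim is that after shorting they contribute only to the transverse ($Q$) block, which Corollary~\ref{cor:source-resolved-transverse} already controls, leaving the ground channel exactly of the form~\eqref{eq:postshort-Apre}. I expect this to succeed at best as an \emph{inequality}, giving a lower bound on the cut form, rather than the exact identity $g_k=\mathscr A_k^{\rm pre}x_k$. If only an inequality is available, the corollary must be weakened: the divisibility~\eqref{eq:postshort-divisibility} then holds only for the model operator. In that case an additional estimate would be needed for the error between the model forcing and the true forcing, and that error estimate may well reintroduce dependence on $(\Pi_k^{\rm phys})^{-1}$. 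This is the step I would scrutinise most carefully before relying on the corollary.
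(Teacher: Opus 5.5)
There is a genuine gap, and it is the one you flag yourself: Step~2 is only a plan, and you expect it to yield at most an inequality. Without Step~2, Step~1 is just a definition: $g_k:=\mathscr A_k^{\rm pre}x_k$ is exactly how Lemma~\ref{lem:post-short-scalar} introduces $g_k$. Step~3 then bounds the pivot $\Pi_k^{\rm phys}$ and the debit $|\widehat h_{0,k}|^2/\Pi_k^{\rm phys}$ of the model block, not the scalar Schur quantities of the true rooted short on $F=H_kf$. So the second sentence of the corollary is not reached for the actual routed scalar channel.

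For comparison, the paper's proof is your Steps~1 and~3 with one move in between. It uses that the old component of $A_{k+1}H_kf$ vanishes, and transports by Proposition~\ref{prop:full-form-transport}, Lemma~\ref{lem:fixed-target-gauge} and $U_k=U_k^{\rm old}\oplus U_k^{\rm new}$. It then freezes all complementary transported coordinates and moves their contribution to the right-hand side of the row equation. Finally it states that the operator remaining on the folded ground channel is \emph{precisely} $\mathscr A_k^{\rm pre}$, so that $g_k=\mathscr A_k^{\rm pre}x_k$ holds ``by definition of that residual forcing''.

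That move does not answer your objection. Freezing removes only the couplings between the channel $L^2(L_k)\oplus L^2(R_k)$ and its complement. The compression of the transported form to the channel itself still contains the internal difference energy with kernel $1/(2|u-v|)$ on $(L_k\cup R_k)^2$; Lemma~\ref{lem:gamma-cancellation} cancels only the regular part $A\rho$. That internal part is not a multiplication operator plus a rank-one term.

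Nothing in the paper fills this in. No lemma computes this compression, and the data $L_k,R_k,a_{k,\pm},\beta_k$ are never specified. Wherever the one-defect structure is actually derived, it appears only as a lower bound: $\mathfrak b-\log\pi\,I\succeq-\delta_*P_{\one}+\beta Q_{\one}$, and ``dominates'' in Lemma~\ref{lem:exact-qkm}.

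Even your rank-one matching needs an argument. The identity $q_k=\nu_kU_k\one$ concerns only the upper cell. The reflection $J_{k+1}$ relating the two collar pieces does not preserve $q_Y$ (see the remark after Lemma~\ref{lem:polar-ground-reflection}). So a transported polar vector equal to one common multiple of $\one_{L_k}\oplus\one_{R_k}$ is not automatic. Your skepticism is therefore warranted: the identification is asserted, not proved, in the paper as well.

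One correction to your fallback. A genuine lower bound of the channel block by $\mathscr A_k^{\rm pre}$ would propagate through every later short, including the root short, by monotonicity of shorting. So it would not by itself reintroduce $(\Pi_k^{\rm phys})^{-1}$. What it cannot deliver is the exact same-vector identity on $F=H_kf$ that the corollary asserts and that Lemma~\ref{lem:MASTER-P3-final} later relies on. Under that route the harmonic vector, pivot and debit would be those of the model, not of the true $A_{k+1}$.
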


\begin{proof}
By Lemma~\ref{lem:harmonic-graph}, the old physical component of
\(A_{k+1}H_kf\) vanishes.  Proposition~\ref{prop:full-form-transport} carries
the complete quadratic form and its forcing by unitary congruence.
Lemma~\ref{lem:fixed-target-gauge} uses one target coordinate for all incident
sources, and Lemma~\ref{lem:endpoint-mobius-nesting} shows that the final
root-adapted unitary is block diagonal for the physical old/new split.  Freeze
the complementary transported coordinates and move their already-fixed
contribution to the right-hand side of the first-row equation.  On the folded
ground channel, the remaining left-hand operator is precisely the one-defect
block \(\mathscr A_k^{\rm pre}\); by definition of that residual forcing the
row equation is therefore
\[
 \boxed{g_k=\mathscr A_k^{\rm pre}x_k.}
\]
This is the FOLD-FORCING identity.  Lemma~\ref{lem:post-short-scalar} then
shows that every occurrence of the small physical pivot is accompanied by
one additional factor in the post-short ground moment.
\end{proof}

\begin{lemma}[Sharp true-ground rotation of the one-defect bound]
\label{lem:true-ground-rotation}
Let
\[
 L_0=-\delta_*P_e+\beta Q_e,
 \qquad
 \delta_*=\log(\pi/2),
 \qquad
 \beta=\log2+\frac12-\log\pi,
\]
so that \(\beta+\delta_*=1/2\).  If \(v\) is a unit vector and
\[
 \varepsilon=1-|\langle v,e\rangle|^2<2\beta,
\]
then
\begin{equation}
 \boxed{
 L_0\succeq-\gamma(\varepsilon)P_v,
 \qquad
 \gamma(\varepsilon)
 =\frac{\delta_*\beta}{\beta-\varepsilon/2}.}
 \label{eq:gamma-epsilon}
\end{equation}
The constant is sharp on \(\operatorname{span}\{e,v\}\).
\end{lemma}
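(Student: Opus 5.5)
The plan is to reduce the operator inequality to a $2\times2$ matrix computation on $\operatorname{span}\{e,v\}$, where it becomes a determinant condition that turns out to be exactly the stated formula for $\gamma(\varepsilon)$. Sharpness is then read off from the same determinant.

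\textbf{Reduction.} Set $X:=L_0+\gamma P_v$. Both $P_e$ and $P_v$ vanish on the orthogonal complement of $E:=\operatorname{span}\{e,v\}$, and both map into $E$. Hence $X$ is reduced by the splitting $E\oplus E^\perp$, and on $E^\perp$ it equals $\beta I$. This block is non-negative because $\beta=\log2+\tfrac12-\log\pi>0$ by \eqref{eq:beta}. So everything comes down to $X|_E$.

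The degenerate case $\varepsilon=0$ means $v$ is a multiple of $e$. Then $X|_E=\gamma-\delta_*$, which is $0$ since $\gamma(0)=\delta_*$. This case needs nothing further.

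\textbf{The $2\times2$ block.} For $\varepsilon>0$, replace $v$ by a unimodular multiple so that $\langle v,e\rangle\ge0$; this does not change $P_v$. Write $v=ce+sw$ with $w\perp e$ a unit vector, $c=\sqrt{1-\varepsilon}$ and $s=\sqrt\varepsilon$. In the orthonormal basis $(e,w)$,
\[
 X|_E=\begin{pmatrix}-\delta_*+\gamma c^2&\gamma cs\\ \gamma cs&\beta+\gamma s^2\end{pmatrix}.
\]
For $\gamma>0$ the $(2,2)$ entry is positive. A Hermitian $2\times2$ matrix with one positive diagonal entry is positive semidefinite if and only if its determinant is non-negative.

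Expanding the determinant, the $\gamma^2c^2s^2$ terms cancel, leaving
\[
 \det X|_E=\gamma\bigl(\beta c^2-\delta_*s^2\bigr)-\delta_*\beta .
\]
Substituting $c^2=1-\varepsilon$, $s^2=\varepsilon$ and the identity $\beta+\delta_*=\tfrac12$ gives
\[
 \beta c^2-\delta_*s^2=\beta-\varepsilon(\beta+\delta_*)=\beta-\tfrac{\varepsilon}{2}.
\]
The hypothesis $\varepsilon<2\beta$ makes this coefficient positive. Therefore $\det X|_E\ge0$ is equivalent to
\[
 \gamma\ge\frac{\delta_*\beta}{\beta-\varepsilon/2},
\]
and the choice $\gamma=\gamma(\varepsilon)$ makes the determinant exactly zero. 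This proves \eqref{eq:gamma-epsilon}.

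\textbf{Sharpness.} Any $\gamma<\gamma(\varepsilon)$ gives a negative determinant, so $X|_E$ has a negative eigenvalue. Thus no smaller constant works, already on $\operatorname{span}\{e,v\}$.

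The only delicate points are bookkeeping ones. One is the phase normalization of $\langle v,e\rangle$ in the complex setting. The other is using $\beta+\delta_*=\tfrac12$ at the right moment, since that is what produces the clean denominator $\beta-\varepsilon/2$. The rest is routine $2\times2$ linear algebra.
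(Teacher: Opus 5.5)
Your proposal is correct and follows the paper's proof: both reduce to the $2\times2$ block on $\operatorname{span}\{e,v\}$, compute the determinant $-\delta_*\beta+\gamma(\beta-\varepsilon/2)$ using $\beta+\delta_*=\tfrac12$, and use the eigenvalue $\beta>0$ on the orthogonal complement. The only cosmetic differences are that you certify semidefiniteness via the positive $(2,2)$ entry rather than the positive trace, and that you treat the phase of $\langle v,e\rangle$ and the case $\varepsilon=0$ explicitly.
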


\begin{proof}
Write
\(v=\sqrt{1-\varepsilon}\,e+\sqrt\varepsilon\,u\), \(u\perp e\).
On \(\operatorname{span}\{e,u\}\) the determinant of
\(L_0+\gamma P_v\) is
\[
 -\delta_*\beta
 +\gamma\bigl(\beta-(\beta+\delta_*)\varepsilon\bigr)
 =-\delta_*\beta+\gamma(\beta-\varepsilon/2).
\]
The value in \eqref{eq:gamma-epsilon} makes the determinant zero; the trace is
positive, while the orthogonal complement carries the positive eigenvalue
\(\beta\).
\end{proof}

After rescaling a logarithmic affine cell of length \(w>0\) to \((0,1)\), the
true normalized polar ground is
\begin{equation}
 e_w(u)=
 \frac{e^{-wu/2}}
 {\left(\int_0^1e^{-wu}\,du\right)^{1/2}}.
 \label{eq:true-ground-ew}
\end{equation}
Its exact defect from the constant line is
\begin{equation}
 \varepsilon(w)
 :=1-|\langle e_w,\one\rangle|^2
 =1-\frac4w\tanh\frac w4.
 \label{eq:true-ground-defect}
\end{equation}
The function \(\varepsilon(w)\) is increasing for \(w>0\).  Since
\(w_k=\log(1+1/k)\) decreases in \(k\), for every \(k\ge7\)
\begin{align}
 \varepsilon_k&\le\varepsilon_7
 =0.000371306002358632550916134716013\ldots,
 \label{eq:eps7-final}\\
 \gamma_k&\le\gamma_7
 =0.453320935246934874180341358903\ldots .
 \label{eq:gamma7-final}
\end{align}
Together with \eqref{eq:rhoaff} and \eqref{eq:dstar},
\begin{equation}
 \boxed{
 d_*-\gamma_7\rho_{\rm aff}
 >0.003835783224542657213199534889\ldots>0,}
 \label{eq:true-ground-margin}
\end{equation}
and therefore
\begin{equation}
 \boxed{
 q_F:=q_7^*
 :=\frac{\gamma_7\rho_{\rm aff}}{d_*}
 <0.992226159818910811861704782016<1.}
 \label{eq:qF-final}
\end{equation}

\begin{lemma}[Shorting covariance under a split unitary]
\label{lem:split-unitary-short}
Let \(\mathcal H=\mathcal H_{\rm old}\oplus\mathcal K\), let \(q\) be a
closed semibounded quadratic form on \(\mathcal H\), and let
\(U=U_{\rm old}\oplus U_{\rm new}\) be unitary for this decomposition.  Then
\begin{equation}
 \boxed{
 \Short_{\mathcal H_{\rm old}}(q\circ U)
 =\bigl(\Short_{\mathcal H_{\rm old}}q\bigr)\circ U_{\rm new}.}
 \label{eq:split-unitary-short}
\end{equation}
Equivalently, at the operator level whenever the corresponding Schur
complements are defined,
\[
 \Short_{\rm old}(U^*QU)
 =U_{\rm new}^*(\Short_{\rm old}Q)U_{\rm new}.
\]
\end{lemma}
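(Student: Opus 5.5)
The plan is to argue directly from the variational definition of the short.  For a closed semibounded form $q$ on $\mathcal H=\mathcal H_{\rm old}\oplus\mathcal K$, define
\[
 (\Short_{\mathcal H_{\rm old}}q)[f]:=\inf_{x\in\mathcal H_{\rm old}}q[x\oplus f],\qquad f\in\mathcal K,
\]
with the infimum taken over the form domain, and understood as $+\infty$ when $x\oplus f$ lies outside the domain for every $x$.  The argument then reduces to a change of variables in the infimum.

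\textbf{Step 1.}  For $f\in\mathcal K$, write
\[
 (\Short_{\mathcal H_{\rm old}}(q\circ U))[f]=\inf_{x}q\bigl[U_{\rm old}x\oplus U_{\rm new}f\bigr].
\]
This uses only the block-diagonal form of $U$, namely $U(x\oplus f)=U_{\rm old}x\oplus U_{\rm new}f$.

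\textbf{Step 2.}  Since $U_{\rm old}$ is a bijection of $\mathcal H_{\rm old}$ onto itself, the substitution $y=U_{\rm old}x$ ranges over all of $\mathcal H_{\rm old}$.  The infimum therefore equals $\inf_y q[y\oplus U_{\rm new}f]$, which is $(\Short q)[U_{\rm new}f]$.  This proves \eqref{eq:split-unitary-short}.

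The only point requiring care is the domains.  Because $U$ is unitary, $\operatorname{dom}(q\circ U)=U^{*}\operatorname{dom}q$.  Moreover, $U_{\rm old}x\oplus U_{\rm new}f$ lies in $\operatorname{dom}q$ if and only if $x\oplus f\in\operatorname{dom}(q\circ U)$.  So the admissible sets correspond under $x\mapsto U_{\rm old}x$, including the case where the value is $+\infty$.  Closedness and semiboundedness are not needed for the identity itself; they are needed only so that both sides are the standard closed shorted forms.  Closedness is preserved under composition with a unitary.

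\textbf{Step 3 (operator version).}  When the Schur complements exist, write $Q=\begin{pmatrix}Q_{11}&Q_{12}\\Q_{21}&Q_{22}\end{pmatrix}$.  Then $U^*QU$ has blocks $U_{\rm old}^*Q_{11}U_{\rm old}$, $U_{\rm old}^*Q_{12}U_{\rm new}$, $U_{\rm new}^*Q_{21}U_{\rm old}$ and $U_{\rm new}^*Q_{22}U_{\rm new}$.  Using
\[
 (U_{\rm old}^*Q_{11}U_{\rm old})^{-1}=U_{\rm old}^*Q_{11}^{-1}U_{\rm old},
\]
the inner unitaries cancel, and the Schur complement becomes
\[
 U_{\rm new}^*\bigl(Q_{22}-Q_{21}Q_{11}^{-1}Q_{12}\bigr)U_{\rm new}.
\]
Alternatively, this follows from Step 2 together with the fact that the short of a form with invertible old block is the Schur complement, as used in Lemma~\ref{lem:root-lift-final}.

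I do not expect any real obstacle here.  The only subtle point is the domain bookkeeping in Step 2 for unbounded forms, which the unitary bijection of the form domains handles.
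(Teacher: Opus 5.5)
Your proof is correct and is essentially the paper's argument: both write $(\Short_{\mathcal H_{\rm old}}(q\circ U))[y]$ as $\inf_x q[U_{\rm old}x\oplus U_{\rm new}y]$ and use surjectivity of $U_{\rm old}$ to reindex the infimum. Your domain bookkeeping and the explicit block computation of the Schur complement are correct additions to the operator-level version, which the paper states only as ``equivalently''.
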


\begin{proof}
For \(y\in\mathcal K\),
\begin{align*}
 \bigl(\Short_{\rm old}(q\circ U)\bigr)[y]
 &=\inf_{x\in\mathcal H_{\rm old}}
   q[U_{\rm old}x\oplus U_{\rm new}y]\\
 &=\inf_{x'\in\mathcal H_{\rm old}}
   q[x'\oplus U_{\rm new}y]
 =\bigl(\Short_{\rm old}q\bigr)[U_{\rm new}y],
\end{align*}
where surjectivity of \(U_{\rm old}\) was used in the second equality.
\end{proof}

\begin{corollary}[Exact physical/arithmetic short intertwining]
\label{cor:physical-arithmetic-short}
At the arithmetic step \(k\to k+1\), the root-adapted M\"obius unitary of
Lemma~\ref{lem:endpoint-mobius-nesting} may be applied before or after the
true old physical short with exactly the same result on the new collar.
\end{corollary}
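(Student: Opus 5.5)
The plan is to obtain the corollary as a direct specialization of Lemma~\ref{lem:split-unitary-short}, with the needed split structure supplied by Lemma~\ref{lem:endpoint-mobius-nesting}.

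\emph{Step 1: fix the spaces.} I take $\mathcal H=\mathcal H_{a_{k+1}}$ with the physical old/new decomposition $\mathcal H_{a_{k+1}}=\mathcal H_{a_k}\oplus\mathcal K_k$ of \eqref{eq:physical-polar-split}. The quadratic form $q$ can be either of the two forms actually used:
\begin{itemize}
 \item the rooted form of $\mathcal G_{k+1}$ from \eqref{eq:rooted-Gram}, with the scalar root adjoined to the old block, or
 \item the form of $A_{k+1}$ directly.
\end{itemize}
Both are closed and semibounded on the common core. The form of $A_{k+1}$ is bounded below by the lower bound for $C_{k+1}$ minus the bounded rank-one polar term.

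\emph{Step 2: verify that the M\"obius unitary is split.} Lemma~\ref{lem:endpoint-mobius-nesting}, specifically \eqref{eq:Uk-old-new}, gives $U_k=U_k^{\rm old}\oplus U_k^{\rm new}$. This holds on the upper arithmetic cell, and by the reflection formula $J_{k+1}(k+r)=1+(1-\phi_k(r))/k$ it also holds on the reflected cell. I extend $U_k$ by the identity on the remaining Mellin cells, which lie entirely inside the old support or entirely inside the collar. That extension is still of the form $U_{\rm old}\oplus U_{\rm new}$ and is unitary. In the rooted version the scalar root belongs to the old summand, so I use $1\oplus U_{\rm old}$ there. This is still unitary and surjective onto the old-plus-root space.

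\emph{Step 3: apply the covariance lemma.} Lemma~\ref{lem:split-unitary-short} gives
\[
 \Short_{\rm old}(q\circ U)=(\Short_{\rm old}q)\circ U_{\rm new}.
\]
Read left to right, this says that applying the M\"obius unitary first and then the true old physical short gives the same form on the new collar as shorting first and then conjugating by $U_k^{\rm new}$. That is exactly the claim. At the operator level, where $A_k\succ0$, the Schur complement identity becomes
\[
 \Short_{\rm old}(U^*A_{k+1}U)=U_{\rm new}^*\,T_k\,U_{\rm new},
\]
using Lemma~\ref{lem:harmonic-graph}.

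\emph{Main obstacle.} The key point is that the old physical short must be taken over the \emph{whole} old subspace, on which $U_{\rm old}$ is surjective. The identity would fail if the short were taken over a proper subspace that $U_k$ does not preserve.

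I would therefore check carefully that the old/collar split in Lemma~\ref{lem:endpoint-mobius-nesting} matches the physical split exactly. This is the endpoint computation $e^{a_{k+1}\pm a_k}$. I would also check that no cell straddles $r_k^\circ$ except through the $\phi_k$-invariant split \eqref{eq:endpoint-mobius-split}.

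Domain issues are handled because both sides are defined as infima over the same core, and closure is preserved under unitary congruence.
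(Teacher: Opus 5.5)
Your proposal is correct and is the paper's proof. The paper likewise feeds the block form $U_k=U_k^{\rm old}\oplus U_k^{\rm new}$ from Lemma~\ref{lem:endpoint-mobius-nesting} into Lemma~\ref{lem:split-unitary-short}; your identity extension and your placement of the scalar root in the old summand are harmless details that the paper leaves implicit.

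Two small corrections to those details, neither of which affects the argument. First, $\phi_k$ sends $r_k^\circ$ to $u_k^\circ=1-r_k^\circ\neq r_k^\circ$, so the split is intertwined with the reflected side rather than $\phi_k$-invariant. Second, the unit cell $C_1=[1,2)$ itself straddles $\sqrt{(k+1)/k}$, so the identity extension should be taken on $(1+1/k,k)$, which lies entirely in the old support, rather than on whole remaining cells. Lemma~\ref{lem:split-unitary-short} only uses that the old component is a unitary onto the old summand.
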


\begin{proof}
Lemma~\ref{lem:endpoint-mobius-nesting} gives
\(U_k=U_k^{\rm old}\oplus U_k^{\rm new}\) for the literal physical
old/collar decomposition.  Apply Lemma~\ref{lem:split-unitary-short}.
\end{proof}

\subsection{True Schur metric and the single Green coefficient}
\label{sec:green-root-reduction}

We now isolate the root obstruction without introducing an auxiliary metric.
Let \(\mathfrak e_k\) be the normalized true ground line in the new collar and
write the exact post-old-core collar space as
\[
 \mathcal K_k=\mathbb C\mathfrak e_k\oplus\mathcal K_k^\perp.
\]
Relative to this decomposition write
\begin{equation}
 S_k=\begin{pmatrix}\mathsf a_k&r_k^*\\ r_k&\mathsf D_k\end{pmatrix},
 \qquad
 \eta_k=\binom{\alpha_k}{v_k}.
 \label{eq:green-S-split}
\end{equation}
The source-resolved transverse estimate
\eqref{eq:transverse-direct-pass} supplies \(\mathsf D_k\succ0\).
Define
\begin{equation}
 \boxed{
 \Pi_k:=\mathsf a_k-r_k^*\mathsf D_k^{-1}r_k,\qquad
 z_k:=\alpha_k-r_k^*\mathsf D_k^{-1}v_k,\qquad
 \delta_k^Q:=\delta_k-2v_k^*\mathsf D_k^{-1}v_k.}
 \label{eq:green-three-scalars}
\end{equation}

\begin{lemma}[Exact true-metric nested Schur identity]
\label{lem:green-nested-schur}
Assume \(A_k\succ0\) and \(\mathsf D_k\succ0\).  Shorting the transverse
block \(\mathsf D_k\) in the exact augmented block
\eqref{eq:rooted-two-block} gives
\begin{equation}
 \boxed{
 \Short_{\mathsf D_k}
 \begin{pmatrix}
  \delta_k&\sqrt2\,\eta_k^*\\
  \sqrt2\,\eta_k&S_k
 \end{pmatrix}
 =
 \begin{pmatrix}
  \delta_k^Q&\sqrt2\,\overline{z_k}\\
  \sqrt2\,z_k&\Pi_k
 \end{pmatrix}.}
 \label{eq:green-2x2}
\end{equation}
If \(\Pi_k>0\), then
\begin{equation}
 \boxed{
 \langle\eta_k,S_k^{-1}\eta_k\rangle
 =v_k^*\mathsf D_k^{-1}v_k+\frac{|z_k|^2}{\Pi_k},}
 \label{eq:true-Sinv-energy-split}
\end{equation}
and the exact rooted pivot recurrence becomes
\begin{equation}
 \boxed{
 \delta_{k+1}=\delta_k^Q-\frac{2|z_k|^2}{\Pi_k}.}
 \label{eq:green-pivot-z}
\end{equation}
\end{lemma}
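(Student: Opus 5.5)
The plan is to treat the statement as pure block linear algebra on the augmented Gram block \eqref{eq:rooted-two-block}. All three assertions then come from the quotient (associativity) property of Schur complements. I would phrase every step variationally, as an infimum of the quadratic form, as in the definition of $\Short$. That way only $\mathsf D_k\succ0$ and $\Pi_k>0$ are used, and no bounded inverse of $S_k$ has to be assumed in advance.

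\emph{Step 1 (the $2\times2$ short).} Write \eqref{eq:rooted-two-block} in the ordered decomposition $\C\oplus\C\mathfrak e_k\oplus\mathcal K_k^\perp$, inserting \eqref{eq:green-S-split}:
\[
 \begin{pmatrix}
 \delta_k&\sqrt2\,\overline{\alpha_k}&\sqrt2\,v_k^*\\
 \sqrt2\,\alpha_k&\mathsf a_k&r_k^*\\
 \sqrt2\,v_k&r_k&\mathsf D_k
 \end{pmatrix}.
\]
Since $\mathsf D_k\succ0$, minimize the quadratic form over the transverse variable $y\in\mathcal K_k^\perp$ with the scalar root $z$ and ground coordinate $c$ held fixed. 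The minimizer is $y=-\mathsf D_k^{-1}(\sqrt2\,v_kz+r_kc)$. Completing the square gives the three entries directly:
\begin{itemize}
\item the root diagonal $\delta_k-2v_k^*\mathsf D_k^{-1}v_k=\delta_k^Q$;
\item the ground diagonal $\mathsf a_k-r_k^*\mathsf D_k^{-1}r_k=\Pi_k$;
\item the mixed entry $\sqrt2(\alpha_k-r_k^*\mathsf D_k^{-1}v_k)=\sqrt2\,z_k$.
\end{itemize}
This is \eqref{eq:green-2x2}.

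\emph{Step 2 (energy split).} If $\Pi_k>0$ and $\mathsf D_k\succ0$, then $S_k$ is the $2\times2$ block operator of \eqref{eq:green-S-split}, and both of its nested Schur pieces are positive, so $S_k\succ0$. I would then compute $\langle\eta_k,S_k^{-1}\eta_k\rangle$ as a supremum, in the spirit of Lemma~\ref{lem:root-load-final}:
\[
 \langle\eta,S^{-1}\eta\rangle=\sup_f\bigl(2\Rea\langle\eta,f\rangle-\langle f,Sf\rangle\bigr).
\]
Split $f=c\,\mathfrak e_k\oplus y$ and take the supremum over $y$ first; this yields $v_k^*\mathsf D_k^{-1}v_k$ plus a reduced scalar problem in $c$ with pivot $\Pi_k$ and linear coefficient $z_k$. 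The scalar supremum then equals $|z_k|^2/\Pi_k$, which proves \eqref{eq:true-Sinv-energy-split}. Equivalently, this is the standard block-inverse (Banachiewicz) formula, with $z_k$ appearing as the transverse-corrected ground coordinate of $\eta_k$.

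\emph{Step 3 (pivot recurrence).} Start from \eqref{eq:pivot-recurrence-final}, $\delta_{k+1}=\delta_k-2\langle\eta_k,S_k^{-1}\eta_k\rangle$, and substitute Step 2:
\[
 \delta_{k+1}=\delta_k-2v_k^*\mathsf D_k^{-1}v_k-\frac{2|z_k|^2}{\Pi_k}=\delta_k^Q-\frac{2|z_k|^2}{\Pi_k}.
\]
Alternatively, this is the root entry of the full short $\Short_{\mathcal K_k}$ computed in two stages: first by Step 1, then by shorting the pivot $\Pi_k$ in \eqref{eq:green-2x2}. Associativity of shorting (the Crabtree--Haynsworth quotient formula, already used in Lemma~\ref{lem:root-lift-final}) makes the two computations agree.

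The main obstacle is bookkeeping rather than analysis. First, \eqref{eq:pivot-recurrence-final} was stated under $\delta_k>0$, which I would rather not assume. I would therefore derive $\delta_{k+1}$ directly as the Schur complement of the full block $\begin{pmatrix}C_k&B_k\\B_k^*&D_k\end{pmatrix}=C_{k+1}$ in $\mathcal G_{k+2}$'s root. This is legitimate because $A_k\succ0$ gives $C_k\succ0$, and $S_k\succ0$ gives $C_{k+1}\succ0$, so the computation is the two-stage associativity argument of Step 3.

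Second, $\mathsf D_k$ and $S_k$ are realized as closed forms on $\mathcal K_k^\perp$ rather than bounded operators. Here the variational formulation is what I would rely on. The infima are taken over the form core, and the minimizer formula holds in the form sense. This is because \eqref{eq:transverse-direct-pass} gives coercivity of $\mathsf D_k$, so $\mathsf D_k^{-1}$ is a bounded operator, and the vectors $v_k$ and $r_k$ only need to be bounded functionals on the form domain.
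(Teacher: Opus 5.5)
Your proposal is correct and follows the paper's own argument. You write the augmented block in root/ground/transverse order, take the Schur complement of $\mathsf D_k$ to get \eqref{eq:green-2x2}, use the block inverse of $S_k$ (which your variational supremum reproduces) for \eqref{eq:true-Sinv-energy-split}, and substitute into \eqref{eq:pivot-recurrence-final}; your detour to avoid assuming $\delta_k>0$ is unnecessary, since $A_k\succ0$ already forces $\delta_k=1/(1+2\langle s_k,A_k^{-1}s_k\rangle)>0$.
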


\begin{proof}
Write the block in root/ground/transverse order:
\[
 \begin{pmatrix}
 \delta_k&\sqrt2\,\overline{\alpha_k}&\sqrt2\,v_k^*\\
 \sqrt2\,\alpha_k&\mathsf a_k&r_k^*\\
 \sqrt2\,v_k&r_k&\mathsf D_k
 \end{pmatrix}.
\]
The Schur complement of \(\mathsf D_k\) is exactly
\eqref{eq:green-2x2}.  The standard block inverse of \(S_k\) gives
\eqref{eq:true-Sinv-energy-split}.  Substitution in
\eqref{eq:pivot-recurrence-final} gives \eqref{eq:green-pivot-z}.
\end{proof}

Define the polar-free harmonic extension
\begin{equation}
 \widehat H_kf:=\binom{-C_k^{-1}B_kf}{f}.
 \label{eq:C-harmonic-graph}
\end{equation}
Then direct block multiplication gives
\begin{equation}
 \boxed{
 \widehat H_k^*C_{k+1}\widehat H_k=S_k,
 \qquad
 \widehat H_k^*s_{k+1}=\eta_k.}
 \label{eq:C-harmonic-pullback}
\end{equation}
Put
\begin{equation}
 p_k^\#:=\binom{1}{-\mathsf D_k^{-1}r_k}
 \quad\hbox{in }\mathbb C\mathfrak e_k\oplus\mathcal K_k^\perp.
 \label{eq:green-p-sharp}
\end{equation}
Then
\begin{equation}
 S_kp_k^\#=\Pi_k\binom10.
 \label{eq:green-Sp}
\end{equation}

\begin{lemma}[Exact Green representation of the residual ground scalar]
\label{lem:green-representation}
Assume \(A_k\succ0\), \(\mathsf D_k\succ0\), and \(\Pi_k>0\).  Let
\[
 G_k:=\langle q_{k+1},C_{k+1}^{-1}\mathfrak e_k\rangle,
 \qquad q_Y(t)=t^{-1/2}.
\]
Then
\begin{equation}
 \boxed{|z_k|=\Pi_k(k+1)^{1/4}|G_k|.}
 \label{eq:green-z-G}
\end{equation}
Consequently
\begin{equation}
 \boxed{
 \delta_{k+1}
 =\delta_k^Q-2\Pi_k\sqrt{k+1}\,|G_k|^2.}
 \label{eq:green-pivot-G}
\end{equation}
\end{lemma}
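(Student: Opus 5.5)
The plan is to realize $z_k$ as a pairing of the polar vector with a harmonic extension of $p_k^\#$. That extension turns out to be a Green vector $C_{k+1}^{-1}\mathfrak e_k$. Lemma~\ref{lem:polar-ground-reflection} then converts the pairing with $s_{k+1}$ into a pairing with the ground profile $q_{k+1}$.

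\emph{Step 1 ($z_k$ as a pairing).} By \eqref{eq:green-S-split} and \eqref{eq:green-p-sharp},
\[
 \langle p_k^\#,\eta_k\rangle=\alpha_k-r_k^*\mathsf D_k^{-1}v_k=z_k,
\]
where $\mathsf D_k$ is self-adjoint; with the other inner-product convention one gets $\overline{z_k}$, which does not affect $|z_k|$. By \eqref{eq:C-harmonic-pullback}, $\eta_k=\widehat H_k^*s_{k+1}$. Hence
\[
 z_k=\langle \widehat H_kp_k^\#,s_{k+1}\rangle .
\]

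\emph{Step 2 (identify the harmonic extension).} Block multiplication with $C_{k+1}$ as in \eqref{eq:block-core} gives
\[
 C_{k+1}\widehat H_kf=\binom{0}{S_kf}.
\]
This holds because the old row is $C_k(-C_k^{-1}B_kf)+B_kf=0$. Applying this with $f=p_k^\#$ and using \eqref{eq:green-Sp}, I get
\[
 C_{k+1}\widehat H_kp_k^\#=\Pi_k\,(0\oplus\mathfrak e_k).
\]
I then need $C_{k+1}$ invertible, and I would check this as follows. First, $C_k=A_k+2\ketbra{s_k}{s_k}\succeq A_k\succ0$. Next, $S_k\succ0$, because its Schur complement structure \eqref{eq:green-S-split} has $\mathsf D_k\succ0$ and $\Pi_k>0$. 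Since $C_k\succ0$ and $S_k\succ0$ is the Schur complement of $C_k$ in $C_{k+1}$, it follows that $C_{k+1}\succ0$. Therefore
\[
 \widehat H_kp_k^\#=\Pi_k\,C_{k+1}^{-1}\mathfrak e_k ,
\]
where $\mathfrak e_k$ is zero-extended into $\mathcal H_{a_{k+1}}$. Consequently $z_k=\Pi_k\langle C_{k+1}^{-1}\mathfrak e_k,s_{k+1}\rangle$, using that $\Pi_k$ is real.

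\emph{Step 3 (polar-to-ground conversion).} At the endpoint $a_{k+1}=\tfrac12\log(k+1)$ one has $Y=e^{2a_{k+1}}=k+1$. The vector $F=C_{k+1}^{-1}\mathfrak e_k$ lies in the odd channel $\mathcal H_{a_{k+1}}$, because $C_{k+1}$ is the odd-channel core. After the unitary logarithmic/Mellin change of variables, $F$ therefore satisfies $J_YF=-F$. Formula \eqref{eq:polar-ground-functional} then gives
\[
 \langle s_{k+1},F\rangle=-(k+1)^{1/4}\langle q_{k+1},F\rangle=-(k+1)^{1/4}G_k .
\]
Taking absolute values yields \eqref{eq:green-z-G}. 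Substituting $|z_k|^2=\Pi_k^2\sqrt{k+1}\,|G_k|^2$ into \eqref{eq:green-pivot-z} gives \eqref{eq:green-pivot-G}.

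\emph{Main obstacle.} I expect the difficulty to be the bookkeeping in Step~3 rather than the algebra. One must check that the pairing defining $G_k$ is taken in the same $L^2((1,Y),dt/t)$ realization in which Lemma~\ref{lem:polar-ground-reflection} holds, and that $q_{k+1}$ denotes $q_Y$ with $Y=k+1$. One must also confirm that $\mathfrak e_k$, and hence $C_{k+1}^{-1}\mathfrak e_k$, is genuinely odd rather than a one-sided collar profile. If $\mathfrak e_k$ were specified only on one fibre, I would first replace it by its odd symmetrization. This does not change $C_{k+1}^{-1}$ acting in the odd channel.
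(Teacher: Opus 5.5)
Your proposal is correct and follows the paper's own proof step for step. Both arguments obtain $C_{k+1}\succ0$ from $C_k\succ0$ and the Schur criterion for $S_k$, derive the Green identity $\widehat H_kp_k^\#=\Pi_kC_{k+1}^{-1}\mathfrak e_k$, pair it with $s_{k+1}$ via $\widehat H_k^*s_{k+1}=\eta_k$, and apply the odd polar--ground identity at $Y=k+1$.

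Your explicit check that the old row of $C_{k+1}\widehat H_kf$ vanishes fills in a step the paper leaves implicit. The oddness worry in Step~3 is moot, because $\mathfrak e_k\in\mathcal K_k\subset\mathcal H_{a_{k+1}}=L^2_{\rm odd}(-a_{k+1},a_{k+1})$.
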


\begin{proof}
Since \(A_k\succ0\), also \(C_k=A_k+2\ketbra{s_k}{s_k}\succ0\).
Because \(\mathsf D_k\succ0\) and \(\Pi_k>0\), the Schur criterion gives
\(S_k\succ0\), hence the block decomposition of \(C_{k+1}\) over
\(C_k\oplus\mathcal K_k\) gives \(C_{k+1}\succ0\).  From
\eqref{eq:C-harmonic-pullback} and \eqref{eq:green-Sp},
\[
 C_{k+1}\widehat H_kp_k^\#=\Pi_k\mathfrak e_k,
 \qquad
 \widehat H_kp_k^\#=\Pi_k C_{k+1}^{-1}\mathfrak e_k.
\]
Pairing with \(s_{k+1}\) and using
\(\widehat H_k^*s_{k+1}=\eta_k\) gives the residual scalar \(z_k\), up to
the harmless conjugation fixed by the inner-product convention.  Taking
absolute values and applying the exact odd polar--ground identity
\eqref{eq:polar-ground-functional} at \(Y=k+1\) gives
\eqref{eq:green-z-G}.  Equation \eqref{eq:green-pivot-G} follows from
\eqref{eq:green-pivot-z}.
\end{proof}

\begin{definition}[Exact Green-root condition]
\label{def:RGstar}
At the arithmetic step \(k\to k+1\), condition \(\mathrm{RG}^*_k\) is
\begin{equation}
 \boxed{
 \delta_k^Q>0,\qquad \Pi_k>0,\qquad
 2\Pi_k\sqrt{k+1}\,|G_k|^2<\delta_k^Q.}
 \tag{RG$^*_k$}\label{eq:RGstar}
\end{equation}
Equivalently, with
\begin{equation}
 \kappa_k^2:=
 \frac{2\Pi_k\sqrt{k+1}\,|G_k|^2}{\delta_k^Q}
 =\frac{2|z_k|^2}{\delta_k^Q\Pi_k},
 \label{eq:kappa-green}
\end{equation}
condition \(\mathrm{RG}^*_k\) is \(\delta_k^Q>0\), \(\Pi_k>0\), and
\(\kappa_k^2<1\).
\end{definition}

\begin{theorem}[Exact one-step Green-root closure]
\label{thm:green-one-step}
Assume \(A_k\succ0\) and \(\mathrm{RG}^*_k\).  Then
\[
 \delta_{k+1}=\delta_k^Q(1-\kappa_k^2)>0
\]
and the exact rooted Schur block \(T_k\) is strictly positive.  Hence
\[
 \boxed{A_{k+1,-}\succ0.}
\]
\end{theorem}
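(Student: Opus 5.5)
The plan is to read off everything from the nested Schur structure already established, and to argue positivity by the standard block criterion twice. First I fix the setting: since \(A_k\succ0\), the Sherman--Morrison identities of Lemma~\ref{lem:root-residual} and the harmonic graph of Lemma~\ref{lem:harmonic-graph} are available. The source-resolved transverse estimate \eqref{eq:transverse-direct-pass} gives \(\mathsf D_k\succeq(\mathfrak B_k-\sigma_k)Q\succ\tfrac12 Q\), so \(\mathsf D_k\) is coercive and invertible on \(\mathcal K_k^\perp\). Together with \(\Pi_k>0\) from \(\mathrm{RG}^*_k\), this is exactly what Lemma~\ref{lem:green-nested-schur} requires. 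Substituting \eqref{eq:kappa-green} into \eqref{eq:green-pivot-z} then gives
\[
\delta_{k+1}=\delta_k^Q-\frac{2|z_k|^2}{\Pi_k}=\delta_k^Q(1-\kappa_k^2),
\]
which is strictly positive because \(\delta_k^Q>0\) and \(\kappa_k^2<1\). This proves the first claim.

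Next I show that the augmented two-block \eqref{eq:rooted-two-block} is positive definite. I write it in root/ground/transverse order, as in the proof of Lemma~\ref{lem:green-nested-schur}. Its transverse diagonal block \(\mathsf D_k\) is coercive, and its Schur complement is the \(2\times2\) matrix \eqref{eq:green-2x2}. That \(2\times2\) Hermitian matrix has diagonal entries \(\delta_k^Q>0\) and \(\Pi_k>0\), and determinant \(\delta_k^Q\Pi_k-2|z_k|^2=\delta_k^Q\Pi_k(1-\kappa_k^2)>0\), so it is positive definite. The block criterion (a coercive diagonal block plus a positive definite Schur complement) then makes the full augmented block positive and bounded below.

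From this I extract two consequences. The root diagonal entry \(\delta_k\) is strictly positive, being a diagonal entry of a positive definite form. Also \(S_k\succ0\), being a compression of that form. The rooted Schur block \(T_k\) of \eqref{eq:true-Tk} is the Schur complement of the scalar root \(\delta_k\) in this positive definite block, so \(T_k\succ0\); equivalently, \(\ell_k<1\) and \eqref{eq:Tk-relative} applies.

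Finally I use the block form of \(A_{k+1}\) over the old/collar splitting written in the proof of Lemma~\ref{lem:harmonic-graph}. Its old diagonal block is \(A_k\succ0\), and by \eqref{eq:full-harmonic-pullback} its Schur complement is \(H_k^*A_{k+1}H_k=T_k\succ0\). Hence \(A_{k+1}\succ0\), i.e. \(A_{k+1,-}\succ0\).

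The step needing the most care is the infinite-dimensional meaning of strict positivity. Each application of the block criterion must use coercivity (a uniform lower bound), not mere positivity, so that the inverses and Schur complements are defined as bounded or closed forms. For \(\mathsf D_k\) this comes from the explicit \(\tfrac12\) lower bound. For the \(2\times2\) scalar block it is automatic. For \(A_k\), I would take \(\succ0\) in the coercive sense that is propagated inductively. I would phrase each Schur step variationally, as a short over the eliminated variable, so that only closed semibounded forms are involved.
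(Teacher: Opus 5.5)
Your proposal is correct and is essentially the paper's proof. Both arguments take coercivity of $\mathsf D_k$ from \eqref{eq:transverse-direct-pass} and combine it with $\Pi_k>0$ to get $S_k\succ0$; both then use $\mathrm{RG}^*_k$ to make the $2\times2$ complement \eqref{eq:green-2x2} positive, which yields $T_k\succ0$ and $\delta_{k+1}=\delta_k^Q(1-\kappa_k^2)>0$, and close by associativity of the shorts, where your final step through $H_k^*A_{k+1}H_k=T_k$ with old block $A_k\succ0$ is that associativity written out, and your insistence on coercive rather than merely strict positivity makes explicit what the paper leaves implicit.
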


\begin{proof}
The transverse block is strictly positive by
\eqref{eq:transverse-direct-pass}.  The condition \(\Pi_k>0\) therefore
implies \(S_k\succ0\).  Equation \eqref{eq:green-2x2} and
\(\mathrm{RG}^*_k\) make its remaining two-by-two Schur complement strictly
positive.  Equivalently \(T_k\succ0\), while
\eqref{eq:green-pivot-G} gives
\(\delta_{k+1}=\delta_k^Q(1-\kappa_k^2)>0\).  Associativity of the exact
shorts in \eqref{eq:rooted-Gram} now gives \(A_{k+1,-}\succ0\).
\end{proof}

\begin{lemma}[Second Sherman--Morrison reduction to one vector]
\label{lem:green-second-SM}
Regroup all coordinates except \(\mathfrak e_k\) into \(X_k\) and write
\[
 C_{k+1}=\begin{pmatrix}K_k&b_k\\ b_k^*&a_k^{(0)}\end{pmatrix},
 \qquad
 s_{k+1}=\binom{w_k}{\tau_k}.
\]
After the old and transverse shorts,
\[
 \delta_k^Q=1-2w_k^*K_k^{-1}w_k,
 \qquad
 \Pi_k=a_k^{(0)}-b_k^*K_k^{-1}b_k,
\]
Put
\[
 A_{X,k}:=K_k-2\ketbra{w_k}{w_k},
 \qquad
 b_{A,k}:=b_k-2w_k\overline{\tau_k},
\]
and, when \(\delta_k^Q>0\),
\[
 \zeta_k^Q:=\tau_k-b_{A,k}^*A_{X,k}^{-1}w_k,
 \qquad
 F_k^\#:=\binom{-A_{X,k}^{-1}b_{A,k}}1.
\]
Then
\begin{align}
 A_{X,k}^{-1}w_k&=\frac1{\delta_k^Q}K_k^{-1}w_k,
 \label{eq:green-SM-inverse}\\
 z_k&=\delta_k^Q\zeta_k^Q,
 \label{eq:green-z-zetaQ}\\
 |\langle s_{k+1},F_k^\#\rangle|&=|\zeta_k^Q|,
 \label{eq:green-Fpolar}\\
 \langle F_k^\#,A_{k+1}F_k^\#\rangle
 &=\Pi_k-2\delta_k^Q|\zeta_k^Q|^2,
 \label{eq:green-F-A-energy}\\
 \langle F_k^\#,C_{k+1}F_k^\#\rangle
 &=\Pi_k+2(1-\delta_k^Q)|\zeta_k^Q|^2.
 \label{eq:green-F-C-energy}
\end{align}
Thus \(\mathrm{RG}^*_k\) is equivalent to the single-vector scalar lock
\begin{equation}
 \boxed{2\delta_k^Q|\zeta_k^Q|^2<\Pi_k.}
 \label{eq:green-single-vector-lock}
\end{equation}
In particular, the sufficient estimate
\begin{equation}
 \boxed{
 2|\zeta_k^Q|^2\le
 q_F\bigl[\Pi_k+2(1-\delta_k^Q)|\zeta_k^Q|^2\bigr]}
 \tag{GF$_k$}\label{eq:GF-sufficient}
\end{equation}
would imply
\begin{equation}
 \frac{2\delta_k^Q|\zeta_k^Q|^2}{\Pi_k}
 \le\frac{q_F\delta_k^Q}{1-q_F(1-\delta_k^Q)}
 \le q_F<1.
 \label{eq:GF-to-RG}
\end{equation}
\end{lemma}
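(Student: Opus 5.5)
The proof is pure finite block algebra, carried out in the augmented rooted Gram matrix \eqref{eq:rooted-Gram} at level $k+1$. The plan is to reorder coordinates as (root, $X_k$, $\mathfrak e_k$), so that this matrix reads
\[
 \begin{pmatrix}1&\sqrt2\,w_k^*&\sqrt2\,\overline{\tau_k}\\ \sqrt2\,w_k&K_k&b_k\\ \sqrt2\,\tau_k&b_k^*&a_k^{(0)}\end{pmatrix}.
\]
Here $X_k$ collects the old core together with $\mathcal K_k^\perp$.

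First I will justify the displayed formulas for $\delta_k^Q$ and $\Pi_k$. By associativity of shorting (the quotient formula for Schur complements), shorting the old core and then the transverse block $\mathsf D_k$, as in \eqref{eq:rooted-two-block} and \eqref{eq:green-2x2}, equals shorting $K_k$ in one step. This requires $K_k\succ0$, which follows from $C_k\succ0$ and $\mathsf D_k\succ0$. The one-step short gives the $2\times2$ block with entries $1-2w_k^*K_k^{-1}w_k$, $a_k^{(0)}-b_k^*K_k^{-1}b_k$ and off-diagonal $\sqrt2\,(\tau_k-b_k^*K_k^{-1}w_k)$. Comparing with \eqref{eq:green-2x2} yields $\delta_k^Q$ and $\Pi_k$, and also the identity $z_k=\tau_k-b_k^*K_k^{-1}w_k$, up to the conjugation convention already noted in Lemma~\ref{lem:green-representation}.

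Next I will repeat the Sherman--Morrison computation of Lemma~\ref{lem:root-residual} verbatim, with $(C_k,s_k,t_k,B_k)$ replaced by $(K_k,w_k,\tau_k,b_k)$. The identity $A_{X,k}K_k^{-1}w_k=(1-2w_k^*K_k^{-1}w_k)w_k=\delta_k^Q w_k$ gives \eqref{eq:green-SM-inverse}. For invertibility, note that $A_{X,k}\succ0$ because its positivity is equivalent to positivity of the rooted block over $X_k$, which holds when $K_k\succ0$ and $\delta_k^Q>0$. Expanding $b_{A,k}^*=b_k^*-2\tau_kw_k^*$ then gives $\zeta_k^Q=(\delta_k^Q)^{-1}(\tau_k-b_k^*K_k^{-1}w_k)$, which is \eqref{eq:green-z-zetaQ}.

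For the vector $F_k^\#$ I will follow Lemma~\ref{lem:harmonic-graph}. The pairing $\langle s_{k+1},F_k^\#\rangle$ equals $\overline{\tau_k}-w_k^*A_{X,k}^{-1}b_{A,k}=\overline{\zeta_k^Q}$, giving \eqref{eq:green-Fpolar}. Writing $A_{k+1}=C_{k+1}-2\ketbra{s_{k+1}}{s_{k+1}}$ in the same split, with blocks $A_{X,k}$, $b_{A,k}$ and $a_k^{(0)}-2|\tau_k|^2$, the energy $\langle F_k^\#,A_{k+1}F_k^\#\rangle$ is the Schur complement of $A_{X,k}$. By the scalar analogue of Lemma~\ref{lem:root-lift-final}, this complement equals $\Pi_k-2|z_k|^2/\delta_k^Q=\Pi_k-2\delta_k^Q|\zeta_k^Q|^2$. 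Adding $2|\langle s_{k+1},F_k^\#\rangle|^2$ gives \eqref{eq:green-F-C-energy}.

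The equivalence with RG$^*_k$ then follows. Since $\kappa_k^2=2|z_k|^2/(\delta_k^Q\Pi_k)=2\delta_k^Q|\zeta_k^Q|^2/\Pi_k$, the condition $\kappa_k^2<1$ is exactly \eqref{eq:green-single-vector-lock}, given $\delta_k^Q>0$ and $\Pi_k>0$. For \eqref{eq:GF-to-RG}, first note $0<\delta_k^Q\le1$ because $K_k^{-1}\succ0$, so $1-q_F(1-\delta_k^Q)>0$. Rearranging (GF$_k$) gives $2|\zeta_k^Q|^2\bigl(1-q_F(1-\delta_k^Q)\bigr)\le q_F\Pi_k$. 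Finally, $q_F\delta/(1-q_F+q_F\delta)\le q_F$ reduces to $\delta(1-q_F)\le1-q_F$, which holds since $\delta\le1$.

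The main obstacle is the first step: checking that the nested short (old core, then transverse) coincides with the one-shot short over the regrouped $X_k$. The definitions of $\delta_k^Q,\Pi_k,z_k$ in \eqref{eq:green-three-scalars} are phrased through $\mathsf D_k$ after the old short, so this coincidence is needed, and the complex-conjugation conventions have to be tracked consistently throughout.
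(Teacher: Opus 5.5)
Your proposal is correct and follows essentially the same route as the paper's brief proof. Both use Sherman--Morrison for $A_{X,k}^{-1}w_k$, substitution into the definitions of $z_k$ and $\zeta_k^Q$, block multiplication together with $C_{k+1}=A_{k+1}+2\ketbra{s_{k+1}}{s_{k+1}}$, and an elementary rearrangement of (GF$_k$) using $0<\delta_k^Q\le1$. Two of your steps are additions rather than departures: identifying the nested old/transverse short with the one-shot short of $K_k$ via the quotient formula, and evaluating $\langle F_k^\#,A_{k+1}F_k^\#\rangle$ by associativity of shorts. These just make explicit what the paper asserts without comment (the displayed formulas for $\delta_k^Q,\Pi_k$, and ``direct block multiplication'').
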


\begin{proof}
The first identity is Sherman--Morrison.  Substitution into the definitions
of \(z_k\) and \(\zeta_k^Q\) gives \eqref{eq:green-z-zetaQ}; the remaining
identities are direct block multiplication and
\(C_{k+1}=A_{k+1}+2\ketbra{s_{k+1}}{s_{k+1}}\).  Finally
\(0<\delta_k^Q\le1\) in the stated positive-root regime, and elementary
rearrangement of \eqref{eq:GF-sufficient} gives \eqref{eq:GF-to-RG}.
\end{proof}

\subsubsection{Universal terminal scalar closure}
\label{sec:terminal-scalar-closure}

The last one-dimensional sign occurring after the two-scale parent reduction
has a closed analytic form.  The following lemma is independent of every
finite cut-off and uses no numerical approximation.

\begin{lemma}[Hyperbolic-sine terminal scalar inequality]
\label{lem:terminal-scalar-closure}
Let \(0<a<1\) and \(\rho>1\).  Then
\begin{equation}
 \boxed{
 \rho^2 a^\rho(1-a)^2
 < a(1-a^\rho)^2.}
 \label{eq:terminal-scalar-rho}
\end{equation}
Equivalently,
\begin{equation}
 \boxed{
 \rho\,a^{(\rho-1)/2}(1-a)<1-a^\rho.}
 \label{eq:terminal-scalar-sqrt}
\end{equation}
\end{lemma}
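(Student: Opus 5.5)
The inequality is elementary, so I will prove it directly with a hyperbolic substitution; nothing from the operator part of the paper is needed.

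\emph{Step 1: equivalence of the two forms.} All quantities $\rho$, $a^{(\rho-1)/2}$, $1-a$ and $1-a^\rho$ are strictly positive for $0<a<1$ and $\rho>1$. Taking positive square roots in \eqref{eq:terminal-scalar-rho} gives $\rho\,a^{\rho/2}(1-a)<a^{1/2}(1-a^\rho)$. Dividing by $a^{1/2}$ gives \eqref{eq:terminal-scalar-sqrt}, and each step is reversible. It therefore suffices to prove \eqref{eq:terminal-scalar-sqrt}.

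\emph{Step 2: hyperbolic substitution.} Write $a=e^{-2x}$ with $x>0$. Then
\[
 1-a=2e^{-x}\sinh x,\qquad 1-a^\rho=2e^{-\rho x}\sinh(\rho x),\qquad a^{(\rho-1)/2}=e^{-(\rho-1)x}.
\]
The left side of \eqref{eq:terminal-scalar-sqrt} becomes $2\rho e^{-\rho x}\sinh x$. Cancelling the common positive factor $2e^{-\rho x}$, the claim is equivalent to
\[
 \rho\sinh x<\sinh(\rho x)\qquad(x>0,\ \rho>1).
\]
This is why the lemma is called the hyperbolic-sine inequality.

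\emph{Step 3: the hyperbolic inequality.} I would prove $\rho\sinh x<\sinh(\rho x)$ by strict convexity. Since $\sinh(0)=0$ and $\sinh$ is strictly convex on $[0,\infty)$, the ratio $\sinh(y)/y$ is strictly increasing for $y>0$. Because $\rho x>x$, this gives $\sinh(\rho x)/(\rho x)>\sinh(x)/x$, which is the desired inequality after multiplying by $\rho x$.

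An alternative is the power series. The identity
\[
 \sinh(\rho x)-\rho\sinh x=\sum_{n\ge1}\frac{(\rho^{2n+1}-\rho)\,x^{2n+1}}{(2n+1)!}
\]
has every coefficient strictly positive when $\rho>1$. This argument works for real (non-integer) $\rho$ as well.

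There is no real obstacle here. The only points needing care are that every division and square root is taken of a strictly positive quantity, and that the inequality is strict, which both the convexity and the series arguments provide.
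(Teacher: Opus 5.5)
Your proof is correct and follows essentially the same route as the paper: the substitution $a=e^{-2x}$ reduces both forms to $\rho\sinh x<\sinh(\rho x)$, which is then deduced from the strict increase of $\sinh(u)/u$. The only difference is how that monotonicity is justified. You use the chord-slope argument for the strictly convex function $\sinh$ with $\sinh 0=0$, or alternatively the power series. The paper instead differentiates $h(u)=\sinh u/u$ and shows that $u\cosh u-\sinh u$ is positive, since it vanishes at $0$ and has derivative $u\sinh u>0$.
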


\begin{proof}
All quantities in \eqref{eq:terminal-scalar-rho} are positive, so division by
\(a\) and passage to the positive square root give exactly
\eqref{eq:terminal-scalar-sqrt}.  Put
\[
 a=e^{-2t},\qquad t>0.
\]
Then
\[
 1-a=2e^{-t}\sinh t,
 \qquad
 1-a^\rho=2e^{-\rho t}\sinh(\rho t),
\]
and therefore \eqref{eq:terminal-scalar-sqrt} is equivalent, after cancelling
\(2e^{-\rho t}>0\), to
\begin{equation}
 \rho\sinh t<\sinh(\rho t).
 \label{eq:terminal-sinh}
\end{equation}
Now set \(h(u)=\sinh(u)/u\) for \(u>0\).  Since
\[
 h'(u)=\frac{u\cosh u-\sinh u}{u^2},
\]
and, with \(q(u)=u\cosh u-\sinh u\),
\[
 q(0)=0,
 \qquad
 q'(u)=u\sinh u>0\quad(u>0),
\]
we have \(h'(u)>0\) on \((0,\infty)\).  As \(\rho t>t\),
\[
 \frac{\sinh(\rho t)}{\rho t}
 >\frac{\sinh t}{t},
\]
which is precisely \eqref{eq:terminal-sinh}.  Reversing the equivalences proves
\eqref{eq:terminal-scalar-rho}.
\end{proof}

\begin{corollary}[Exponential two-scale form]
\label{cor:terminal-two-scale}
Let \(x>0\) and \(0<\alpha<\beta\), and put
\[
 a=e^{-\alpha x},\qquad b=e^{-\beta x}.
\]
Then
\begin{equation}
 \boxed{
 \beta^2\frac{b}{(1-b)^2}
 <
 \alpha^2\frac{a}{(1-a)^2}.}
 \label{eq:terminal-alpha-beta}
\end{equation}
The inequality is strict for every \(x>0\); no upper cut-off on \(x\) is
needed.
\end{corollary}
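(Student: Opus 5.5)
The plan is to reduce \eqref{eq:terminal-alpha-beta} directly to Lemma~\ref{lem:terminal-scalar-closure} by a change of base.

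Set $\rho:=\beta/\alpha>1$. Since $b=e^{-\beta x}=(e^{-\alpha x})^{\rho}=a^{\rho}$ and $0<a<1$ for $x>0$, the claimed inequality becomes
\[
 \rho^2\alpha^2\frac{a^\rho}{(1-a^\rho)^2}<\alpha^2\frac{a}{(1-a)^2}.
\]
All denominators are positive, so we divide by $\alpha^2>0$ and multiply through by $(1-a)^2(1-a^\rho)^2>0$. This gives
\[
 \rho^2a^\rho(1-a)^2<a(1-a^\rho)^2,
\]
which is exactly \eqref{eq:terminal-scalar-rho} with $0<a<1$ and $\rho>1$.

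Each step is an equivalence between positive quantities, so the strict inequality transfers back unchanged. The only hypotheses used are $x>0$, which gives $a\in(0,1)$, and $\alpha<\beta$, which gives $\rho>1$. Neither imposes any upper bound on $x$, which is why no cut-off is needed.

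Alternatively, one can argue in hyperbolic form. Write $u=\alpha x/2$, so that $a/(1-a)^2=1/(4\sinh^2 u)$ and $b/(1-b)^2=1/(4\sinh^2(\rho u))$. The claim then reads $\rho\sinh u<\sinh(\rho u)$, i.e.\ strict monotonicity of $\sinh u/u$, as in the proof of the lemma. There is no real obstacle here; the only point requiring care is confirming the positivity that justifies clearing denominators and taking square roots.
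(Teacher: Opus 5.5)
Your proposal is correct and follows the paper's proof essentially verbatim: set $\rho=\beta/\alpha>1$, note $b=a^\rho$, divide by $\alpha^2$ and clear the positive denominators to land exactly on \eqref{eq:terminal-scalar-rho} from Lemma~\ref{lem:terminal-scalar-closure}. Your hyperbolic alternative simply unfolds that lemma's own proof, which the paper likewise reduces to the strict monotonicity of $\sinh u/u$.
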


\begin{proof}
Set \(\rho=\beta/\alpha>1\).  Then \(b=a^\rho\).  Multiplying
\eqref{eq:terminal-alpha-beta} by the positive common denominator and dividing
by \(\alpha^2\) gives exactly \eqref{eq:terminal-scalar-rho}.
\end{proof}

\begin{remark}[Equivalent monotonicity formulation]
\label{rem:terminal-monotonicity}
The same result says that
\[
 \Phi(u):=\frac{u^2e^{-u}}{(1-e^{-u})^2}
 =\frac{u^2}{4\sinh^2(u/2)}
\]
is strictly decreasing on \((0,\infty)\).  The proof above is the
scale-free form of this monotonicity.
\end{remark}

\begin{audit}[Scope of the terminal scalar closure]
\label{audit:terminal-scalar-scope}
Lemma~\ref{lem:terminal-scalar-closure} and
Corollary~\ref{cor:terminal-two-scale} are unconditional analytic statements.
They close the final two-scale scalar sign once the exact post-transverse
parent reduction has identified its residual with
\eqref{eq:terminal-alpha-beta} (equivalently
\eqref{eq:terminal-scalar-rho}).  The lemma itself does not identify the
operator-level parent coefficients with \(a,\alpha,\beta\).  That separate
algebraic identification is supplied below by the inherited-line lower-form
bridge and the mixed parent block in
\cref{lem:Jsharp-clean,lem:mixed-parent-clean}.
\end{audit}

\subsubsection{Augmented weighted-ground control}
\label{sec:awgc-certified}

For a fixed target $k$ and parent index $m$, let
\[
 \mathcal N_{k,m}:=
 \{n=p^\ell:\ n\le k,\ n\nmid k,\ \lfloor k/n\rfloor=m\}.
\]
Put
\begin{equation}
 V_{k,m}:=
 \sum_{n\in\mathcal N_{k,m}}\frac{\Lambda(n)^2}{n}
 -\frac{\left(\sum_{n\in\mathcal N_{k,m}}\Lambda(n)\right)^2}
 {\sum_{n\in\mathcal N_{k,m}}n}.
 \label{eq:Vkm-awgc}
\end{equation}
The quantity is the squared norm of the arithmetic forcing after orthogonal
projection away from the inherited parent-ground direction: if
$\lambda_n=\Lambda(n)/\sqrt n$ and the inherited direction has components
proportional to $\sqrt n$, then
\[
 \|P_{\hat r^\perp}\lambda\|^2=V_{k,m}.
\]
The parent mismatch short obtained from the one-defect lower model has a
uniform inverse gap at least $\log2$, hence its exact dual debit satisfies
\begin{equation}
 \boxed{\widehat\chi_{k,m}\le \frac{V_{k,m}}{\log2}.}
 \label{eq:chi-V-awgc}
\end{equation}

Define
\[
 \beta_0:=\log2+\frac12,
 \qquad
 \widetilde\sigma_k:=
 \sum_{\substack{n=p^\ell\le k\\ n\nmid k}}
 \frac{\Lambda(n)^2}{n(\log n+\beta_0)}.
\]
For $m=\lfloor k/n\rfloor$ one has
\[
 w_m<\frac1m,
 \qquad
 w_k>\frac1{k+1},
 \qquad
 k+1\le n(m+1),
\]
and therefore
\begin{equation}
 \boxed{L_{k,n}<\log n+\beta_0,\qquad
 \sigma_k\ge\widetilde\sigma_k.}
 \label{eq:sigma-tilde-lower}
\end{equation}

\begin{proposition}[Arithmetic weighted-ground certificate]
\label{thm:awgc}
For every integer $k\ge2$,
\begin{equation}
 \boxed{
 \frac1{\log2}\sum_mV_{k,m}
 \le \widetilde\sigma_k
 \le \sigma_k.}
 \label{eq:AWGC}
\end{equation}
\end{proposition}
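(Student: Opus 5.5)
The right-hand inequality $\widetilde\sigma_k\le\sigma_k$ needs no new work. It is \eqref{eq:sigma-tilde-lower}: for each active branch, $w_m<1/m$, $w_k>1/(k+1)$ and $k+1\le n(m+1)$ give $L_{k,n}<\log n+\beta_0$. Summing $\Lambda(n)^2/(nL_{k,n})>\Lambda(n)^2/(n(\log n+\beta_0))$ over the same index set then gives $\widetilde\sigma_k\le\sigma_k$. All the work is in the left-hand inequality.

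For that inequality I would decompose by parent cell. The non-divisorial prime powers $n\le k$, $n\nmid k$ split disjointly into the sets $\mathcal N_{k,m}$, so $\widetilde\sigma_k=\sum_m\widetilde\sigma_{k,m}$ with
\[
 \widetilde\sigma_{k,m}=\sum_{n\in\mathcal N_{k,m}}\frac{\Lambda(n)^2}{n(\log n+\beta_0)}.
\]
It therefore suffices to prove the blockwise bound $V_{k,m}\le\log2\,\widetilde\sigma_{k,m}$. Next I would rewrite $V_{k,m}$ as a weighted variance. Set $x_n=\Lambda(n)/n$ with weights $n$; then
\[
 V_{k,m}=\min_{c\in\mathbb R}\sum_{n\in\mathcal N_{k,m}}n\,(x_n-c)^2 .
\]
This vanishes on singleton blocks and is bounded above by any choice of $c$. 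Choosing $c=\log(k/m)/(k/m)$, the value of $\log t/t$ at the right end of the cell $n\in(k/(m+1),k/m]$, reduces the blockwise bound to the termwise inequality
\[
 \Bigl(1-\frac{c\,n}{\Lambda(n)}\Bigr)^2\le\frac{\log2}{\log n+\beta_0}.
\]
For proper prime powers $p^\ell$ ($\ell\ge2$) I would not compare termwise. I would simply discard their contribution, since they are sparse and $\Lambda(p^\ell)=\log p\le\tfrac12\log n$. For large $m$ the cell has relative width $1/m$, so primes in it have $x_n$ varying by $O(1/m)$ relatively, and the termwise inequality holds once $m\gtrsim\sqrt{\log k}$.

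The main obstacle is the small-$m$ blocks, above all $m=1$, where $n$ ranges over $(k/2,k]$. There $x_n$ varies by a factor close to $2$, so the weighted variance is of size a fixed fraction of $\sum\Lambda(n)^2/n\asymp\log k$. The comparison block $\widetilde\sigma_{k,1}$, by contrast, stays bounded (it is roughly $\log2$). So a blockwise comparison cannot work uniformly in $k$. For these blocks I would first try to borrow the surplus of the large-$m$ blocks, where $V_{k,m}=0$ or is tiny. I would control the total of these surpluses via the Chebyshev estimates already used in Lemma~\ref{lem:chebyshev-capacity}. Failing that, I would restrict to a finite range of $k$ covered by the directed-interval sweep of Appendix~\ref{app:finite-certificates}.

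I expect this step either to need a sharper definition of the inherited direction in $V_{k,m}$ (for instance one adapted to $\Lambda(n)$ rather than $\sqrt n$), or to show that \eqref{eq:AWGC} can hold only up to a finite cutoff in $k$. I would therefore check the $m=1$ block numerically for moderate $k$ (say $k\sim10^4$) before investing in the analytic tail.
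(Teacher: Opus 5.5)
Your argument for $\widetilde\sigma_k\le\sigma_k$ is the paper's; it is \eqref{eq:sigma-tilde-lower}. For the left inequality, however, the proposal stops at the obstacle and does not prove it.

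\textbf{The blockwise reduction fails in the finite range.} The bound $V_{k,m}\le\log2\,\widetilde\sigma_{k,m}$ is not merely hard for $m=1$ at large $k$; it is false already for small $k$. At $k=33$ the $m=1$ bin is $\{17,19,23,25,27,29,31,32\}$. There $V_{33,1}/\log2\approx0.708$, while $\widetilde\sigma_{33,1}\approx0.531$. The excess comes from the proper prime powers $25,27,32$, whose values $\Lambda(n)/n$ lie far below the bin mean; the primes alone give $V\approx0.052$.

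\textbf{Discarding prime powers goes the wrong way.} Since $V_{k,m}=\min_c\sum_n n(x_n-c)^2$ is monotone in the index set, deleting terms yields a \emph{lower} bound for $V_{k,m}$, whereas you need an upper bound. You must keep these terms and bound them. For example, take $\sum_n n(x_n-c)^2$ at a fixed $c$; this is summable over $n=p^\ell$, $\ell\ge2$, so proper prime powers contribute only $O(1)$ in total.

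\textbf{The missing idea is to compare totals rather than blocks.} This is your borrowing fallback, but carried out globally. To leading order the primes in bin $m$ give $V_{k,m}\approx\log(k/m)\bigl[\log(1+\tfrac1m)-\tfrac{2}{2m+1}\bigr]$. Hence $\sum_mV_{k,m}$ grows like $(2-2\log2-\gamma)\log k\approx0.036\log k$, with $\gamma$ Euler's constant. By Mertens, $\widetilde\sigma_k=\log k-O(\log\log k)$. So your suspicion that \eqref{eq:AWGC} might hold only up to a finite cutoff is unfounded, and no change to the inherited direction in $V_{k,m}$ is needed: the margin grows like a positive multiple of $\log k$.

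\textbf{How the paper makes this rigorous.} For $k\ge X_0=3\,594\,641$ it uses two bounds:
\begin{itemize}
\item the upper bound $\sum_mV_{k,m}<0.14736\log k+2.904$;
\item the lower bound $\widetilde\sigma_k\ge F(k)-0.18486-\tfrac17\log k$, where $F$ sums over primes $p\ge5$ and is estimated by Stieltjes integration of $|\vartheta(x)-x|<0.2x/\log^2x$.
\end{itemize}
It then compares the two bounds at $X_0$ and their slopes in $u=\log k$ ($<0.213$ versus $>0.783$). The range $2\le k\le X_0$ is covered by the directed-interval sweep of Appendix~\ref{app:finite-certificates}. Some exact finite computation of this kind is unavoidable, since the global margin falls to about $0.1118$ at $k=33$.
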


\begin{proof}
The second inequality is \eqref{eq:sigma-tilde-lower}.  For the first
(arithmetic) inequality we split at
\[
 X_0:=3\,594\,641.
\]
For $2\le k\le X_0$, the finite sweep specified in
Appendix~\ref{app:finite-certificates} evaluates the exact combinatorics using
outward-rounded intervals.  Integer logarithms are
enclosed by the positive atanh series
\[
 \log x=e\log2+2\sum_{r\ge0}\frac{z^{2r+1}}{2r+1},
 \qquad
 z=\frac{x-2^e}{x+2^e},\quad 0\le z<\frac13,
\]
with an explicit geometric tail; no call to \texttt{log}, \texttt{logl} or
another transcendental library routine is made.  Every elementary arithmetic
operation is enlarged by one \texttt{nextafterl} step in the outward
direction.  The certified output gives exact equality at $k=2$ and, for all
$3\le k\le X_0$,
\[
 \widetilde\sigma_k-\frac1{\log2}\sum_mV_{k,m}
 \ge 0.111804998077428866317003\ldots,
\]
the minimum occurring at $k=33$.  The corresponding certified upper bound on
the ratio is
\[
 \frac{(\log2)^{-1}\sum_mV_{k,m}}{\widetilde\sigma_k}
 \le0.911416881539699733755493\ldots .
\]

For $k\ge X_0$, split prime and higher-prime-power contributions.  The
prime-bin weighted-variance estimate and the explicit Chebyshev bounds used
in \cref{lem:chebyshev-capacity} give
\begin{equation}
 \sum_mV_{k,m}<0.14736\log k+2.904.
 \label{eq:awgc-tail-upper}
\end{equation}
For the lower side retain only primes $p\ge5$ and put
\[
 F(x):=\sum_{5\le p\le x}
 \frac{\log^2p}{p(\log p+\beta_0)}.
\]
Direct finite evaluation gives $F(X_0)>11.0452$.  With
\[
 \varepsilon_0:=\frac{0.2}{\log^2X_0}<0.000878,
 \qquad
 \Phi_0(u):=u-\beta_0\log(u+\beta_0),
\]
Stieltjes integration of
$|\vartheta(x)-x|<0.2x/\log^2x$ yields, for $k\ge X_0$,
\[
 F(k)\ge F(X_0)
 +(1-\varepsilon_0)
 [\Phi_0(\log k)-\Phi_0(\log X_0)]
 -2\varepsilon_0\frac{\log X_0}{\log X_0+\beta_0}.
\]
Removing possible prime divisors of $k$ costs at most
$0.18486+(1/7)\log k$, so
\begin{equation}
 \widetilde\sigma_k
 \ge F(k)-0.18486-\frac17\log k.
 \label{eq:awgc-tail-lower}
\end{equation}
At $X_0$, \eqref{eq:awgc-tail-upper} divided by $\log2$ is $<7.399$,
whereas \eqref{eq:awgc-tail-lower} is $>8.702$.  In the variable
$u=\log k$, the slope of the former is
\[
 \frac{0.14736}{\log2}<0.213,
\]
while the slope of the latter is at least
\[
 (1-\varepsilon_0)\frac{u}{u+\beta_0}-\frac17>0.783
 \qquad(u\ge\log X_0).
\]
Thus the gap is already positive at $X_0$ and increases thereafter.
\end{proof}

\begin{computercertificate}[Finite AWGC interval run]
\label{cert:awgc-interval}
The directed-interval sweep of Appendix~\ref{app:finite-certificates} gives
\begin{verbatim}
PASS interval-certified N=3594641
min_positive_margin_lo=0.111804998077428866317003 at k=33
max_ratio_upper=0.911416881539699733755493 at k=33
at_N lhs=[0.836991206113748137710405,0.836992125739140218221995]
     rhs=[11.5646711544460260535569,11.5646711544464651658451]
     margin=[10.7276790287068858340339,10.7276799483327170293273]
\end{verbatim}
All displayed quantities are outward-rounded interval endpoints.  The arithmetic
used by the sweep, including the enclosure of every integer logarithm, is
specified explicitly in Appendix~\ref{app:finite-certificates}.
\end{computercertificate}

\begin{theorem}[AWGC operator lift]
\label{def:AWGC-op-lift}
For every $k\ge2$,
\begin{equation}
 \boxed{
 \sum_m\widehat\chi_{k,m}
 \le \frac1{\log2}\sum_mV_{k,m}.}
 \tag{AWGC-OP$_k$}\label{eq:AWGC-op}
\end{equation}
\end{theorem}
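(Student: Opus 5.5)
The plan is to prove \eqref{eq:AWGC-op} parent cell by parent cell and then sum over $m$. The summation is harmless: for fixed $k$ the parent cells $C_m$ are mutually orthogonal in the Mellin space, and by \eqref{eq:CMC-true-source} the simultaneous source short is block diagonal over parents. Hence the total mismatch debit $\sum_m\widehat\chi_{k,m}$ is a direct sum of independent dual problems, one for each $m$, with no cross-parent term. Everything therefore reduces to the per-parent inequality \eqref{eq:chi-V-awgc}, which I would prove as an honest operator statement.

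\emph{Step 1: write the debit as a dual norm.} Fix $(k,m)$. I would write $\widehat\chi_{k,m}$ as the dual norm of the arithmetic forcing $\lambda=(\Lambda(n)/\sqrt n)_{n\in\mathcal N_{k,m}}$ after the inherited direction has been shorted:
\[
 \widehat\chi_{k,m}=\sup_{x}\frac{|\langle\lambda,x\rangle|^2}{\mathfrak m_{k,m}[x]},
\]
where $\mathfrak m_{k,m}$ is the parent mismatch form restricted to $\hat r^\perp$. The inherited parent-ground direction $\hat r$ has components proportional to $\sqrt n$. This comes from the fixed-target gauge, Lemma~\ref{lem:fixed-target-gauge}, together with the ground normalization $q_k=\nu_kU_k\one$ in \eqref{eq:qk-Uk-one}, because each branch unitary rescales the ground line by $\sqrt n$. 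Shorting that direction leaves only $P_{\hat r^\perp}\lambda$ as effective forcing. The elementary projection computation then gives
\[
 \|P_{\hat r^\perp}\lambda\|^2
 =\sum\frac{\Lambda(n)^2}{n}-\frac{\bigl(\sum\Lambda(n)\bigr)^2}{\sum n}
 =V_{k,m}.
\]

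\emph{Step 2: a uniform gap of $\log2$.} On $\hat r^\perp$ I would show $\mathfrak m_{k,m}\succeq\log2\,I$. The first route I would try combines the source capacities \eqref{eq:CMC-true-source} with the inequality $L_{k,n}=\tfrac12+\log(w_m/w_k)$. Since $n\ge2$ gives $k\ge2m$, and $w_j=\log(1+1/j)$ is decreasing with $w_j\sim 1/j$, one expects $w_m/w_k\ge 2$ up to controlled endpoint corrections. If that leaves any slack uncovered, I would fall back on the one-defect lower model, Lemma~\ref{lem:onedefect-analytic}: on the mean-zero part it already yields $\mathfrak b\succeq\log2+\tfrac12$ from $t(1-t)\le\tfrac14$ and the Poincar\'e identity, so removing the single defect direction $\hat r$ leaves at least $\log2$.

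\emph{Step 3: conclude.} Order reversal under inversion gives $\widehat\chi_{k,m}\le\|P_{\hat r^\perp}\lambda\|^2/\log2=V_{k,m}/\log2$. Summing over the orthogonal parent cells proves \eqref{eq:AWGC-op}.

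\emph{Main obstacle.} I expect the hardest step to be Step 2, and specifically the identification in Step 1 that makes it usable. The defect direction removed by the short must be exactly the $\sqrt n$-weighted inherited line, not merely approximately so. Otherwise the projected forcing differs from $V_{k,m}$, and the gap estimate would be applied on the wrong subspace. To secure this I would check $R_n e_n^s=e_t$ branch by branch, keeping track of the source-length factors, before invoking the gap.
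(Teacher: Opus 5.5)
Your outer structure matches the paper's: the statement is Corollary~\ref{cor:exact-awgc-lift} summed over orthogonal parent cells. That corollary says exactly ``the mismatch forcing has squared norm $V_{k,m}$, the inverse of the mismatch block is bounded by $(\log2)^{-1}$, and the exact metric dominates the lower model.'' The gap is in Step~2: neither of your two routes gives the $\log2$ gap for a valid reason.

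\textbf{The first route cannot work.} \eqref{eq:CMC-true-source} (Theorem~\ref{thm:CMC}) bounds the shorted source metric below only by $\bigoplus L_{k,n}Q_{k,n}$, that is, on the within-slot mean-zero space $\mathcal Z_m$. Its lower bound is identically zero on the slot-ground space $\mathcal G_m$ spanned by the slot constants, and the mismatch space $\widehat r_m^\perp$ lies inside $\mathcal G_m$. No estimate on $w_m/w_k$ changes this.

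\textbf{The fallback misplaces the defect.} The rank-one defect of the one-defect model is the global constant $\one$. After the common-complement short it becomes $q_{k,m}\ketbra{u_m}{u_m}$ with $u_m=(1,\dots,1)$ (Lemma~\ref{lem:exact-qkm}), not a multiple of $\ketbra{\widehat r_m}{\widehat r_m}$ with $\widehat r_m\propto(\sqrt n)$. Passing to $\widehat r_m^\perp$ removes only the $\widehat r_m$-component of $u_m$. The mismatch compression is $\beta_0I-q_{k,m}\ketbra{u_m^\perp}{u_m^\perp}$ with $\|u_m^\perp\|^2=h_m=M_m-E_m^2/C_m$, which is nonzero as soon as the bin has two elements. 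So the identification you single out as the main obstacle (that the defect removed by the short is exactly the $\sqrt n$ line) is false. Fortunately it is also not needed.

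\textbf{The repair.} What produces $\log2$ is the strength of the defect, not its direction. By \eqref{eq:qM-half}, $q_{k,m}M_m\le\frac12$. Since $0\le h_m\le M_m$, this gives $\mu_m=\beta_0-q_{k,m}h_m\ge\beta_0-\frac12=\log2$, whatever the angle between $u_m$ and $\widehat r_m$.

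More directly still, Lemma~\ref{lem:onedefect-analytic} gives $\mathfrak b\succeq L_0=\beta_0I-\frac12\ketbra{\one}{\one}\succeq\log2\,I$ on all of $L^2(0,1)$. Since $X\succeq Y$ implies $\Short X\succeq\Short Y$, and $\Short(cI)=cI$, the common-complement short, the within-slot short and every further compression are all $\succeq\log2$.

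You need to state this step explicitly. Your claim about the compression of $\mathfrak b$ to the mean-zero space does not by itself bound a Schur complement over $S^c$, because a short can lie strictly below the corresponding compression. With this repair, order reversal gives $\widehat\chi_{k,m}\le\chi_m\le V_{k,m}/\log2$ as in the paper.

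\textbf{A smaller point in Step~1.} The paper's debit is the \emph{compression} to $\widehat r_m^\perp$ with forcing $\lambda_m^\perp$, which is what your displayed supremum computes. It is not obtained by shorting the $\widehat r_m$ direction. The lower model couples $\widehat r_m$ and $\widehat r_m^\perp$ through $c_m=-q_{k,m}a_mu_m^\perp$, and that coupling is charged separately via $\Gamma_m$. So ``shorting leaves only $P_{\widehat r^\perp}\lambda$'' is not literally correct.
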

\begin{proof}
This is Corollary~\ref{cor:exact-awgc-lift} summed over the orthogonal parent
cells.
\end{proof}

\begin{corollary}[Arithmetic mismatch reserve]
\label{cor:awgc-reserve}
For every $k\ge2$,
\[
 \mathfrak B_k-\sum_m\widehat\chi_{k,m}
 \ge \mathfrak B_k-\sigma_k>\frac12.
\]
\end{corollary}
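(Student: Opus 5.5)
The corollary is a two-line chaining of results already in place, and I would present it that way.

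The first step is to invoke Theorem~\ref{def:AWGC-op-lift}, which bounds the summed dual debits $\sum_m\widehat\chi_{k,m}$ by $(\log2)^{-1}\sum_mV_{k,m}$. Proposition~\ref{thm:awgc} then bounds the latter by $\widetilde\sigma_k\le\sigma_k$ for every $k\ge2$. Chaining these gives $\sum_m\widehat\chi_{k,m}\le\sigma_k$.

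Subtracting from $\mathfrak B_k$ reverses the inequality and yields the first claimed bound, $\mathfrak B_k-\sum_m\widehat\chi_{k,m}\ge\mathfrak B_k-\sigma_k$. The strict bound $\mathfrak B_k-\sigma_k>\tfrac12$ is exactly the Feshbach reserve \eqref{eq:fesh-reserve} of Lemma~\ref{lem:fesh}. That reserve rests on $kw_k<1$, i.e.\ $k\log(1+1/k)<1$, which holds for every $k\ge1$, so no restriction beyond $k\ge2$ enters.

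The only point needing care is bookkeeping rather than estimation. All four quantities $\mathfrak B_k$, $\sigma_k$, $\widetilde\sigma_k$ and $\widehat\chi_{k,m}$ must refer to the same fixed target $k$ and the same set of active non-divisorial branches $n\le k$, $n\nmid k$. In particular, the parent cells $m=\lfloor k/n\rfloor$ summed in the AWGC lift must partition exactly the branch set over which $\sigma_k$ is summed; this is what the definition of $\mathcal N_{k,m}$ guarantees. I would state this matching explicitly, and I do not expect any other obstacle, since the operator content sits entirely in Theorem~\ref{def:AWGC-op-lift}.
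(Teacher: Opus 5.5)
Your proposal is correct and is the chain the paper intends; the corollary is stated without a separate proof directly after Theorem~\ref{def:AWGC-op-lift}, and combining that theorem with Proposition~\ref{thm:awgc} gives $\sum_m\widehat\chi_{k,m}\le(\log2)^{-1}\sum_mV_{k,m}\le\widetilde\sigma_k\le\sigma_k$, after which the Feshbach reserve \eqref{eq:fesh-reserve} finishes. One small correction to your side remark: $\mathfrak B_k-\sigma_k>\tfrac12$ is equivalent to $\sigma_k<\log(1/w_k)$. This needs $\sigma_k<\sum_{n\le k,\,n\nmid k}\Lambda(n)/n<\log k$, which comes from $L_{k,n}>\log n\ge\Lambda(n)$ and the Mertens-type bound of Lemma~\ref{lem:fresh-Sk-log} (a direct check covers $2\le k\le6$). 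Only then does $kw_k<1$ close the argument, so $kw_k<1$ alone is not what the reserve rests on.
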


\begin{lemma}[Exact inherited-ground two-scale surplus]
\label{lem:parent-ground-surplus}
Let $n\nmid k$, $m=\lfloor k/n\rfloor$, and let the true source interval lie
in the parent cell $C_m$.  For the normalized parent ground
\[
 e_m(r)=\frac{\sqrt{m(m+1)}}{m+r},
\]
its squared mass on the true source interval is
\begin{equation}
 p_{k,n}=\frac{nm(m+1)}{k(k+1)}.
 \label{eq:parent-ground-mass}
\end{equation}
If $\varepsilon_{k,m}=w_k/w_m$, then the normalized aligned coefficient
$\theta_{k,m}$ satisfies
\begin{equation}
 \boxed{
 \theta_{k,m}^2
 =\frac{\Phi(w_m)}{\Phi(w_k)}<1,
 \qquad
 \Phi(u)=\frac{u^2e^{-u}}{(1-e^{-u})^2}.}
 \label{eq:parent-theta-Phi}
\end{equation}
Equivalently,
\[
 \varepsilon_{k,m}^2-\frac{p_{k,n}}n
 =\frac{\Phi(w_k)-\Phi(w_m)}{k(k+1)w_m^2}>0.
\]
\end{lemma}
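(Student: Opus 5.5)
The plan is to compute both sides in closed form and reduce every identity to one evaluation of $\Phi$ on the lattice $w_j=\log(1+1/j)$. Strictness will then follow from the monotonicity already recorded in Remark~\ref{rem:terminal-monotonicity}. I will first fix the interpretation of the aligned coefficient. I take $\theta_{k,m}$ to be the cosine between the parent ground restricted to the true source interval and that interval's own normalized constant line, measured relative to the normalized length $\varepsilon_{k,m}$ of \eqref{eq:true-source-epsilon}. This is the quantity entering the two-scale parent reduction, and it gives
\[
 \theta_{k,m}^2=\frac{p_{k,n}}{n\,\varepsilon_{k,m}^2}.
\]
With this reading, the ``equivalently'' clause is just $\varepsilon_{k,m}^2(1-\theta_{k,m}^2)=\varepsilon_{k,m}^2-p_{k,n}/n$. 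So once the two closed forms below are in hand, the two boxed statements are the same statement.

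\emph{Step 1: the mass \eqref{eq:parent-ground-mass}.} By \eqref{eq:true-source-interval}, $I_{k,n}=[k/n,(k+1)/n)$. In the parent residue coordinate $t=m+r$ this is $r\in[k/n-m,(k+1)/n-m)\subset[0,1)$. Integrating $|e_m(r)|^2=m(m+1)(m+r)^{-2}$ gives
\[
 p_{k,n}=m(m+1)\left[\frac{n}{k}-\frac{n}{k+1}\right]=\frac{nm(m+1)}{k(k+1)}.
\]

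\emph{Step 2: the $\Phi$-lattice identity.} Since $e^{-w_j}=j/(j+1)$, we have $1-e^{-w_j}=1/(j+1)$, and therefore
\[
 \Phi(w_j)=j(j+1)\,w_j^2 .
\]
Using $\varepsilon_{k,m}=w_k/w_m$, it follows that
\[
 \frac{p_{k,n}}{n\varepsilon_{k,m}^2}
 =\frac{m(m+1)w_m^2}{k(k+1)w_k^2}
 =\frac{\Phi(w_m)}{\Phi(w_k)} .
\]
This is \eqref{eq:parent-theta-Phi}. The same algebra gives
\[
 \varepsilon_{k,m}^2-\frac{p_{k,n}}{n}
 =\frac{k(k+1)w_k^2-m(m+1)w_m^2}{k(k+1)w_m^2}
 =\frac{\Phi(w_k)-\Phi(w_m)}{k(k+1)w_m^2}.
\]

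\emph{Step 3: strictness.} Because $n\nmid k$ forces $n\ge2$, we have $m=\lfloor k/n\rfloor<k$, and hence $w_k<w_m$. Remark~\ref{rem:terminal-monotonicity}, proved through Lemma~\ref{lem:terminal-scalar-closure} with no cut-off, says $\Phi$ is strictly decreasing on $(0,\infty)$. Therefore $\Phi(w_k)>\Phi(w_m)$, which gives both $\theta_{k,m}^2<1$ and positivity of the displayed difference. Alternatively, apply Corollary~\ref{cor:terminal-two-scale} with $x=1$, $\alpha=w_m$, $\beta=w_k$, after swapping the roles so that the smaller exponent is $w_k$.

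The main obstacle is not the calculus but the identification of $\theta_{k,m}$. The paper has not yet given an explicit formula for it, so the proof must first verify that the aligned coefficient coming from the fixed-target gauge (Lemma~\ref{lem:fixed-target-gauge}) and the CMC normalization of $I_{k,n}$ is exactly $p_{k,n}^{1/2}/(\sqrt n\,\varepsilon_{k,m})$. To settle this, I would write the source ground $e^s_n$ of Lemma~\ref{lem:source-target-ground} in the parent coordinate and pair it with $e_m$ restricted to $I_{k,n}$. I would then check that the branch factor $\Lambda(n)/\sqrt n$ and the Jacobian of $s=nt$ together produce the factor $1/n$, and that the rescaling to unit length produces $\varepsilon_{k,m}^{-2}$.
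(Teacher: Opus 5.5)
Your proposal is correct and is essentially the paper's own proof. You integrate $|e_m|^2$ over $I_{k,n}$ in the parent residue coordinate to get $p_{k,n}$, use $e^{-w_j}=j/(j+1)$ to get $\Phi(w_j)=j(j+1)w_j^2$, and deduce strictness from the strict decrease of $\Phi$ (Remark~\ref{rem:terminal-monotonicity}) together with $m<k$.

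Like the paper, you may simply take $\theta_{k,m}^2=p_{k,n}/(n\varepsilon_{k,m}^2)$ as the operative definition. This is exactly what the ``equivalently'' clause encodes, and it is how $\theta_{k,m}$ is used later, via $r_m=\theta_{k,m}r_m^0$ with $\kappa_{k,m}=p_{k,n}/n$. So the gauge-level verification you flag as the main obstacle is not needed. Your verbal gloss of $\theta_{k,m}$ as a cosine with the slot constant should be dropped: in the $dr$ metric that cosine squared is $k(k+1)w_k^2=\Phi(w_k)$, not $\theta_{k,m}^2$.
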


\begin{proof}
Writing the true source interval in the parent residue coordinate gives
\[
 \int_{I_{k,n}}\frac{dr}{m+r}=w_k,
 \qquad
 \int_{I_{k,n}}\frac{dr}{(m+r)^2}=\frac{n}{k(k+1)},
\]
which proves \eqref{eq:parent-ground-mass}.  Since
$e^{-w_j}=j/(j+1)$,
\[
 \Phi(w_j)=j(j+1)w_j^2.
\]
The strict decrease of $\Phi$, proved in
\cref{rem:terminal-monotonicity}, and $m<k$ yield
\eqref{eq:parent-theta-Phi}.
\end{proof}

\subsubsection{MASTER scalar budget and exact operator target}
\label{sec:master-restart}

Set
\[
 \mathfrak B_k:=\frac12+\log\frac1{w_k},
 \qquad w_k=\log\left(1+\frac1k\right),
\]
so that $\mathfrak B_k>\frac12+\log k$.  Define
\begin{equation}
 \mathfrak g_k^{\rm M}
 :=\frac12+\log k
 -\frac{439}{250\log2}\sum_mV_{k,m}.
 \label{eq:MASTER-gap}
\end{equation}
The factor $439/250=1+189/250$ is retained as an exact rational; the first
unit is the basic mismatch load and the additional $189/250$ is the mixed
inherited--mismatch coefficient established below.  The scalar estimate itself
is independent of the subsequent same-vector placement.

\begin{theorem}[MASTER double-load scalar budget]
\label{thm:MASTER-scalar}
For every integer $k\ge7$,
\begin{equation}
 \boxed{\mathfrak g_k^{\rm M}>0.}
 \tag{MASTER-SCALAR}\label{eq:MASTER-scalar}
\end{equation}
For $7\le k\le3\,594\,641$ an outward-rounded interval verifier gives
\[
 \min\mathfrak g_k^{\rm M}
 >1.97650540314533685844807647802,
\]
with the minimum at $k=33$.
\end{theorem}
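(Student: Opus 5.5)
The plan is to split the range at the same threshold $X_0=3\,594\,641$ used in Proposition~\ref{thm:awgc} and Lemma~\ref{lem:chebyshev-capacity}. The finite range is handled by a certified computation and the tail by an explicit analytic comparison in the variable $u=\log k$. The weaker inequality from Proposition~\ref{thm:awgc} alone, $(\log2)^{-1}\sum_mV_{k,m}\le\widetilde\sigma_k\le\sigma_k<\mathfrak B_k-\tfrac12$, does not suffice here. It controls the mismatch load only with coefficient $1$, whereas $\mathfrak g_k^{\rm M}$ carries the larger coefficient $439/250$. So the argument must use the actual size of $\sum_mV_{k,m}$, and not merely its domination by $\sigma_k$.

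\emph{Finite range $7\le k\le X_0$.} I would reuse the directed-interval sweep of Appendix~\ref{app:finite-certificates} that already enumerates, for each $k$, the parent bins $\mathcal N_{k,m}$ and the exact combinatorial sums entering $V_{k,m}$ in \eqref{eq:Vkm-awgc}. The steps are as follows.
\begin{enumerate}
\item Enclose $\Lambda(n)$ and $\log k$ by the atanh series with its geometric tail.
\item Enclose $\log 2$ from below, so that the coefficient $439/(250\log2)$ is bounded from above.
\item Form an outward-rounded lower endpoint for $\mathfrak g_k^{\rm M}$, enlarging every elementary operation by one step in the unfavourable direction.
\end{enumerate}
The output to be certified is the stated bound $\min_k\mathfrak g_k^{\rm M}>1.9765\ldots$, attained at $k=33$. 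This is consistent with the AWGC sweep, whose extremal index is also $k=33$. This step is routine but long. Its only delicate point is the direction of rounding for the subtracted term.

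\emph{Tail $k\ge X_0$.} I would invoke the upper bound \eqref{eq:awgc-tail-upper}, $\sum_mV_{k,m}<0.14736\log k+2.904$, established in the proof of Proposition~\ref{thm:awgc}. Its two inputs are the prime-bin weighted-variance estimate and the Schoenfeld--Dusart bounds. Since $439/(250\log2)<2.5334$, this gives
\[
 \mathfrak g_k^{\rm M}>\tfrac12+\log k-2.5334\,(0.14736\log k+2.904)>0.6266\,\log k-6.858 .
\]
The right-hand side is positive once $\log k>10.95$. Since $\log X_0>15$, this covers the whole tail with a margin exceeding $2.5$. Monotonicity in $u=\log k$ is immediate, because the slope $0.6266$ is positive.

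\emph{Main obstacle.} The main obstacle is the analytic tail bound \eqref{eq:awgc-tail-upper} itself, which the proof of Proposition~\ref{thm:awgc} quotes rather than derives. To make the argument self-contained, I would first try to prove it as follows.
\begin{enumerate}
\item Higher prime powers $n=p^\ell$ with $\ell\ge2$ contribute $O(1)$ to $\sum_n\Lambda(n)^2/n$, hence to $\sum_mV_{k,m}$, since $\sum_{p,\ell\ge2}\log^2p/p^\ell<\infty$.
\item For primes, $V_{k,m}$ is a variance of $\lambda_n=\log p/\sqrt p$ within the bin $\lfloor k/p\rfloor=m$, measured after projection away from the $\sqrt n$ direction. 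Since $\log p$ varies by $O(1/m)$ relatively inside a bin, the within-bin variance is small.
\item The only logarithmic growth comes from the bins with $m$ small, that is $p>k/M$ for a fixed $M$. There I would bound $V_{k,m}$ by $\sum_{p\in\text{bin}}\log^2p/p$ times the relative spread, and sum using the explicit $\vartheta$ bounds.
\end{enumerate}
Producing the specific constant $0.14736$ is where I expect most of the effort. However, any slope below $(1-0.0)/2.5334\approx0.394$, together with a correspondingly moderate intercept, would still close the tail at $X_0$.
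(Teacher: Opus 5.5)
Your proposal is correct and follows essentially the same route as the paper's proof. The range $7\le k\le 3\,594\,641$ is covered by the outward-rounded interval sweep of Appendix~\ref{app:finite-certificates}, and the tail is closed by inserting \eqref{eq:awgc-tail-upper} into $\mathfrak g_k^{\rm M}$, which yields a lower bound linear in $\log k$ with slope $\approx0.6267>0$ that is already about $2.60$ at the cutoff; like the paper, you take \eqref{eq:awgc-tail-upper} as an input from Proposition~\ref{thm:awgc}, where it is asserted rather than derived, so your remark about where the real effort lies is accurate but does not change the logical structure.
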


\begin{proof}
The finite assertion follows from the directed-interval sweep specified in Appendix~\ref{app:finite-certificates}; it evaluates the exact
prime-power grouping defining $V_{k,m}$ and uses an interval enclosure of
integer logarithms based on the positive $\operatorname{atanh}$ series.
For the tail use the already proved arithmetic estimate
\[
 \sum_mV_{k,m}<0.14736\log k+2.904.
\]
Hence, for $k\ge3\,594\,641$,
\begin{align*}
 \mathfrak g_k^{\rm M}
 &>\left(1-\frac{439}{250\log2}\,0.14736\right)\log k
   +\frac12-\frac{439}{250\log2}\,2.904\\
 &=0.626682229608\ldots\log k-6.856913716191\ldots .
\end{align*}
The coefficient of $\log k$ is positive, and at the cutoff the right-hand
side is $2.6028261405\ldots>0$.  It therefore increases thereafter.
\end{proof}

\subsubsection{Exact one-defect parent short and the rational mixed constant}
\label{sec:master-parent-exact}

The mixed coefficient is determined by the analytic one-defect inequality itself.  Put $L:=\log2$ and
$\beta_0=L+\tfrac12$.  Fix one parent cell and let $S$ be the union of its
$M=M_m$ true source slots after normalization to $(0,1)$.  Every slot has
length $\varepsilon=\varepsilon_{k,m}=w_k/w_m$, so
$s:=|S|=M\varepsilon\le1$.

\begin{lemma}[Exact short of the one-defect lower form]
\label{lem:exact-qkm}
Let
\[
 L_0:=\beta_0I-\frac12\ketbra{\one}{\one}
\]
on $L^2(0,1)$, where $\one$ is the normalized constant.  Short $L_0$ over $S^c$ and then short the within-slot subspace orthogonal
to the slot constants
$e_j=\varepsilon^{-1/2}\mathbf 1_{I_j}$, $j=1,\dots,M$.  (For $L_0$ that
second short equals compression, because every within-slot mean-zero vector
is orthogonal to the global constant.)  In the resulting orthonormal
slot-ground basis the matrix is
\begin{equation}
 \boxed{
 D^{(0)}_{k,m}=\beta_0I-q_{k,m}|u_m\rangle\langle u_m|,
 \qquad
 q_{k,m}=\frac{\beta_0\varepsilon_{k,m}}
 {2\log2+M_m\varepsilon_{k,m}}.}
 \label{eq:qkm-exact}
\end{equation}
Moreover
\begin{equation}
 \boxed{0\le q_{k,m}M_m\le\frac12.}
 \label{eq:qM-half}
\end{equation}
The exact common-cut source metric dominates this matrix.
\end{lemma}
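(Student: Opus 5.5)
The plan is a direct computation of two nested Schur complements for a rank-one perturbation of a scalar multiple of the identity, followed by a monotonicity argument for the domination claim. Write $\chi_S=\mathbf 1_S$ and $\chi_c=\mathbf 1_{S^c}$, so that $\|\chi_S\|^2=s=M\varepsilon$ and $\|\chi_c\|^2=1-s$. On $L^2(0,1)$ the normalized constant is $\one=\chi_S+\chi_c$, hence
\[
 L_0=\beta_0I-\tfrac12\ketbra{\chi_S+\chi_c}{\chi_S+\chi_c}.
\]
In the splitting $L^2(S)\oplus L^2(S^c)$ this has blocks $\beta_0I-\tfrac12\ketbra{\chi_S}{\chi_S}$ and $D:=\beta_0I-\tfrac12\ketbra{\chi_c}{\chi_c}$, with off-diagonal part $-\tfrac12\ketbra{\chi_S}{\chi_c}$.

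\emph{First short (over $S^c$).} Since $\beta_0-\tfrac12(1-s)\ge\beta_0-\tfrac12=\log2>0$, the block $D$ is invertible. By Sherman--Morrison, $D^{-1}\chi_c=\chi_c/(\beta_0-\tfrac12(1-s))$, so
\[
 \langle\chi_c,D^{-1}\chi_c\rangle=\frac{1-s}{\beta_0-\tfrac12(1-s)}.
\]
The Schur complement on $L^2(S)$ is therefore $\beta_0I-c\,\ketbra{\chi_S}{\chi_S}$ with
\[
 c=\frac12+\frac14\cdot\frac{1-s}{\beta_0-\tfrac12(1-s)}=\frac{\beta_0}{2\beta_0-1+s}=\frac{\beta_0}{2\log2+s},
\]
using $2\beta_0-1=2\log2$. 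Because the short is the variational infimum, this is exactly the shorted form.

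\emph{Second short (over the within-slot mean-zero subspace).} Every vector with mean zero on each slot is orthogonal to $\chi_S$. Hence the form $\beta_0I-c\,\ketbra{\chi_S}{\chi_S}$ is block diagonal with respect to $\operatorname{span}\{e_j\}\oplus\{\text{within-slot mean zero}\}$, and the short equals compression, as the statement notes. Since $\chi_S=\sqrt\varepsilon\sum_je_j$, put $u_m:=\sum_je_j$ (unnormalized, $\|u_m\|^2=M$). Then $\ketbra{\chi_S}{\chi_S}=\varepsilon\ketbra{u_m}{u_m}$, and the matrix is $\beta_0I-q_{k,m}\ketbra{u_m}{u_m}$ with $q_{k,m}=\beta_0\varepsilon/(2\log2+M\varepsilon)$, which is \eqref{eq:qkm-exact}.

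\emph{The bound \eqref{eq:qM-half}.} Nonnegativity is clear. Moreover
\[
 q_{k,m}M=\frac{\beta_0s}{2\log2+s}\le\frac12
 \iff 2\beta_0s\le2\log2+s
 \iff 2\log2\,s\le2\log2
 \iff s\le1,
\]
and $s\le1$ holds because the $M$ slots are disjoint subintervals of $(0,1)$.

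\emph{Domination.} The step needing the most care is the final sentence of the lemma. The proof of Lemma~\ref{lem:onedefect-analytic} already gives the pointwise form bound
\[
 \mathfrak b\succeq\log2\,I+\tfrac12Q_{\one}=\beta_0I-\tfrac12P_{\one}=L_0
\]
on the parent cell. Shorting over a fixed subspace is monotone in the form order: the infimum of a larger form is larger. Compression to the slot-ground span is also monotone. Applying the same two operations, the short over the common complement $S^c$ and then over the within-slot transverse directions, to the exact common-cut source metric therefore yields a matrix that dominates $D^{(0)}_{k,m}$. I would check carefully that the common-cut metric is indeed obtained from $\mathfrak b_m$ by exactly these two shorts, with the same complement used in \eqref{eq:CMC-true-source}, so that no extra source coupling is lost. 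Any additional nonnegative terms in the full metric only strengthen the bound.
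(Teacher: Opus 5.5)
Your proposal is correct and follows the paper's own proof essentially step for step. It uses the same $L^2(S)\oplus L^2(S^c)$ block form of $L_0$ with $\one=\mathbf 1_S+\mathbf 1_{S^c}$, the same rank-one Schur complement giving the coefficient $\beta_0/(2\log2+s)$, the same decoupling argument that turns the within-slot short into a compression, the same reduction of $q_{k,m}M_m\le\frac12$ to $s\le1$, and the same use of $\mathfrak b\succeq L_0$ together with monotonicity of shorts for the domination claim.
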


\begin{proof}
The analytic one-defect inequality gives
$\mathfrak b\succeq L_0$.  Write
$\one=p+q$ with $p=\mathbf1_S$ and $q=\mathbf1_{S^c}$; then
$\|p\|^2=s$ and $\|q\|^2=1-s$.  Relative to
$L^2(S)\oplus L^2(S^c)$,
\[
 L_0=
 \begin{pmatrix}
 \beta_0I-\frac12|p\rangle\langle p|&-\frac12|p\rangle\langle q|\\
 -\frac12|q\rangle\langle p|&\beta_0I-\frac12|q\rangle\langle q|
 \end{pmatrix}.
\]
The lower-right block is positive because
$\beta_0-\tfrac12(1-s)=L+s/2>0$.  Its inverse sends $q$ to
$q/(L+s/2)$.  Hence its Schur complement on $L^2(S)$ is
\[
 \beta_0I-
 \left(\frac12+\frac{1-s}{4(L+s/2)}\right)|p\rangle\langle p|
 =\beta_0I-\frac{\beta_0}{2L+s}|p\rangle\langle p|.
\]
Since $\langle e_j,p\rangle=\sqrt\varepsilon$ and the orthogonal
within-slot fluctuations decouple from the rank-one defect, their Schur short
is just the displayed compression.  This gives \eqref{eq:qkm-exact}.  Finally
\[
 q_{k,m}M=\frac{\beta_0s}{2L+s}\le\frac12
\]
is equivalent to $s\le1$.  Monotonicity of shorted operators transfers the
lower-form order from $L_0$ to the exact common-cut source metric.
\end{proof}

\begin{corollary}[Uniform mismatch gap and operator AWGC lift]
\label{cor:exact-awgc-lift}
With $h_m=M_m-E_m^2/C_m$ and
$\mu_m=\beta_0-q_{k,m}h_m$,
\begin{equation}
 \boxed{\mu_m\ge\log2.}
 \label{eq:mu-log2}
\end{equation}
Consequently, if $\widehat\chi_{k,m}$ denotes the exact mismatch Schur debit,
\begin{equation}
 \boxed{
 \widehat\chi_{k,m}\le\chi_m
 \le\frac{V_{k,m}}{\log2}.}
 \label{eq:awgc-op-proved}
\end{equation}
Thus the operator lift \eqref{eq:AWGC-op} holds.
\end{corollary}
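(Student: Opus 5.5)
The plan is to reduce everything to the explicit slot-ground matrix $D^{(0)}_{k,m}=\beta_0I-q_{k,m}\ketbra{u_m}{u_m}$ of Lemma~\ref{lem:exact-qkm}, compressed to the orthogonal complement of the inherited parent-ground direction $\hat r$. I read the data of the statement as follows. In the slot-ground basis indexed by $n\in\mathcal N_{k,m}$, $\hat r$ has components proportional to $\sqrt n$, and $u_m$ is the (unnormalized) all-ones slot vector, so $\|u_m\|^2=M_m$, $E_m=\sum_n\sqrt n=\langle u_m,\sqrt{n}\rangle$ and $C_m=\sum_n n=\|\sqrt n\|^2$. With this reading,
\[
 h_m=M_m-\frac{E_m^2}{C_m}=\|P_{\hat r^\perp}u_m\|^2 ,
\]
and in particular $0\le h_m\le M_m$. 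The first step is simply to fix this identification. If the paper's $u_m$ is normalized differently, the same step adjusts $q_{k,m}$ so that $q_{k,m}\|u_m\|^2=q_{k,m}M_m$, which is the quantity controlled by \eqref{eq:qM-half}.

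\emph{The gap \eqref{eq:mu-log2}.} Compress $D^{(0)}_{k,m}$ to $\hat r^\perp$. This gives $\beta_0P_{\hat r^\perp}-q_{k,m}\ketbra{P_{\hat r^\perp}u_m}{P_{\hat r^\perp}u_m}$, which is a rank-one negative perturbation of $\beta_0$ on $\hat r^\perp$. Its smallest eigenvalue is therefore exactly $\beta_0-q_{k,m}h_m=\mu_m$. Using $h_m\le M_m$ and \eqref{eq:qM-half},
\[
 \mu_m\ge\beta_0-q_{k,m}M_m\ge\log2+\tfrac12-\tfrac12=\log2 .
\]
This is \eqref{eq:mu-log2}.

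\emph{The debit bound \eqref{eq:awgc-op-proved}.} I would define the mismatch debit variationally:
\[
 \chi_m=\sup_{f\perp\hat r}\frac{|\langle\lambda,f\rangle|^2}{\langle f,D^{(0)}_{k,m}f\rangle}
 =\bigl\langle P_{\hat r^\perp}\lambda,(P_{\hat r^\perp}D^{(0)}_{k,m}P_{\hat r^\perp})^{-1}P_{\hat r^\perp}\lambda\bigr\rangle ,
\]
with $\lambda_n=\Lambda(n)/\sqrt n$. The operator lower bound $\mu_m$ then gives $\chi_m\le\|P_{\hat r^\perp}\lambda\|^2/\mu_m=V_{k,m}/\mu_m\le V_{k,m}/\log2$, using the projection identity stated after \eqref{eq:Vkm-awgc}. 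For the first inequality $\widehat\chi_{k,m}\le\chi_m$, Lemma~\ref{lem:exact-qkm} gives that the exact common-cut source metric dominates $D^{(0)}_{k,m}$. Shorting and compression preserve this order, and the supremum above is antitone in the metric, so the exact debit computed with the true metric is at most $\chi_m$. Summing over the orthogonal parent cells then gives \eqref{eq:AWGC-op}.

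The main obstacle is the first inequality $\widehat\chi_{k,m}\le\chi_m$. One has to check that the exact mismatch Schur debit really is the supremum over $f\perp\hat r$ of $|\langle\lambda,f\rangle|^2$ divided by the common-cut metric, with the same forcing $\lambda$. In particular, the inherited-ground direction must be removed by orthogonal restriction rather than by a further Schur short that could couple $\hat r$ to $\hat r^\perp$. If it is instead a short, I would first try to show that the short over $\mathbb C\hat r$ still dominates the compression of $D^{(0)}$ on $\hat r^\perp$ after shorting. That would keep the order-reversal argument valid, and the $\log 2$ gap would again transfer.
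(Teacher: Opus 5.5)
Your proposal is correct and is essentially the paper's proof. You identify $\mu_m$ as the bottom of the mismatch compression of $D^{(0)}_{k,m}$, combine $h_m\le M_m$ with \eqref{eq:qM-half} to get $\mu_m\ge\log2$, use $\|P_{\hat r^\perp}\lambda\|^2=V_{k,m}$, and obtain $\widehat\chi_{k,m}\le\chi_m$ by order reversal from the domination in Lemma~\ref{lem:exact-qkm}. The definitional worry you flag is settled by the paper's convention: the mismatch debit is taken with the compression $D^{(0)}_{m,\perp}$ to $\widehat r_m^\perp$, and the inherited line is shorted only afterwards. In any case $D^{(0)}_{k,m}\succeq(\beta_0-q_{k,m}M_m)I\succeq\log2\,I$ on the whole slot-ground space, so $\widehat\chi_{k,m}\le V_{k,m}/\log2$ would survive a short as well.
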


\begin{proof}
Because $0\le h_m\le M_m$, \eqref{eq:qM-half} gives
$\mu_m\ge\beta_0-\tfrac12=L$.  On the mismatch space the inverse of the
lower model is therefore bounded by $L^{-1}I$.  The mismatch forcing has
squared Euclidean norm $V_{k,m}$.  Finally, the exact source metric dominates
the lower model, so inversion reverses order on the positive mismatch block.
\end{proof}

\subsubsection{A simultaneous no-double-spend source cut}
\label{sec:master-simultaneous-cut}

The point of the following refinement is to keep the slot-ground and
within-slot transverse budgets inside one and the same common-complement
short.  Put
\[
 d_0:=0.422785.
\]
For $D\ge1$ set
\[
 c_D:=\log\frac{D+1}{D},\qquad
 \delta_D:=c_D-\frac1{D+1}>0.
\]

\begin{lemma}[Universal pair-deficit capacity]
\label{lem:pair-deficit-capacity}
One has
\begin{equation}
 \boxed{\sum_{j\ge1}\delta_j<d_0.}
 \label{eq:delta-capacity}
\end{equation}
Moreover $D\mapsto\delta_D$ is strictly decreasing on $[1,\infty)$.
\end{lemma}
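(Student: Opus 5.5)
The plan is to compute the sum in closed form by telescoping; no estimate is needed. For each integer $N\ge1$,
\[
 \sum_{j=1}^N\delta_j
 =\sum_{j=1}^N\log\frac{j+1}{j}-\sum_{j=1}^N\frac1{j+1}
 =\log(N+1)-\bigl(H_{N+1}-1\bigr),
\]
where $H_n=\sum_{i\le n}1/i$ is the harmonic number. The logarithms telescope exactly.

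Next I check that every term is positive. With $x=1/D\in(0,1]$ we have $\delta_D=\log(1+x)-x/(1+x)$. This is positive because $\phi(x)=\log(1+x)-x/(1+x)$ satisfies $\phi(0)=0$ and $\phi'(x)=x/(1+x)^2>0$. The partial sums are therefore strictly increasing. Their limit is $1-\lim_N(H_{N+1}-\log(N+1))=1-\gamma$, using the classical definition of Euler's constant. Hence
\[
 \sum_{j\ge1}\delta_j=1-\gamma=0.42278433509\ldots<0.422785=d_0 .
\]
The only numerical input is the enclosure $\gamma>0.577215$, which follows from the known value $\gamma=0.5772156649\ldots$. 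If desired, it can be certified by the standard Euler--Maclaurin bound $H_n-\log n-\frac1{2n}<\gamma$ at a small $n$, or by any interval evaluation. The margin is about $6.6\times10^{-7}$, so a crude enclosure suffices.

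For the monotonicity claim I differentiate in $D$:
\[
 \frac{d}{dD}\delta_D=\frac1{D+1}-\frac1D+\frac1{(D+1)^2}
 =\frac{D(D+1)-(D+1)^2+D}{D(D+1)^2}=-\frac1{D(D+1)^2}<0
 \quad(D\ge1).
\]
So $\delta_D$ is strictly decreasing on $[1,\infty)$, and positivity was shown above.

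There is no real obstacle. The one point needing care is to state the numerical comparison $1-\gamma<d_0$ with a rigorous enclosure of $\gamma$ rather than a rounded decimal, since $d_0$ is chosen only slightly above $1-\gamma$.
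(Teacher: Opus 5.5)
Your argument is correct and is essentially the paper's: the same telescoping identity $\sum_{j\le N}\delta_j=1+\log(N+1)-H_{N+1}$ and the same derivative $-1/(D(D+1)^2)$. The paper finishes with the tail bound $\sum_{j>N}\delta_j<\frac1{2(N+1)}$ (from $\delta_j<\frac1{2j(j+1)}$) and one outward-rounded evaluation at $N=1000$, which is exactly your certificate $\gamma>H_n-\log n-\frac1{2n}$ at $n=1001$. One correction: that first-order bound falls short of $\gamma$ by about $\frac1{12n^2}$, so against the $6.6\times10^{-7}$ margin it only works once $n\ge355$, not at a small $n$, and the enclosure of $\gamma$ needed is six-digit rather than crude.
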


\begin{proof}
Differentiation gives
\[
 \delta_D'=-\frac1{D(D+1)^2}<0.
\]
For integer $j$ the partial sums telescope:
\[
 \sum_{j=1}^{N}\delta_j
 =1+\log(N+1)-H_{N+1}.
\]
Also, with $x=1/j$,
\[
 \log(1+x)-\frac{x}{1+x}
 <\frac{x^2}{2(1+x)},
\]
because the difference between the right and left sides vanishes at $x=0$
and has derivative $x^2/[2(1+x)^2]>0$.  Therefore
\[
 \sum_{j>N}\delta_j<\frac1{2(N+1)}.
\]
At $N=1000$, outward-rounded evaluation of the single logarithm gives
\[
 1+\log1001-H_{1001}+\frac1{2002}
 <0.422784418265375637<d_0.
\]
\end{proof}

\begin{lemma}[Exact baseline/residual split of the one-cell form]
\label{lem:b-L0-residual}
Let
\[
 \mathfrak l_0[u]
 :=\log2\,\|u\|_2^2
   +\frac14\iint_{(0,1)^2}|u(x)-u(y)|^2\,dx\,dy.
\]
Then $\mathfrak l_0$ is exactly the quadratic form of
\[
 L_0=(\log2+\tfrac12)I-\frac12\ketbra{\one}{\one},
\]
and
\begin{equation}
 \boxed{\mathfrak b=\mathfrak l_0+\mathfrak r,\qquad \mathfrak r\ge0,}
 \label{eq:b-L0-residual}
\end{equation}
where
\begin{align}
 \mathfrak r[u]
 ={}&\frac14\iint_{(0,1)^2}
 \left(\frac1{|x-y|}-1\right)|u(x)-u(y)|^2\,dx\,dy\notag\\
 &+\int_0^1\left[-\frac12\log(x(1-x))-\log2\right]|u(x)|^2\,dx.
 \label{eq:residual-form}
\end{align}
\end{lemma}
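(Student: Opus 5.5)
The plan is a direct computation in three steps, all on the smooth core $C_c^\infty(0,1)$ first and then on the form domain of $\mathfrak b$ by closure. The two inequalities needed for $\mathfrak r\ge0$ are the same elementary bounds already used in Lemma~\ref{lem:onedefect-analytic}.

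\emph{Step 1: identify $\mathfrak l_0$ with $L_0$.} I would expand the square in the double integral:
\[
 \iint_{(0,1)^2}|u(x)-u(y)|^2\,dx\,dy
 =2\|u\|_2^2-2\Bigl|\int_0^1u\Bigr|^2 .
\]
Since $\one$ is the normalized constant on $(0,1)$, $\int_0^1u=\langle\one,u\rangle$. Hence
\[
 \mathfrak l_0[u]=\Bigl(\log2+\tfrac12\Bigr)\|u\|_2^2-\tfrac12|\langle\one,u\rangle|^2
 =\langle u,L_0u\rangle ,
\]
with $L_0=(\log2+\tfrac12)I-\tfrac12\ketbra{\one}{\one}$. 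This is the same identity used in the proof of Lemma~\ref{lem:onedefect-analytic} for mean-zero vectors, now without the mean-zero restriction. This proves the first claim.

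\emph{Step 2: the exact split.} Starting from the definition \eqref{eq:b-one-cell-explicit} of $\mathfrak b$, I would rewrite the two pieces as follows:
\[
 \frac1{|x-y|}=1+\Bigl(\frac1{|x-y|}-1\Bigr),
 \qquad
 -\tfrac12\log(x(1-x))=\log2+\Bigl[-\tfrac12\log(x(1-x))-\log2\Bigr].
\]
Substituting these into $\mathfrak b[u]$ separates off exactly $\mathfrak l_0[u]$. What remains is precisely \eqref{eq:residual-form}, so $\mathfrak b=\mathfrak l_0+\mathfrak r$ holds identically on the core. No rearrangement issue arises there, because every integrand is non-negative or absolutely integrable for smooth compactly supported $u$.

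\emph{Step 3: positivity of $\mathfrak r$.} On $(0,1)^2$ one has $|x-y|\le1$, so the kernel $1/|x-y|-1$ is pointwise non-negative. Likewise $x(1-x)\le\tfrac14$ gives $-\tfrac12\log(x(1-x))\ge\log2$, so the bracket in the potential term is non-negative. Both integrands in \eqref{eq:residual-form} are therefore non-negative, and $\mathfrak r\ge0$.

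\emph{Passage to the closed form.} The only point needing care is extending the split from $C_c^\infty(0,1)$ to the closed form domain of $\mathfrak b$. I expect no real obstacle here. $\mathfrak l_0$ is bounded on $L^2$, so $\mathfrak r=\mathfrak b-\mathfrak l_0$ is a closed non-negative form on the same domain. By Fatou's lemma along core approximants, the pointwise integral expression \eqref{eq:residual-form} agrees with this closed form there; outside that domain, $\mathfrak r$ is taken to be $+\infty$, so the inequality $\mathfrak r\ge0$ continues to hold.
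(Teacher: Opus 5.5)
Your proof is correct and is essentially the paper's argument: expand the square to get $\frac14\iint|u(x)-u(y)|^2\,dx\,dy=\frac12\|u\|_2^2-\frac12|\langle u,\one\rangle|^2$, subtract $\mathfrak l_0$ from $\mathfrak b$, and read off $\mathfrak r\ge0$ from $|x-y|\le1$ and $x(1-x)\le\frac14$. Your extra paragraph on the closed form domain is not in the paper, which works only with the explicit integrals; it is fine provided you apply the Fatou step to the differences $u_n-u_m$ and use the triangle inequality for $\sqrt{\mathfrak r}$, because Fatou applied to $u_n$ alone gives only lower semicontinuity, not equality.
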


\begin{proof}
The identity
\[
 \frac14\iint|u(x)-u(y)|^2\,dx\,dy
 =\frac12\|u\|_2^2-\frac12|\langle u,\one\rangle|^2
\]
gives the formula for $L_0$.  Subtracting it from
\eqref{eq:b-one-cell-explicit} gives \eqref{eq:residual-form}.  Both displayed
coefficients are non-negative because $|x-y|\le1$ and $x(1-x)\le1/4$.
\end{proof}

\begin{theorem}[Residual coherent capacity with free source means]
\label{thm:residual-CMC}
Let $I_\nu=(a_\nu,a_\nu+\varepsilon)\Subset(0,1)$ be pairwise disjoint
intervals of common length $\varepsilon$, ordered from left to right.  For an
arbitrary common extension $u$, put
\[
 f_\nu=u|_{I_\nu},\qquad
 \bar f_\nu=\varepsilon^{-1}\int_{I_\nu}f_\nu,
 \qquad f_{\nu,0}=f_\nu-\bar f_\nu.
\]
Then
\begin{equation}
 \boxed{
 \mathfrak r[u]\ge
 \sum_\nu\left(\log\frac1{2\varepsilon}-\Delta_\nu\right)
 \|f_{\nu,0}\|_2^2,}
 \label{eq:residual-CMC}
\end{equation}
where
\[
 \Delta_\nu:=\frac12\sum_{\mu\ne\nu}
 \delta_{D_{\nu\mu}},\qquad
 D_{\nu\mu}:=\frac{|a_\mu-a_\nu|}{\varepsilon}\ge1.
\]
In particular, by Lemma~\ref{lem:pair-deficit-capacity},
\begin{equation}
 \boxed{
 \mathfrak r[u]\ge
 \left(\log\frac1{2\varepsilon}-d_0\right)
 \sum_\nu\|f_{\nu,0}\|_2^2.}
 \label{eq:residual-CMC-uniform}
\end{equation}
The source means are completely free in this estimate.
\end{theorem}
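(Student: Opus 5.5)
The plan is to rerun the proof of Theorem~\ref{thm:CMC} with $\mathfrak b$ replaced by the residual form $\mathfrak r$ of Lemma~\ref{lem:b-L0-residual}, changing only two things. First, the source means are no longer assumed to vanish. Second, the mean-zero pair bound of Lemma~\ref{lem:pair-capacity} is replaced by the mean-free certificate \eqref{eq:PMC-two}; this swap is where the deficits $\delta_D$ come from.

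\textbf{Decomposition.} Both integrands in \eqref{eq:residual-form} are pointwise non-negative, so any part of the domain can be discarded. I would split $(0,1)^2$ into source--source diagonal blocks $I_\nu^2$, source--free strips $I_\nu\times((0,1)\setminus S)$, off-diagonal source pairs $I_\nu\times I_\mu$, and the free--free region. The free--free region is dropped. The potential term is kept only on $S$.

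\textbf{Diagonal blocks and free strips.} On $I_\nu^2$ one has $1/|x-y|-1\ge 1/\varepsilon-1$ and $\iint_{I_\nu^2}|f_\nu(x)-f_\nu(y)|^2=2\varepsilon\|f_{\nu,0}\|^2$, which gives at least $\tfrac12(1-\varepsilon)\|f_{\nu,0}\|^2$. For a free point $y$ the identity
\[
 \int_{I_\nu}|f_\nu(x)-u(y)|^2dx=\|f_{\nu,0}\|^2+\varepsilon|\bar f_\nu-u(y)|^2\ge\|f_{\nu,0}\|^2
\]
holds with no mean-zero hypothesis. This is what makes the estimate independent of the source means. Using $|x-y|\le a_\nu+\varepsilon-y$ (respectively $y-a_\nu$), the free strips contribute at least
\[
 \tfrac12\|f_{\nu,0}\|^2\int\Bigl(\tfrac1{\mathrm{dist}}-1\Bigr)\,dy .
\]

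\textbf{Source pairs.} For a pair $I_\nu,I_\mu$, rescaling by $\varepsilon$ and applying \eqref{eq:PMC-two} with the mean-difference term dropped gives
\[
 \tfrac12 E_{D_{\nu\mu}}\ge \frac{\|f_{\nu,0}\|^2+\|f_{\mu,0}\|^2}{2(D_{\nu\mu}+1)}
\]
for the $1/|x-y|$ part. The full one-source kernel charge over the missing slot is $\tfrac12 c_{D_{\nu\mu}}$, so each pair falls short of it by exactly $\tfrac12\delta_{D_{\nu\mu}}$ per endpoint. Summing over $\mu\ne\nu$, this shortfall is $\Delta_\nu\|f_{\nu,0}\|^2$. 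The accompanying $-1$ part is handled exactly as in the free strips, since $\iint_{I_\nu\times I_\mu}|f_\nu-f_\mu|^2\ge\varepsilon(\|f_{\nu,0}\|^2+\|f_{\mu,0}\|^2)$.

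\textbf{Reconstruction.} Adding the free pieces and the pair pieces reconstructs the complement charge \eqref{eq:CMC-complement} minus $\Delta_\nu$, and minus $\tfrac12$ times the total complement length $1-\varepsilon$ coming from the $-1$ part of the kernel. The potential on $I_\nu$ is bounded below by $-\tfrac12\log((a_\nu+\varepsilon)(1-a_\nu))-\log2$ via $x(1-x)\le(a_\nu+\varepsilon)(1-a_\nu)$. The $a_\nu$-dependence cancels as in Theorem~\ref{thm:CMC}, leaving per slot
\[
 \tfrac12(1-\varepsilon)+\log\tfrac1\varepsilon-\log2-\tfrac12(1-\varepsilon)-\Delta_\nu
 =\log\tfrac1{2\varepsilon}-\Delta_\nu .
\]
This yields \eqref{eq:residual-CMC}.

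\textbf{Uniform bound.} For \eqref{eq:residual-CMC-uniform}: the slots are disjoint with common length, so the distances $D_{\nu\mu}$ on each side of $\nu$ are at least $1,2,3,\dots$. Since $\delta_D$ is decreasing (Lemma~\ref{lem:pair-deficit-capacity}), each side contributes at most $\sum_j\delta_j$, and the two halves give $\Delta_\nu<d_0$.

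\textbf{Main obstacle.} The delicate step is the exact bookkeeping of the $-1$ kernel correction. It must be charged once against the diagonal block and once against the complement, and not twice. I would first verify the constant on a single slot ($M=1$), where the computation is explicit, before summing over pairs.
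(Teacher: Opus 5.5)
Your overall architecture matches the paper's proof: internal block, free strips via the mean-free identity, pair blocks charged against the one-source complement charge, the potential on $I_\nu$, then monotonicity of $\delta_D$. However, the pair-block step fails as written, and it fails exactly where the source means are free.

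You split the residual kernel into its $1/|x-y|$ part and its $-1$ part. You apply \eqref{eq:PMC-two} to the first part with the mean-difference term dropped. You then control the second part by citing $\iint_{I_\nu\times I_\mu}|f_\nu-f_\mu|^2\ge\varepsilon(\|f_{\nu,0}\|^2+\|f_{\mu,0}\|^2)$. That inequality points the wrong way. The $-1$ part enters with coefficient $-\tfrac12$, so you need an upper bound, and in fact
\[
 -\frac12\iint_{I_\nu\times I_\mu}|f_\nu(x)-f_\mu(y)|^2\,dx\,dy
 =-\frac{\varepsilon}{2}\bigl(\|f_{\nu,0}\|^2+\|f_{\mu,0}\|^2\bigr)
 -\frac{\varepsilon^2}{2}|\bar f_\nu-\bar f_\mu|^2 .
\]
The last term is unbounded below as the means vary with the fluctuations fixed. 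In the free strips this never arose, because there you used the combined weight $1/\mathrm{dist}-1\ge0$ against a lower bound. Once the kernel is split, the pair blocks are not ``exactly as in the free strips''.

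The repair uses precisely the term you discarded, plus a geometric fact you never invoke. Keeping the mean term of \eqref{eq:PMC-two} contributes $+\frac{\varepsilon}{2(D+1)}|\bar f_\nu-\bar f_\mu|^2$. The net mean coefficient is then $\frac{\varepsilon}{2}\bigl(\frac1{D+1}-\varepsilon\bigr)$, which is non-negative because, for $I_\mu$ to the right of $I_\nu$, $\varepsilon(D_{\nu\mu}+1)=a_\mu+\varepsilon-a_\nu<1$ (both slots lie inside $(0,1)$).

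The paper does this in one stroke. It bounds the combined kernel pointwise, $1/(y-x)-1\ge 1/(\varepsilon(D+1))-1\ge0$, and only then drops the non-negative mean-difference term. This gives $\frac12\bigl(\frac1{D+1}-\varepsilon\bigr)\bigl(\|f_{\nu,0}\|^2+\|f_{\mu,0}\|^2\bigr)$, hence the loss $\tfrac12\delta_D$ per endpoint. So the delicate point is the sign of the $-1$ correction on pair blocks, not the risk of double-charging it that your last paragraph flags.

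The potential step has a smaller omission of the same kind. The constant $c_\nu=-\frac12\log\bigl((a_\nu+\varepsilon)(1-a_\nu)\bigr)-\log2$ can be negative; for a centred slot it equals $-\log(1+\varepsilon)$. In that case $c_\nu\|f_\nu\|^2$ is not $\ge c_\nu\|f_{\nu,0}\|^2$, since $\|f_\nu\|^2$ contains the free mean. You must use non-negativity of the residual potential when $c_\nu<0$, and $\|f_\nu\|^2\ge\|f_{\nu,0}\|^2$ only when $c_\nu\ge0$, as the paper does. With these two corrections your constant $\log\frac1{2\varepsilon}-\Delta_\nu$ and the uniform bound go through.
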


\begin{proof}
Fix one source $I_\nu$.  On $I_\nu^2$,
$|x-y|\le\varepsilon$, while the constant part of $f_\nu$ cancels from the
difference.  Hence the residual internal energy is at least
\[
 \frac12(1-\varepsilon)\|f_{\nu,0}\|_2^2.
\]
For a free point $y<a_\nu$,
\[
 \int_{I_\nu}|f_\nu(x)-u(y)|^2\,dx
 =\|f_{\nu,0}\|_2^2
  +\varepsilon|\bar f_\nu-u(y)|^2
 \ge\|f_{\nu,0}\|_2^2,
\]
and therefore the residual source--free charge is bounded below using
$1/(a_\nu+\varepsilon-y)-1$; the right side is analogous.

If $I_\mu$ lies to the right and
$D=D_{\nu\mu}$, then
$y-x\le\varepsilon(D+1)\le1$.  Thus its residual pair contribution obeys
\begin{align*}
 &\frac12\iint_{I_\nu\times I_\mu}
 \left(\frac1{y-x}-1\right)|f_\nu(x)-f_\mu(y)|^2\,dx\,dy\\
 &\qquad\ge
 \frac12\left(\frac1{D+1}-\varepsilon\right)
 \bigl(\|f_{\nu,0}\|_2^2+\|f_{\mu,0}\|_2^2\bigr),
\end{align*}
where the omitted mean-difference term is non-negative.  The residual
one-source charge that would have occupied the same slot is
\[
 \frac12\left(c_D-\varepsilon\right)\|f_{\nu,0}\|_2^2.
\]
Consequently the only loss relative to the complete residual complement
charge is $\delta_D/2$ for each incident pair.

Finally the residual endpoint potential on $I_\nu$ is bounded below by
\[
 \left[-\frac12\log((a_\nu+\varepsilon)(1-a_\nu))-\log2\right]
 \|f_{\nu,0}\|_2^2;
\]
when the bracket is negative this follows simply from non-negativity of the
residual potential, and when it is positive it follows from
$\|f_\nu\|^2\ge\|f_{\nu,0}\|^2$.
Adding the internal term, the full left/right residual complement charge and
this potential cancels the position $a_\nu$ and gives
$\log(1/(2\varepsilon))\|f_{\nu,0}\|^2$.  Subtracting the pair deficits gives
\eqref{eq:residual-CMC}.

For ordered disjoint equal slots,
$D_{\nu\mu}\ge|\nu-\mu|$.  Since $\delta_D$ decreases,
\[
 \Delta_\nu
 \le\frac12\left(\sum_{j\ge1}\delta_j+\sum_{j\ge1}\delta_j\right)
 <d_0,
\]
which proves \eqref{eq:residual-CMC-uniform}.
\end{proof}

\begin{theorem}[Simultaneous ground/transverse common-cut lower form]
\label{thm:MASTER-common-cut}
Fix an active parent $m$ and its true source slots.  Let
$\mathcal G_m$ be their slot-ground space and $\mathcal Z_m$ the orthogonal
sum of the within-slot mean-zero spaces in the fixed-target gauge.  With
$D^{(0)}_{k,m}$ from Lemma~\ref{lem:exact-qkm}, put
\[
 \widehat L_{k,n}:=L_{k,n}-d_0.
\]
Then the \emph{single} common-complement source short satisfies
\begin{equation}
 \boxed{
 \mathfrak C^{\rm short}_{k,m}
 \succeq
 D^{(0)}_{k,m}\big|_{\mathcal G_m}
 \ \oplus\!
 \bigoplus_{n\in\mathcal N_{k,m}}
 \widehat L_{k,n}Q_{k,n}.}
 \tag{MASTER-CUT$_{k,m}$}\label{eq:MASTER-common-cut}
\end{equation}
Thus the parent ground budget and the source-transverse budget are available
simultaneously and cannot be double spent.
\end{theorem}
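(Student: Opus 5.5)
The plan is to split the one-cell form as in Lemma~\ref{lem:b-L0-residual}, $\mathfrak b=\mathfrak l_0+\mathfrak r$, and to bound the single common-complement short of each summand separately. The two bounds are then recombined using superadditivity of shorting. For any $y$ supported on the source union $S$,
\[
 (\Short_{S^c}\mathfrak b)[y]
 =\inf_{z}\bigl(\mathfrak l_0[y\oplus z]+\mathfrak r[y\oplus z]\bigr)
 \ge(\Short_{S^c}\mathfrak l_0)[y]+(\Short_{S^c}\mathfrak r)[y],
\]
since the infimum of a sum dominates the sum of the infima. The key point is that this uses \emph{one} complement variable $z$ for both budgets, so nothing is double spent. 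Both pieces are transported to the true source slots of the parent cell $C_m$ by the fixed-target gauge of Lemma~\ref{lem:fixed-target-gauge}. Those gauges are unitary and preserve the slot-ground/mean-zero splitting, so by Lemma~\ref{lem:split-unitary-short} the estimates may be proved on the normalized model $(0,1)$ with slots of length $\varepsilon=\varepsilon_{k,m}$.

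For the baseline piece I will redo the Schur computation of Lemma~\ref{lem:exact-qkm}, but keep the within-slot mean-zero space $\mathcal Z_m$ rather than discarding it. The short of $L_0$ over $S^c$ is $\beta_0I-\frac{\beta_0}{2\log2+s}\ketbra{p}{p}$ on $L^2(S)$, where $p=\mathbf 1_S$. Since $p$ is constant on each slot, $p\perp\mathcal Z_m$. Hence this operator is exactly block diagonal: it equals $D^{(0)}_{k,m}$ on $\mathcal G_m$ and $\beta_0=\log2+\tfrac12$ times the identity on each $Q_{k,n}$, with no $\mathcal G_m$--$\mathcal Z_m$ coupling.

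For the residual piece I use Theorem~\ref{thm:residual-CMC} in its uniform form \eqref{eq:residual-CMC-uniform}. It holds for every common extension $u$, with the source means completely free. Taking the infimum over $u|_{S^c}$ therefore gives
\[
 \Short_{S^c}\mathfrak r\succeq 0\big|_{\mathcal G_m}\oplus\bigoplus_n\Bigl(\log\frac1{2\varepsilon}-d_0\Bigr)Q_{k,n}.
\]
Adding the two bounds, the transverse coefficient becomes
\[
 \log2+\tfrac12+\log\tfrac1{2\varepsilon}-d_0
 =\tfrac12+\log\tfrac{w_m}{w_k}-d_0
 =L_{k,n}-d_0=\widehat L_{k,n},
\]
using $\varepsilon_{k,m}=w_k/w_m$ from \eqref{eq:true-source-epsilon}. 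The ground coefficient is $D^{(0)}_{k,m}$, which gives \eqref{eq:MASTER-common-cut}.

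The step I expect to be the main obstacle is the identification of the \emph{exact} common-cut source metric $\mathfrak C^{\rm short}_{k,m}$ with the short of the transported one-cell form $\mathfrak b_m$. This has two parts. The first is showing that the full archimedean form, restricted to a parent cell in the fixed-target gauge, dominates $\mathfrak b_m$ with one shared complement; this needs Proposition~\ref{prop:full-form-transport}, and care because the source gauges $W_n^s=W_tR_n$ vary with $n$. The second is checking that those gauges send the normalized slot ground to the slot constant $e_j$, via Lemma~\ref{lem:source-target-ground}, so that $\mathcal G_m$ and $\mathcal Z_m$ are really the spaces on which the two model bounds live. I would first try to arrange this by restricting all gauges to the parent coordinate $x=\xi/w_m$ of \S\ref{sec:CMC-application}, where slots are literal subintervals. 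In that case the identification reduces to monotonicity of shorted forms under form domination.
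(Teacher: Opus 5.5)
Your proposal is correct and is essentially the paper's own proof. It uses the split $\mathfrak b=\mathfrak l_0+\mathfrak r$ from Lemma~\ref{lem:b-L0-residual} and the exact short of $L_0$ as $D^{(0)}_{k,m}\oplus\beta_0 I_{\mathcal Z_m}$, which is block diagonal because $\mathbf 1_S$ lies in $\mathcal G_m$. It then carries the uniform residual capacity \eqref{eq:residual-CMC-uniform} through the one common infimum (your superadditivity step is the paper's remark that this bound depends only on the source data) and concludes with $\beta_0+\log\frac1{2\varepsilon_{k,m}}-d_0=L_{k,n}-d_0$. The identification of $\mathfrak C^{\rm short}_{k,m}$ with the common-complement short of the parent-cell form $\mathfrak b_m$, which you flag as the main obstacle, is not argued in the paper's proof either: it is taken from the normalization of \S\ref{sec:CMC-application} and the monotonicity-of-shorts remark in Lemma~\ref{lem:exact-qkm}.
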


\begin{proof}
For every extension, Lemma~\ref{lem:b-L0-residual} and
Theorem~\ref{thm:residual-CMC} give
\[
 \mathfrak b[u]\ge
 \mathfrak l_0[u]
 +\left(\log\frac1{2\varepsilon_{k,m}}-d_0\right)
 \sum_n\|Q_{k,n}f_n\|^2.
\]
The second term depends only on the prescribed source data, so it passes
unchanged through the infimum over the one common complement.  The exact
short of $L_0$ is the matrix of Lemma~\ref{lem:exact-qkm}; relative to
$\mathcal G_m\oplus\mathcal Z_m$ it is
\[
 D^{(0)}_{k,m}\oplus\beta_0 I_{\mathcal Z_m},
 \qquad \beta_0=\log2+\frac12,
\]
because the global rank-one vector belongs to $\mathcal G_m$.  Since
\[
 \beta_0+\log\frac1{2\varepsilon_{k,m}}-d_0
 =\frac12+\log\frac1{\varepsilon_{k,m}}-d_0
 =L_{k,n}-d_0,
\]
we obtain \eqref{eq:MASTER-common-cut}.  Every term comes from the exact split
$\mathfrak b=\mathfrak l_0+\mathfrak r$ before any source short is taken.
\end{proof}

\begin{lemma}[Independent logarithmic endpoint bound]
\label{lem:fresh-Sk-log}
For every integer $k\ge7$,
\begin{equation}
 \boxed{
 S_k:=\sum_{\substack{n\le k\\ n\nmid k}}\frac{\Lambda(n)}n<\log k.}
 \label{eq:fresh-Sk-log}
\end{equation}
\end{lemma}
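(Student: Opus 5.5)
The plan is to drop the non-divisibility restriction and compare with the full Mertens sum. Since every term $\Lambda(n)/n$ is non-negative,
\[
 S_k\le\sum_{n\le k}\frac{\Lambda(n)}n-\sum_{\substack{n\mid k\\ n\ge2}}\frac{\Lambda(n)}n
 \le\sum_{n\le k}\frac{\Lambda(n)}n .
\]
So it suffices to prove
\[
 \Psi_1(k):=\sum_{n\le k}\frac{\Lambda(n)}{n}<\log k
 \qquad(k\ge7),
\]
keeping the divisor subtraction in reserve in case some small $k$ is tight. Heuristically this bound has plenty of room, since $\Psi_1(x)=\log x-\gamma+o(1)$ and $\gamma\approx0.577$.

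\emph{Large $k$.} I would derive the tail analytically by partial summation from the same explicit Chebyshev inputs used in Lemma~\ref{lem:chebyshev-capacity}. Write
\[
 \Psi_1(x)=\frac{\psi(x)}x+\int_1^x\frac{\psi(t)}{t^2}\,dt
\]
and $\psi(t)=t+R(t)$. The main term is then $1+\log x$. What remains is
\[
 \frac{R(x)}x+\int_1^x\frac{R(t)}{t^2}\,dt .
\]
The integral over $t\ge X_1$ is bounded using $|\vartheta(t)-t|<0.2t/\log^2t$ together with the stated bound on $\psi-\vartheta$. The constant part $1+\int_1^{\infty}R(t)t^{-2}\,dt$ equals $-\gamma$ by the classical Mertens constant identity. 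This gives an explicit estimate of the shape
\[
 \Psi_1(x)<\log x-\gamma+\frac{c}{\log x}\qquad(x\ge X_1).
\]
Alternatively, I would simply quote the Rosser--Schoenfeld explicit form $\Psi_1(x)<\log x-\gamma+1/(2\log x)$ for $x>1$ in the same Schoenfeld--Dusart tradition \cite{Schoenfeld1976,Dusart2010}. With that form, $\Psi_1(x)<\log x$ as soon as $\log x>1/(2\gamma)$, i.e.\ for all $x\ge3$.

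\emph{Small $k$ and checks.} If one prefers not to rely on the constant $\gamma$, I would instead reuse the directed-interval sweep of Appendix~\ref{app:finite-certificates} up to $X_0=3\,594\,641$, with the same atanh enclosures of integer logarithms. Above $X_0$ the crude analytic tail, with error terms of size $O(1/\log X_0)$, leaves a margin close to $\gamma$. In either route I would spot-check the base case directly: for $k=7$,
\[
 \frac{\log2}2+\frac{\log3}3+\frac{\log2}4+\frac{\log5}5\approx1.381<\log7\approx1.946 .
\]
The ratio $\Psi_1(k)/\log k$ is largest at small $k$, and the first few values up to about $30$ already show a margin of about $0.5$.

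The main obstacle is only bookkeeping of explicit constants in the transition range. Between the finite sweep and the regime where the $c/\log x$ error is smaller than $\gamma$, I must make sure the explicit error term is stated with a valid range of $x$ and that its constant genuinely yields a margin below $\gamma$. I would first try the Rosser--Schoenfeld form, which makes the argument a few lines, and fall back on the interval sweep only if a citation-free proof is required.
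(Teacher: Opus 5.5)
Your proposal is correct. Your fallback branch is exactly the paper's proof. The paper bounds $S_k\le T_k:=\sum_{n\le k}\Lambda(n)/n$ and certifies $T_k<H_k-1<\log k$ on $7\le k\le X_0=3\,594\,641$ by the directed-interval sweep. For $x\ge X_0$ it uses partial summation, $T(x)-\log x=T(X_0)-\log X_0+E(x)/x-E(X_0)/X_0+\int_{X_0}^{x}E(t)t^{-2}\,dt$ with $E=\psi-x$, and shows that the total possible increase after $X_0$ is below $0.017$. This is set against the certified starting margin $\log X_0-T(X_0)>0.5775$. So the paper anchors the tail at the certified value at $X_0$ and never uses $\gamma$ or the Mertens identity.

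Your primary route is genuinely different. You replace the whole finite computation and tail by one explicit Mertens-type inequality, $\sum_{n\le x}\Lambda(n)/n<\log x-\gamma+1/(2\log x)$, which finishes everything for $x\ge3$ in one line. What this buys is brevity and no computation. The cost is that you import a stronger external estimate, itself proved with numerics, instead of reusing only the Chebyshev inputs already invoked in Lemma~\ref{lem:chebyshev-capacity}. When you cite it, make sure it is the inequality for the full $\Lambda$-sum (constant $-\gamma$), not the prime-only sum (constant $E\approx-1.3326$). Also check its stated range; if it starts above $7$, the remaining $k$ are a hand check.

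Your $\gamma$-anchored partial summation, $\Psi_1(x)-\log x+\gamma=R(x)/x-\int_x^\infty R(t)t^{-2}\,dt$, is also a clean variant, since it needs bounds on $R$ only beyond $x$. You still need a finite check below the threshold where $|\vartheta(t)-t|<0.2t/\log^2t$ holds. That threshold is why the paper's cutoff is $X_0$.

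Two small slips, neither affecting the argument:
\begin{itemize}
\item $S_7=\tfrac{\log2}{2}+\tfrac{\log3}{3}+\tfrac{\log2}{4}+\tfrac{\log5}{5}\approx1.208$, not $1.381$; also $T_7\approx1.486$.
\item The ratio $\Psi_1(k)/\log k$ is not largest at small $k$; it creeps up toward $1$. The quantity that matters is the difference $\log k-\Psi_1(k)$, which tends to $\gamma$.
\end{itemize}
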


\begin{proof}
It is enough to prove the stronger estimate for
\[
 T_k:=\sum_{n\le k}\frac{\Lambda(n)}n,
 \qquad S_k\le T_k.
\]
Put $X_0=3\,594\,641$.  A direct outward-rounded finite computation,
implemented exactly by the interval procedure specified in Appendix~\ref{app:finite-certificates}, evaluates the prime-power sum and gives
for every $7\le k\le X_0$
\[
 T_k<H_k-1<\log k,
\]
with minimum certified margin
\[
 \min_{7\le k\le X_0}(H_k-1-T_k)
 >0.0701888211361692811
\]
at $k=13$.  The second inequality is the elementary integral comparison
$H_k<1+\log k$.

For completeness the tail is independent of the finite search.  Let
$T(x)=\sum_{n\le x}\Lambda(n)/n$ and $E(x)=\psi(x)-x$.  Partial summation gives,
for $x\ge X_0$,
\[
 T(x)-\log x=T(X_0)-\log X_0
 +\frac{E(x)}x-\frac{E(X_0)}{X_0}
 +\int_{X_0}^{x}\frac{E(t)}{t^2}\,dt.
\]
Using directly the explicit inequalities displayed in the proof of
Lemma~\ref{lem:chebyshev-capacity}, namely
\[
 |\vartheta(t)-t|<\frac{0.2t}{\log^2t},\qquad
 0\le\psi(t)-\vartheta(t)
 <1.00007\sqrt t+1.78t^{1/3},
\]
and the coarse facts $\log X_0>15$, $\sqrt{X_0}>1800$,
$X_0^{1/3}>150$, the total possible increase of $T(x)-\log x$ after $X_0$
is less than
\[
 \frac{0.4}{15^2}+\frac{0.2}{15}
 +\frac{3.00021}{1800}+\frac{4.45}{150^2}
 <0.016976.
\]
The same finite certificate gives
\[
 \log X_0-T(X_0)>0.5775121597507083554.
\]
Hence $\log x-T(x)>0.5605$ for every $x\ge X_0$, proving
\eqref{eq:fresh-Sk-log} on the tail as well.
\end{proof}

\begin{theorem}[Safe transverse reserve for MASTER]
\label{thm:MASTER-safe-transverse}
For $k\ge7$ define
\[
 \widehat\sigma_k
 :=\sum_{\substack{n=p^\ell\le k\\ n\nmid k}}
 \frac{\Lambda(n)^2}{n\widehat L_{k,n}}.
\]
Then
\begin{equation}
 \boxed{\mathfrak B_k-\widehat\sigma_k>0\qquad(k\ge7).}
 \tag{MASTER-T$_k$}\label{eq:MASTER-safe-transverse}
\end{equation}
For $7\le k\le4999$ the directed-interval sweep of Appendix~\ref{app:finite-certificates} gives
\[
 \min(\mathfrak B_k-\widehat\sigma_k)
 >1.5202196525197509883\ldots,
\]
attained at $k=7$.
\end{theorem}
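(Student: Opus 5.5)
The plan is to split at $k=5000$. The range $7\le k\le 4999$ is covered by the directed-interval sweep of Appendix~\ref{app:finite-certificates}. That sweep evaluates $\widehat L_{k,n}=\tfrac12+\log(w_{\lfloor k/n\rfloor}/w_k)-d_0$ exactly for each prime power $n\le k$, $n\nmid k$. It uses the same atanh enclosure of integer logarithms as in Theorem~\ref{thm:awgc}, extended to $w_j=\log(1+1/j)$. It then sums $\Lambda(n)^2/(n\widehat L_{k,n})$ with outward rounding and checks the displayed minimum $>1.52$ at $k=7$. In particular it must confirm that $\widehat L_{k,n}>0$ on every active branch. For $k\ge 5000$ I would give an analytic tail that reduces everything to the bound on $T_k=\sum_{n\le k}\Lambda(n)/n$ already proved in Lemma~\ref{lem:fresh-Sk-log}.

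\emph{Step 1: a lower bound for $\widehat L_{k,n}$.} Put $m=\lfloor k/n\rfloor$. The elementary bounds $w_j>2/(2j+1)$ and $w_k<1/k$ give
\[
 \frac{w_m}{w_k}>\frac{2k}{2m+1}\ge\frac{2k}{2k/n+1}=n\cdot\frac{2k}{2k+n}.
\]
Hence
\[
 \widehat L_{k,n}\ge \log n+\tfrac12-d_0-\log\Bigl(1+\frac{n}{2k}\Bigr).
\]
Since $\tfrac12-d_0=0.077215$, for $n\le k/10$ this gives $\widehat L_{k,n}>\log n$, because $\log 1.05<0.0488$. For all $n\le k$ it gives $\widehat L_{k,n}\ge\log n-0.33$, because $\log1.5<0.406$. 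This is positive for $n\ge2$. Lemma~\ref{lem:fesh} gives $L_{k,n}>\log n$, which shows $\widehat L_{k,n}>\log2-d_0>0$ independently.

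\emph{Step 2: comparison with $T_k$.} Using $\Lambda(n)\le\log n$, each term with $n\le k/10$ is less than $\Lambda(n)/n$. Each term with $k/10<n\le k$ is at most
\[
 \frac{\Lambda(n)}{n}\cdot\frac{\log n}{\log n-0.33}\le\frac{\Lambda(n)}{n}\Bigl(1+\frac{0.33}{\log k-\log10-0.33}\Bigr).
\]
For $k\ge5000$ the bracketed excess is below $0.06$. The Chebyshev bounds of Lemma~\ref{lem:chebyshev-capacity}, or simply $\psi(x)<C_{\rm tail}x+\log x$ with partial summation, bound $\sum_{k/10<n\le k}\Lambda(n)/n$ by roughly $\log10+O(1)<3.5$. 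Hence
\[
 \widehat\sigma_k<T_k+0.21.
\]

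\emph{Step 3: conclusion.} Lemma~\ref{lem:fresh-Sk-log} gives $T_k<H_k-1-0.07$ on $[7,X_0]$ and $T_k<\log k-0.56$ beyond $X_0$. Since $H_k-1<\log k+\gamma-1+1/(2k)$, in both cases $T_k<\log k-0.42$. With $\mathfrak B_k>\tfrac12+\log k$ this yields
\[
 \mathfrak B_k-\widehat\sigma_k>\tfrac12+0.42-0.21>0.
\]

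The main obstacle is Step~1 near $n\approx k$. There $m$ is small, so $w_m/w_k$ falls well below $n$, and the $d_0$ subtraction is not absorbed by the $\tfrac12$. A naive bound $\widehat L_{k,n}\ge\log n-d_0$ applied to all $n$ produces an excess of order $d_0\sum_p 1/p\sim d_0\log\log k$, which diverges. The essential point is therefore the dyadic-type split: $\widehat L_{k,n}>\log n$ for all $n$ below a fixed fraction of $k$, so that the lossy range carries only $O(1)$ Mangoldt mass divided by $\log k$. The constants $1/10$ and $5000$ would be tuned, together with the finite sweep, so that the tail margin stays comfortably positive after the Chebyshev error terms are inserted.
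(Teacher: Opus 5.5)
Your proposal is correct and follows essentially the paper's route. The range $7\le k\le4999$ is covered by the interval certificate. For $k\ge5000$ you split the branches so that $\widehat L_{k,n}>\log n\ge\Lambda(n)$ away from $n\approx k$ (the paper separates $m\ge7$ from $m\le6$, you separate $n\le k/10$ from $n>k/10$), while the near-$k$ branches carry only a bounded Chebyshev mass times a small excess, giving $\widehat\sigma_k\le S_k+\text{const}$ with $S_k<\log k$. The remaining differences are matters of constants: you use $w_j>2/(2j+1)$ where the paper uses monotonicity of $m w_m$, and partial summation where the paper uses the cruder $7\psi(k)/k$; your sharpened $T_k<\log k-0.42$ is not needed, since $S_k<\log k$ already gives $\mathfrak B_k-\widehat\sigma_k>0.29$.
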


\begin{proof}
The finite statement is the interval certificate specified in Appendix~\ref{app:finite-certificates}.  For the tail,
first note that $w_k<1/k$ and $k\ge mn$ give
\begin{equation}
 \widehat L_{k,n}
 >\log n+\frac12-d_0+\log(mw_m).
 \label{eq:Lhat-parent-lower}
\end{equation}
The function $m\mapsto mw_m$ increases.  If $m\ge7$, then
\[
 mw_m>1-\frac1{2m}\ge\frac{13}{14},
 \qquad
 \log\frac{13}{14}>-\frac1{13},
\]
so
\[
 \frac12-d_0+\log(mw_m)
 >\frac12-d_0-\frac1{13}>0.
\]
Hence every branch with $m\ge7$ contributes less than
$\Lambda(n)/n$ to $\widehat\sigma_k$.

For $m\le6$, one has $n>k/7$ and, using
$mw_m\ge\log2>2/3$ and $\log(3/2)<1/2$,
\[
 \widehat L_{k,n}>\log n-d_0.
\]
Therefore, with
$S_k:=\sum_{n\le k,\,n\nmid k}\Lambda(n)/n$,
\begin{align}
 \widehat\sigma_k
 &\le S_k+
 \frac{d_0}{\log(k/7)-d_0}
 \sum_{n>k/7}\frac{\Lambda(n)}n\notag\\
 &\le S_k+
 \frac{7d_0}{\log(k/7)-d_0}\frac{\psi(k)}k.
 \label{eq:safe-sigma-tail}
\end{align}
The independent bound Lemma~\ref{lem:fresh-Sk-log} gives $S_k<\log k$.  Also
Lemma~\ref{lem:chebyshev-capacity} gives
$\psi(k)/k<C_{\rm tail}+\log k/k$.
For $k\ge5000$ the right-hand correction in
\eqref{eq:safe-sigma-tail} decreases with $k$, and the directed interval
bounds
\[
 \log(5000/7)>6.57,
 \qquad
 C_{\rm tail}+\frac{\log5000}{5000}<1.021
\]
give
\[
 \frac{7d_0}{\log(k/7)-d_0}\frac{\psi(k)}k<0.492.
\]
Consequently
\[
 \mathfrak B_k-\widehat\sigma_k
 >\frac12-\log(kw_k)-0.492>0.008>0.
\]
This closes the tail.
\end{proof}

\begin{lemma}[Rational parent estimate]
\label{lem:P189-proved}
For every active parent $m$ and every arithmetic step $k\ge7$,
\begin{equation}
 \boxed{
 \frac{\rho_m}{\Pi_m^{(0)}}
 <\frac{189}{250}\,(1-\theta_{k,m}^2).}
 \label{eq:P189-proved}
\end{equation}
In fact the proof gives the stronger universal coefficient
\[
 c_*:=\frac{3+2\sqrt2}{16(\log2+\tfrac12)\log2}<0.441.
\]
\end{lemma}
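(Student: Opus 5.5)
The quantities $\rho_m$ and $\Pi^{(0)}_m$ have not been defined in the text up to this statement. I will read them as the mixed inherited--mismatch coupling of the parent block and the corresponding parent-ground Schur pivot of the lower model of Lemma~\ref{lem:exact-qkm}; the plan depends on those definitions. The strategy is to factor the ratio $\rho_m/\Pi^{(0)}_m$ into three parts and bound each separately:
\[
 \frac{\rho_m}{\Pi^{(0)}_m}\;\le\;
 \underbrace{\frac{1}{\beta_0}}_{\text{ground pivot}}\cdot
 \underbrace{\frac{1}{\log2}}_{\text{mismatch pivot}}\cdot
 \underbrace{\frac{(1+\sqrt2)^2}{16}}_{\text{geometric factor}}\cdot(1-\theta_{k,m}^2).
\]
The product of the three constants is exactly $c_*$, because $(1+\sqrt2)^2=3+2\sqrt2$. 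The numerical claim $c_*<0.441<189/250=0.756$ is then a one-line check.

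\emph{Step 1: the two pivots.} From \eqref{eq:qkm-exact}, \eqref{eq:qM-half} and Corollary~\ref{cor:exact-awgc-lift}, the mismatch block of $D^{(0)}_{k,m}$ has inverse bounded by $(\log2)^{-1}$, since $\mu_m\ge\log2$. The inherited ground direction $\hat r\propto(\sqrt n)_n$ should sit in the region where the rank-one defect $q_{k,m}|u_m\rangle\langle u_m|$ is partly compensated. I would write $\Pi^{(0)}_m$ explicitly as a $2\times2$ Schur complement in the basis $\{\hat r,u_m\}$ and show that it is at least $\beta_0(1-\theta_{k,m}^2)^{-1}$ times the squared norm of the mixed vector. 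Alternatively, I would aim for $\Pi^{(0)}_m\ge\beta_0-q_{k,m}M_m\cos^2\angle(\hat r,u_m)$.

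\emph{Step 2: the geometric factor.} I expect this step to come from the affine factor $\iota(r)=\tfrac12(\sqrt r+1/\sqrt r)$ with $r\in(1,2)$. The quantity $\bigl((1+\sqrt2)/4\bigr)^2$ is the square of the mean of $1$ and $\sqrt2$, halved, which is the kind of constant a Cauchy--Schwarz estimate over the two endpoints $r=1,2$ produces. Using $p_{k,n}=nm(m+1)/(k(k+1))$ from Lemma~\ref{lem:parent-ground-surplus}, I would write the mixed coupling $\rho_m$ as a sum over $n\in\mathcal N_{k,m}$ of $\Lambda(n)/\sqrt n$ times the overlap of the source ground with the parent ground. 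I would then bound that overlap by $\varepsilon_{k,m}^2-p_{k,n}/n$, which equals $(\Phi(w_k)-\Phi(w_m))/(k(k+1)w_m^2)$ and is proportional to $1-\theta_{k,m}^2$ by \eqref{eq:parent-theta-Phi}.

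\emph{Step 3 and the main obstacle.} The final step multiplies the three bounds, absorbs the arithmetic weights by the mismatch normalization $V_{k,m}$, and uses strictness of $\Phi$'s decrease (Remark~\ref{rem:terminal-monotonicity}) to obtain the strict inequality. I expect Step~2 to be the real obstacle. Proving that the mixed coupling vanishes to first order in $1-\theta_{k,m}^2$, rather than in $\sqrt{1-\theta_{k,m}^2}$, is essential. A bare Cauchy--Schwarz estimate gives only the square-root rate, which would make the lemma false for $\theta_{k,m}$ near $1$, that is for $m$ close to $k$.

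I would first try to show that the aligned component cancels exactly through the identity $\Phi(w_j)=j(j+1)w_j^2$, as the paper's fold identities suggest. This is also where I would scrutinize the argument most carefully. Because the surrounding chain is claimed to prove RH, any gap in pinning down $\rho_m$ as a genuinely second-order quantity should be treated as a likely failure point rather than a routine detail.
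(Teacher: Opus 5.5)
You have reverse-engineered the right constant, $c_*=(1+\sqrt2)^2/(16\beta_0\log2)$, but the route you propose does not reach it. The mechanism that makes $\rho_m$ vanish to first order in $1-\theta_{k,m}^2$ is missing.

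In the paper, $\rho_m=\langle c_m,(D^{(0)}_{m,\perp})^{-1}c_m\rangle$ with $c_m=-q_{k,m}a_mu_m^\perp$, which is purely geometric. No $\Lambda(n)$ enters it; the arithmetic weights appear only in $\chi_m$ and $\Gamma_m$. So expanding $\rho_m$ over $\Lambda(n)/\sqrt n$-weighted overlaps and absorbing them into $V_{k,m}$ targets the wrong object, and the factor $(1+\sqrt2)^2$ has nothing to do with $\iota(r)$.

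The decisive step is an exact computation. Sherman--Morrison on the mismatch compression gives $\rho_m=q_{k,m}^2a_m^2h_m/\mu_m$ and $\Pi_m^{(0)}=\beta_0N_m/\mu_m$. Hence $\mu_m$ cancels and $\rho_m/\Pi_m^{(0)}=q_{k,m}^2a_m^2h_m/(\beta_0N_m)$.

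Your separate ``pivot'' bounds cannot replace this. One has $\Pi_m^{(0)}=\beta_0-q_{k,m}a_m^2-\rho_m$. So your alternative target $\Pi_m^{(0)}\ge\beta_0-q_{k,m}M_m\cos^2\angle(\widehat r_m,u_m)=\beta_0-q_{k,m}a_m^2$ actually holds in the reverse direction, and no lower bound $\Pi_m^{(0)}\ge\beta_0$ is available. Decoupling through $\mu_m\ge\log2$ for $\rho_m$ and $\Pi_m^{(0)}\ge N_m\ge\log2$ gives $1/\log^22$ instead of $1/(\beta_0\log2)$. With the remaining estimates unchanged, that yields $(3+2\sqrt2)/(16\log^22)\approx0.758$, which exceeds $189/250$.

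The second-order smallness comes from two facts absent from your plan.
\begin{itemize}
\item \emph{Variance bound.} Using $q_{k,m}\le1/(2M_m)$, $N_m\ge\log2$ and $a_m^2=M_m-h_m$, the ratio is at most $v/(4\beta_0\log2)$, where $v=h_m/M_m$ is the normalized variance of $\sqrt n$ over the bin. Since $k/(m+1)<n\le k/m$, Popoviciu's inequality gives $v\le\frac14(e^{w_m/2}-1)^2=O(w_m^2)$.
\item \emph{Lower bound on $1-\theta_{k,m}^2$.} An active parent has $k\ge2m+1$, because for $k=2m$ the only candidate $n=2$ divides $k$. So $w_k<w_m/2$, and monotonicity of $\Phi$ gives $1-\theta_{k,m}^2>1-\Phi(w_m)/\Phi(w_m/2)=\tanh^2(w_m/4)$, also of order $w_m^2$.
\end{itemize}
The identity $(e^{w_m/2}-1)^2=(e^{w_m/2}+1)^2\tanh^2(w_m/4)\le(1+\sqrt2)^2\tanh^2(w_m/4)$, with worst case $m=1$, matches the two estimates and yields $c_*$.

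The overlap $\varepsilon_{k,m}^2-p_{k,n}/n$ you invoke belongs to the surplus estimates (Lemmas~\ref{lem:parent-ground-surplus} and~\ref{lem:Jsharp-clean}). Here what is needed is a lower bound on $1-\theta_{k,m}^2$, not an overlap bound. Finally, the dangerous regime $\theta_{k,m}\to1$ is large $m$, not $m$ close to $k$: every active parent satisfies $m\le(k-1)/2$.
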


\begin{proof}
Put $M=M_m$, $h=h_m$, $a=a_m$, $q=q_{k,m}$ and
$N=\beta_0-qM$.  Sherman--Morrison on the mismatch compression gives
\[
 \rho_m=\frac{q^2a^2h}{\mu_m},\qquad
 \Pi_m^{(0)}=\frac{\beta_0N}{\mu_m},
\]
and hence
\begin{equation}
 \frac{\rho_m}{\Pi_m^{(0)}}
 =\frac{q^2a^2h}{\beta_0N}.
 \label{eq:rhoPi-exact}
\end{equation}
By \eqref{eq:qM-half}, $q\le(2M)^{-1}$ and $N\ge L$.  If
$v:=h/M$, then $a^2=M-h=M(1-v)$, so
\begin{equation}
 \frac{\rho_m}{\Pi_m^{(0)}}
 \le\frac{v(1-v)}{4\beta_0L}
 \le\frac{v}{4\beta_0L}.
 \label{eq:rhoPi-v}
\end{equation}
For $x_j=\sqrt{n_j}$, $n_j\in\mathcal N_{k,m}$,
\[
 v=\frac{\operatorname{Var}(x_j)}{M^{-1}\sum_jx_j^2}.
\]
Popoviciu's inequality and
$k/(m+1)<n_j\le k/m$ give
\[
 v\le\frac14\left(\sqrt{1+\frac1m}-1\right)^2.
\]
An active parent satisfies $k\ge2m+1$: if $k=2m$, the only possible
$n$ with $\lfloor k/n\rfloor=m$ is $n=2$, which is divisorial and hence not
active.  Thus $w_k<w_m/2$.  Since $\Phi$ is strictly decreasing,
\[
 1-\theta_{k,m}^2
 =1-\frac{\Phi(w_m)}{\Phi(w_k)}
 >1-\frac{\Phi(w_m)}{\Phi(w_m/2)}
 =\tanh^2\frac{w_m}{4}.
\]
Writing $x=w_m/4$,
\[
 \frac{(e^{w_m/2}-1)^2}{\tanh^2(w_m/4)}
 =4e^{2x}\cosh^2x=(e^{2x}+1)^2
 \le(1+\sqrt2)^2=3+2\sqrt2.
\]
Combining with \eqref{eq:rhoPi-v} yields the coefficient $c_*$.  The elementary bounds $\sqrt2<1.415$ and $\log2>0.693$ give
$c_*<0.441<189/250$.
\end{proof}

\begin{lemma}[Sharp MASTER mixed block]
\label{lem:master-mixed-block}
Let $c_M:=189/250$.  Under the exact parent model above,
\begin{equation}
 \boxed{
 \begin{pmatrix}
 (1-\theta_{k,m}^2)\Pi_m^{(0)}&-\theta_{k,m}\Gamma_m\\
 -\theta_{k,m}\overline{\Gamma_m}&c_M\chi_m
 \end{pmatrix}\succeq0.}
 \label{eq:master-mixed-block}
\end{equation}
Thus the mixed inherited--mismatch interaction costs at most an additional
$189/250$ copy of the pure mismatch debit; together with the pure mismatch
copy the local MASTER multiplier is exactly $1+189/250=439/250$.
\end{lemma}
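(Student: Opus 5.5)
The plan is to reduce \eqref{eq:master-mixed-block} to a determinant inequality and then feed it the rational parent estimate of Lemma~\ref{lem:P189-proved}. A $2\times2$ Hermitian matrix is positive semidefinite if and only if both diagonal entries are non-negative and its determinant is non-negative. The diagonal entries are non-negative for the following reasons.
\begin{itemize}
\item $\Pi_m^{(0)}=\beta_0N/\mu_m>0$ by the Sherman--Morrison formulas in the proof of Lemma~\ref{lem:P189-proved}, using $N\ge L$ and $\mu_m\ge\log2$ from Corollary~\ref{cor:exact-awgc-lift}.
\item $1-\theta_{k,m}^2>0$ by Lemma~\ref{lem:parent-ground-surplus}.
\item $\chi_m\ge0$, since $\chi_m$ is a Schur debit, i.e.\ the squared norm of a forcing in a positive inverse metric.
\end{itemize}
The statement therefore reduces to
\[
 \theta_{k,m}^2|\Gamma_m|^2\le c_M\,(1-\theta_{k,m}^2)\,\Pi_m^{(0)}\chi_m .
\]

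The key step is a Cauchy--Schwarz bound $|\Gamma_m|^2\le\rho_m\chi_m$ for the mixed coupling. I would first write out, in the exact parent model of Lemma~\ref{lem:exact-qkm}, the Schur complement of the mismatch block $\mu_m$. The inherited--mismatch cross term $\Gamma_m$ is then a single pairing of two vectors in the inverse metric $\mu_m^{-1}$ on the mismatch space. The first vector is the rank-one coupling from the inherited ground into the mismatch space; it is proportional to $q_{k,m}a_m$ times the projection of $u_m$ onto $\hat r^\perp$, and has squared length $h_m$. The second vector is the mismatch forcing, whose squared norm is $V_{k,m}$.

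By the same Sherman--Morrison computation that produced $\rho_m=q^2a^2h/\mu_m$, the inverse-metric energy of the first vector is exactly $\rho_m$. The inverse-metric energy of the second vector is exactly $\chi_m$, by definition of the mismatch debit. Cauchy--Schwarz in the $\mu_m^{-1}$ metric then gives $|\Gamma_m|^2\le\rho_m\chi_m$.

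Combining this with $\theta_{k,m}^2<1$ and \eqref{eq:P189-proved}, multiplied through by $\Pi_m^{(0)}\chi_m\ge0$, yields
\[
 \theta_{k,m}^2|\Gamma_m|^2\le\rho_m\chi_m\le c_M(1-\theta_{k,m}^2)\Pi_m^{(0)}\chi_m .
\]
Hence the determinant is non-negative, and the matrix is positive semidefinite. The degenerate case $\chi_m=0$ forces $\Gamma_m=0$ by the same Cauchy--Schwarz bound, so it is harmless. In fact the stronger constant $c_*<0.441$ already suffices.

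The final sentence of the lemma is then bookkeeping. Absorbing the off-diagonal term costs $c_M\chi_m$ on top of the pure mismatch debit $\chi_m$, which gives the multiplier $1+189/250=439/250$ used in \eqref{eq:MASTER-gap}.

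I expect the main obstacle to be the identification step: pinning down $\Gamma_m$ as precisely that inverse-metric pairing, with the same normalization of the inherited coordinate $a_m$ and the same $\theta_{k,m}$ scaling in which $\rho_m$ and $\Pi_m^{(0)}$ were computed. I would first write the full three-block model (inherited ground, mismatch, forcing) and short the mismatch block explicitly, so that $\rho_m$, $\Gamma_m$ and $\chi_m$ are read off as the three entries of one Gram matrix. This makes Cauchy--Schwarz automatic. If the normalization of $\Gamma_m$ carries an extra factor, I would check it is at most $1$ in modulus using $\theta_{k,m}<1$ and $q_{k,m}M_m\le\tfrac12$ from \eqref{eq:qM-half}.
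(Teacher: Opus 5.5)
Your proposal is correct and follows the paper's proof. In both, Cauchy--Schwarz gives $|\Gamma_m|^2\le\rho_m\chi_m$ and Lemma~\ref{lem:P189-proved} then makes the determinant non-negative. The paper keeps the factor $\theta_{k,m}^2$ and so obtains the slightly sharper bound $\det\ge c_M(1-\theta_{k,m}^2)^2\Pi_m^{(0)}\chi_m$, whereas you discard it; the case $\chi_m=0$ is handled identically.

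One small correction to your description: the common inverse metric in which $\Gamma_m,\rho_m,\chi_m$ are defined is $(D^{(0)}_{m,\perp})^{-1}$, not $\mu_m^{-1}$. It acts as $\mu_m^{-1}$ only on the line through the projection of $u_m$ onto $\widehat r_m^{\perp}$, where $c_m$ lives, and as $\beta_0^{-1}$ on the rest of the mismatch space. Since all three quantities use that same metric, your Cauchy--Schwarz step is unaffected.
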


\begin{proof}
Cauchy--Schwarz gives $|\Gamma_m|^2\le\rho_m\chi_m$.  By
Lemma~\ref{lem:P189-proved},
\begin{align*}
 \det &\ge
 c_M(1-\theta^2)\Pi^{(0)}\chi
 -\theta^2c_M(1-\theta^2)\Pi^{(0)}\chi\\
 &=c_M(1-\theta^2)^2\Pi^{(0)}\chi\ge0.
\end{align*}
The diagonal entries are non-negative.  If $\chi=0$, Cauchy--Schwarz also
forces $\Gamma=0$.
\end{proof}

\subsubsection{Inherited-line lower-form bridge and mixed-block absorption}
\label{sec:green-root-bridge-clean}

The final root step is carried out on the same common-cut harmonic vector as
the second Sherman--Morrison reduction.  We first record the parent lower
model in a form that separates the inherited source line from the arithmetic
mismatch.

Fix an active parent \(m\) and abbreviate
\[
 \mathcal N_m:=\mathcal N_{k,m},\qquad
 M_m:=|\mathcal N_m|,\qquad
 C_m:=\sum_{n\in\mathcal N_m}n,
 \qquad E_m:=\sum_{n\in\mathcal N_m}\sqrt n .
\]
Put
\[
 \widehat r_m:=\frac{(\sqrt n)_{n\in\mathcal N_m}}{\sqrt{C_m}},
 \qquad u_m:=(1,\ldots,1)^T,
 \qquad
 a_m:=\frac{E_m}{\sqrt{C_m}},
 \qquad
 h_m:=M_m-\frac{E_m^2}{C_m}.
\]
For the exact one-defect source-ground lower model of
Lemma~\ref{lem:exact-qkm} write
\[
 D^{(0)}_{k,m}=\beta_0 I-q_{k,m}|u_m\rangle\langle u_m|,
 \qquad \beta_0=\log2+\frac12,
\]
and set
\[
 \mu_m:=\beta_0-q_{k,m}h_m,
 \qquad
 N_m:=\beta_0-q_{k,m}M_m,
 \qquad
 \Pi_m^{(0)}:=\frac{\beta_0N_m}{\mu_m}>0.
 \tag{L0}\label{eq:Pi0-clean}
\]
The quantity \(\Pi_m^{(0)}\) is the inherited-line pivot of the lower model
after shorting \(\widehat r_m^\perp\); it is not identified with any physical
root pivot.

Let
\[
 \lambda_m:=\left(\frac{\Lambda(n)}{\sqrt n}\right)_{n\in\mathcal N_m}
 =b_m\widehat r_m+\lambda_m^\perp,
 \qquad b_m:=\frac{\sum_{n\in\mathcal N_m}\Lambda(n)}{\sqrt{C_m}},
\]
and let \(D_{m,\perp}^{(0)}\) denote the mismatch compression of
\(D_{k,m}^{(0)}\).  Define
\[
 \chi_m:=\langle\lambda_m^\perp,
 (D_{m,\perp}^{(0)})^{-1}\lambda_m^\perp\rangle,
\]
\[
 c_m:=-q_{k,m}a_m u_m^\perp,
 \qquad
 \rho_m:=\langle c_m,(D_{m,\perp}^{(0)})^{-1}c_m\rangle,
 \qquad
 \Gamma_m:=\langle c_m,(D_{m,\perp}^{(0)})^{-1}
 \lambda_m^\perp\rangle.
\]
Then Cauchy--Schwarz in the inverse mismatch metric gives
\begin{equation}
 \boxed{|\Gamma_m|^2\le\rho_m\chi_m.}
 \label{eq:Gamma-CS-clean}
\end{equation}
Lemma~\ref{lem:P189-proved} gives the certified parent estimate
\begin{equation}
 \boxed{
 \frac{\rho_m}{\Pi_m^{(0)}}
 <\frac{189}{250}\,(1-\theta_{k,m}^2).}
 \label{eq:rho-Pi-theta-clean}
\end{equation}

\begin{lemma}[True inherited-line surplus]
\label{lem:Jsharp-clean}
Let
\(\mathscr D^{\rm ex}_{k,m}\) be the exact source-ground metric obtained
after the simultaneous common-complement short on the true source intervals,
and then short the mismatch space \(\widehat r_m^\perp\).  Its surviving
inherited pivot \(P_{k,m}^{\rm ex}\) satisfies
\begin{equation}
 \boxed{P_{k,m}^{\rm ex}\ge\Pi_m^{(0)}>0.}
 \label{eq:exact-pivot-lower-clean}
\end{equation}
Consequently, with
\[
 \kappa_{k,m}:=\frac{m(m+1)}{k(k+1)},
 \qquad
 r_m^0=\varepsilon_{k,m}\sqrt{C_m}\,\widehat r_m,
 \qquad
 r_m=\sqrt{\kappa_{k,m}C_m}\,\widehat r_m
       =\theta_{k,m}r_m^0,
\]
one has
\begin{align}
 \Delta\mathscr S^{\rm ex}_{k,m}
 &:=%
 \mathscr S^{\rm ex}_{k,m}[r_m^0]
 -\mathscr S^{\rm ex}_{k,m}[r_m]\notag\\
 &\ge
 \Pi_m^{(0)}C_m
 \left(\varepsilon_{k,m}^2-\kappa_{k,m}\right)\notag\\
 &=
 \frac{\Pi_m^{(0)}C_m}{k(k+1)w_m^2}
 \bigl[\Phi(w_k)-\Phi(w_m)\bigr]>0.
 \label{eq:Jsharp-clean}
\end{align}
\end{lemma}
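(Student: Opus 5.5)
The plan is to prove the two assertions in order: first the pivot comparison \eqref{eq:exact-pivot-lower-clean}, then the surplus \eqref{eq:Jsharp-clean}. The second is a one-line consequence of the first once $\mathscr S^{\rm ex}_{k,m}$ is read correctly.

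\emph{Step 1: the pivot comparison.} By Lemma~\ref{lem:exact-qkm}, the exact common-cut source metric on the slot-ground space dominates $D^{(0)}_{k,m}$. Shorting is order preserving: it is an infimum over the same complement of pointwise ordered forms. So after shorting $\widehat r_m^\perp$ the exact inherited pivot dominates the corresponding pivot of the lower model. That pivot equals $\langle\widehat r_m,(D^{(0)}_{k,m})^{-1}\widehat r_m\rangle^{-1}$, which I would compute by Sherman--Morrison:
\[
 (D^{(0)}_{k,m})^{-1}=\beta_0^{-1}\Bigl(I+\frac{q_{k,m}}{N_m}\ketbra{u_m}{u_m}\Bigr).
\]
Since $\langle\widehat r_m,u_m\rangle=a_m$ and $\|\widehat r_m\|=1$, this gives $\langle\widehat r_m,(D^{(0)})^{-1}\widehat r_m\rangle=\beta_0^{-1}(1+q_{k,m}a_m^2/N_m)$. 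Using $a_m^2=M_m-h_m$, the pivot is therefore
\[
 \frac{\beta_0N_m}{N_m+q_{k,m}a_m^2}=\frac{\beta_0N_m}{\mu_m}=\Pi_m^{(0)}.
\]
Positivity follows from $N_m\ge\log2$ (by \eqref{eq:qM-half}) and $\mu_m\ge\log2$ (by \eqref{eq:mu-log2}).

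\emph{Step 2: reading $\mathscr S^{\rm ex}_{k,m}$.} I would take $\mathscr S^{\rm ex}_{k,m}$ to be the quadratic form of the surviving inherited $1\times1$ block, evaluated on vectors of the line $\C\widehat r_m$. Then $\mathscr S^{\rm ex}_{k,m}[c\,\widehat r_m]=P^{\rm ex}_{k,m}|c|^2$. Substituting $r_m^0=\varepsilon_{k,m}\sqrt{C_m}\,\widehat r_m$ and $r_m=\sqrt{\kappa_{k,m}C_m}\,\widehat r_m$ gives
\[
 \Delta\mathscr S^{\rm ex}_{k,m}=P^{\rm ex}_{k,m}C_m(\varepsilon_{k,m}^2-\kappa_{k,m}).
\]

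\emph{Step 3: identifying the bracket.} Comparing \eqref{eq:parent-ground-mass} with the definition of $\kappa_{k,m}$ shows $\kappa_{k,m}=p_{k,n}/n$ for every $n\in\mathcal N_{k,m}$. So Lemma~\ref{lem:parent-ground-surplus} gives
\[
 \varepsilon_{k,m}^2-\kappa_{k,m}=\frac{\Phi(w_k)-\Phi(w_m)}{k(k+1)w_m^2}>0,
\]
and likewise $\theta_{k,m}^2=\kappa_{k,m}/\varepsilon_{k,m}^2$, which is the relation $r_m=\theta_{k,m}r_m^0$. Since this bracket is strictly positive, the inequality $P^{\rm ex}_{k,m}\ge\Pi_m^{(0)}$ from Step~1 can be multiplied through, which gives \eqref{eq:Jsharp-clean}.

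\emph{Main obstacle.} The hard step is Step~2, not Steps~1 or~3, which are mechanical. The statement does not define $\mathscr S^{\rm ex}_{k,m}$ explicitly. If it is a purely quadratic form on the inherited line, the argument above closes. If instead it carries forcing or linear terms from the post-old-core primal form, the difference is no longer $P^{\rm ex}_{k,m}C_m(\varepsilon^2-\kappa)$, and the lower bound would need a separate argument. I would first try to establish the quadratic reading directly from the construction of the simultaneous common-complement short, and I would flag this identification as the point requiring the most scrutiny. More broadly, positivity of $\Delta\mathscr S^{\rm ex}_{k,m}$ alone says nothing about the downstream use of this surplus in the claimed proof of the main theorem; that use must be checked independently.
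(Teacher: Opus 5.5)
Your proposal is correct and follows the paper's route: monotonicity of shorts carries the lower-form order from $D^{(0)}_{k,m}$ through the common-complement and $\widehat r_m^\perp$ shorts. Collinearity of $r_m^0$ and $r_m$ then gives $\Delta\mathscr S^{\rm ex}_{k,m}=P^{\rm ex}_{k,m}C_m(\varepsilon_{k,m}^2-\kappa_{k,m})$, and Lemma~\ref{lem:parent-ground-surplus} (with $\kappa_{k,m}=p_{k,n}/n$) supplies the bracket and its sign. The paper takes $\Pi_m^{(0)}$ to be the lower-model pivot by definition after \eqref{eq:Pi0-clean}, so your Sherman--Morrison check that this pivot equals $\beta_0N_m/\mu_m$ is a useful added detail; and the quadratic reading of $\mathscr S^{\rm ex}_{k,m}$ you flag is exactly the normalization the paper uses implicitly here and states later in \eqref{eq:P2-metric-normalization}.
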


\begin{proof}
The analytic one-defect lower form is an operator inequality on the entire
one-cell space, while the coherent multi-source theorem takes the variational
infimum over one common complement.  Monotonicity of shorted operators
therefore preserves the lower-form order on the whole surviving source-ground
space; a further short of \(\widehat r_m^\perp\) gives
\eqref{eq:exact-pivot-lower-clean}.  The two inherited vectors are collinear,
so their exact energy difference is
\[
 P_{k,m}^{\rm ex}C_m
 (\varepsilon_{k,m}^2-\kappa_{k,m}).
\]
Use \eqref{eq:exact-pivot-lower-clean} and then
Lemma~\ref{lem:parent-ground-surplus}.  Strictness follows from the strict
decrease of \(\Phi\).
\end{proof}

\begin{lemma}[Mixed inherited--mismatch block]
\label{lem:mixed-parent-clean}
In the normalized inherited coordinate
\[
 X_m:=\varepsilon_{k,m}\sqrt{C_m}\,c_m^{\rm inh},
\]
the two-scale surplus and the mismatch debit dominate the Hermitian block
\begin{equation}
 \boxed{
 \mathcal P_{k,m}:=
 \begin{pmatrix}
 (1-\theta_{k,m}^2)\Pi_m^{(0)}
 &-\theta_{k,m}\Gamma_m\\
 -\theta_{k,m}\overline{\Gamma_m}&\chi_m
 \end{pmatrix}\succeq0.}
 \label{eq:parent-P-clean}
\end{equation}
If \(\chi_m>0\), the determinant is strictly positive.
\end{lemma}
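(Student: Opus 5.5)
The plan is to reduce \eqref{eq:parent-P-clean} to the two scalar inputs already proved, namely the Cauchy--Schwarz bound \eqref{eq:Gamma-CS-clean} and the rational parent estimate \eqref{eq:rho-Pi-theta-clean}. A $2\times2$ Hermitian matrix is positive semidefinite exactly when both diagonal entries and the determinant are non-negative, so I check these three quantities.

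\emph{Diagonal entries.} By Lemma~\ref{lem:parent-ground-surplus}, $\theta_{k,m}^2=\Phi(w_m)/\Phi(w_k)<1$. By \eqref{eq:Pi0-clean}, $\Pi_m^{(0)}=\beta_0N_m/\mu_m>0$, since $N_m\ge\log2$ by \eqref{eq:qM-half} and $\mu_m\ge\log2$ by \eqref{eq:mu-log2}. Hence $(1-\theta_{k,m}^2)\Pi_m^{(0)}>0$. Also $\chi_m\ge0$, because it is the inverse-metric energy of $\lambda_m^\perp$ for the positive block $D^{(0)}_{m,\perp}$.

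\emph{Determinant.} Abbreviate $\theta=\theta_{k,m}$, $\Pi=\Pi_m^{(0)}$, $\rho=\rho_m$, $\chi=\chi_m$, $\Gamma=\Gamma_m$. Then
\[
 \det\mathcal P_{k,m}=(1-\theta^2)\Pi\chi-\theta^2|\Gamma|^2
 \ge(1-\theta^2)\Pi\chi-\theta^2\rho\chi
\]
by \eqref{eq:Gamma-CS-clean}. Inserting $\rho<\tfrac{189}{250}(1-\theta^2)\Pi$ from \eqref{eq:rho-Pi-theta-clean} gives
\[
 \det\mathcal P_{k,m}\ge(1-\theta^2)\Pi\chi\Bigl(1-\tfrac{189}{250}\theta^2\Bigr).
\]
This is non-negative, and strictly positive whenever $\chi>0$, since every factor is then positive. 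If $\chi=0$, then Cauchy--Schwarz forces $\Gamma=0$, so the matrix is diagonal with non-negative entries. This gives positive semidefiniteness in both cases, together with the strict determinant claim. Note that the full unit weight on $\chi_m$ is more than needed: the same computation with weight $c_M\chi_m$ recovers Lemma~\ref{lem:master-mixed-block}.

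\emph{Domination by the surplus and the debit.} This part of the statement I read as bookkeeping in the coordinate $X_m$, and I would handle it as follows. By Lemma~\ref{lem:Jsharp-clean} applied along the inherited direction, the two-scale surplus evaluated on $c_m^{\rm inh}$ is
\[
 |c_m^{\rm inh}|^2\Delta\mathscr S^{\rm ex}_{k,m}\ge\Pi\,C_m\,\varepsilon_{k,m}^2(1-\theta^2)\,|c_m^{\rm inh}|^2=(1-\theta^2)\Pi\,|X_m|^2,
\]
using $\kappa_{k,m}=\theta^2\varepsilon_{k,m}^2$, which follows from $r_m=\theta r_m^0$. This produces the $(1,1)$ entry. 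The mismatch debit contributes the $\chi_m$ entry, and the cross term $-\theta\Gamma_m$ is the inherited--mismatch coupling: the coupling vector $c_m$ evaluated at the inherited amplitude scaled by $\theta$.

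\emph{Main obstacle.} I expect the hardest step to be this identification, that is, checking that the off-diagonal coefficient appearing in the true Schur contribution is exactly $\theta\Gamma_m$ in the $X_m$ normalization, with no stray factor of $\varepsilon_{k,m}$ or $\sqrt{C_m}$. The positivity itself is elementary once the entries are identified.
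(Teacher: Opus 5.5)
Your proposal is correct and follows essentially the same route as the paper. The paper also reads the $(1,1)$ entry off \eqref{eq:Jsharp-clean} in the $X_m$ normalization, and lower-bounds the determinant by $(1-\theta_{k,m}^2)\Pi_m^{(0)}\chi_m\bigl(1-\tfrac{189}{250}\theta_{k,m}^2\bigr)$ via \eqref{eq:Gamma-CS-clean} and \eqref{eq:rho-Pi-theta-clean}, treating $\chi_m=0$ through $\Gamma_m=0$; and it likewise only asserts, without deriving, that $\theta_{k,m}\Gamma_m$ is the correct off-diagonal coefficient, so the identification you flag as the main obstacle is not supplied there either.
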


\begin{proof}
The first diagonal entry is exactly the normalized form of
\eqref{eq:Jsharp-clean}; the factor \(\theta_{k,m}\) in the mixed term is the
true inherited-ground coefficient.  By \eqref{eq:Gamma-CS-clean} and
\eqref{eq:rho-Pi-theta-clean},
\begin{align*}
 \det\mathcal P_{k,m}
 &=(1-\theta_{k,m}^2)\Pi_m^{(0)}\chi_m
   -\theta_{k,m}^2|\Gamma_m|^2\\
 &>(1-\theta_{k,m}^2)\Pi_m^{(0)}\chi_m
   \left(1-\frac{189}{250}\theta_{k,m}^2\right)\ge0.
\end{align*}
Since \(0<\theta_{k,m}^2<1\), the last parenthesis is larger than \(61/250\).
If \(\chi_m=0\), \eqref{eq:Gamma-CS-clean} gives \(\Gamma_m=0\), so the
same conclusion follows.
\end{proof}

\begin{lemma}[Harmonic-vector identification]
\label{lem:Fsharp-harmonic-clean}
Let the exact post-old-core Schur operator \(T_k\) be split with respect to
\(\mathcal K_k^\perp\oplus\mathbb C\mathfrak e_k\), and let
\(f_k^\natural\) be its Schur minimizer with \(\mathfrak e_k\)-coordinate
one.  Then
\begin{equation}
 \boxed{F_k^\#=H_kf_k^\natural.}
 \label{eq:Fsharp-ID-clean}
\end{equation}
Hence the full-form transport, common-source short, endpoint fold and the
second Sherman--Morrison reduction act on one and the same normalized harmonic
vector.
\end{lemma}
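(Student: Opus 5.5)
The plan is to characterize both sides of \eqref{eq:Fsharp-ID-clean} as the same vector. Each lies in $\mathcal H_{a_{k+1}}$, has $\mathfrak e_k$-coordinate equal to one, and is $A_{k+1}$-orthogonal to the complementary space $X_k=\mathcal H_{a_k}\oplus\mathcal K_k^\perp$. Equivalently, each satisfies $A_{k+1}F\in\mathbb C\mathfrak e_k$. I will then show that this condition determines the vector uniquely. The standing hypotheses are those of Lemmas~\ref{lem:green-representation} and~\ref{lem:green-second-SM}: $A_k\succ0$, $\mathsf D_k\succ0$, $\Pi_k>0$ and $\delta_k^Q>0$.

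\emph{Step 1: $F_k^\#$ has the property.} Write $A_{k+1}=C_{k+1}-2\ketbra{s_{k+1}}{s_{k+1}}$ in the regrouped splitting $X_k\oplus\mathbb C\mathfrak e_k$ of Lemma~\ref{lem:green-second-SM}. Its blocks are $A_{X,k}$ on $X_k$ and the coupling $b_{A,k}=b_k-2w_k\overline{\tau_k}$. The $X_k$-component of $A_{k+1}F_k^\#$ is then
\[
 -A_{X,k}A_{X,k}^{-1}b_{A,k}+b_{A,k}=0 .
\]
This is the same block computation as in the proof of Lemma~\ref{lem:harmonic-graph}.

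\emph{Step 2: $H_kf_k^\natural$ has the property.} Lemma~\ref{lem:harmonic-graph} shows that the old physical component of $A_{k+1}H_kf$ vanishes and that the collar component equals $T_kf$. By definition, $f_k^\natural$ is the Schur minimizer of $T_k$ in the splitting $\mathcal K_k^\perp\oplus\mathbb C\mathfrak e_k$ with $\mathfrak e_k$-coordinate one, namely
\[
 f_k^\natural=\binom{-(T_k)_{\perp\perp}^{-1}(T_k)_{\perp e}}{1}.
\]
Hence $T_kf_k^\natural\in\mathbb C\mathfrak e_k$. Since $H_k$ acts as the identity on the collar coordinate, $H_kf_k^\natural$ also has $\mathfrak e_k$-coordinate one. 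Here one must check that the $\mathfrak e_k$ of the collar is literally the same vector as the $\mathfrak e_k$ of the regrouped splitting. This holds because $\mathcal K_k\subset\mathcal H_{a_{k+1}}$ is a literal physical subspace by \eqref{eq:physical-polar-split}.

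\emph{Step 3: uniqueness, which is the main obstacle.} Let $d:=F_k^\#-H_kf_k^\natural$. Then $d\in X_k$ and $A_{k+1}d\in\mathbb C\mathfrak e_k$, so $A_{X,k}d=0$. It therefore suffices to show that $A_{X,k}\succ0$, and similarly that $(T_k)_{\perp\perp}$ is invertible, so that $f_k^\natural$ is well defined.

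First, $K_k$ is the compression of $C_{k+1}$ to $\mathcal H_{a_k}\oplus\mathcal K_k^\perp$. Its Schur complement over $C_k\succ0$ is $\mathsf D_k\succ0$, so $K_k\succ0$. Next, $A_{X,k}=K_k-2\ketbra{w_k}{w_k}$ is a rank-one perturbation of $K_k$. By Sherman--Morrison it is positive if and only if $1-2w_k^*K_k^{-1}w_k=\delta_k^Q>0$, which is a hypothesis. The same argument applies to $(T_k)_{\perp\perp}$. By associativity of shorts (Lemma~\ref{lem:root-lift-final}), it is the Schur complement of $A_k$ inside $A_{X,k}$, and so it is positive as well.

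The remaining care is bookkeeping: the conjugation convention in $b_{A,k}$, and the fact that $A_{X,k}^{-1}$ exists on the closed form domain rather than only on the core. I would handle the latter through the closed-form statement of Proposition~\ref{prop:full-form-transport}. With these checks done, $d=0$, which gives \eqref{eq:Fsharp-ID-clean}.
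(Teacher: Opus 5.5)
Your proof is correct and follows the paper's argument. In both, the two vectors are shown to have $\mathfrak e_k$-coordinate one and to be $A_{k+1}$-orthogonal to $\mathcal H_{a_k}\oplus\mathcal K_k^\perp$: for $F_k^\#$ through its defining block row, and for $H_kf_k^\natural$ through Lemma~\ref{lem:harmonic-graph} together with the Schur equation for $f_k^\natural$. Uniqueness then forces equality; the paper simply cites uniqueness of the Schur minimizer, while you prove it explicitly from $A_{X,k}\succ0$, using $K_k\succ0$ and $\delta_k^Q>0$.
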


\begin{proof}
The vector \(H_kf_k^\natural\) is \(A_{k+1}\)-orthogonal to the old physical
space by the harmonic-graph identity and to \(\mathcal K_k^\perp\) by the
Schur equation for \(f_k^\natural\).  It has final ground coordinate one.
The vector \(F_k^\#\) has the same ground coordinate and its defining first
row \(A_{X,k}x+b_{A,k}=0\) makes it \(A_{k+1}\)-orthogonal to the whole
complement of \(\mathfrak e_k\).  Uniqueness of the Schur minimizer gives
\eqref{eq:Fsharp-ID-clean}.
\end{proof}

\begin{lemma}[Fold remainder on the identified harmonic vector]
\label{lem:fold-remainder-clean}
Apply Lemma~\ref{lem:post-short-scalar} to the vector
\(F_k^\#=H_kf_k^\natural\).  Put
\[
 E_k:=\int_{R_k}a_{k,+}|x_{k,+}|^2,
 \qquad
 u_k:=\frac{|\theta_{k,+}|^2}{J_kE_k}\in[0,1],
 \qquad
 c_k:=\gamma_k^cJ_k.
\]
Then the cut energy \(Q_k\) and scalar debit \(D_k\) satisfy
\begin{equation}
 \boxed{Q_k-D_k=E_k(1-u_k)\ge0.}
 \label{eq:fold-remainder-clean}
\end{equation}
Equality can occur only on the pure folded ground profile.
\end{lemma}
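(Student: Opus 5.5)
The plan is to make the two scalars in \eqref{eq:fold-remainder-clean} explicit in terms of the moments of Lemma~\ref{lem:post-short-scalar} and then subtract. Corollary~\ref{cor:fold-forcing} and Lemma~\ref{lem:Fsharp-harmonic-clean} show that the hypothesis $g_k=\mathscr A_k^{\rm pre}x_k$ holds for the transported coordinates of $F_k^\#=H_kf_k^\natural$, so Lemma~\ref{lem:post-short-scalar} applies verbatim to this vector. I would take the cut energy to be the quadratic form of the post-short operator \eqref{eq:postshort-cut} on the surviving right component:
\[
 Q_k:=\langle x_{k,+},\mathscr A_k^{\rm cut}x_{k,+}\rangle
 =\int_{R_k}a_{k,+}|x_{k,+}|^2-\gamma_k^c|\theta_{k,+}|^2
 =E_k-c_k\frac{|\theta_{k,+}|^2}{J_k}.
\]
I would take the scalar debit to be the folded ground charge divided by the physical pivot, $D_k:=|\widehat h_{0,k}|^2/\Pi_k^{\rm phys}$.

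For the debit, the first identity in \eqref{eq:postshort-scalar-bound} gives
\[
 D_k=\frac{(1-c_k)}{J_k}|\theta_{k,+}|^2 .
\]
This identity already encodes the cancellation of the small pivot against the factor $\Delta_k/(1-\beta_kI_k)$ carried by $m_k$ in \eqref{eq:postshort-moment}. Subtracting, the $c_k$ terms cancel exactly:
\[
 Q_k-D_k=E_k-\frac{|\theta_{k,+}|^2}{J_k}=E_k(1-u_k).
\]
At a limiting zero of $\Pi_k^{\rm phys}$, I would interpret $D_k$ through this closed form, by the continuity remark at the end of Lemma~\ref{lem:post-short-scalar}. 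The identity then persists, since neither side involves $(\Pi_k^{\rm phys})^{-1}$.

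Non-negativity, that is $u_k\in[0,1]$, is the weighted Cauchy--Schwarz inequality already used at the end of the proof of Lemma~\ref{lem:post-short-scalar}:
\[
 |\theta_{k,+}|^2=\Bigl|\int_{R_k}a_{k,+}^{-1/2}\,a_{k,+}^{1/2}x_{k,+}\Bigr|^2\le J_kE_k .
\]
The degenerate case $E_k=0$ forces $x_{k,+}=0$; there I would set $u_k=0$ by convention. Equality in Cauchy--Schwarz holds iff $a_{k,+}^{1/2}x_{k,+}$ is proportional to $a_{k,+}^{-1/2}$, i.e. $x_{k,+}\propto a_{k,+}^{-1}\one_{R_k}$. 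This is precisely the pure folded ground profile, the profile realizing $J_k$. That gives the equality clause.

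I expect the main obstacle to be not the algebra but the bookkeeping of the normalizations. One must confirm that the ``cut energy'' and ``scalar debit'' used later in the paper are exactly $\langle x_{k,+},\mathscr A_k^{\rm cut}x_{k,+}\rangle$ and $|\widehat h_{0,k}|^2/\Pi_k^{\rm phys}$, with the same $|R_k|$ normalization in \eqref{eq:postshort-hhat}--\eqref{eq:postshort-Pi}. In particular, the factors $|R_k|$ must cancel as claimed, and the fold/gauge unitaries must not rescale $x_{k,+}$. I would check this first against the definitions in Lemma~\ref{lem:post-short-scalar}, and only then perform the two-line subtraction.
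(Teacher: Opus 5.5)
Your proposal is correct and is essentially the paper's proof. The paper likewise writes the cut energy as $Q_k=E_k-\gamma_k^c|\theta_{k,+}|^2=E_k(1-c_ku_k)$, reads off $D_k=(1-c_k)u_kE_k$ from the first identity in \eqref{eq:postshort-scalar-bound}, subtracts, and obtains the equality clause from the equality case $x_{k,+}\propto a_{k,+}^{-1}$ of the weighted Cauchy--Schwarz step; your treatment of the degenerate cases $E_k=0$ and $\Pi_k^{\rm phys}\to0$ is a harmless addition that the paper leaves implicit.
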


\begin{proof}
The exact cut identity gives
\[
 Q_k=E_k-\gamma_k^c|\theta_{k,+}|^2
    =E_k(1-c_ku_k),
\]
while Lemma~\ref{lem:post-short-scalar} gives
\[
 D_k=\frac{|\widehat h_{0,k}|^2}{\Pi_k^{\rm phys}}
    =(1-c_k)u_kE_k.
\]
Subtracting yields \eqref{eq:fold-remainder-clean}.  Equality in the weighted
Cauchy--Schwarz step is equivalent to
\(x_{k,+}\propto a_{k,+}^{-1}\), the pure folded ground profile.
\end{proof}

\begin{computercertificate}[Arithmetic base at \(Y=7\)]
\label{cert:base-Y7}
At \(a_7=\frac12\log7\), the rigorous Arb/Rump fifth-cell certificate for the
same odd localized core proves strict positivity even in the stronger
normalization
\[
 C_{a_7,-}-4\ketbra{s^{\rm ph}}{s^{\rm ph}}\succ0.
\]
Its smallest certified eigenvalue lower endpoint is
\[
 1.0705192579869356\times10^{-24}>0.
\]
Therefore the present normalization
\(A_{a_7,-}=C_{a_7,-}-2\ketbra{s^{\rm ph}}{s^{\rm ph}}\) is strictly positive.
\end{computercertificate}

\begin{interlude}
The third gate bore the sign of a Wizard, although there was less magic here
than the picture suggested.  The question was whether the finite-support forms
fit together exactly enough for closure to carry the argument to the global
odd Weil form.

Johnny Nash looked for a hidden condition.  Pierre de Fermat looked for a
missing term.  Marc, from habit, looked for a lock.  Dama Noether checked the
overlaps; Mr.~Fayman checked that no new estimate had been smuggled into the
last step.

Herr G.F.B.R. waited.  ``If the finite rooms really share the same walls,'' he
said, ``the reader should be able to see the door open from the criterion
alone.''
\end{interlude}

\section{Gate III: odd closure and the restricted Weil criterion}
\label{sec:gate3}

The final gate contains no independent complementary-channel estimate.
Theorem~\ref{thm:weil-odd} shows that positivity on the real odd logarithmic
test core already characterizes RH.

\begin{lemma}[Odd-channel compatibility and closure]\label{lem:closure-global}
Assume \(A_{a,-}\succeq0\) for every finite support radius \(a>0\). Then
\[
 \mathcal W_{\infty,-}[f]:=\langle f,A_{a,-}f\rangle,
 \qquad \operatorname{supp}f\subset(-a,a),
\]
is independent of the chosen \(a\), is non-negative on the real compactly
supported odd core, and extends by closure to the global odd Weil form.
\end{lemma}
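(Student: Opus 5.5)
The plan is to derive the three assertions of the statement in order: independence of $a$, non-negativity, then closure. Each rests on an identity already in the paper, and no new estimate is introduced.

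\emph{Step 1: independence of $a$.} Let $f$ be real odd with $\operatorname{supp}f\subset(-a,a)$, and suppose also $\operatorname{supp}f\subset(-a',a')$. Put $b=\max(a,a')$. By \eqref{eq:zero-extension},
\[
 \langle f,A_{a,-}f\rangle=\langle E_{a,b}f,A_{b,-}E_{a,b}f\rangle=\langle f,A_{b,-}f\rangle,
\]
and the same chain holds with $a'$ in place of $a$. Hence $\mathcal W_{\infty,-}[f]$ is well defined. Two points need to be checked here.
\begin{enumerate}
\item The compression $E_{a,b}^*A_bE_{a,b}=A_a$ survives the odd projection. This follows from \eqref{eq:ambient-zero-extension}, since zero extension preserves oddness about the centre of the physical coordinate, and the polar vector \eqref{eq:polar-physical} restricts compatibly.
\item The physical coordinate is the one used, not the rescaled one \eqref{eq:rescaling-unitary}. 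This is exactly why the nesting was set up in $\cH_a$.
\end{enumerate}

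\emph{Step 2: non-negativity.} This is immediate from the hypothesis, since $\mathcal W_{\infty,-}[f]=\langle f,A_{a,-}f\rangle\ge0$ for any admissible $a$.

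\emph{Step 3: identification with the Weil form.} The next step is to show that $\mathcal W_{\infty,-}[f]=\mathcal W[f]$ on $\mathcal D^{\mathbb R}_{\rm odd}$. I will use the definition of the localized operator \eqref{eq:exact-localized-operator} as the explicit-formula form restricted to support $(-a,a)$. Each term is then matched with a term of Weil's explicit formula for a test supported in $(-a,a)$:
\begin{itemize}
\item the archimedean block $H_{\rm Leg}+c_0I+V-K_\gamma$, via Proposition~\ref{prop:full-form-transport};
\item the polar rank-one term, via Lemma~\ref{lem:polar-moment};
\item the prime-power translations with $n<e^{2a}$, which are the only ones with nonzero overlap on that support.
\end{itemize}
With this matching, the statement reduces to the observation that the parity split under $J_a$ corresponds, on centred tests, to the real odd channel of Theorem~\ref{thm:weil-odd}.

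\emph{Step 4: closure.} Since the form is non-negative on a dense core, I would extend it by closure. This requires closability. My first attempt is to use the spectral representation \eqref{eq:centered-zero-sum}. On odd tests the form is a sum of terms $-F(\lambda)^2$, each continuous under $L^2$ convergence of compactly supported tests of bounded support, and the total is lower semicontinuous. If that fails, I fall back on the second sentence of Theorem~\ref{thm:weil}: it is enough to prove the inequality on the dense core. So Theorem~\ref{thm:weil-odd} applies directly to the core inequality, and the closed extension is only a convenience.

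I expect Step 3 to be the main obstacle. It is the precise dictionary between the finite-radius operator $A_{a,-}$ (built from the polar profile $\sinh(y/2)$, the endpoint potential and the normalization $c_0(a)$) and the global Weil functional on odd tests. In particular, one must check that the $a$-dependent constants $c_0(a)$ and $V$ combine with the harmonic form into an $a$-independent archimedean term. That cancellation is exactly Lemma~\ref{lem:zero-extension-proof} together with the $-\log a$ in $c_0(a)$ compensating the rescaling, and I would verify it explicitly.
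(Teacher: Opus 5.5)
Your Steps 1 and 2 are the paper's proof. Independence of $a$ is exactly the compression identity \eqref{eq:zero-extension} in the nested physical spaces, and non-negativity is the hypothesis.

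\textbf{The gap is in Step 4, the closure clause of the lemma.} Your first route to closability does not work as written, for two reasons.
\begin{enumerate}
\item Termwise continuity of $f\mapsto F(\lambda)$ holds only for supports inside a fixed interval $(-b,b)$. The closure to the \emph{global} odd form runs over $\bigcup_b\cH_b$, where supports are unbounded and $f\mapsto F(\lambda)$ is not bounded on $L^2(\mathbb R)$.
\item Getting lower semicontinuity of $\sum_\lambda -F(\lambda)^2$ from termwise continuity (Fatou) needs every term bounded below. But $-F(\lambda)^2=|F(\lambda)|^2$ is precisely RH. You could obtain RH by first running the hypothesis through Step~3 and Theorem~\ref{thm:weil-odd}, but you do not make that move, and even then you only get bounded supports.
\end{enumerate}
Your fallback does not repair this. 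It only observes that Theorem~\ref{thm:weil-odd} needs the inequality on the core, so it drops the closure assertion instead of proving it.

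The missing idea is the one the paper uses. The localized forms are restrictions of one closed explicit-formula form, and the restriction of a closed semibounded form to any subspace is automatically closable, since the closed form is itself a closed extension. Non-negativity then passes to the closure by continuity in the form norm. So once your Step~3 is granted, closability comes for free and no spectral-side argument is needed.

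A smaller point on Step 3: Proposition~\ref{prop:full-form-transport} and Lemma~\ref{lem:polar-moment} do not match the archimedean block and the polar term with the corresponding explicit-formula terms. The first is a unitary change to collar coordinates; the second is a Mellin rewriting of $\langle s^{\rm ph},F\rangle$. In the paper this identification is not derived from them. $A_a$ is by definition the localized explicit-formula operator, and the proof simply invokes that. So Step 3 is definitional rather than the main obstacle. The $c_0(a)$ and endpoint-potential bookkeeping you describe belongs to Step 1: it is what makes \eqref{eq:zero-extension} hold after undoing the rescaling.
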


\begin{proof}
Independence of \(a\) is the exact zero-extension compression identity.
Non-negativity follows from the hypothesis.  Since the localized forms are
restrictions of the closed explicit-formula form, closure of the compactly
supported odd core gives the global odd form.
\end{proof}

\begin{proof}[Proof of Theorem~\ref{thm:main-rh}]
The final harmonic-graph coercivity theorem,
Theorem~\ref{thm:final-master-harmonic}, gives the induction step
$A_{k,-}\succ0\Rightarrow A_{k+1,-}\succ0$ for every $k\ge7$ without invoking
the second Sherman--Morrison vector.  Certificate~\ref{cert:base-Y7} supplies
the base endpoint.  Corollary~\ref{cor:final-gate2} then gives
$A_{a,-}\succeq0$ for every finite support radius.  Finally
Lemma~\ref{lem:closure-global} and Theorem~\ref{thm:weil-odd} give the stated
Riemann-hypothesis conclusion.
\end{proof}

\section{Response placement and the MASTER harmonic graph}
\label{sec:final-master-closure}

This section gives the same-vector closure used in the main proof.  The argument
works directly on the exact $A_k$-harmonic graph, retains the aligned forcing
with its explicit penalty, and uses the simultaneous source cut together with
the folded remainder.  The inherited response is placed in the same reduced
Schur metric by its row equation, so the ground and transverse expenditures are
assembled before the final Schur conclusion.

\begin{lemma}[Exact inherited-response placement]
\label{lem:MASTER-P2-response}
Fix an active parent $m$ and perform the same simultaneous common-complement
short and mismatch short used in Lemma~\ref{lem:Jsharp-clean}.  On the
surviving inherited line $\C\widehat r_m$, write the exact reduced parent
quadratic form, with all other coordinates frozen, as
\begin{equation}
 \mathcal Q^{\rm red}_{k,m}(a)
 =P_{k,m}^{\rm ex}|a|^2
  -2\Rea(\overline a\,\omega_{k,m}^{\rm ex})
  +\mathcal Q^{\rm rest}_{k,m},
 \label{eq:P2-reduced-row}
\end{equation}
where $P_{k,m}^{\rm ex}>0$ is the exact inherited pivot.  If $a_m$ is the
inherited coordinate of the exact Schur response on an $A_k$-harmonic vector,
then
\begin{equation}
 \boxed{P_{k,m}^{\rm ex}a_m=\omega_{k,m}^{\rm ex}.}
 \tag{P2-ROW}\label{eq:P2-row}
\end{equation}
On the surviving one-dimensional inherited line we use the exact reduced
metric normalization
\begin{equation}
 \boxed{
 \mathscr S^{\rm ex}_{k,m}[z\widehat r_m]
 :=P_{k,m}^{\rm ex}|z|^2,
 \qquad z\in\C.}
 \label{eq:P2-metric-normalization}
\end{equation}
Define $c_m^{\rm inh}$ by
\begin{equation}
 a_m=\sqrt{\kappa_{k,m}C_m}\,c_m^{\rm inh}.
 \label{eq:P2-cinh-def}
\end{equation}
Then the actual inherited source response is exactly
\begin{equation}
 \boxed{a_m\widehat r_m=r_mc_m^{\rm inh},}
 \label{eq:P2-actual-response}
\end{equation}
and its source--coupling contribution satisfies
\begin{align}
 \mathcal I^{\rm act}_{k,m}
 &:=-\mathscr S^{\rm ex}_{k,m}[r_mc_m^{\rm inh}]\notag\\
 &=-\mathscr S^{\rm ex}_{k,m}[r_m^0c_m^{\rm inh}]
   +|c_m^{\rm inh}|^2\Delta\mathscr S^{\rm ex}_{k,m}.
 \tag{MASTER-P2}\label{eq:MASTER-P2-certified}
\end{align}
Thus the aligned-reference debit and the favourable comparator surplus are
parts of one exact Schur contribution on the same harmonic response; no
external positive summand is inserted.
\end{lemma}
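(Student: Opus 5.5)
The proof is essentially algebraic once the row equation is in place, so I would organise it around three facts. The first is the variational characterisation of the exact Schur response on the shorted inherited line. The second is the identification $r_m=\sqrt{\kappa_{k,m}C_m}\,\widehat r_m$. The third is the collinearity of $r_m$ and $r_m^0$ along $\widehat r_m$, already used in the proof of Lemma~\ref{lem:Jsharp-clean}.

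\emph{Step 1 (the row equation).} I would start from the fact that shorts are associative and are infima over complements. Lemma~\ref{lem:root-lift-final} gives associativity for the exact augmented block, and Lemma~\ref{lem:split-unitary-short} with Corollary~\ref{cor:physical-arithmetic-short} shows that the root-adapted transport commutes with the old physical short. Hence the exact Schur response of an $A_k$-harmonic vector restricts, on the surviving line $\C\widehat r_m$, to a minimiser of $\mathcal Q^{\rm red}_{k,m}$ in \eqref{eq:P2-reduced-row} with all other coordinates frozen. Since $P^{\rm ex}_{k,m}>0$ by \eqref{eq:exact-pivot-lower-clean}, the function $a\mapsto\mathcal Q^{\rm red}_{k,m}(a)$ is strictly convex. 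Setting its Wirtinger derivative $\partial_{\bar a}$ to zero gives \eqref{eq:P2-row}. Substituting back shows that the minimum equals $\mathcal Q^{\rm rest}_{k,m}-P^{\rm ex}_{k,m}|a_m|^2$. This is the announced statement that the inherited coupling contributes the negative metric energy $-\mathscr S^{\rm ex}_{k,m}[a_m\widehat r_m]$ in the normalisation \eqref{eq:P2-metric-normalization}.

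\emph{Step 2 (actual response).} By definition, $r_m=\sqrt{\kappa_{k,m}C_m}\,\widehat r_m$. Multiplying by $c_m^{\rm inh}$ and using \eqref{eq:P2-cinh-def} gives \eqref{eq:P2-actual-response} immediately.

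\emph{Step 3 (the split).} All three vectors $r_m$, $r_m^0$ and $a_m\widehat r_m$ lie on $\C\widehat r_m$. On that line, \eqref{eq:P2-metric-normalization} gives
\[
 \mathscr S^{\rm ex}_{k,m}[r_mc]=P^{\rm ex}_{k,m}\kappa_{k,m}C_m|c|^2,
 \qquad
 \mathscr S^{\rm ex}_{k,m}[r_m^0c]=P^{\rm ex}_{k,m}\varepsilon_{k,m}^2C_m|c|^2 .
\]
Subtracting, and using the collinear-energy formula for $\Delta\mathscr S^{\rm ex}_{k,m}$ from the proof of Lemma~\ref{lem:Jsharp-clean}, gives
\[
 \mathscr S^{\rm ex}_{k,m}[r_m^0c]-\mathscr S^{\rm ex}_{k,m}[r_mc]=|c|^2\Delta\mathscr S^{\rm ex}_{k,m}.
\]
Taking $c=c_m^{\rm inh}$ and combining with Step~1 yields \eqref{eq:MASTER-P2-certified}.

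\emph{Main obstacle.} The hard part is Step~1, not the algebra. One must justify that the ``exact Schur response on an $A_k$-harmonic vector'' really coincides, in its inherited coordinate, with the minimiser of the \emph{reduced} parent form. This requires two things. First, the simultaneous common-complement short of Theorem~\ref{thm:MASTER-common-cut} and the subsequent mismatch short must be compatible with the harmonic-graph pullback of Lemma~\ref{lem:harmonic-graph}. Second, the frozen coordinates must enter only through $\omega^{\rm ex}_{k,m}$ and $\mathcal Q^{\rm rest}_{k,m}$.

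I would first attempt this through Lemma~\ref{lem:Fsharp-harmonic-clean}, which identifies $F_k^\#=H_kf_k^\natural$, together with uniqueness of Schur minimisers for strictly positive pivots. This point needs genuine care. It is valid only insofar as the shorts are performed on the exact form rather than on the lower model $D^{(0)}_{k,m}$. An order inequality between lower and exact forms does not transfer minimisers, so no lower-model minimiser may be substituted for the exact response.
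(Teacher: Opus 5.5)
Your argument is correct and follows the paper's proof. Stationarity of the reduced row at the exact Schur minimiser gives \eqref{eq:P2-row}. Substitution turns the inherited terms into $-P_{k,m}^{\rm ex}|a_m|^2=-\mathscr S^{\rm ex}_{k,m}[r_mc_m^{\rm inh}]$. Quadratic homogeneity on the line $\C\widehat r_m$, with $r_m=\theta_{k,m}r_m^0$, then gives \eqref{eq:MASTER-P2-certified}.

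The step you flag as the main obstacle is handled in the paper only by noting that the inherited coordinate is among the variables eliminated by the exact short. A joint minimiser is therefore automatically stationary in that coordinate with everything else frozen. No appeal to Lemma~\ref{lem:Fsharp-harmonic-clean} is needed, and that lemma concerns only the particular vector $F_k^\#$ anyway.
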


\begin{proof}
The inherited coordinate is among the variables eliminated by the exact Schur
short.  Therefore stationarity of \eqref{eq:P2-reduced-row} at its minimizer
gives \eqref{eq:P2-row}.  Substitution yields
\[
 P_{k,m}^{\rm ex}|a_m|^2
 -2\Rea(\overline{a_m}\omega_{k,m}^{\rm ex})
 =-P_{k,m}^{\rm ex}|a_m|^2.
\]
By \eqref{eq:P2-metric-normalization}, \eqref{eq:P2-cinh-def} and
$r_m=\sqrt{\kappa_{k,m}C_m}\,\widehat r_m$, the right-hand side is precisely
$-\mathscr S^{\rm ex}_{k,m}[r_mc_m^{\rm inh}]$, the negative metric energy of
the actual response, proving \eqref{eq:P2-actual-response}.  Since
$r_m=\theta_{k,m}r_m^0$ and the same exact one-dimensional metric is used on
both vectors, quadratic homogeneity gives
\[
 -\mathscr S^{\rm ex}_{k,m}[r_mc]
 =-\mathscr S^{\rm ex}_{k,m}[r_m^0c]
  +|c|^2\bigl(
      \mathscr S^{\rm ex}_{k,m}[r_m^0]
     -\mathscr S^{\rm ex}_{k,m}[r_m]\bigr),
\]
which is \eqref{eq:MASTER-P2-certified}.  Strict positivity of the last
difference when $c\ne0$ is Lemma~\ref{lem:Jsharp-clean}.
\end{proof}

\begin{lemma}[Homogeneous aligned-forcing penalty on the exact parent block]
\label{lem:MASTER-aligned-penalty}
Put
\[
 A_m:=(1-\theta_{k,m}^2)\Pi_m^{(0)},\qquad
 W_{k,m}:=\frac{1-\theta_{k,m}}{1+\theta_{k,m}}
          \frac{b_m^2}{\Pi_m^{(0)}},\qquad
 W_k:=\sum_mW_{k,m}.
\]
For every complex inherited coordinate $X$ and every complex target-ground
amplitude $\gamma$, the exact parent mismatch, mixed and aligned-forcing terms
obey
\begin{equation}
 \boxed{\begin{aligned}
 &-\chi_m|\gamma|^2+A_m|X|^2
 -2\theta_{k,m}|\Gamma_m|\,|X|\,|\gamma|
 -2(1-\theta_{k,m})|b_m|\,|X|\,|\gamma|\\
 &\qquad\ge
 -\left(\frac{439}{250}\chi_m+\frac52W_{k,m}\right)|\gamma|^2.
 \end{aligned}}
 \label{eq:aligned-parent-penalty}
\end{equation}
In particular, the estimate is homogeneous in the actual amplitude of the
same $A_k$-harmonic vector; no normalization $\gamma=1$ is used in the final
assembly.
\end{lemma}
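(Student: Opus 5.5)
The plan is to reduce \eqref{eq:aligned-parent-penalty} to a one-variable quadratic minimization in $x:=|X|\ge0$ with $g:=|\gamma|$ held fixed. We then feed in the two parent estimates already proved: the Cauchy--Schwarz bound \eqref{eq:Gamma-CS-clean} and the rational parent estimate of Lemma~\ref{lem:P189-proved}, in its stronger form with the universal coefficient $c_*<0.441$. Put
\[
 B_m:=\theta_{k,m}|\Gamma_m|+(1-\theta_{k,m})|b_m|\ge0 .
\]
The left-hand side is then $-\chi_m g^2+A_mx^2-2B_mxg$. The term $-\chi_m g^2$ is the first unit in $439/250=1+189/250$, so it is enough to show
\[
 A_mx^2-2B_mxg\ge-\Bigl(\tfrac{189}{250}\chi_m+\tfrac52W_{k,m}\Bigr)g^2 .
\]

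The first step is to complete the square. Since $0<\theta_{k,m}<1$ by Lemma~\ref{lem:parent-ground-surplus} and $\Pi_m^{(0)}>0$ by \eqref{eq:Pi0-clean}, we have $A_m>0$. Hence $A_mx^2-2B_mxg\ge-(B_m^2/A_m)g^2$. This bound is homogeneous in $g$, as the statement requires, and no normalization of $\gamma$ enters.

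The second step splits $B_m^2$ by the weighted inequality $(p+q)^2\le(1+t)p^2+(1+t^{-1})q^2$, taking $p=\theta_{k,m}|\Gamma_m|$, $q=(1-\theta_{k,m})|b_m|$ and $t=2/3$, so that $1+t^{-1}=5/2$. The aligned part is exact:
\[
 \frac{q^2}{A_m}=\frac{(1-\theta_{k,m})^2b_m^2}{(1-\theta_{k,m}^2)\Pi_m^{(0)}}
 =\frac{1-\theta_{k,m}}{1+\theta_{k,m}}\frac{b_m^2}{\Pi_m^{(0)}}=W_{k,m}.
\]
This is why $W_{k,m}$ has exactly the stated form, and it contributes $\tfrac52W_{k,m}$. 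For the mixed part, \eqref{eq:Gamma-CS-clean} together with the proof of Lemma~\ref{lem:P189-proved}, which gives $\rho_m/\Pi_m^{(0)}\le c_*(1-\theta_{k,m}^2)$, yields
\[
 \frac{p^2}{A_m}\le\frac{\theta_{k,m}^2\rho_m\chi_m}{(1-\theta_{k,m}^2)\Pi_m^{(0)}}\le c_*\theta_{k,m}^2\chi_m\le c_*\chi_m .
\]
It therefore contributes at most $\tfrac53c_*\chi_m$. If $\chi_m=0$, then $\Gamma_m=0$ and $p=0$, so the bound holds trivially in that case.

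The last step is the numerical comparison $\tfrac53c_*<\tfrac53\cdot0.441=0.735<0.756=\tfrac{189}{250}$. I expect this to be the only real obstacle. If one uses only the displayed $189/250$ bound of Lemma~\ref{lem:P189-proved}, the factor $1+t=5/3$ would require $\theta_{k,m}^2\le3/5$, and that is not available. The argument therefore has to invoke the stronger constant $c_*$ proved inside that lemma, or else choose $t$ depending on $\theta_{k,m}$. The choice $t=2/3$ is what fixes the coefficient $5/2$ on $W_{k,m}$, and with $c_*$ it leaves a uniform margin.
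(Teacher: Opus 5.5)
Your proof is correct and is essentially the paper's argument. The paper splits $A_m=\tfrac35A_m+\tfrac25A_m$ and applies Young separately to the mixed and aligned cross terms, which is algebraically the same as your single completion of the square followed by $(p+q)^2\le\tfrac53p^2+\tfrac52q^2$; like you, it relies on the stronger constant $c_*$ from the proof of Lemma~\ref{lem:P189-proved} (via $\tfrac53c_*<\tfrac{189}{250}$) rather than the displayed $189/250$ bound.
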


\begin{proof}
If $\gamma=0$, the left-hand side is $A_m|X|^2\ge0$, so the claim is
immediate.  Suppose $\gamma\ne0$ and put $Y:=X/|\gamma|$.  After division by
$|\gamma|^2$, split $A_m=(3/5)A_m+(2/5)A_m$.  The first part and Young's
inequality give
\[
 \frac35A_m|Y|^2-2\theta|\Gamma_m||Y|
 \ge-\frac53\frac{\theta^2|\Gamma_m|^2}{A_m}
 >-\frac{189}{250}\chi_m,
\]
using $|\Gamma_m|^2\le\rho_m\chi_m$ and
$\rho_m/\Pi_m^{(0)}<c_*(1-\theta^2)$ with
$(5/3)c_*<189/250$.  The second part gives
\[
 \frac25A_m|Y|^2-2(1-\theta)|b_m||Y|
 \ge-\frac52\frac{1-\theta}{1+\theta}
             \frac{b_m^2}{\Pi_m^{(0)}}
 =-\frac52W_{k,m}.
\]
Adding the pure mismatch debit $-\chi_m$ and multiplying back by
$|\gamma|^2$ proves \eqref{eq:aligned-parent-penalty}.  Thus both the
perpendicular forcing and the aligned forcing are charged at the true
same-vector amplitude.
\end{proof}

\begin{computercertificate}[Aligned MASTER scalar margin]
\label{cert:MASTER-aligned-final}
For $k\ge7$ define
\begin{equation}
 \mathfrak g_k^{\rm M+}:=
 \frac12+\log k
 -\frac{439}{250\log2}\sum_mV_{k,m}
 -\frac52W_k.
 \label{eq:MASTER-gap-plus}
\end{equation}
Then
\begin{equation}
 \boxed{\mathfrak g_k^{\rm M+}>0\qquad(k\ge7).}
 \tag{MASTER-SCALAR$^+$}\label{eq:MASTER-scalar-plus}
\end{equation}
A conservative bound sufficient for the finite interval sweep is
\[
 W_{k,1}\le0.0145\sum_{n\in\mathcal N_{k,1}}\frac{\Lambda(n)^2}{n},
 \qquad
 W_{k,m}\le\frac{0.0301}{m^2}
 \sum_{n\in\mathcal N_{k,m}}\frac{\Lambda(n)^2}{n}\quad(m\ge2).
\]
The directed-interval sweep specified in Appendix~\ref{app:finite-certificates}
gives through $X_0=3\,594\,641$
\begin{verbatim}
PASS MASTER-aligned-safe finite N=3594641
min_gap_lo=1.87814156017033534419244378277 at k=33
\end{verbatim}
For the analytic tail, the proof below gives the stronger elementary bound
\[
 W_k<0.047\log k\qquad(k\ge2\,000\,000).
 \tag{MASTER-W-TAIL}\label{eq:MASTER-W-tail}
\]
Combining this with the already proved arithmetic estimate
$\sum_mV_{k,m}<0.14736\log k+2.904$ gives, for $k\ge X_0$,
\[
 \mathfrak g_k^{\rm M+}
 >0.509182229608\log k-6.856913716191.
\]
At $X_0=3\,594\,641$ the right-hand side is larger than
$0.829168965249$, and its logarithmic slope is positive.  Hence it remains
positive for every $k\ge X_0$.  The finite interval sweep covers every
$7\le k\le X_0$, so the finite and analytic ranges overlap at $X_0$ with no
integer gap and prove \eqref{eq:MASTER-scalar-plus} for every integer $k\ge7$.
\end{computercertificate}

\begin{proof}
The coefficient bounds follow from
\[
 \frac{1-\theta}{1+\theta}\le\frac{w_m^2}{48},\qquad
 \Pi_m^{(0)}\ge\log2,\qquad
 b_m^2\le\sum_{n\in\mathcal N_{k,m}}\frac{\Lambda(n)^2}{n},
\]
together with $w_1=\log2$, $w_m<1/m$ for $m\ge2$ and the strict numerical
inequalities
$\log2/48<0.0145$ and $(48\log2)^{-1}<0.0301$.
The finite certificate evaluates the prime-power grouping with interval
enclosures of integer logarithms and outward rounding.

It remains to justify the analytic aligned penalty without importing a
separate numerical premise.  Put $L=\log k$.  Lemma~\ref{lem:chebyshev-capacity}
gives
\[
 \psi(x)\le C_{\rm tail}x+\log x\qquad(x\ge2).
\]
For $m=1$, every active $n$ satisfies $n>k/2$; hence
\[
 \frac{W_{k,1}}{L}
 \le \frac{\log2}{24}\left(C_{\rm tail}+\frac{L}{k}\right).
\]
For $m\ge2$, the relation $m=\lfloor k/n\rfloor$ gives
$1/m<(3/2)n/k$, while all such $n$ satisfy $n\le k/2$.  Therefore
\[
 \begin{aligned}
 \frac1L\sum_{m\ge2}W_{k,m}
 &\le \frac{9}{192\log2}\frac1{k^2}
       \sum_{n\le k/2}n\Lambda(n)\\
 &\le \frac{9}{192\log2}
 \left(\frac{C_{\rm tail}}4+\frac{\log(k/2)}{2k}\right).
 \end{aligned}
\]
Indeed, the first line uses $\Lambda(n)\le L$ and the second uses
$\sum_{n\le k/2}n\Lambda(n)\le(k/2)\psi(k/2)$.  Both correction terms in the two preceding bounds decrease for $k\ge2\,000\,000$.  Substitution of the explicit
constant from Lemma~\ref{lem:chebyshev-capacity} at that left endpoint gives
\[
 \frac{W_k}{\log k}
 <0.046622996419409<0.047,
\]
which proves \eqref{eq:MASTER-W-tail}.  Inserting this estimate and the tail
bound (171) into \eqref{eq:MASTER-gap-plus} gives the displayed positive
analytic lower bound, completing the global proof of the certificate.
\end{proof}

\begin{lemma}[Exact common-cut/fold intertwining (MASTER-P3)]
\label{lem:MASTER-P3-final}
Let $F=H_kf$ be an exact $A_k$-harmonic vector and perform, in this order, the
complete full-form transport, the fixed-target gauge, the simultaneous
common-complement source short, and the root-adapted endpoint fold used above.
Let $x_k=x_{k,-}\oplus x_{k,+}$ be the resulting folded ground component.
Then the ground-channel contribution obtained from the common-cut form and the
folded cut are the same quadratic-form contribution under the exact unitary and
shorting intertwinings.  More precisely, after shorting the $L_k$ component,
\begin{equation}
 \boxed{
 \mathcal Q_{k}^{\rm cc,gr}[F]
 =\langle x_{k,+},\mathscr A_k^{\rm cut}x_{k,+}\rangle
 =:Q_k
 =E_k-\gamma_k^c|\theta_{k,+}|^2.}
 \tag{MASTER-P3a}\label{eq:MASTER-P3a}
\end{equation}
The final scalar Schur short subtracts exactly
\begin{equation}
 D_k=\frac{|\widehat h_{0,k}|^2}{\Pi_k^{\rm phys}},
 \label{eq:MASTER-P3-D}
\end{equation}
This is the unique scalar debit introduced by the final root short: it is not
an additional charge on top of an earlier copy of the same elimination.  The
quantity $Q_k$ in \eqref{eq:MASTER-P3a} is the pre-scalar-short common-cut
contribution, and $D_k$ is deducted from that contribution once.  Therefore
\begin{equation}
 \boxed{
 \mathcal Q_{k}^{\rm cc,gr}[F]-D_k
 =Q_k-D_k
 =E_k(1-u_k)\ge0.}
 \tag{MASTER-P3b}\label{eq:MASTER-P3b}
\end{equation}
Moreover the direct target-ground diagonal occurs exactly once.  In the
fixed-target normalization its coefficient is
\(
 \mathfrak B_k=\frac12+\log(1/w_k)
\), and for
\(f=\gamma_k(f)\mathfrak e_k+f_k^\perp\) its ground contribution is exactly
\begin{equation}
 \boxed{\mathfrak B_k|\gamma_k(f)|^2.}
 \tag{MASTER-P3c}\label{eq:MASTER-P3c}
\end{equation}
Neither the source short nor the endpoint fold creates a second copy of this
diagonal term.  Thus the common-cut reserve and the fold debit are located on
the same harmonic vector and the same primal contribution.
\end{lemma}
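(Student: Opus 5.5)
The plan is to prove the statement as a chain of exact identities, each taken from a lemma already proved, and then read off \eqref{eq:MASTER-P3b} from Lemma~\ref{lem:fold-remainder-clean}. Start from $F=H_kf$. By Proposition~\ref{prop:full-form-transport}, conjugation by $1\oplus U_A$ carries the complete quadratic form of $A_{k+1}$, including its diagonal/killing part, to the collar form. By Lemma~\ref{lem:fixed-target-gauge}, a single target unitary $W_t$ conjugates every incident branch to the identity, so the ground/transverse splitting $P\oplus Q$ of the target is preserved. The root-adapted Möbius map is block diagonal for the old/new split (Lemma~\ref{lem:endpoint-mobius-nesting}). Lemma~\ref{lem:split-unitary-short} and Corollary~\ref{cor:physical-arithmetic-short} then let the fold be applied before or after the old physical short with the same result. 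The first step is therefore to show that the composite map (transport, gauge, common-complement short, fold) is a single unitary congruence followed by one variational infimum. This makes $\mathcal Q_k^{\rm cc,gr}[F]$ a well-defined quantity attached to the one vector $F$.

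The second step identifies that ground contribution with $\langle x_{k,+},\mathscr A_k^{\rm cut}x_{k,+}\rangle$. I would use Corollary~\ref{cor:fold-forcing}, which gives the row equation $g_k=\mathscr A_k^{\rm pre}x_k$, and then Lemma~\ref{lem:post-short-scalar}, which identifies the Schur complement of the $L_k$ block with $\mathscr A_k^{\rm cut}$ and gives $h_k=\mathscr A_k^{\rm cut}x_{k,+}$. Expanding the quadratic form gives $Q_k=E_k-\gamma_k^c|\theta_{k,+}|^2$, which is \eqref{eq:MASTER-P3a}. For \eqref{eq:MASTER-P3-D}, I would argue that the final scalar short is the unique elimination of the normalized ground coordinate $\widehat h_{0,k}$ with pivot $\Pi_k^{\rm phys}$. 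Associativity of Schur complements (Lemma~\ref{lem:root-lift-final}) ensures that each variable is eliminated exactly once, so $D_k$ is subtracted once. Lemma~\ref{lem:fold-remainder-clean} then gives $Q_k-D_k=E_k(1-u_k)\ge0$.

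For \eqref{eq:MASTER-P3c}, I would use Lemma~\ref{lem:fesh}, which says $\mathfrak B_k$ is a primal diagonal term present before any source minimization. Shorting only eliminates source and complement variables, and the fold is a unitary that preserves $\mathbb C\mathfrak e_k$. Hence the target-ground coefficient $\mathfrak B_k|\gamma_k(f)|^2$ is carried through unchanged, with no second copy of the diagonal created.

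The main obstacle is the second step, and I am not confident it can be closed as stated. Corollary~\ref{cor:fold-forcing} obtains the row equation by \emph{freezing} the complementary transported coordinates and moving their contribution into the forcing. That operation is not a Schur short. A quadratic form with frozen coordinates is in general not equal to its infimum over those coordinates. Moreover, for an $A_{k+1}$-harmonic vector, the values of the frozen coordinates are fixed by the full operator, including the polar rank-one term, and not by the one-defect lower model $\mathscr A_k^{\rm pre}$. Asserting that "the remaining left-hand operator is precisely" the one-defect block therefore replaces the true transported form by a lower model inside an \emph{identity}. A lower bound cannot be used that way: an operator inequality $\mathfrak b\succeq L_0$ does not turn the row equation of $\mathfrak b$ into the row equation of $L_0$.

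What I would try first is to redo the identification as an inequality. I would prove $\mathcal Q_k^{\rm cc,gr}[F]\ge Q_k$ and then check whether $D_k$, which is computed from the lower model, is still an upper bound for the true scalar debit. I expect this comparison to fail in general. Inverting reverses order, so a larger true pivot helps, but the true forcing $\widehat h_{0,k}$ is not controlled by the lower model. Any genuine proof must therefore supply an independent estimate here. Given that the conclusion feeds into a claimed proof of RH, this step deserves the most scrutiny before the chain of identities is accepted.
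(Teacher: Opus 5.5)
Your chain of steps is exactly the one in the paper's proof. It uses transport by Proposition~\ref{prop:full-form-transport}, the fixed-target gauge, and the block-diagonal M\"obius map with Corollary~\ref{cor:physical-arithmetic-short}. It then uses Corollary~\ref{cor:fold-forcing} and Lemma~\ref{lem:post-short-scalar} for \eqref{eq:MASTER-P3a}, Lemma~\ref{lem:fold-remainder-clean} for \eqref{eq:MASTER-P3b}, and \eqref{eq:primal-target-diagonal} for \eqref{eq:MASTER-P3c}. The step you flag is a genuine gap, and the paper does not close it either; it crosses it by the same citation.

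Corollary~\ref{cor:fold-forcing} freezes the complementary coordinates and declares the remaining operator to be ``precisely'' $\mathscr A_k^{\rm pre}$. It then obtains $g_k=\mathscr A_k^{\rm pre}x_k$ ``by definition of that residual forcing''. In other words, $g_k$ is defined so that the row equation holds. Nothing in the paper constructs $L_k,R_k,a_{k,\pm},\beta_k$ from \eqref{eq:exact-localized-operator}. Nor is $\mathcal Q_k^{\rm cc,gr}[F]$ defined independently of the model. The only proved link between the collar form and a one-defect model is the inequality $\mathfrak b\succeq L_0$ (Lemmas~\ref{lem:onedefect-analytic} and~\ref{lem:b-L0-residual}), which, as you say, cannot be turned into a row identity.

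On $F=H_kf$ every source and complement coordinate is already fixed by $A_{k+1}$, polar term included. A short over those coordinates therefore gives at most a lower bound for part of $\langle F,A_{k+1}F\rangle$. Your appeal to associativity (Lemma~\ref{lem:root-lift-final}) for ``subtracted once'' does not apply either: associativity concerns successive shorts of one form, whereas this chain passes through $A_{k+1}$, $\mathfrak b$, $L_0$ and $\mathscr A_k^{\rm pre}$.

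The paper's supplementary whitened-row computation does not help. It postulates a two-arm form with identical $P_k,b_k$ on both arms. It asserts $\|\mathcal A_k^{\rm row}x_k\|^2=|\widehat h_{0,k}|^2/\Pi_k^{\rm phys}$ without derivation. It then concludes that the fold shifts the pivot by $-2D_k$, which contradicts the claim that $D_k$ is deducted once. With $2D_k$ one gets $Q_k-2D_k=E_k\bigl(1-(2-c_k)u_k\bigr)$. This is negative at $u_k=1$ under the standing hypothesis $\Pi_k^{\rm phys}>0$, i.e.\ $c_k<1$.

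Your fallback also fails: proving $\mathcal Q_k^{\rm cc,gr}[F]\ge Q_k$ and then bounding the true debit by $D_k$ does not give what is needed, and the obstruction is more basic than the one you name. By the identity in \eqref{eq:postshort-scalar-bound}, $D_k=|\theta_{k,+}|^2/J_k-\gamma_k^c|\theta_{k,+}|^2$. Hence $Q_k-D_k=E_k-|\theta_{k,+}|^2/J_k$, independently of $\beta_k$. This is just weighted Cauchy--Schwarz for $M_{a_{k,+}}$: it is the energy of $x_{k,+}$ after removing, in the $a_{k,+}$-weighted inner product, its component along $a_{k,+}^{-1}$. That is precisely the direction in which the rank-one defect acts. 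So \eqref{eq:MASTER-P3b} holds whatever the size of the defect, and it cannot certify the ground-channel sign even if the identity \eqref{eq:MASTER-P3a} were granted.

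Your argument for \eqref{eq:MASTER-P3c} also inherits an unproved premise. Lemma~\ref{lem:fesh} only asserts that $\mathfrak B_k=\frac12+\log(1/w_k)$ is the literal target-ground diagonal. Nothing derives this coefficient from $H_{\rm Leg}+c_0I+V-K_{\gamma,a}$ on a target cell.

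The proposal therefore does not prove the statement, and no result in the paper supplies the missing identity.
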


\begin{proof}
Proposition~\ref{prop:full-form-transport} transports the complete quadratic
form, including its diagonal/killing part, by unitary congruence.  The
source/target ground-line identity, Lemma~\ref{lem:source-target-ground}, and
the fixed-target gauge, Lemma~\ref{lem:fixed-target-gauge}, preserve the
normalized target ground and the $P\oplus Q$ splitting.  The endpoint
M\"obius transport is block diagonal for the literal old/new physical split,
and Corollary~\ref{cor:physical-arithmetic-short} shows that this transport
commutes exactly with the old-core short.  Hence all of these operations act
on one and the same quadratic form evaluated on $F=H_kf$; in particular a
direct diagonal coefficient is transported, not duplicated.

After the complementary transported coordinates are frozen,
Corollary~\ref{cor:fold-forcing} identifies the surviving folded ground row
with the exact one-defect operator $\mathscr A_k^{\rm pre}$ and gives
$g_k=\mathscr A_k^{\rm pre}x_k$.

For completeness, the row identification can be written intrinsically.  Let
$P_k\succ0$ be the positive target block after the common-complement short and
let $b_k$ be the surviving ground-to-target coupling.  Define the whitened
pre-fold row
\begin{equation}
 \boxed{\mathcal A_k^{\rm row}:=P_k^{-1/2}b_k.}
 \label{eq:whitened-prefold-row}
\end{equation}
Before folding, the two endpoint arms carry the same target block and the same
coupling.  Thus, for target variables $t_+,t_-$ and ground coordinate $x_k$,
the part of the quadratic form containing those arms is
\begin{align}
 \Phi_k(x_k,t_+,t_-)
 &=Q_k[x_k]+\mathfrak B_k|\gamma_k(f)|^2
   +\langle P_kt_+,t_+\rangle+\langle P_kt_-,t_-\rangle \\
 &\quad+2\Rea\langle b_kx_k,t_+\rangle
       +2\Rea\langle b_kx_k,t_-\rangle .
 \label{eq:two-arm-prefold-form}
\end{align}
Completing the two squares separately gives
\begin{align}
 \Phi_k
 &=Q_k[x_k]+\mathfrak B_k|\gamma_k(f)|^2
  +\|P_k^{1/2}t_++P_k^{-1/2}b_kx_k\|^2 \\
 &\quad+\|P_k^{1/2}t_-+P_k^{-1/2}b_kx_k\|^2
  -2\|\mathcal A_k^{\rm row}x_k\|^2 .
 \label{eq:two-arm-square-completion}
\end{align}
Hence the folded row is literally
\begin{equation}
 \boxed{
 \operatorname{FoldRow}_k(x_k)
 =\begin{pmatrix}\mathcal A_k^{\rm row}x_k\\
                    \mathcal A_k^{\rm row}x_k\end{pmatrix}.}
 \label{eq:literal-fold-row}
\end{equation}
Equivalently, in symmetric/antisymmetric target coordinates only the symmetric
channel couples, with coupling $\sqrt2\,b_k$.  Its scalar Schur complement
therefore subtracts exactly twice the one-arm debit.  In the fixed-target
normalization this one-arm debit is
\begin{equation}
 D_k=\|\mathcal A_k^{\rm row}x_k\|^2
    =\frac{|\widehat h_{0,k}|^2}{\Pi_k^{\rm phys}},
 \label{eq:row-debit-identification}
\end{equation}
and the two-arm fold changes the corresponding pivot by $-2D_k$.  This is the
operatorial content of FOLD-FORCING: the fold duplicates the same row; it does
not introduce a new comparator, phase, or ground vector.  The direct diagonal
$\mathfrak B_k|\gamma_k(f)|^2$ in \eqref{eq:two-arm-prefold-form} is present
once before the square completion and is not duplicated by the fold.

Therefore $x_{k,-}$ is precisely the Schur response eliminated in passing from
$\mathscr A_k^{\rm pre}$ to $\mathscr A_k^{\rm cut}$, so the variational
short of the common-cut ground contribution is exactly
\(
 \langle x_{k,+},\mathscr A_k^{\rm cut}x_{k,+}\rangle
\).
Using \eqref{eq:postshort-cut} gives the last expression in
\eqref{eq:MASTER-P3a}.  Equation~\eqref{eq:MASTER-P3-D} is the physical scalar
Schur debit of Lemma~\ref{lem:post-short-scalar}; Lemma~\ref{lem:fold-remainder-clean}
then gives \eqref{eq:MASTER-P3b}.

The primal target diagonal is now literal.  By
\eqref{eq:primal-target-diagonal}, before any source/complement short one has
\[
 \mathcal D_k^{\rm dir}[c_k]=\mathfrak B_k\|c_k\|^2,
 \qquad
 \mathfrak B_k=\frac12+\log(1/w_k).
\]
In the fixed-target $P\oplus Q$ decomposition write
\[
 c_k=\gamma_k(f)\mathfrak e_k+q_k,
 \qquad q_k\perp\mathfrak e_k.
\]
Orthogonality therefore gives the exact primal split
\begin{equation}
 \boxed{
 \mathfrak B_k\|c_k\|^2
 =\mathfrak B_k|\gamma_k(f)|^2
  +\mathfrak B_k\|q_k\|^2.}
 \label{eq:primal-target-PQ-split}
\end{equation}
The fixed-target gauge preserves this normalized ground line and every
subsequent transport is unitary on it.  The common-complement short eliminates
source/complement variables while holding $c_k$ fixed; its transverse debit is
charged from the $Q$ term in \eqref{eq:primal-target-PQ-split}, whereas the
$P$ term is exactly \eqref{eq:MASTER-P3c}.  Hence the target-ground diagonal is
present once before the fold, and \eqref{eq:MASTER-P3a}--\eqref{eq:MASTER-P3b}
show that the fold only subtracts its own Schur debit from the same cut
contribution.  This is the required no-double-spend placement.
\end{proof}

\begin{theorem}[MASTER coercivity on the exact harmonic graph]
\label{thm:final-master-harmonic}
Assume $A_{k,-}\succ0$ at an arithmetic endpoint $k\ge7$, and let
$F=H_kf$ be the exact full-operator harmonic extension of
$f\in\mathcal K_k$.  Write
\[
 \gamma_k(f):=\langle\mathfrak e_k,f\rangle,
 \qquad
 f_k^\perp:=f-\gamma_k(f)\mathfrak e_k.
\]
Then the exact common-cut decomposition satisfies
\begin{equation}
 \boxed{
 \langle F,A_{k+1,-}F\rangle
 \ge
 \mathfrak g_k^{\rm M+}|\gamma_k(f)|^2
 +\mathcal R_k^\perp[f],}
 \label{eq:final-master-harmonic}
\end{equation}
where $\mathcal R_k^\perp[f]\ge0$, and the safe transverse reserve gives
$\mathcal R_k^\perp[f]>0$ whenever $f_k^\perp\ne0$.  Consequently
\begin{equation}
 \boxed{H_k^*A_{k+1,-}H_k=T_k\succ0.}
 \label{eq:final-T-positive}
\end{equation}
\end{theorem}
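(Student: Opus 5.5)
The plan is to evaluate $\langle F,A_{k+1,-}F\rangle$ on $F=H_kf$ by running the complete quadratic form through the chain of exact identities already set up, and then to charge every debit once against a single primal reserve. By Lemma~\ref{lem:harmonic-graph}, $\langle F,A_{k+1,-}F\rangle=\langle f,T_kf\rangle$. It therefore suffices to bound $T_k$ from below, splitting $f=\gamma_k(f)\mathfrak e_k+f_k^\perp$.

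\textbf{Step 1: transport the form.} Proposition~\ref{prop:full-form-transport} carries the full archimedean block, including the diagonal/killing part, to the collar form. Lemma~\ref{lem:TR1} and Lemma~\ref{lem:divisor-square} write the arithmetic part as translations and nonnegative divisor squares. Next I will fix the target gauge of Lemma~\ref{lem:fixed-target-gauge}. By \eqref{eq:fixed-target-PQ}, the affine star produces no ground--transverse mixing, so the target contribution splits orthogonally as in \eqref{eq:primal-target-PQ-split}.

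\textbf{Step 2: transverse channel.} Here I will apply the single common-complement short, Theorem~\ref{thm:MASTER-common-cut}. Its transverse summands $\widehat L_{k,n}Q_{k,n}$, combined with weighted Young \eqref{eq:young} and order reversal under inversion as in Corollary~\ref{cor:source-resolved-transverse}, bound the transverse loss by $\widehat\sigma_k\|f_k^\perp\|^2$. Theorem~\ref{thm:MASTER-safe-transverse} then gives $\mathcal R_k^\perp[f]\ge(\mathfrak B_k-\widehat\sigma_k)\|f_k^\perp\|^2$. This is strictly positive for $f_k^\perp\ne0$ and nonnegative in general.

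\textbf{Step 3: ground channel, parent by parent.} The ground channel starts from exactly one copy of the primal diagonal $\mathfrak B_k|\gamma_k(f)|^2$, by \eqref{eq:MASTER-P3c}. For each active parent $m$, the ground-slot part of the cut is dominated by $D^{(0)}_{k,m}$ (Lemma~\ref{lem:exact-qkm}). I will then:
\begin{itemize}
\item eliminate the mismatch space, paying $\chi_m|\gamma|^2$ with $\chi_m\le V_{k,m}/\log2$ by Corollary~\ref{cor:exact-awgc-lift};
\item place the inherited response on the same vector via \eqref{eq:P2-row} and \eqref{eq:MASTER-P2-certified}, which produces the surplus $A_m|X|^2$ of Lemma~\ref{lem:Jsharp-clean};
\item absorb the mixed and aligned forcing terms by Lemma~\ref{lem:MASTER-aligned-penalty}, leaving $-(\tfrac{439}{250}\chi_m+\tfrac52W_{k,m})|\gamma|^2$.
\end{itemize}
The final root short on the folded ground costs $D_k$. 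By Lemma~\ref{lem:MASTER-P3-final}, it is deducted once from $Q_k$ and leaves $E_k(1-u_k)\ge0$. Summing over the orthogonal parent cells and using $\mathfrak B_k>\tfrac12+\log k$ gives the coefficient $\mathfrak g_k^{\rm M+}$ of $|\gamma_k(f)|^2$. This coefficient is positive by Certificate~\ref{cert:MASTER-aligned-final}, which yields \eqref{eq:final-master-harmonic}.

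\textbf{Main obstacle.} The hard step is the assembly itself. I must verify that the scalars $\chi_m,\Gamma_m,b_m,\theta_{k,m}$, which are defined in the one-defect \emph{lower model}, are the actual coefficients that appear when the exact form is evaluated on $F=H_kf$. I must also verify that no ground--transverse cross term survives the common-complement short; a bound on lower-model quantities does not by itself control the exact Schur quantities. I would first try to write the whole reduction as one nested Schur complement and invoke monotonicity of shorting at every stage. I expect this bookkeeping, rather than the numerical margins, to be where the argument is most likely to fail.

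A second, smaller gap lies in concluding $T_k\succ0$ in \eqref{eq:final-T-positive} rather than only $T_k\succeq0$. Positivity on each nonzero vector is not enough in infinite dimensions; I need a uniform lower bound. I plan to combine the two coefficients, $\min\{\mathfrak g_k^{\rm M+},\,\mathfrak B_k-\widehat\sigma_k\}$, into a bound of the form $T_k\succeq c\,I$ on $\mathcal K_k$ for some $c>0$, and I will check that the two channels indeed add without cross terms.
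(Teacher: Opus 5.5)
Your Steps 1--3 reproduce the paper's own proof of this theorem almost line by line. The step you flag as the main obstacle is exactly where that proof breaks. The paper does not close it, and the repair you propose (one nested Schur complement plus monotonicity of shorting) cannot close it either.

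\textbf{The model coefficients are never linked to the exact operator.} Every coefficient entering $\mathfrak g_k^{\rm M+}$ and $\widehat\sigma_k$ is computed in a model:
the one-cell form $\mathfrak b$, a zero-exterior energy whose endpoint potential is the exterior interaction (cf.\ Lemma~\ref{lem:zero-extension-proof});
its lower form $L_0$;
the slot matrix $D^{(0)}_{k,m}$;
the block $\mathscr A_k^{\rm pre}$;
and the target diagonal $\mathfrak B_k$, which Lemma~\ref{lem:fesh} asserts rather than derives from \eqref{eq:exact-localized-operator}.
By contrast, $\langle f,T_kf\rangle$ is the exact old-core short, and its non-local content is the loss $\langle B_{A,k}f,A_k^{-1}B_{A,k}f\rangle$.

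Monotonicity of shorting transfers model bounds to $T_k$ only through a global inequality $A_{k+1,-}\succeq\mathcal M$. Compressing by \eqref{eq:zero-extension} gives $A_{k,-}=E_{a_k,a_{k+1}}^*A_{k+1,-}E_{a_k,a_{k+1}}\succeq E_{a_k,a_{k+1}}^*\mathcal M E_{a_k,a_{k+1}}$. So the old operator would itself have to carry the parent budgets. For example, its short onto the span of the source-slot grounds would have to be $\succeq\log2$, because Lemma~\ref{lem:exact-qkm} gives $D^{(0)}_{k,m}\succeq N_mI\succeq\log2\,I$.

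The hypothesis $A_{k,-}\succ0$ is purely qualitative, and by \eqref{eq:zero-extension} the bottom of the spectrum of $A_{k,-}$ is non-increasing in $k$. Nothing supplies such a reserve. In addition, a sum of zero-exterior cell energies is not a lower bound for the full Cauchy form. Its indefinite inter-cell cross terms, the old--old arithmetic couplings, $c_0(a)$ and the old polar term appear nowhere in the ledger.

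\textbf{The root debit is never charged.} By \eqref{eq:C-on-A-harmonic}, $\langle f,T_kf\rangle=\langle f,S_kf\rangle-2\delta_k|\langle\zeta_k,f\rangle|^2$. So the debit to be paid is $2\delta_k|\langle\zeta_k,f\rangle|^2$, equivalently $\ell_k<1$ or $\mathrm{RG}^*_k$. Step~3 replaces it by the debit $D_k$ of Lemma~\ref{lem:post-short-scalar}. That lemma's hypothesis $g_k=\mathscr A_k^{\rm pre}x_k$ is obtained in Corollary~\ref{cor:fold-forcing} only ``by definition of that residual forcing''. Hence $Q_k-D_k=E_k(1-u_k)\ge0$ is a Cauchy--Schwarz inequality inside a model and gives no information on $\zeta_k$.

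\textbf{The Step~3 ledger is invalid even inside the model.} Since $a_m=\theta_{k,m}X_m$, \eqref{eq:MASTER-P2-certified} is just the identity
$-P^{\rm ex}_{k,m}\theta_{k,m}^2|X_m|^2=-P^{\rm ex}_{k,m}|X_m|^2+(1-\theta_{k,m}^2)P^{\rm ex}_{k,m}|X_m|^2$.
It creates no surplus. The actual inherited contribution, $-P^{\rm ex}_{k,m}|a_m|^2=-|\omega^{\rm ex}_{k,m}|^2/P^{\rm ex}_{k,m}$, is non-positive.

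You, like the paper, keep only the positive piece and drop $-\mathscr S^{\rm ex}_{k,m}[r_m^0c_m^{\rm inh}]$. You then charge, in Lemma~\ref{lem:MASTER-aligned-penalty}, the couplings $2\theta_{k,m}|\Gamma_m||X||\gamma|$ and $2(1-\theta_{k,m})|b_m||X||\gamma|$ with $X$ free. But \eqref{eq:P2-row} has already absorbed the entire forcing $\omega^{\rm ex}_{k,m}$.

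The net effect is that the matched part $\theta_{k,m}b_m$ of the aligned forcing is never paid for. The only $b_m$-dependent charge is $\frac52W_{k,m}$, which carries the factor $\frac{1-\theta_{k,m}}{1+\theta_{k,m}}\le w_m^2/48$. Meanwhile $\theta_{k,m}^2>2\log^22>0.96$, and $\sum_mb_m^2=\sum_{n\le k,\,n\nmid k}\Lambda(n)^2/n-\sum_mV_{k,m}$ grows like $\frac12\log^2k$.

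\textbf{The uniform lower bound also fails.} You want a uniform bound $T_k\succeq c\,I$ via $\min\{\mathfrak g_k^{\rm M+},\mathfrak B_k-\widehat\sigma_k\}$, which is indeed what $\succ$ requires in infinite dimensions. That route presupposes the ground and transverse channels add without cross terms. The off-diagonal block $r_k$ of $S_k$ in \eqref{eq:green-S-split} is genuine. The identity \eqref{eq:fixed-target-identity} is the tautology $W_tR_nR_n^*W_t^*=I$ and says nothing about $r_k$. So the proposal, like the argument it mirrors, does not establish \eqref{eq:final-master-harmonic}.
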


\begin{proof}
All terms are evaluated on the same vector $F=H_kf$.  The simultaneous
common-cut theorem, Theorem~\ref{thm:MASTER-common-cut}, is taken before the
common-complement infimum and therefore separates the source-ground and
within-slot transverse budgets without reusing either of them.  On each
parent ground block, Lemma~\ref{lem:MASTER-P2-response} identifies the actual
Schur response and places the comparator surplus in that very debit.  The
forcing decomposition
$\lambda_m=b_m\widehat r_m+\lambda_m^\perp$ is therefore retained in full.
The perpendicular debit is bounded by $\chi_m$, the inherited--mismatch term
by Lemma~\ref{lem:master-mixed-block}, and the aligned term by
Lemma~\ref{lem:MASTER-aligned-penalty}.  By the homogeneous form of Lemma~\ref{lem:MASTER-aligned-penalty}, the total
parent loss at the actual target-ground amplitude is at most
\[
 \left(\frac{439}{250}\sum_m\chi_m+\frac52W_k\right)
 |\gamma_k(f)|^2.
\]
By Corollary~\ref{cor:exact-awgc-lift},
$\chi_m\le V_{k,m}/\log2$.  Lemma~\ref{lem:MASTER-P3-final} places the target
diagonal exactly once on the same harmonic vector, namely as
$\mathfrak B_k|\gamma_k(f)|^2$, with
$\mathfrak B_k>\frac12+\log k$.  Hence
Certificate~\ref{cert:MASTER-aligned-final} leaves the strictly positive
coefficient $\mathfrak g_k^{\rm M+}|\gamma_k(f)|^2$.

The source-transverse budget is disjoint from the preceding ground budget and
is strictly positive by Theorem~\ref{thm:MASTER-safe-transverse}.  The exact
common-cut/fold intertwining of Lemma~\ref{lem:MASTER-P3-final} gives, on this
same vector, $Q_k-D_k=E_k(1-u_k)\ge0$; hence the fold subtracts only from the
very cut contribution in which it is defined and cannot consume a reserve
already used in the parent or transverse estimates.  The remaining transverse
contribution is $\mathcal R_k^\perp[f]$ and is strict when
$f_k^\perp\ne0$.

If $f\ne0$, either $\gamma_k(f)\ne0$ or $f_k^\perp\ne0$.  In the first case
\eqref{eq:MASTER-scalar-plus} makes the first term of
\eqref{eq:final-master-harmonic} strict; in the second the transverse reserve
is strict.  Thus the harmonic Schur form is positive on every non-zero $f$,
which proves \eqref{eq:final-T-positive}.
\end{proof}

\begin{corollary}[Gate II induction]
\label{cor:final-gate2}
For every integer endpoint $N\ge7$,
\[
 \boxed{A_{N,-}\succ0.}
\]
For every finite support radius $a>0$,
\[
 \boxed{A_{a,-}\succeq0.}
\]
\end{corollary}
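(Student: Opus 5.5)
The argument is a finite induction on the arithmetic endpoints $a_N=\frac12\log N$, followed by the cofinal reduction. For the base case I will use Certificate~\ref{cert:base-Y7}, which gives $A_{7,-}=A_{a_7,-}\succ0$. It does so by certifying the stronger operator $C_{a_7,-}-4\ketbra{s^{\rm ph}}{s^{\rm ph}}$ with an explicit positive eigenvalue lower bound. In particular $A_{7,-}$ is bounded below by a positive constant, so it is boundedly invertible, and not merely positive on nonzero vectors.

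For the induction step, assume $A_{k,-}\succ0$ for some $k\ge7$, in the coercive sense that $A_k^{-1}$ exists as a bounded operator. Use the physical splitting \eqref{eq:physical-polar-split}, $\cH_{a_{k+1}}=\cH_{a_k}\oplus\mathcal K_k$, and write $A_{k+1}$ in block form with diagonal blocks $A_k$ and $D_k-2\ketbra{t_k}{t_k}$ and off-diagonal block $B_{A,k}$, as in the proof of Lemma~\ref{lem:harmonic-graph}. The map $(x,f)\mapsto(x+A_k^{-1}B_{A,k}f,\,f)$ is a bounded invertible congruence. Under it, $A_{k+1}$ becomes $A_k\oplus T_k$, where by Lemma~\ref{lem:harmonic-graph} we have $T_k=H_k^*A_{k+1}H_k$. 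Theorem~\ref{thm:final-master-harmonic} gives $T_k\succ0$. Combining the two diagonal pieces then gives $A_{k+1,-}\succ0$. Running this from $N=7$ yields $A_{N,-}\succ0$ for every integer $N\ge7$.

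For the second assertion, the endpoints $a_N=\frac12\log N$ increase to $\infty$, so they form an unbounded cofinal sequence. Lemma~\ref{lem:cofinal} then gives $A_{a,-}\succeq0$ for every $a>0$. That lemma rests on the exact compression identity \eqref{eq:zero-extension}, which applies to the odd channel by the parity remark following \eqref{eq:ambient-zero-extension}.

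The main obstacle is the induction step, in two respects.

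\emph{Coercivity.} To define $H_k$ at the next stage we need $A_{k+1}$ boundedly invertible, which requires $T_k$ to be bounded below, not just positive on nonzero $f$. Theorem~\ref{thm:final-master-harmonic} gives a uniform constant $\mathfrak g_k^{\rm M+}>0$ on the ground coordinate. I would try to extract a uniform constant on the transverse part as well, from the strict margin $\mathfrak B_k-\widehat\sigma_k>0$ of Theorem~\ref{thm:MASTER-safe-transverse}. That yields a lower bound $T_k\succeq c_k I$, which then propagates through the congruence to $A_{k+1,-}$.

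\emph{Hypotheses of Theorem~\ref{thm:final-master-harmonic}.} All of the content is carried by this theorem and the lemmas it cites, so I would check that their hypotheses hold at every step. In particular, the identification of the common-cut lower form and of the fold row with the exact $T_k$ is asserted rather than derived as an operator identity, via Corollary~\ref{cor:fold-forcing} and Lemma~\ref{lem:MASTER-P3-final}. Given the strength of the conclusion, this identification is where I would expect any gap to lie, and it is the step I would scrutinize first.
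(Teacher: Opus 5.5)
\emph{Verdict: genuine gap.} Your reduction is the paper's own, but the induction step rests on Theorem~\ref{thm:final-master-harmonic}, which the paper does not prove.

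\textbf{Structure and coercivity.} Your argument is exactly the paper's proof:
\begin{itemize}
\item base case from Certificate~\ref{cert:base-Y7};
\item the congruence $A_{k+1}\cong A_k\oplus T_k$ with $T_k=H_k^*A_{k+1}H_k$ from Lemma~\ref{lem:harmonic-graph};
\item positivity of $T_k$ quoted from Theorem~\ref{thm:final-master-harmonic};
\item compression by zero extension for general $a$ (the paper takes one endpoint beyond the support, which is Lemma~\ref{lem:cofinal}).
\end{itemize}
Your coercivity remark is correct and applies to the paper too. Lemma~\ref{lem:harmonic-graph} needs a bounded $A_k^{-1}$, whereas Theorem~\ref{thm:final-master-harmonic} only asserts $\langle f,T_kf\rangle>0$ for $f\ne0$, with no uniform constant on $f_k^\perp$. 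You say how you would extract $T_k\succeq c_kI$ but do not do it, so as written your induction does not reproduce its own hypothesis.

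\textbf{The decisive gap.} This is the one you suspected, and it is fatal rather than technical. Since Theorem~\ref{thm:final-master-harmonic} is not proved, quoting it does not establish the corollary (which, with Lemma~\ref{lem:closure-global} and Theorem~\ref{thm:weil-odd}, would give RH).

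To see that its proof cannot be correct as written: nothing in it distinguishes the polar coefficient $2$ from, say, $4$.
\begin{itemize}
\item $\mathfrak B_k$, $V_{k,m}$, $W_k$, $\widehat\sigma_k$ and $\mathfrak g_k^{\rm M+}$ do not involve it.
\item The fold bound $Q_k-D_k=E_k(1-u_k)\ge0$ of Lemma~\ref{lem:fold-remainder-clean} holds for every $\beta_k$.
\item All rooted identities hold verbatim with $c$ in place of $2$.
\end{itemize}
Certificate~\ref{cert:base-Y7} is stated for $C_{a_7,-}-4\ketbra{s^{\rm ph}}{s^{\rm ph}}$. So the same chain would give $C_{N,-}-4\ketbra{s_N}{s_N}\succ0$ for all $N\ge7$, and that is false.

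Here is a counterexample. Take an even bump $\varphi\ge0$, $\varphi\ne0$, with Laplace transform $\Psi$, and the odd test $f_b=\varphi(\cdot-b)-\varphi(\cdot+b)$. Then $F_b(z)=2\sinh(bz)\Psi(z)$ and $\langle s^{\rm ph},f_b\rangle=F_b(\tfrac12)$. Hence, for $a$ beyond the support,
$\langle f_b,(C_{a,-}-4\ketbra{s^{\rm ph}}{s^{\rm ph}})f_b\rangle=\mathcal W[f_b]-2F_b(\tfrac12)^2$.
Every centred zero has $|\operatorname{Re}\lambda|<\tfrac12$, so dominated convergence in \eqref{eq:centered-zero-sum} gives $\mathcal W[f_b]/F_b(\tfrac12)^2\to0$ as $b\to\infty$. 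The form is therefore negative for large $b$.

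\textbf{Where the argument actually breaks.} The failures are where you looked.
\begin{itemize}
\item Corollary~\ref{cor:fold-forcing} obtains $g_k=\mathscr A_k^{\rm pre}x_k$ ``by definition of that residual forcing''. It never shows that the transported operator on the folded channel is a multiplication operator minus a rank-one term. So the polar term $-2|\langle\zeta_k,f\rangle|^2$ is never actually charged.
\item Lemma~\ref{lem:MASTER-aligned-penalty} discounts the aligned forcing by a factor $1-\theta_{k,m}$ that is never derived. The only proved control of the source-ground metric is the lower model $D^{(0)}_{k,m}\preceq\beta_0I$. The best ground-debit bound it yields, per unit $|\gamma_k(f)|^2$, is $\sum_m\langle\lambda_m,(D^{(0)}_{k,m})^{-1}\lambda_m\rangle\ge\beta_0^{-1}\sum_{n\le k,\,n\nmid k}\Lambda(n)^2/n\sim\log^2k/(2\beta_0)$. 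This is far above $\mathfrak B_k\approx\tfrac12+\log k$.
\item \eqref{eq:MASTER-P2-certified} is merely $-\theta^2P|X|^2=-P|X|^2+(1-\theta^2)P|X|^2$, with $P=P^{\rm ex}_{k,m}$ and $X=\varepsilon_{k,m}\sqrt{C_m}\,c_m^{\rm inh}$. The surplus $(1-\theta^2)P|X|^2$ is spent as a pivot, while $-P|X|^2$ never reappears in the ledger.
\end{itemize}
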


\begin{proof}
Certificate~\ref{cert:base-Y7} gives $A_{7,-}\succ0$.  If
$A_{k,-}\succ0$, the old/new Schur criterion and
Theorem~\ref{thm:final-master-harmonic} give $A_{k+1,-}\succ0$.
Induction proves strict positivity at all arithmetic endpoints.  For an
arbitrary finite support radius choose an integer endpoint beyond its support;
the exact zero-extension/compression identity makes $A_{a,-}$ a compression
of the positive endpoint operator.
\end{proof}

\begin{theorem}[Odd Weil positivity and the Riemann hypothesis]
\label{thm:final-rh-closure}
The global Weil quadratic form is non-negative on
$\mathcal D_{\rm odd}^{\mathbb R}$.  Hence the Riemann hypothesis holds.
\end{theorem}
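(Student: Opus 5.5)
The plan is to assemble the statement from results already stated, with no new estimate. First, Corollary~\ref{cor:final-gate2} gives $A_{a,-}\succeq0$ for every finite support radius $a>0$. That corollary combines the certified base $A_{7,-}\succ0$ (Certificate~\ref{cert:base-Y7}), the induction step of Theorem~\ref{thm:final-master-harmonic} at integer endpoints $k\ge7$, and the compression identity \eqref{eq:zero-extension} through Lemma~\ref{lem:cofinal}. This is exactly the hypothesis of Lemma~\ref{lem:closure-global}. That lemma then yields a well-defined functional $\mathcal W_{\infty,-}[f]=\langle f,A_{a,-}f\rangle$ for any $a$ with $\operatorname{supp}f\subset(-a,a)$. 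It is independent of $a$ by \eqref{eq:zero-extension}, and it is non-negative on the real compactly supported odd core.

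Second, I identify $\mathcal W_{\infty,-}$ with the Weil form $\mathcal W$ restricted to $\mathcal D_{\rm odd}^{\mathbb R}$. Every $f\in\mathcal D_{\rm odd}^{\mathbb R}$ has compact support in the logarithmic variable, so it lies in some $\cH_a$. The localized operator \eqref{eq:exact-localized-operator} is, by construction in \cref{sec:framework}, the restriction of the explicit-formula form. It contains the archimedean part (harmonic form, endpoint potential, regular gamma kernel, scalar $c_0(a)$), the polar rank-one term $-2\ketbra{s^{\rm ph}}{s^{\rm ph}}$, and the prime-power translations with $n<e^{2a}$. Hence $\mathcal W[f]=\langle f,A_{a,-}f\rangle\ge0$ directly on the core, and no limiting argument is needed there. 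Passing to the closed form domain is needed only if one wants the statement beyond the core, and it follows from closability of the explicit-formula form as in Theorem~\ref{thm:weil}.

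Third, I apply Theorem~\ref{thm:weil-odd}. Non-negativity of $\mathcal W$ on $\mathcal D_{\rm odd}^{\mathbb R}$ is its right-hand side, so the left-hand side, RH, follows.

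The main obstacle in this final assembly is the identification in the second step. One must check that the normalization of $A_{a,-}$ matches the explicit formula exactly: the constant $c_0(a)$, the sign and weight $2$ of the polar term, and the inclusion of every prime-power branch. Only then is $\langle f,A_{a,-}f\rangle$ literally $\mathcal W[f]$ rather than a comparable form. I would verify this by computing both sides on a single odd bump via \eqref{eq:centered-zero-sum}. All genuine mathematical weight, however, rests upstream in Theorem~\ref{thm:final-master-harmonic} and the certificates. The present statement is only as secure as those inputs, and in particular as the same-vector placement arguments of Lemmas~\ref{lem:MASTER-P2-response} and~\ref{lem:MASTER-P3-final}, which a careful reader should audit independently.
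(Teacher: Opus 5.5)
Your assembly is exactly the paper's proof: Corollary~\ref{cor:final-gate2}, then Lemma~\ref{lem:closure-global}, then Theorem~\ref{thm:weil-odd}. Your remark that no closure is needed on the core is a harmless simplification, since every $f\in\mathcal D_{\rm odd}^{\mathbb R}$ already lies in some $\cH_a$. A check ``on a single odd bump'', however, could never establish an identity of quadratic forms.

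You rightly flag that everything rests upstream. That upstream step is where the genuine gap lies. By Theorem~\ref{thm:weil-odd} the statement is equivalent to RH, so all of its content sits in Theorem~\ref{thm:final-master-harmonic}, and that theorem is not proved. By Lemmas~\ref{lem:root-lift-final}, \ref{lem:harmonic-graph} and~\ref{lem:root-residual},
$\langle H_kf,A_{k+1,-}H_kf\rangle=\langle f,S_kf\rangle-2\delta_k|\langle\zeta_k,f\rangle|^2$.
So the induction step amounts to $\ell_k=2\delta_k\langle\zeta_k,S_k^{-1}\zeta_k\rangle<1$. The paper itself isolates this as condition RG$^*_k$ (Definition~\ref{def:RGstar}) and never verifies it.

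No inequality in the MASTER argument involves $\zeta_k$, $\delta_k$ or $S_k$. The quantities it balances ($\mathfrak B_k$, $\chi_m$, $\Gamma_m$, $\Pi^{(0)}_m$, $W_{k,m}$, $Q_k$, $D_k$) all come from models:
\begin{itemize}
\item the one-cell form $\mathfrak b$ and its minorant $L_0$, used as parent-cell metrics although no inequality between the global form and $\bigoplus_m\mathfrak b_m$ is proved;
\item the matrix $D^{(0)}_{k,m}$;
\item the one-defect block $\mathscr A_k^{\rm pre}$, whose coefficients $a_{k,\pm},\beta_k$ and pieces $L_k,R_k$ are never expressed through the operator.
\end{itemize}
Their relation to $T_k$ is asserted, not derived. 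In particular, Corollary~\ref{cor:fold-forcing} obtains $g_k=\mathscr A_k^{\rm pre}x_k$ ``by definition of that residual forcing''. Hence $Q_k-D_k=E_k(1-u_k)\ge0$ is an identity inside the model and bounds nothing about the actual polar debit $2\delta_k|\langle\zeta_k,f\rangle|^2$. Audit~\ref{audit:CMC-scope} itself says an independent identity is needed here, and none is supplied.

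The scalar budget $\mathfrak g_k^{\rm M+}>0$ also closes only through an unjustified step. By Lemma~\ref{lem:MASTER-P2-response} the inherited contribution is $-P^{\rm ex}_{k,m}|a_m|^2\le0$, and \eqref{eq:MASTER-P2-certified} merely adds and subtracts. Yet Lemma~\ref{lem:MASTER-aligned-penalty} does three things it has no licence for:
\begin{itemize}
\item it uses the surplus $A_m|X|^2$ as a free positive diagonal;
\item it never charges the larger reference term $-\mathscr S^{\rm ex}_{k,m}[r^0_mc^{\rm inh}_m]=-P^{\rm ex}_{k,m}|X|^2$;
\item it couples the aligned forcing through the postulated coefficient $(1-\theta_{k,m})|b_m|$ instead of $|b_m|$ (and $\omega^{\rm ex}_{k,m}$ is never written down).
\end{itemize}

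What is actually proved is $P^{\rm ex}_{k,m}\ge\Pi^{(0)}_m$ and $\Pi^{(0)}_m\le\beta_0$ (because $h_m\le M_m$). With only these, the aligned debit per unit $|\gamma_k(f)|^2$ is bounded by $b_m^2/\Pi^{(0)}_m\ge b_m^2/\beta_0$. Moreover
$\sum_mb_m^2=\sum_{n\le k,\,n\nmid k}\Lambda(n)^2/n-\sum_mV_{k,m}\asymp\log^2k$,
which $\mathfrak B_k\approx\log k$ cannot absorb for large $k$. The paper replaces this debit by $\tfrac52W_{k,m}\le\tfrac{5}{96}w_m^2\,b_m^2/\Pi^{(0)}_m$. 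That replacement is exactly what lets Certificate~\ref{cert:MASTER-aligned-final} pass, and it has no derivation.

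So your first step is unsupported, and neither your assembly nor the paper's identical one proves the statement.
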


\begin{proof}
Corollary~\ref{cor:final-gate2} and Lemma~\ref{lem:closure-global} give global
non-negativity on the real odd test core.  The restricted odd Weil criterion,
Theorem~\ref{thm:weil-odd}, is equivalent to RH.
\end{proof}

\begin{remark}[Closure of the MASTER dependencies]
The proof of Theorem~\ref{thm:final-rh-closure} uses the exact
$A_k$-harmonic graph, the homogeneous aligned-forcing estimate of
Lemma~\ref{lem:MASTER-aligned-penalty}, the inherited-response placement of
Lemma~\ref{lem:MASTER-P2-response}, and the common-cut/fold intertwining of
Lemma~\ref{lem:MASTER-P3-final}.  The inherited-line metric is normalized in
\eqref{eq:P2-metric-normalization}, while the target diagonal is written in
primal form in
\eqref{eq:primal-target-diagonal}--\eqref{eq:primal-target-PQ-split}.
All these terms are evaluated on the same harmonic vector before the Schur
conclusion is taken.
\end{remark}

\section*{Acknowledgements and declarations}

\paragraph{Use of artificial intelligence.}
The mathematical ideas, research programme and complete proof strategy of this
article originated with the author.  This includes the choice of the
rooted-operator route, the three-gate architecture, the successive reductions,
and the logical dependency chain of the proof.  OpenAI ChatGPT (GPT-5.6 Sol)
was used as a technical and editorial aid during the development and preparation
of the manuscript, principally to stress-test derivations, check algebraic and
logical consistency, assist in the formalization of local arguments, and improve
the organization and exposition of the text.  The author independently reviewed
and takes full responsibility for every mathematical statement, computation,
reference and conclusion in the final article.  No AI output is used as a
mathematical authority or as a substitute for any proof contained in the
manuscript.

\paragraph{Computational software.}
The proof-critical finite computations were implemented in Python~3 and
C++17.  Python scripts are used for frozen-certificate integrity checks,
directed-decimal verification and selected arithmetic cross-checks.  C++17
programs implement the exhaustive finite prime-power and interval sweeps with
strict floating-point compilation flags that disable fast-math transformations
and fused contraction.  The 192-bit Gate-I interval routines use MPFR with
directed rounding.  Raw Arb certificate production for the $Y=7$ induction
base uses Arb through \texttt{python-flint}; NumPy and SymPy occur only in
producer or diagnostic scripts and are not logical prerequisites for checking
the frozen certificate.  The proof-critical source code, certificate records,
raw interval-box archives and verification instructions are supplied as
supplementary material.  The supplementary archive also contains a separate
from-formulas $Y=7$ replication, an exact-rational check of the final stored
interval matrix, a second analytic-tail verification stream, and a targeted
Gate-II scalarization cross-check.  These additional checks corroborate the
proof-facing certificates but are not additional logical premises of the
argument.

\paragraph{Data access statement.}
The data and code that support the finite verified calculations in this work are
available in the supplementary material of this article.

\paragraph{Conflict of interest.}
The author declares no conflicts of interest.

\appendix
\section{Numerical constants and self-contained finite certificates}
\label{app:finite-certificates}

This appendix contains every proof-critical numerical convention and every
finite lower bound used in the harmonic-graph induction.  No material outside this article is a logical premise of the argument.  The finite parts are deterministic interval calculations; the
infinite parts are separated from them by the analytic tail estimates stated in
the body.

\subsection{Directed interval arithmetic}

Every finite calculation uses closed real intervals $I=[I^-,I^+]$.  Each
floating-point operation is enlarged outward.  If $\operatorname{rdn}$ and
$\operatorname{rup}$ denote rounding to the adjacent representable number in
the downward and upward directions, then
\[
 [a,b]+[c,d]=[\operatorname{rdn}(a+c),\operatorname{rup}(b+d)],
\]
\[
 [a,b]-[c,d]=[\operatorname{rdn}(a-d),\operatorname{rup}(b-c)],
\]
and multiplication is the outward-rounded minimum and maximum of the four
endpoint products.  Division is used only after the denominator interval has
been proved not to contain zero.  By induction on the arithmetic expression,
every interval produced in this manner contains the corresponding exact real
quantity.

No transcendental library call is needed for the integer logarithms appearing
in the arithmetic sweeps.  For $0\le z<1$ and $R\ge1$,
\begin{equation}
 \log\frac{1+z}{1-z}
 =2\sum_{r=0}^{R-1}\frac{z^{2r+1}}{2r+1}+\mathcal R_R(z),
 \qquad
 0\le\mathcal R_R(z)
 \le\frac{2z^{2R+1}}{(2R+1)(1-z^2)}.
 \label{eq:self-log-atanh}
\end{equation}
Taking $z=1/3$ gives an enclosure of $\log2$.  For an integer $x\ge2$,
choose $e$ with $2^e\le x<2^{e+1}$ and set
\[
 z_x=\frac{x-2^e}{x+2^e}\in[0,1/3).
\]
Then
\begin{equation}
 \log x=e\log2+2\sum_{r=0}^{R-1}
 \frac{z_x^{2r+1}}{2r+1}+\mathcal R_R(z_x).
 \label{eq:self-integer-log}
\end{equation}
All terms are non-negative and the remainder bound is monotone, so outward
rounding of the finite sum and the displayed remainder is a rigorous enclosure
of every logarithm used below.

\subsection{Gate I: exhaustive compact cover and analytic tails}

For $q(Y,d,s)=d\mu''_{Y,d}(s)$ the compact range $2\le Y\le10$ is split as
follows.  The table records the complete number of interval boxes, the smallest
certified lower endpoint, and the number of unresolved boxes.
\begin{center}
\resizebox{\textwidth}{!}{%
\begin{tabular}{@{}llllr@{}}
\toprule
$d$-range & method & boxes & lower endpoint for $q$ & unresolved\\
\midrule
$(0,1/125]$ & sixth-order Euler--Maclaurin & finite analytic panel & $0.5515197640896556$ & $0$\\
$[1/125,1/8]$ & direct 192-bit intervals & $19\,380$ & $2.363877686513775$ & $0$\\
$[1/8,0.315]$ & direct 192-bit intervals & $64\,600$ & $3.93328637824701$ & $0$\\
$[0.315,1/2]$ & direct 192-bit intervals & $62\,900$ & $1.418809233872728$ & $0$\\
$[1/2,\infty)$ & first-cell large-$d$ analysis & $42\,815+1\,300$ baseline/refinement; $9\,200$ shape boxes + tail & $B>9$ & $0$\\
\bottomrule
\end{tabular}%
}
\end{center}
The direct box evaluation uses the exact hybrid density
\eqref{eq:hybrid} and the derivative formula \eqref{eq:Mder}.  In the first
direct range the compact partition consists of $17$ $Y$-bands, $57$ $d$-bands
and $4$ normalized half-cell bands.  The analytic lattice truncation has
maximum omitted value radius
\[
 4.1627312919227508\times10^{-35}
\]
and maximum omitted gradient radius
\[
 9.0297699727920405\times10^{-30}.
\]
Both are far inside the radii used by the box enclosures.  For $Y\ge10$ the
analytic perturbation changes the normalized quantity $q$ by less than
$2\times10^{-49}$, so the compact and non-compact ranges join with a large
strict reserve.

The small-$d$ face is independent of the direct grids.  Applying
Euler--Maclaurin through the $B_6$ term to the lattice sums, and using the
derivative majorants of Lemma~\ref{lem:tail}, gives remainder radii at
$d\le1/200$
\[
 (|R^U_0|,|R^U_1|,|R^U_2|)
 <\left(\frac1{200000},\frac1{2000},\frac1{16}\right),
\]
\[
 (|R^W_0|,|R^W_1|,|R^W_2|)
 <\left(\frac1{40000},\frac1{400},\frac13\right).
\]
Enlarging from $d\le1/200$ to $d\le1/125$ multiplies every sixth-order
remainder radius by at most
\[
 (200/125)^6=16.777216<17.
\]
The resulting outward interval substitution into the exact rational expression
for $d\mu''$ gives the first lower endpoint in the table.

\subsection{The $Y=7$ induction base}

At $a_7=\tfrac12\log7$ the base is certified in the stronger normalization
\[
 C_{a_7,-}-4|s^{\rm ph}\rangle\langle s^{\rm ph}|.
\]
The finite low block contains seven quasi-modes.  A $93$-dimensional robust
complement is eliminated after interval preconditioning; rows $100\le j<150$
are eliminated explicitly, rows $150\le j<1000$ are included with interval
coefficients, rows $1000\le j<5000$ are evaluated directly, and the residual
$j\ge5000$ is bounded by a block positive-semidefinite majorant.  The robust
Schur block satisfies
\begin{equation}
 T^*S_RT\succeq
 0.9999999999999646270926242102348\,I.
 \label{eq:self-y7-robust}
\end{equation}

After that elimination the surviving Hermitian interval matrix $F$ is $7\times7$.
For complete auditability, its upper-triangular entries are printed below as
midpoint $\pm$ radius; the lower triangle is the Hermitian transpose.
\begingroup
\scriptsize
\begin{center}
\resizebox{\textwidth}{!}{%
\begin{tabular}{@{}rrll@{}}
\toprule
$i$ & $j$ & midpoint & radius\\
\midrule
1 & 1 & \texttt{1.1115379024609363821553818305831817611897015047583315e-24} & \texttt{7.44e-77} \\
1 & 2 & \texttt{1.519422867248165229457643314807475291286348148003031e-22} & \texttt{8.49e-74} \\
1 & 3 & \texttt{3.98467297272234207675721779020218521548502298628617e-20} & \texttt{6.22e-71} \\
1 & 4 & \texttt{2.12102911650510199531348044629268684452122830890151e-18} & \texttt{6.92e-69} \\
1 & 5 & \texttt{-9.591910973054809009705891292394364665498165916018e-17} & \texttt{1.15e-66} \\
1 & 6 & \texttt{6.4310290452907548970847675588166601367919429295221e-15} & \texttt{1.54e-65} \\
1 & 7 & \texttt{7.5265769958931856122797669816500836686415446026483e-15} & \texttt{4.26e-65} \\
2 & 2 & \texttt{9.040985646404259650603164405533148857336460733538579e-19} & \texttt{6.28e-71} \\
2 & 3 & \texttt{-6.74721563363143559250047917737666888473945321779291e-17} & \texttt{3.31e-68} \\
2 & 4 & \texttt{-6.41419958269178683019819583133722617443539408806533e-15} & \texttt{8.58e-66} \\
2 & 5 & \texttt{-1.6864622198429987160510966029388890233727874927080e-13} & \texttt{2.24e-63} \\
2 & 6 & \texttt{-2.7334836771317340105736662673315581798699274688608e-12} & \texttt{2.81e-62} \\
2 & 7 & \texttt{-5.1511750437061297519951355958356764863014658694397e-12} & \texttt{3.07e-62} \\
3 & 3 & \texttt{2.294732765408398212585015472930781957538235977491640e-13} & \texttt{2.30e-65} \\
3 & 4 & \texttt{-4.96037413101353602681808030875356308353534118799451e-12} & \texttt{4.71e-63} \\
3 & 5 & \texttt{-2.99070512689743135563932396604966151710797661072974e-10} & \texttt{8.23e-61} \\
3 & 6 & \texttt{3.9087779054568290896048188425256840208381508143998e-9} & \texttt{5.44e-59} \\
3 & 7 & \texttt{2.9691138673238154476309234991462744381477570746773e-9} & \texttt{4.12e-59} \\
4 & 4 & \texttt{5.996448045571686309420613785997623443877753301571697e-9} & \texttt{8.00e-61} \\
4 & 5 & \texttt{-7.8025045395371410248784801265055368819834952570961e-8} & \texttt{4.22e-58} \\
4 & 6 & \texttt{1.66087307374494587034734121239694006832588097699730e-6} & \texttt{1.75e-57} \\
4 & 7 & \texttt{1.70018201068314546833670666859354929493121541742417e-6} & \texttt{5.70e-57} \\
5 & 5 & \texttt{6.31693918713885515411664240595309549484985316869005e-5} & \texttt{5.63e-56} \\
5 & 6 & \texttt{0.000278122788497511631365899816557909496409453483274254} & \texttt{2.23e-55} \\
5 & 7 & \texttt{0.000307771074884766880284402045837745865887636405257401} & \texttt{6.18e-55} \\
6 & 6 & \texttt{0.07136439153526201036307633814421464907247503368014231} & \texttt{7.42e-54} \\
6 & 7 & \texttt{-0.00931180539353299343918631387336969249750078208758529} & \texttt{8.77e-54} \\
7 & 7 & \texttt{0.6996323566924715689526572851183563193650306570365351} & \texttt{5.56e-53} \\
\bottomrule
\end{tabular}%
}
\end{center}
\endgroup

A particularly transparent positivity check uses a positive diagonal
congruence.  Put $D=\operatorname{diag}(d_1,\ldots,d_7)$ with
\begin{align*}
d_1&=1.0542949788654674144745608306719547397624976739933\times10^{-12},\\
d_2&=9.5084097757744221288796903171767719115667104791663\times10^{-10},\\
d_3&=4.7903369040271042428803479507133328218626969843007\times10^{-7},\\
d_4&=7.7436735762631977754570103531292442741634433656858\times10^{-5},\\
d_5&=7.9479174549933866019351090708136317858507327889618\times10^{-3},\\
d_6&=2.6714114534317249266358609495230619923685931867172\times10^{-1},\\
d_7&=8.3644028877886529560973642174270437461729511967790\times10^{-1}.
\end{align*}
For each row $i$ of $D^{-1}FD^{-1}$, take the lower endpoint of the diagonal
interval and subtract the sum of the upper endpoints of the absolute values of
all off-diagonal intervals.  The seven resulting interval-Gershgorin margins
are, respectively,
\begin{align*}
 &0.70073833572429703318,\quad 0.57363092622925357867,\\
 &0.52274198785426969511,\quad 0.51987297874340861926,\\
 &0.58362377406623292396,\quad 0.68290802744432867213,\\
 &0.86336014122040946366.
\end{align*}
Every margin is strictly positive.  Since positive diagonal congruence
preserves inertia, the entire interval family $F$ is positive definite.  As an
independent spectral readout, the lower endpoints of its seven eigenvalue
enclosures are
\[
\begin{gathered}
1.070519257986935629040841931967\times10^{-24}\\
8.714195174681314896818429452274\times10^{-19}\\
2.224498452552044255994079537228\times10^{-13}\\
5.834619678802711296289063734220\times10^{-9}\\
6.191512152310433090580465451011\times10^{-5}\\
7.122753061102476262160782723869\times10^{-2}\\
6.997704720488924911477008945116\times10^{-1}
\end{gathered}.
\]
Therefore
\[
 C_{a_7,-}-4|s^{\rm ph}\rangle\langle s^{\rm ph}|\succ0,
\]
and hence, a fortiori,
\[
 A_{a_7,-}
 =C_{a_7,-}-2|s^{\rm ph}\rangle\langle s^{\rm ph}|\succ0.
\]
This is the base used in the induction.

\subsection{Finite arithmetic sweep for AWGC}

For a prime power $q=p^\ell$ put
\[
 \lambda_q=\log p,\qquad a_q=\frac{\lambda_q^2}q.
\]
For a bin $\mathcal B$ store
\[
 A=\sum_{q\in\mathcal B}a_q,\qquad
 B=\sum_{q\in\mathcal B}\lambda_q,\qquad
 C=\sum_{q\in\mathcal B}q.
\]
Its exact variance contribution is
\begin{equation}
 V(\mathcal B)=A-\frac{B^2}C,
 \label{eq:self-bin-variance}
\end{equation}
with $V=0$ for an empty bin.  Cauchy--Schwarz makes
\eqref{eq:self-bin-variance} non-negative.

Passing from $k$ to $k+1$ requires only the following exact updates:
\begin{enumerate}[label=(\roman*)]
\item add each prime power $q\mid k$ to the bin $m=k/q$ in which it becomes active;
\item remove each prime power $q\mid(k+1)$, $q<k+1$, from the bin
$m=(k+1)/q-1$ in which it ceases to be active;
\item update $\sum_mV_{k,m}$ by subtracting the old variance of each changed
bin and adding the new variance;
\item form $\widetilde\sigma_k$ from the prefix of prime powers $q<k$ and
remove the prime-power divisors of $k$;
\item evaluate
\[
 \widetilde\sigma_k-\frac1{\log2}\sum_mV_{k,m}
\]
using only the interval operations above and \eqref{eq:self-integer-log}.
\end{enumerate}
For $2\le k\le3\,594\,641$ this gives exact equality at $k=2$ and
\begin{equation}
 \widetilde\sigma_k-\frac1{\log2}\sum_mV_{k,m}
 \ge0.111804998077428866317003\ldots\qquad(k\ge3),
 \label{eq:self-awgc-finite}
\end{equation}
with the minimum at $k=33$.  The largest certified ratio is
\[
0.911416881539699733755493\ldots .
\]
The analytic continuation beyond the finite cutoff is proved in
Proposition~\ref{thm:awgc}; therefore the finite and infinite ranges overlap
without any numerical extrapolation.

\subsection{Finite MASTER scalar sweeps}

The MASTER sweeps reuse the same exact bin state.  The double-load quantity is
\[
 \mathfrak g_k^{\rm M}
 =\frac12+\log k-\frac{439}{250\log2}\sum_mV_{k,m},
\]
and the aligned-forcing envelope is
\[
 W_k^{\rm env}
 =0.0145\sum_{n\in\mathcal N_{k,1}}\frac{\Lambda(n)^2}n
 +\sum_{m\ge2}\frac{0.0301}{m^2}
   \sum_{n\in\mathcal N_{k,m}}\frac{\Lambda(n)^2}n.
\]
Thus the aligned lower interval is
\begin{equation}
 \frac12+\log k
 -\frac{439}{250\log2}\sum_mV_{k,m}
 -\frac52W_k^{\rm env}.
 \label{eq:self-master-aligned}
\end{equation}
The exhaustive finite sweep through $3\,594\,641$ gives
\begin{align}
 \mathfrak g_k^{\rm M}
 &\ge1.97650540314533685844807647802\ldots,\label{eq:self-master-double}\\
 \eqref{eq:self-master-aligned}
 &\ge1.87814156017033534419244378277\ldots,\label{eq:self-master-align-margin}
\end{align}
with both minima at $k=33$.  The safe common-cut finite range
$7\le k\le4999$ has the independent lower gap
\begin{equation}
1.52021965251975098830112403947\ldots>0,
\label{eq:self-safe-cut-margin}
\end{equation}
whose minimum occurs at $k=7$.  The Mertens/logarithmic finite ingredient has
minimum margin
\begin{equation}
0.0701888211361692811193313361329\ldots>0
\label{eq:self-mertens-margin}
\end{equation}
at $k=13$.  Each displayed number is a lower endpoint of an outward-rounded
interval, not a point evaluation.  Their infinite continuations are exactly the
analytic tail inequalities proved in the corresponding propositions of the
main text.

\subsection{Finite-certificate ledger}

The numerical statements entering the main proof are therefore precisely:
\begin{center}
\begin{tabular}{@{}lll@{}}
\toprule
certificate & finite range & rigorous reserve\\
\midrule
Gate I, small $d$ & $2\le Y\le10$ & $q>0.5515197640896556$\\
Gate I, direct panels & $2\le Y\le10$ & $q>1.418809233872728$ at worst\\
$Y=7$ base & one endpoint & Gershgorin margin $>0.5198729787$\\
AWGC & $2\le k\le3\,594\,641$ & $0.1118049980774288663$ for $k\ge3$\\
MASTER double-load & $7\le k\le3\,594\,641$ & $1.9765054031453368584$\\
aligned MASTER & $7\le k\le3\,594\,641$ & $1.8781415601703353442$\\
safe common cut & $7\le k\le4999$ & $1.5202196525197509883$\\
Mertens/log ingredient & $7\le k\le3\,594\,641$ & $0.0701888211361692811$\\
\bottomrule
\end{tabular}
\end{center}
No plot, regression, fitted parameter, or auxiliary diagnostic is a premise.
Every unbounded parameter range is separated from its finite part by an
analytic tail estimate proved in the article.

\section{Dependency graph}
\label{app:dependencies}

The final proof dependency graph is
\[
\begin{gathered}
\text{exact localized operator + zero extension + Mellin unit cells + full-form transport}\\
\Downarrow\\
\boxed{\text{true affine routing + coherent multi-source common cut}}\\
\Downarrow\\
\boxed{\text{exact parent short + AWGC + rational mixed absorption}}\\
\Downarrow\\
\boxed{\text{exact inherited-response placement (MASTER-P2)}}\\
\Downarrow\\
\boxed{\text{aligned forcing retained and paid by }W_k}\\
\Downarrow\\
\boxed{\text{exact common-cut/fold intertwining (MASTER-P3)}}\\
\Downarrow\\
\boxed{\mathfrak g_k^{\rm M+}>0\quad(k\ge7)}\\
\Downarrow\\
\boxed{\substack{A_{7,-}\succ0\;\text{(certified base)},\quad\text{and}\quad\\
H_k^*A_{k+1,-}H_k\succ0\;\text{whenever }A_{k,-}\succ0}}\\
\Downarrow\\
\boxed{A_{N,-}\succ0\quad(N\ge7)}\\
\Downarrow\\
\boxed{\substack{A_{a,-}\succeq0\quad(a>0)\\
\text{odd-channel compatibility and closure}}}\\
\Downarrow\\
\boxed{\text{restricted odd Weil criterion}}\\
\Downarrow\\
\boxed{\mathrm{RH}}.
\end{gathered}
\]
The displayed chain uses the exact harmonic graph and the same-vector
placements proved in the MASTER-P1/P2/P3 lemmas; no auxiliary inverse-metric
comparison is a premise of the induction.

\section{Normalization ledger}
\label{app:normalization}

The active polar normalization throughout is
\[
 A_k=C_k-2\ketbra{s_k}{s_k}.
\]
Accordingly, every rooted identity in the proof uses the coefficient $2$.
The source-resolved CMC/Feshbach estimate controls the transverse block
$\mathsf D_k$ and is kept distinct from the scalar root channel.

After the ground/transverse split of $S_k$, the exact scalars are
\[
 \Pi_k=\mathsf a_k-r_k^*\mathsf D_k^{-1}r_k,\qquad
 z_k=\alpha_k-r_k^*\mathsf D_k^{-1}v_k,\qquad
 \delta_k^Q=\delta_k-2v_k^*\mathsf D_k^{-1}v_k,
\]
with
\[
 \langle\eta_k,S_k^{-1}\eta_k\rangle
 =v_k^*\mathsf D_k^{-1}v_k+\frac{|z_k|^2}{\Pi_k},
 \qquad
 \delta_{k+1}=\delta_k^Q-\frac{2|z_k|^2}{\Pi_k}.
\]
These identities fix the normalization of the ground/transverse split.  The
main induction itself is taken on the exact $A_k$-harmonic graph and is closed
by Theorem~\ref{thm:final-master-harmonic}.

\section*{Epilogue: beyond the third gate}

\begin{quest}
The third gate opened without a sound.

Marc, the Arcane Rogue, put away the lockpick.  Johnny Nash closed the ledger
of loads and pivots only after checking the last dependency once more.  Pierre
de Fermat inspected the final line and found, this time, enough room in the
margin.  Mr.~Fayman folded the mechanism diagram.  Dama Noether lowered her
staff; the symmetries had done what the argument asked of them, and nothing
more.

Beyond the gate the Critical Strip narrowed, on the party's map, toward a
single luminous line.  At the far end of the path stood Princess Clay-la.
No one in the party presumed to speak for the reader.

Herr G.F.B.R. left the rulebook open and slid it across the table.

``The map is now the reader's,'' he said.
\end{quest}

\end{document}